\begin{document}

\tikzstyle{decision} = [diamond, draw, fill=gray!20, 
    text width=4.5em, text badly centered, node distance=3cm, inner sep=0pt]
\tikzstyle{block} = [rectangle, draw, fill=gray!20, 
    text width=10em, text centered, rounded corners, minimum height=3em]
\tikzstyle{line} = [draw, -latex']
\tikzstyle{cloud} = [draw, ellipse,fill=gray!20, node distance=3cm,
    minimum height=2em]

\newtheorem{theorem}{Theorem}[section]
\newtheorem{prop}[theorem]{Proposition}
\newtheorem{assume}[theorem]{Assumption}
\newtheorem{lemma}[theorem]{Lemma}
\newtheorem{cor}[theorem]{Corollary}
\newtheorem{definition}[theorem]{Definition}
\newtheorem{conj}[theorem]{Conjecture}
\newtheorem{claim}[theorem]{Claim}
\newtheorem{qn}[theorem]{Question}
\newtheorem{defn}[theorem]{Definition}
\newtheorem{defth}[theorem]{Definition-Theorem}
\newtheorem{obs}[theorem]{Observation}
\newtheorem{rmk}[theorem]{Remark}
\newtheorem{ans}[theorem]{Answers}
\newtheorem{slogan}[theorem]{Slogan}
\newtheorem{corollary}[theorem]{Corollary}
\newtheorem{proposition}[theorem]{Proposition}
\newtheorem{observation}[theorem]{Observation}
\newtheorem{property}{Property}[subsection]
\newtheorem{remark}[theorem]{Remark}
\newtheorem{example}[theorem]{Example}

\newtheorem{question}{Question}
\newtheorem{maintheorem}{Theorem}
\newtheorem{maincoro}[maintheorem]{Corollary}
\newtheorem{mainprop}[maintheorem]{Proposition}

\newcommand{\bluecomment}[1]{\textcolor{blue}{#1}}
\newcommand{\boundary}{\partial}
\newcommand{\hhat}{\widehat}
\newcommand{\C}{{\mathbb C}}
\newcommand{\B}{{\mathbb B}}
\newcommand{\Ga}{{\Gamma}}
\newcommand{\G}{{\Gamma}}
\newcommand{\s}{{\Sigma}}
\newcommand{\PSL}{{PSL_2 (\mathbb{C})}}
\newcommand{\pslc}{{PSL_2 (\mathbb{C})}}
\newcommand{\pslr}{{PSL_2 (\mathbb{R})}}
\newcommand{\Gr}{{\mathcal G}}
\newcommand{\integers}{{\mathbb Z}}
\newcommand{\natls}{{\mathbb N}}
\newcommand{\ratls}{{\mathbb Q}}
\newcommand{\reals}{{\mathbb R}}
\newcommand{\proj}{{\mathbb P}}
\newcommand{\lhp}{{\mathbb L}}
\newcommand{\tube}{{\mathbb T}}
\newcommand{\cusp}{{\mathbb P}}
\newcommand\AAA{{\mathcal A}}
\newcommand\HHH{{\mathbb H}}
\newcommand\BB{{\mathcal B}}
\newcommand\CC{{\mathcal C}}
\newcommand\DD{{\mathcal D}}
\newcommand\EE{{\mathcal E}}
\newcommand\FF{{\mathcal F}}
\newcommand\GG{{\mathcal G}}
\newcommand\HH{{\mathcal H}}
\newcommand\II{{\mathcal I}}
\newcommand\JJ{{\mathcal J}}
\newcommand\KK{{\mathcal K}}
\newcommand\LL{{\mathcal L}}
\newcommand\MM{{\mathcal M}}
\newcommand\NN{{\mathcal N}}
\newcommand\OO{{\mathcal O}}
\newcommand\PP{{\mathcal P}}
\newcommand\QQ{{\mathcal Q}}
\newcommand\RR{{\mathcal R}}
\newcommand\SSS{{\mathcal S}}
\newcommand\TT{{\mathcal T}}
\newcommand\UU{{\mathcal U}}
\newcommand\VV{{\mathcal V}}
\newcommand\WW{{\mathcal W}}
\newcommand\XX{{\mathcal X}}
\newcommand\YY{{\mathcal Y}}
\newcommand\ZZ{{\mathcal Z}}
\newcommand{\iid}{{i.i.d.\ }}
	\renewcommand{\ae}{{a.e.\ }}
\newcommand\CH{{\CC\Hyp}}
\newcommand{\Chat}{{\hat {\mathbb C}}}
\newcommand\MF{{\MM\FF}}
\newcommand\PMF{{\PP\kern-2pt\MM\FF}}
\newcommand\ML{{\MM\LL}}
\newcommand\PML{{\PP\kern-2pt\MM\LL}}
\newcommand\GL{{\GG\LL}}
\newcommand\Pol{{\mathcal P}}
\newcommand\half{{\textstyle{\frac12}}}
\newcommand\Half{{\frac12}}
\newcommand\Mod{\operatorname{Mod}}
\newcommand\Area{\operatorname{Area}}
\newcommand\ep{\epsilon}
\newcommand\Hypat{\widehat}
\newcommand\Proj{{\mathbf P}}
\newcommand\U{{\mathbf U}}
 \newcommand\Hyp{{\mathbf H}}
\newcommand\D{{\mathbf D}}
\newcommand\Z{{\mathbb Z}}
\newcommand\R{{\mathbb R}}
\newcommand\Q{{\mathbb Q}}
\newcommand\E{{\mathbb E}}
\newcommand\EXH{{ \EE (X, \HH_X )}}
\newcommand\EYH{{ \EE (Y, \HH_Y )}}
\newcommand\GXH{{ \GG (X, \HH_X )}}
\newcommand\GYH{{ \GG (Y, \HH_Y )}}
\newcommand\ATF{{ \AAA \TT \FF }}
\newcommand\PEX{{\PP\EE  (X, \HH , \GG , \LL )}}
\newcommand{\lct}{\Lambda_{CT}}
\newcommand{\lel}{\Lambda_{EL}}
\newcommand{\lgel}{\Lambda_{GEL}}
\newcommand{\lre}{\Lambda_{\mathbb{R}}}

\newcommand\til{\widetilde}
\newcommand\length{\operatorname{length}}
\newcommand\tr{\operatorname{tr}}
\newcommand\cone{\operatorname{cone}}
\newcommand\gesim{\succ}
\newcommand\lesim{\prec}
\newcommand\simle{\lesim}
\newcommand\simge{\gesim}
\newcommand{\simmult}{\asymp}
\newcommand{\simadd}{\mathrel{\overset{\text{\tiny $+$}}{\sim}}}
\newcommand{\ssm}{\setminus}
\newcommand{\diam}{\operatorname{diam}}
\newcommand{\pair}[1]{\langle #1\rangle}
\newcommand{\T}{{\mathbf T}}
\newcommand{\I}{{\mathbf I}}
\newcommand{\pG}{{\partial G}}
 \newcommand{\oxi}{{[1,\xi)}}
\newcommand{\cg}{\mathcal{G}}

\newcommand{\tw}{\operatorname{tw}}
\newcommand{\base}{\operatorname{base}}
\newcommand{\trans}{\operatorname{trans}}
\newcommand{\rest}{|_}
\newcommand{\bbar}{\overline}
\newcommand{\lbar}{\underline}
\newcommand{\UML}{\operatorname{\UU\MM\LL}}
\newcommand{\EL}{\mathcal{EL}}
\newcommand{\ncox}{{N_C([o,\xi))}}
\newcommand{\qle}{\lesssim}

\def\ind{{\mathbf 1}}
\def\N{\mathbb{N}}
\def\L{\mathbb{L}}
\def\P{\mathbb{P}}
\def\Z{\mathbb{Z}}
\def\R{\mathbb{R}}
\def\S{\mathcal{S}}
\def\X{\mathcal{X}}
\def\E{\mathbb{E}}
\def\l{\ell}
\def\Ups{\Upsilon}
\def\om{\omega}
\def\Om{\Omega}

\newcommand\Gomega{\Omega_\Gamma}
\newcommand\nomega{\omega_\nu}
\newcommand\omegap{{(\Omega,\P)}}
\newcommand\omegapp{{(\Omega',\P)}}

\DeclarePairedDelimiter\floor{\lfloor}{\rfloor}
\newcommand{\cF}{\mathcal{F}}
\newcommand{\cD}{\mathcal{D}}
\newcommand{\cZ}{\mathcal{Z}}
\newcommand{\cf}{\mathcal{F}}
\newcommand{\cA}{\mathcal{A}}
\newcommand{\cB}{\mathcal{B}}
\newcommand{\cC}{\mathcal{C}}
\newcommand{\cT}{\mathcal{T}}
\newcommand{\rb}{\mathfrak{A}}
\newcommand{\Var}{\hbox{Var}}
\newcommand{\ee}{\hbox{e}_1}
\newcommand{\cd}{\mathcal{D}} 
\newcommand{\ce}{\mathcal{E}}
\renewcommand{\ind}{\mathrm{ind}\,}
\setcounter{tocdepth}{2}

\title[Exceptional directions in hyperbolic FPP]{Geodesic Trees and Exceptional Directions in FPP on Hyperbolic Groups}

\author{Riddhipratim Basu}
\address{Riddhipratim Basu, International Centre for Theoretical Sciences, Tata Institute of Fundamental Research, Bengaluru, India}

\email{rbasu@icts.res.in}

\author{Mahan Mj}
\address{Mahan Mj, School
of Mathematics, Tata Institute of Fundamental Research. 1, Homi Bhabha Road, Mumbai-400005, India}

\email{mahan@math.tifr.res.in}

\subjclass[2010]{60K35, 82B43,  20F67 (20F65, 51F99, 60J50) } 

\keywords{first passage percolation, hyperbolic group,  coalescence, exceptional direction, Cannon-Thurston map}

\date{\today}

 \begin{abstract}
We continue the study of the geometry of infinite geodesics in first passage percolation (FPP) on Gromov-hyperbolic  groups $G$, initiated in \cite{benjamini-tessera} and developed further in \cite{BM19}. It was shown in \cite{BM19} that,  given any fixed direction $\xi\in \partial G$, and under mild conditions on the passage time distribution, there exists almost surely a unique semi-infinite FPP geodesic  from each $v\in G$ to $\xi$. Also, these geodesics coalesce to form a tree.  Our main topic of study is the set of (random) \emph{exceptional directions} for which uniqueness or coalescence fails.  We study these directions in the context of two random geodesics trees: one formed by the union of all geodesics starting at a given base point, and the other formed by the union of all semi-infinite geodesics in a given direction $\xi\in \partial G$. We show that, under mild conditions, the set of exceptional directions almost surely has a strictly smaller Hausdorff dimension than the boundary, and hence has measure zero with respect to the Patterson-Sullivan measure.  We also establish an upper bound on the maximum number of disjoint geodesics in the same direction. For groups that are not virtually free, we show that almost surely exceptional directions exist and are dense in $\pG$. When the topological dimension of $\pG$ is greater than one, we establish the existence of uncountably many exceptional directions. When the topological dimension of $\pG$ is $n$, we prove the existence of directions $\xi$ with at least $(n+1)$ disjoint geodesics. Our results hinge on deep facts about hyperbolic groups, in particular on establishing the existence of random Cannon-Thurston maps for the geodesic trees. En route, we also establish facts about the structure of random bigeodesics that substantially strengthen prior results from  \cite{benjamini-tessera}.  
\end{abstract}

\maketitle

\tableofcontents

\section{Introduction}\label{sec-intro} 
First passage percolation (FPP) is a well-known model of a random metric on a graph where edge is endowed with an independent random length coming from a common distribution taking non-negative values. Introduced by Hammersley and Welsh in 1965 \cite{HW} in the context of the nearest neighbor graph of Euclidean lattices, it has been extensively studied for the last six decades in the probability literature. Interest in this topic is also fueled by its connection to statistical physics; FPP on Euclidean lattices is expected to belong to the so-called Kardar-Parisi-Zhang (KPZ) universality class of random growth models. The study of FPP in Euclidean spaces is  notoriously hard; but there are a few related \emph{integrable} models in the planar setting for which the physics predictions have been verified. The question of universality remains out of reach. We refer the interested reader to \cite{ADH} for an excellent monograph on the history of the developments on FPP and \cite{Cor12} for an overview of related integrable models. 

Over the last decade or two, there has been growing interest in understanding the interplay between the randomness of the metric and the underlying background geometry.  FPP has been studied in different background geometries, typically on Cayley graphs of finitely generated groups (see e.g.\ \cite{BT15,AG23,G24}). The study of FPP on Gromov hyperbolic groups was initiated in \cite{bz-tightness,benjamini-tessera} and was later continued in \cite{BM19}. It was shown that while in some respects FPP on hyperbolic groups exhibit similar behavior  to what is known or expected in the Euclidean (more specifically planar Euclidean) case, in other respects it shows sharply contrasting behavior. {We also point out the recent papers \cite{JQ25,BJQ25} where FPP is studied in more general spaces having hyperbolic properties along certain directions called Morse geodesics.}

In this paper, we continue the investigation from \cite{BM19}. We focus on the study of the structure of the geodesics. {We answer a number of questions, analogs of which have been studied in the discrete Euclidean  case, but so far  been answered only in some planar exactly solvable models or some continuum models with additional rotational symmetries. As far as we are aware, our results are the first ones for FPP in a hyperbolic background geometry and are also the first ones in  discrete models that are not exactly solvable. Our results also exhibit some qualitatively new features in the higher dimensional setting.} 

In this paper, our main object of study is the structure of random FPP geodesics on a Cayley graph  $\Gamma$ of a non-elementary Gromov hyperbolic group $G$. The geometry of geodesics is a very well studied topic in the Euclidean set-up and is of fundamental importance (we shall say more about this later). Under mild assumptions on the passage time distribution, almost surely there is a unique random geodesic between any two vertices (i.e., there are no geodesic bigons). The existence of semi-infinite geodesic rays can also be established easily. The collection of  geodesics thus naturally display a tree-like structure. Our main objects of study in this paper are the following two types of random geodesic trees. Consider the union of all geodesics starting at a given $v\in \Gamma$. By uniqueness of geodesics this is almost surely a tree, and we refer to this as the  \emph{forward random geodesic tree} rooted at $v$. On the other hand, for a fixed $\xi$ in $\pG$, the boundary of $G$, it was shown in \cite{BM19} that for any $v\in \Gamma$,  there exists  almost surely a unique semi-infinite geodesic from $v$ in the direction $\xi$. Further,
 these geodesics (indexed by $v\in \Gamma$) \emph{coalesce}. Hence the union of all these semi-infinite geodesics in a fixed direction $\xi$ form a tree. We call it the \emph{backward random geodesic tree} in the direction $\xi$. For brevity, we shall usually refer to these as \emph{forward tree} and \emph{backward tree} respectively. 

These trees are interesting objects of study in the probability literature. There is a geometric motivation  as well. In the context of geodesic flow on the unit tangent bundle $U\til{M}$ of a complete simply connected manifold $\til M$ of pinched negative curvature, points $\xi \in \partial \til{M}$ parametrize the widely studied \emph{stable manifolds} 
$\SSS_\xi$ in  $U\til{M}$. (We typically think of $\til M$ as the universal cover of a closed manifold $M$.) Here, $\SSS_\xi \subset U\til{M}$ consists of geodesics $\gamma \subset  U\til{M}$ such that $\gamma (\infty) = \xi$.
Thus, the backward random tree in the direction $\xi$ is the random discrete analog of stable manifolds
$\SSS_\xi$.  

 For both forward and backward geodesic trees, of particular  interest is the behavior of exceptional random directions, i.e.\  random directions which show atypical behavior. For example, for the forward tree, recall that in any given direction $\xi$, there is almost surely a unique semi-infinite geodesic. However, since the number of directions is uncountable this allows for the existence of random directions where there might be multiple disjoint semi-infinite geodesics. Such directions are referred to as \emph{exceptional directions} (a similar notion works for backward trees as well, precise definitions will be given later).
 
{A standard theme in probability theory and random geometry is to first study typical events, and then to try and characterize atypical points where these typical events fail. Here, the typical event we are dealing with is coalescence of random geodesics. In \cite{BM19} we established that coalescence is typical behavior in hyperbolic FPP. Thus, atypical behavior is (almost tautologically) non-coalescence of geodesics. The study of exceptional directions carried out in this paper systematizes the study of this atypical behavior and fits into this general theme. We elaborate on this below in Section~\ref{sec-motvn}.}

There many natural questions about exceptional directions one can ask. Are there exceptional directions? How many are there? How many disjoint geodesics can there be in an exceptional direction? These questions have been investigated in the Euclidean setting but the complete answer has so far been obtained only in the set-up of planar exponential last passage percolation (LPP) model and its scaling limit \emph{the directed landscape}. We shall briefly recall the state of the literature in Section \ref{s:lit} below. For hyperbolic FPP, under mild conditions on the edge length distribution, we provide an almost complete answer to these questions (see Theorem \ref{thm-omni-intro} for a summary of the statements). By way of our proof, we also obtain results of independent interest about the structure of bigeodesics and geodesic trees which significantly strengthens previous results obtained in \cite{benjamini-tessera, BM19}.

Our results are proved by a combination of probabilistic and geometric arguments, in particular several deep facts about hyperbolic groups and their boundaries are crucially used. A principal tool is a random Cannon-Thurston map for the natural embeddings of the forward tree and the backward tree into $\Gamma$ for almost every realization $\omega$ of the random edge lengths.  The existence of random Cannon-Thurston maps is derived from probabilistic estimates--a new conjunction of geometric and probabilistic methods. We have tried to make the paper accessible to a larger audience by recalling, at various stages,  the tools that go into it. A more detailed discussion on the arguments is provided in Section \ref{sec-org} below.

{
\subsection{Motivation behind exceptional directions}\label{sec-motvn}
Since this paper is intended for a mixed audience of probabilists and hyperbolic geometers, it would be appropriate to motivate the notion of 
\emph{exceptional directions} particularly for the latter.

{The questions investigated in this paper flow from a rather common motif in probability theory, particularly in the area of random geometry. Often, a particular geometric property is shown to hold almost surely for every deterministic point of an uncountable set, typically points of the underlying space or time points in an interval. However, since there are uncountably many such points, it might well be the case that with positive probability, and often with probability one, there are random locations or times which give rise to some atypical phenomena/structures.  These phenomena/structures almost surely do not arise at deterministic/particular locations or times. One then studies what atypical phenomena/structures can occur. A measure of the rarity of such 
	phenomena/structures is given by the Hausdorff dimension of the set of points where such atypical structures arise. 
	
	Understanding such atypical structures and their rarity has turned out to be useful in understanding the fractal structure of many random geometry models and has also given rise to new mathematics. One famous example in this vein is the case of infinite clusters in critical dynamical percolation on the plane where it has been shown that although at any fixed time there are no infinite clusters almost surely, there are random times when infinite clusters emerge \cite{HPS97,SS10, GPS10}. Similar questions have also been studied in dynamical discrete webs and Brownian webs \cite{FNRS09}. Atypical directions have been studied in certain models of random trees in the real hyperbolic plane in \cite{CFT23}. 
	
	In this paper we are concerned with models equipped with a  random metric. In many interesting random metric models, a typical phenomenon is \emph{coalescence of geodesics}, i.e., geodesics to a fixed point or a point at infinity starting from different points coalesce almost surely before reaching the endpoint. Since the number of points is uncountable, the following questions naturally arise: 
	\begin{itemize}
	\item Are there atypical points where this coalescence structure fails?
	\item What atypical coalescence structures are possible?
	\end{itemize} 
	 These questions have been extensively studied on planar continuum random geometric models of the Brownian map (see e.g.\ \cite{Le22}) and the directed landscape (see e.g.\ \cite{Dau23, Bha23b}). The latter example, the directed landscape, is the putative universal scaling limit of planar first and last passage percolation models in Euclidean background geometry. The study of exceptional directions (i.e., the set of random directions where the coalescence of semi-infinite geodesic rays fails) there is the major motivation for the present work. We shall give a more detailed account for the known results in these models in Section \ref{s:lit}. In \cite{BM19}, we showed that, in first passage percolation on hyperbolic groups, under mild conditions, coalescence of semi-infinite geodesics holds almost surely in any fixed direction of the Gromov boundary. In this work, we investigate the natural question of understanding the exceptional directions for hyperbolic FPP and explore similarities and differences with known or predicted results in the Euclidean case.} 

}

\subsection{Basic set-up and statement of main results}
\label{sec-prel} Before stating the main results, we fix the setup with which we shall be working for the rest of this paper. 
\begin{itemize}
\item  $G$--non-elementary hyperbolic group.
\item $\Ga=(G,S)$-- Cayley graph of $G$ with respect to a finite symmetric generating set $S$. The  size of $S$ (equaling the valence of any vertex of $\Gamma$) is $D$. Let $E$ denote the set of edges in $\Gamma$. $\Gamma$ is  $\delta$-hyperbolic  with respect to the word metric. The identity element of $G$ (as well as the corresponding vertex in $\Gamma$) will be denoted by $1$.
\item $\partial G$-- Gromov boundary of $G$, given by asymptote classes of geodesics in the Cayley graph.
\item $\nu$--Patterson-Sullivan measure on   $\partial G$, given by a weak limit of uniform measure on $n$-spheres in the Cayley graph as $n\to \infty$.
\item $\rho$--edge-length distribution, a measure on $\R_{+}$.
\item $\Omega$-- configuration space, given by $[0, \infty)^E$ equipped with  product measure $\P=\rho^{\otimes E}$.
\item $\omega$--element of $\omegap$, i.e., $\omega(e)$ are i.i.d. random variables distributed according to the measure $\rho$.
\item $d_{\omega}$--the first passage metric  on $\Gamma$ corresponding to $\omega\in \Omega$, i.e., 
$$ d_{\omega}(x,y)=\inf_{\gamma:x\to y} \sum_{e\in \gamma} \omega(e).$$
\item Geodesics-- $[x,y]$ shall denote a word geodesic between $x,y\in \Gamma$, and $[x,y]_{\omega}$ (called the $\omega$-geodesic between $x$ and $y$) shall denote the geodesic between $x,y$ in the $d_{\omega}$ metric. 
\end{itemize}

{Notice that we have chosen not to formally define several standard objects in hyperbolic geometry, e.g., the Gromov boundary and the Patterson-Sullivan measure. We refer the reader to \cite[Section 2.2]{BM19} where relevant definitions and basic properties  are are collected.}

Throughout the paper we shall work with the following assumption on the passage time distribution $\rho$. 

\begin{assume}\label{assume-subexp}
Assume that $\rho$ does not have any atoms and has exponential tails, i.e.\ for some $a > 0$, $\int_0^\infty e^{ax} d\rho(x) $ is finite. 
\end{assume}

It is not hard to see that the non-atomic nature of $\rho$ in Assumption \ref{assume-subexp} implies that for a.e. $\omega$ there is a unique geodesic $[x,y]_{\omega}$ between each pair $x,y\in \Gamma$. Next we recall the definitions of semi-infinite geodesics (also known as geodesic rays), bigeodesics, and their directions.  

\begin{defn}\label{def-fppray} 
	For  $\om \in \Omega$, a semi-infinite (resp.\ bi-infinite) path $\sigma_\omega$ is said to be an $\om-$geodesic ray (resp.\ a bi-infinite
		$\om-$geodesic, or simply a bigeodesic) if every finite subpath of   $\sigma_\omega$ is an $\om-$geodesic.
	
	A path $\sigma$ is said to  accumulate on $\xi \in \pG$ if there exist $v_n \in  \sigma$ such that $v_n \to \xi$ as $n \to \infty$. 
		An $\om-$geodesic ray $\sigma_\omega$ accumulating on $\xi\in \pG$  has direction $\xi$ if its only accumulation point  in $\pG$ is $\xi$.
\end{defn}

We next recall some relevant results from \cite{BM19}.

\begin{rmk}
	Although the hypothesis on the passage time distribution in \cite{BM19} was somewhat stronger than Assumption \ref{assume-subexp}, all the results we need from \cite{BM19} in this paper (in particular, Theorems~\ref{bmdirexists} and \ref{bmthmcoalesce} below) hold under the weaker hypothesis in Assumption \ref{assume-subexp}; see \cite[Remark 4.5]{BM19}.
\end{rmk}

The following theorem says that almost surely all $\omega$-geodesic rays have a direction and that
every $\xi\in \pG$ is the direction of an $\omega$-geodesic ray. 

\begin{theorem}\cite[Theorems 6.6, 6.7]{BM19}
	\label{bmdirexists}
	Consider FPP on $\Gamma$ under Assumption \ref{assume-subexp}. For $o\in \Gamma$, and for $\P$- a.e. $\omega \in \Omega$,  $\om$-geodesic rays starting at $o$ have a direction $\xi\in \pG$.

	Fix $\xi \in \partial G$ and $x_{n}\in \Ga$ such that $x_{n}\to \xi$. For $\P$-a.e.\ $\om \in \Omega$ the sequence of $\om-$geodesics $[o,x_n]_\om$ from $o$ to $x_n$ converges (up to extracting a subsequence) to an $\om-$geodesic ray $[o,\xi)_\om$ having direction $\xi$.
\end{theorem}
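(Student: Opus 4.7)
The plan is to reduce both conclusions to classical stability results for quasi-geodesics in $\delta$-hyperbolic spaces via a metric comparison between $d_\omega$ and the word metric $d_\Gamma$. Specifically, I would first establish that, $\P$-almost surely, there exist constants $0 < c < C < \infty$ and $N < \infty$ (depending on $\omega$) such that
\[ c\, d_\Gamma(x,y) \;\leq\; d_\omega(x,y) \;\leq\; C\, d_\Gamma(x,y) \qquad \text{whenever } d_\Gamma(x,y) \geq N. \]
For the upper bound, $\sum_{e \in [x,y]} \omega(e)$ is a sum of $n = d_\Gamma(x,y)$ i.i.d.\ variables with exponential tails, so Bernstein-type estimates give $\P[\sum_e \omega(e) \geq Cn] \leq \exp(-c_1(C) n)$ with $c_1(C)\to\infty$ as $C\to\infty$. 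Since at most $D^n$ vertices lie at word-distance $n$ from a fixed base point, a Borel--Cantelli argument yields the bound for $C$ sufficiently large. For the lower bound, the no-atom hypothesis on $\rho$ implies that for small $c>0$, any specific path of length $k$ has $\omega$-weight $\geq ck$ except with probability $\leq \exp(-c_2(c) k)$, with $c_2(c)\to\infty$ as $c\to 0$; a union bound over the $\leq D^k$ paths of length $k$ and Borel--Cantelli give the bound. Translation-invariance of $\P$ and a countable union over base points upgrade these to hold simultaneously over all $x\in\Gamma$.

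These bounds imply that every $\omega$-geodesic $[x,y]_\omega$, viewed as a path in $(\Gamma, d_\Gamma)$, is a quasi-geodesic with constants depending only on $c, C, N$: the word-length of $[x,y]_\omega$ is $O(d_\Gamma(x,y))$, since longer paths would have $\omega$-weight exceeding $d_\omega(x,y)\leq C d_\Gamma(x,y)$, and the quasi-geodesic inequality for distant pairs on $[x,y]_\omega$ then follows from the metric comparison applied to those pairs. The Morse lemma therefore provides $R = R(c, C, \delta)$ such that $[x,y]_\omega$ lies within $d_\Gamma$-Hausdorff distance $R$ of the word geodesic $[x,y]$. For the first conclusion, let $\sigma_\omega$ be an $\omega$-ray from $o$ and choose vertices $v_k \in \sigma_\omega$ with $d_\omega(o, v_k) \to \infty$; by the lower bound $d_\Gamma(o, v_k)\to\infty$, so $(v_k)$ has a boundary accumulation point $\xi$. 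If there were another, $\xi'\neq\xi$, one picks $u_n, w_n \in \sigma_\omega$ with $u_n$ preceding $w_n$ and $u_n\to\xi$, $w_n\to\xi'$. The subsegment $\gamma_n$ of $\sigma_\omega$ from $u_n$ to $w_n$ is an $\omega$-geodesic, hence stays within Hausdorff distance $R$ of $[u_n, w_n]$. But $\xi\neq\xi'$ keeps the Gromov product $(u_n, w_n)_o$ bounded, so $[u_n, w_n]$ passes within bounded $d_\Gamma$-distance of $o$; meanwhile every vertex of $\gamma_n$ has $\omega$-distance $\geq d_\omega(o, u_n)\to\infty$ from $o$, and therefore $d_\Gamma$-distance $\to\infty$ from $o$. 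This contradiction establishes uniqueness of the accumulation point.

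For the second conclusion, local finiteness of $\Gamma$ allows a diagonal extraction of a subsequence of $[o, x_n]_\omega$ converging edge-by-edge to an infinite path $[o, \xi)_\omega$, which is an $\omega$-ray because every finite initial segment coincides with an initial segment of some $[o, x_{n_k}]_\omega$. By the Morse-lemma step, $[o, x_{n_k}]_\omega$ lies within $d_\Gamma$-Hausdorff distance $R$ of $[o, x_{n_k}]$; the word geodesics $[o, x_{n_k}]$ in turn subconverge to a word ray $[o, \xi)$, so $[o, \xi)_\omega$ lies at $d_\Gamma$-distance $\leq R$ from $[o, \xi)$ and has direction $\xi$. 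The main technical obstacle is the metric comparison step: upgrading pointwise a.s.\ concentration into a simultaneous almost-sure bound over the uncountable family of pairs, using the exponential growth bound $|B_n(o)| \leq D^n$ to absorb the union bound and translation-invariance to cover arbitrary base points. Once this is in hand, the remainder of the argument is a standard application of hyperbolic geometry.
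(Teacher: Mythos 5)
Your reduction to the Morse lemma is a natural instinct, and parts of your argument are sound: the upper and lower metric comparisons and the edge-count control at a \emph{fixed} base point $o$ do follow from Chernoff bounds, a union bound over the $\leq D^k$ vertices/paths at distance $k$ from $o$, and Borel--Cantelli, exactly as you say. But there is a genuine gap in the step ``Translation-invariance of $\P$ and a countable union over base points upgrade these to hold simultaneously over all $x\in\Gamma$.'' What the countable union gives you is: a.s., for \emph{every} $g\in\Gamma$ there exists $N_g(\omega)<\infty$ such that the bounds hold at base point $g$ for pairs at distance $\geq N_g(\omega)$. It does \emph{not} give $\sup_g N_g(\omega) < \infty$, and there is no reason for this sup to be finite --- the bad constants $N_g(\omega)$ are governed by fresh randomness as $g$ moves away from $o$, so a naive union bound of the form $D^n\cdot e^{-c_2 k}$ over $g\in B_n(o)$ and scales $k$ fails to be summable unless $k$ is tied to $n$. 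Consequently, the quasi-geodesicity constants for the subsegments $\gamma_n = [u_n,w_n]_\omega$ (whose endpoints escape to infinity) are not controlled, and the Morse lemma step --- the linchpin of your contradiction argument --- does not go through. This is precisely the difficulty the statement is nontrivial for.

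The paper (here quoting \cite{BM19}, and re-deriving a quantitative version in Section 2 via Lemma~\ref{lem-effbt} and Proposition~\ref{prop-effbt}) avoids a global bi-Lipschitz comparison entirely. Instead it proves directly the targeted statement you actually need: if the \emph{word} geodesic $[u,v]$ passes through a $C$-neighborhood of $o$, then the $\omega$-geodesic $[u,v]_\omega$ passes through an $R(\omega)$-neighborhood of $o$, with $R(\omega)$ having exponential tails. The key point making the union bound tractable is that the pairs $(u,v)$ involved are constrained (their word geodesic projects near $o$), and the probabilistic cost of an $\omega$-geodesic detour of size $R$ is controlled by the \emph{exponential inefficiency} of paths traveling outside an $\epsilon R$-neighborhood of a word geodesic --- such paths have word-length at least $e^{\alpha\epsilon R}$ times the endpoint separation, which makes them astronomically unlikely to be $\omega$-geodesics. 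This gives a much stronger bound than any naive Chernoff-plus-union-bound argument at $o$ alone, and is what lets one absorb the union over positions of $w$ and the attachment points $u_1,v_1$ along $[u,v]$. If you want to repair your argument you would need to prove a statement of this type (a ``projection'' estimate depending only on the position of $[u,v]$ relative to $o$, not on global bi-Lipschitzness), rather than a uniform metric comparison.
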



We also need to recall the following result about coalescence. For FPP on $\Ga$, semi-infinite geodesics in a fixed direction almost surely coalesce, i.e.\ the set of edges in the symmetric difference of the two geodesic rays outside a sufficiently large ball centered at the identity element is empty.

\begin{theorem}\cite[Lemma 7.8, Theorem 7.9]{BM19}\label{bmthmcoalesce} 
	Consider FPP on $\Gamma$ under Assumption \ref{assume-subexp}. Given $\xi\in \partial G$, there exists a full measure subset $\Omega_\xi \subset \Omega$ such that for all  $\omega\in \Omega_\xi$, and each $o\in \Ga$,  there exists a unique $\omega$-geodesic ray (denoted $[o,\xi)_{\omega}$) starting from $o$ in direction $\xi$.
	
Further, given any direction $\xi$,	for  all  $\omega\in \Omega_\xi$, and any $o_1, o_2 \in G$
the  $\om-$geodesic rays $[o_1,\xi)_\om$ and $[o_2,\xi)_\om$ a.s.\ coalesce.
\end{theorem}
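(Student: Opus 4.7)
My plan is to prove coalescence first and deduce uniqueness as a consequence. The guiding geometric idea is that in a $\delta$-hyperbolic space, any two geodesic rays from vertices $o_1, o_2$ that converge to the same $\xi \in \pG$ must fellow-travel in the word metric: their Hausdorff distance is bounded by a constant depending only on $\delta$ and $d(o_1, o_2)$. Given this, an $\omega$-geodesic ray in direction $\xi$ is essentially confined to a quasi-one-dimensional strip around the corresponding word-geodesic ray, which is the source of the rigidity I would exploit.

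The first step is to promote the fellow-traveling property to $\omega$-rays. Using Assumption \ref{assume-subexp}, I would show that $d_\omega$ is quasi-isometric to the word metric on $\Gamma$, so that any $\omega$-geodesic lies within a bounded word-neighborhood of a word geodesic with the same endpoints. Combined with $\delta$-thin triangles and Theorem \ref{bmdirexists}, this yields a structural lemma: $\P$-a.s., any two $\omega$-geodesic rays $\sigma_1, \sigma_2$ from $o_1, o_2$ in direction $\xi$ eventually lie in a fixed word-metric $R$-neighborhood of a common word-geodesic ray $[o, \xi)$, where $R$ depends only on $\delta$ and $d(o_1, o_2)$.

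Having confined the rays to a strip, I would then prove coalescence with positive probability via a finite-energy edge-weight modification argument: choose a word-ball $B$ far along the strip that both $\sigma_1, \sigma_2$ must cross, and modify the weights inside $B$ to create a single cheap bottleneck edge $e$ that both rays traverse. By non-atomicity of $\rho$ and uniqueness of finite $\omega$-geodesics, the two rays coincide beyond $e$; by continuity of $\rho$, the modified configuration has positive probability. To promote this to probability $1$, I would invoke a zero-one argument based on the tail triviality of $\P$---since both rays eventually leave every finite ball around $o_1, o_2$, the coalescence event is asymptotically determined by edge weights in the complement of arbitrarily large balls. Uniqueness of $[o, \xi)_\omega$ then follows by contradiction: two distinct $\omega$-rays from $o$ would disagree at a first edge, but coalescence applied to the tails at the two distinct next vertices $v_1, v_2$ forces their suffixes to agree, contradicting the uniqueness of finite $\omega$-geodesics through $[o, v_j]$.

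The main obstacle will be the structural lemma in the first step: controlling transverse fluctuations of $\omega$-geodesics relative to word geodesics. The quasi-isometry statement is routine, but ensuring that $\omega$-rays in direction $\xi$ do not eventually escape a bounded-width strip around $[o, \xi)$ requires sharp large-deviation estimates that use the exponential tail hypothesis in Assumption \ref{assume-subexp}. Without this confinement, one cannot localize where both rays must cross and the modification argument collapses; establishing this fellow-travel property is the principal probabilistic input beyond the geometric facts about hyperbolic groups.
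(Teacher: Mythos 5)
This statement is cited verbatim from \cite{BM19} (Lemma 7.8, Theorem 7.9); the present paper does not prove it, so there is no internal proof to compare against. Evaluated on its own merits, and against what the paper reveals about the actual argument in \cite{BM19} (see the use of \cite[Proposition 7.10]{BM19} in the proofs of Lemma~\ref{l:wandering} and Lemma~\ref{l:finite}), your proposal contains a genuine gap in the step that upgrades positive probability to probability one.

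You claim "the coalescence event is asymptotically determined by edge weights in the complement of arbitrarily large balls" and invoke tail triviality of $\P$. This is false: changing finitely many edge weights near $o_1$ or $o_2$ can re-route $[o_1,\xi)_\omega$ and $[o_2,\xi)_\omega$ globally, so the event that they coalesce is not a tail event and Kolmogorov's zero-one law does not apply. Nor can you substitute ergodicity of the $G$-action for a fixed $\xi$, since translating by $g\in G$ moves $\xi$ to $g\xi$, and generic $\xi$ is not fixed by any infinite-order element. The mechanism that actually closes the argument in \cite{BM19} --- and which is used repeatedly in Section~\ref{sec-exceptional-coalesce} of this paper --- is a Borel--Cantelli (second) argument along a chain of nested slabs bounded by hyperplanes $\mathbb{H}_i$ perpendicular to the word geodesic $[o,\xi)$: one builds independent local events $\mathscr{H}_{2i}$, each of probability at least $p>0$ uniformly in $i$, such that on $\mathscr{H}_i$ every pair of geodesics crossing the slab between $\mathbb{H}_i$ and $\mathbb{H}_{i+1}$ shares an edge. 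Almost surely infinitely many $\mathscr{H}_{2i}$ occur, and coalescence follows from a.s.\ uniqueness of finite $\omega$-geodesics. Your local modification idea is a plausible way to build a single such event, but independence across a sequence of slabs, not a zero-one law, is what finishes the proof.

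A secondary error: the assertion that Assumption~\ref{assume-subexp} makes $d_\omega$ deterministically quasi-isometric to the word metric is false. Since $\rho$ may place mass arbitrarily close to $0$, there is no pathwise lower bound $d_\omega \gtrsim d$. What is true, and what \cite[Lemma 5.14]{BM19} together with the argument of Lemma~\ref{lem-effbt} provide, is a probabilistic statement: $\P\big(\T(x,y) \leq \kappa\, d(x,y)\big)$ decays exponentially in $d(x,y)$ for $\kappa$ small, which, combined with exponential inefficiency of detours in hyperbolic spaces, confines $\omega$-geodesics to a random but a.s.\ finite neighborhood of word geodesics. You correctly flag the confinement as the main obstacle, but characterizing the quasi-isometry step as "routine" misstates what is available --- the starting deterministic statement you rely on is simply not there.
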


\begin{rmk}\label{rmk-clarfncoalesce}
Note that the quantification in  Theorem~\ref{bmthmcoalesce} above says that the full measure subset $\Omega_\xi$ depends on the point $\xi$ but not on $o_1, o_2 \in G$.
\end{rmk}

Next, we formally define the forward tree and the backward tree. 

\begin{defn}\label{def-fwdtree}
Consider FPP on $\Gamma$ with passage times satisfying that $\rho$ has no atoms. For $\P$-a.e. $\omega\in \Omega$, the union of random geodesics $$F(1,\omega):=\bigcup_{g \in G} [1,g]_\omega $$ is a tree, which we refer to as the  \emph{forward random geodesic tree}, or simply the forward tree,  of the FPP on $\Gamma$ with base point $1$. Forward random tree with base point $g\in G$ is defined similarly.  
\end{defn}

Observe that each semi-infinite $\omega$-geodesic ray starting at $1$ is contained in $F(1,\omega)$. 
To define the backward tree, notice that as an immediate consequence of Theorem~\ref{bmthmcoalesce}, we have the following.

\begin{cor}\label{cor-btree} 
		Given $\xi\in \partial G$, there exists a full measure subset $\Omega_\xi \subset \Omega$ such that for all  $\omega\in \Omega_\xi$,
$T(\xi,\omega) = \bigcup_{g \in G} [g,\xi)_\om$ is a tree.
\end{cor}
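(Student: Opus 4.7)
The plan is to take $\Omega_\xi$ to be the full-measure event furnished by Theorem~\ref{bmthmcoalesce} itself, and to verify on this event that $T(\xi,\omega)$ is both connected and acyclic. The key preliminary remark is that for $\omega\in\Omega_\xi$ and any vertex $v\in T(\xi,\omega)$, the ray $[v,\xi)_\omega$ is itself contained in $T(\xi,\omega)$: by construction $v$ lies on some $[g,\xi)_\omega$, and since $v$ is again an element of $G$, the uniqueness clause of Theorem~\ref{bmthmcoalesce} identifies the tail of $[g,\xi)_\omega$ based at $v$ with $[v,\xi)_\omega$. Connectedness then follows immediately from coalescence: for any two vertices $u,v$ of $T(\xi,\omega)$, the rays $[u,\xi)_\omega$ and $[v,\xi)_\omega$ both lie in $T(\xi,\omega)$ and eventually agree, yielding a finite path from $u$ to $v$ inside $T(\xi,\omega)$.

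For acyclicity, I would encode the structure via the ``parent map'' $\pi$ sending each $v\in T(\xi,\omega)$ to the vertex adjacent to $v$ on $[v,\xi)_\omega$. The observation above first shows that every edge of $T(\xi,\omega)$ is of the form $\{v,\pi(v)\}$: any edge $e\in T(\xi,\omega)$ lies on some $[g,\xi)_\omega$, and if $v$ denotes the endpoint of $e$ encountered first along that ray, then uniqueness at $v$ forces $e$ to be the initial edge of $[v,\xi)_\omega$. Orienting each such edge from $v$ to $\pi(v)$, every vertex has out-degree at most $1$. On a hypothetical undirected simple cycle $w_0w_1\cdots w_{n-1}w_0$ the oriented edges total $n$, so each $w_i$ must have out-degree exactly $1$ on the cycle; iterating $\pi$ around the cycle then produces a closed orbit. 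But iterating $\pi$ from any vertex $v$ traces out the successive vertices of the geodesic ray $[v,\xi)_\omega$, which are pairwise distinct, giving the desired contradiction.

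The main subtlety worth watching, although it is built into the hypothesis, is that a single full-measure event $\Omega_\xi$ must simultaneously handle uniqueness at \emph{every} vertex $v$ met by every ray $[g,\xi)_\omega$ and coalescence for \emph{every} pair $g_1,g_2\in G$. Since $G$ is countable and, as Remark~\ref{rmk-clarfncoalesce} records, the full-measure subset produced by Theorem~\ref{bmthmcoalesce} is independent of the choice of base point, this is exactly what that theorem already asserts, so the corollary is indeed immediate.
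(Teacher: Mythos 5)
Your proof is correct, and it takes a genuinely different route from the paper's. The paper's proof first passes to a further full-measure subset on which there are no finite $\omega$-geodesic bigons, then argues that for each pair $g_1',g_2'$ the union $[g_1',\xi)_\omega\cup[g_2',\xi)_\omega$ is a tripod, and finally asserts (rather tersely) that taking the union over all countably many pairs yields a tree. Your argument bypasses the no-bigons event entirely: the only ingredient you use is the a.s.\ uniqueness of the geodesic ray $[v,\xi)_\omega$ at every $v$, which by Remark~\ref{rmk-clarfncoalesce} is already built into the event $\Omega_\xi$ from Theorem~\ref{bmthmcoalesce}. The parent-map/out-degree argument you give for acyclicity is both more economical in hypotheses and more rigorous than the paper's ``taking a union over all $g\in G$'' step, which does not explicitly explain why pairwise-tripod implies globally acyclic. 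What the paper's route buys is brevity and a picture (the tripod structure) that is reused informally elsewhere; what your route buys is a self-contained, verifiable acyclicity argument that relies only on the structure asserted in Theorem~\ref{bmthmcoalesce}. One minor point worth flagging in a polished write-up: when you say the tail of $[g,\xi)_\omega$ based at $v$ ``is'' $[v,\xi)_\omega$, you should note that this tail is a ray starting at $v$ whose only accumulation point is still $\xi$, so the uniqueness clause of Theorem~\ref{bmthmcoalesce} applies to it — you clearly have this in mind, but it is the hinge of the whole argument and deserves a sentence.
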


\begin{proof}
Since the existence of more than one $\omega-$geodesic between $g_1', g_2'\in G$ is a 
probability zero event, we can assume, by passing to a full measure subset
 $\Omega'$ of $\Omega$ if necessary that  $[g_1', \xi)_\om \cup [g_2',\xi)_\om$ contains no loops. Theorem~\ref{bmthmcoalesce} now guarantees that $[g_1',\xi)_\om \cup [g_2',\xi)_\om$ is a tree, in fact a tripod. 

Taking a union over all $g \in G$ and repeating the same argument
for all (countably many) pairs $g_1', g_2'$, we observe that $T(\xi,\omega) = \bigcup_{g \in G} [g,\xi)_\om$ is a tree.
\end{proof}

\begin{defn}\label{def-btree}
We shall refer to $T(\xi,\omega)$ as the \emph{backward random geodesic tree} , or simply the backward tree, from $\xi$ for
$\om \in \Omega_\xi$. 
\end{defn}

We are now ready to state our main results. From this point onward, all our results will be under Assumption \ref{assume-subexp}, this will not be mentioned explicitly henceforth. 

\medskip
\noindent
\textbf{Ends of trees and bigeodesics.} Our first result strengthens Theorem \ref{bmdirexists} by showing that on a full measure subset, a (not necessarily unique) semi-infinite geodesic $[1,\xi)_{\omega}$ exists simultaneously for all $\xi\in \pG$. (See Theorem~\ref{prop-ftree}.)

\begin{theorem}
\label{thm-ftree-intro}
There exists $\Omega_0\subset \Omega$ with $\P(\Omega_0)=1$ such that for all $\omega\in \Omega_0$ there exists a semi-infinite $\omega$-geodesic $[1,\xi)_{\omega}$ starting from $1$ in direction $\xi$ for all $\xi\in \pG$. 
\end{theorem}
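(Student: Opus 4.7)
The plan is to upgrade Theorem~\ref{bmdirexists}---which gives, for each fixed $\xi$, a ray in direction $\xi$ on a $\xi$-dependent full-measure event---to a single full-measure event on which such rays exist for every $\xi \in \pG$ simultaneously. The idea is to combine a diagonal compactness extraction in the locally finite graph $\Gamma$ with the almost-sure word-quasi-geodesicity of $\omega$-geodesics, which is a standard consequence of the concentration estimates underlying \cite{benjamini-tessera, BM19} together with Assumption~\ref{assume-subexp}.

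First I would fix a full-measure event $\Omega_0 \subset \Omega$ on which: (i) $[x,y]_\omega$ is unique for every pair $x,y \in G$ (from the non-atomic hypothesis in Assumption~\ref{assume-subexp}); (ii) every $\omega$-geodesic ray starting at $1$ has a direction in $\pG$ (the first half of Theorem~\ref{bmdirexists}); and (iii) there exist constants $K_\omega, C_\omega < \infty$ such that every $\omega$-geodesic between two vertices is a $(K_\omega, C_\omega)$-quasi-geodesic in the word metric. By the Morse lemma in the $\delta$-hyperbolic graph $\Gamma$, property~(iii) implies that any $\omega$-geodesic between two vertices stays within word distance $R = R(\delta, K_\omega, C_\omega)$ of the corresponding word-geodesic segment.

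Fix $\omega \in \Omega_0$ and an arbitrary $\xi \in \pG$. Pick vertices $x_n \in G$ with $x_n \to \xi$. Each $[1,x_n]_\omega$ is a simple path starting at $1$ in the locally finite graph $\Gamma$, so a standard diagonal extraction yields a subsequence along which these $\omega$-geodesics converge vertex-by-vertex to a semi-infinite simple path $r = (v_0 = 1, v_1, v_2, \ldots)$. By (i), each finite initial segment of $r$ is an $\omega$-geodesic (as the pointwise limit of $\omega$-geodesics with fixed endpoints), so $r$ is a semi-infinite $\omega$-geodesic ray from $1$. To identify its direction, extract a further subsequence so that the word geodesics $[1,x_n]$ converge to a word-geodesic ray $\gamma_\xi$ with direction $\xi$ (a standard consequence of the thin-triangle property). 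By the Morse-lemma bound from (iii), $[1,x_n]_\omega$ lies within word distance $R$ of $[1,x_n]$; passing to the limit, $r$ lies within word distance $R$ of $\gamma_\xi$, forcing the direction of $r$---which exists by (ii)---to equal $\xi$. Setting $[1,\xi)_\omega := r$ yields the desired ray.

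The main obstacle is (iii): producing, on a single full-measure event, common quasi-geodesic constants valid for all pairs of vertices simultaneously. The pointwise version follows easily from concentration for the additive functional $d_\omega$ under the exponential-tail hypothesis, but the uniform-in-pair version calls for a Borel--Cantelli argument balancing the exponential growth of balls in $\Gamma$ against exponentially sharp concentration for $d_\omega$ on each pair; this is the technical heart of the proof.
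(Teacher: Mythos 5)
Your overall strategy --- diagonal compactness extraction of a subsequential limit of $\omega$-geodesics $[1,x_n]_\omega$ and identification of its direction by comparison with word geodesics --- matches the paper's (compare Lemma~\ref{lem-backgeo} and Theorem~\ref{prop-ftree}). Steps (i) and (ii) are fine, but step (iii), which you flag as the technical heart, is not merely hard: it is false in general under Assumption~\ref{assume-subexp}. Take $\rho$ with support reaching both $0$ and $\infty$ (e.g.\ exponential, which satisfies the assumption). For adjacent $u\sim v$ joined by an edge $e$, and any $M$, there is a positive-probability local configuration (large $\omega(e)$ together with a cheap detour of word length at least $M$) forcing $[u,v]_\omega$ to have more than $M$ edges. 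Such configurations at far-separated locations of $\Gamma$ are independent, so Borel--Cantelli~II gives $\sup_{d(u,v)=1} |[u,v]_\omega| = \infty$ almost surely; no pair $(K_\omega, C_\omega)$ of quasi-geodesicity constants can be valid over all pairs. The union-bound repair you sketch fails for exactly this reason: for pairs at a fixed word distance the failure probability does not decay in the pair's position, yet there are infinitely many such pairs, so the exponential growth of balls cannot be beaten by pairwise concentration alone.

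The paper proves something strictly weaker, and provable. On an a.s.\ event (Lemma~\ref{lem-intnAn}), for any prescribed $\ep > 0$, the deviation of $[1,x_n]_\omega$ from the word geodesic at a point projecting near $v \in \partial B_n(1)$ is at most $\ep n$ for all large $n$ --- a \emph{linear-in-radius} tracking estimate, not the \emph{bounded} Hausdorff-distance tracking your (iii) would buy via Morse. Linear tracking confines the limiting ray $r$ to the cone $\CC(\ep,\xi) = \bigcup_{i \geq N} B_{\ep i}(v_i)$, which has unique boundary limit $\xi$ (Claim~\ref{claim-cone}); that gives directionality. The probabilistic input is Lemma~\ref{lem-effbt}: the probability of $\omega$-geodesic deviation of size $R$ near a fixed point has tail $e^{-cR}$, so taking $R = \ep d(1,v)$ for $v \in \partial B_n(1)$ gives failure probability $e^{-c\ep n}$, which beats the $D^n$ cardinality of the sphere --- and crucially the Borel--Cantelli sum is over $n$-spheres centered at $1$, not over all pairs. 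If you replace step~(iii) with this linear-cone estimate, your argument becomes the paper's proof.
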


A forward tree $F(1,\omega)\subset \Gamma$  is said to have \emph{complete ends} if  for all $\xi\in \pG$ there exists a semi-infinite  geodesic
$[1,\xi)_F \subset F$ such that $[1,\xi)_F$ accumulates precisely on $\xi$. Thus, Theorem~\ref{thm-ftree-intro} says that almost surely, $F(1,\omega)$ has 
complete ends.
We state next the corresponding result for the backward trees (see Theorem~\ref{thm-btree}).

\begin{theorem}\label{thm-btree-intro} For all $\xi \in \partial G$, 
there exists a full measure subset $\Omega_\xi\subset \Omega$ such that for $\om \in \Omega_\xi$,  the backward tree
$T(\xi,\omega)$ has complete ends, i.e.\ for all $\xi'\neq \xi$, there exists $p \in [1,\xi)_\om$ and a semi-infinite geodesic $(\xi',p]_\om$ such that $(\xi',p]_\om \subset T(\xi,\omega)$.
\end{theorem}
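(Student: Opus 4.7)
My plan is to fix $\xi \in \partial G$ and work on the full measure subset $\Omega_\xi$ from Corollary~\ref{cor-btree}, intersected with a further full measure subset on which $d_\omega$ is bi-Lipschitz equivalent to the word metric $d$ (with random finite constants; this second reduction is a standard consequence of Assumption~\ref{assume-subexp} via subadditivity of $d_\omega$). This guarantees that for $\omega \in \Omega_\xi$, every $\omega$-geodesic is a word-metric $(K,C)$-quasi-geodesic for random constants $K=K(\omega)$, $C=C(\omega)$, and hence, by $\delta$-hyperbolicity of $\Gamma$, satisfies a Morse stability estimate with constant $M=M(K,C,\delta)$ against word geodesics sharing its endpoints.

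Given such an $\omega$ and any $\xi' \in \partial G \setminus \{\xi\}$, I would pick a sequence $g_n \in G$ converging to $\xi'$ and consider the $\omega$-geodesic rays $\sigma_n := [g_n,\xi)_\omega$. Theorem~\ref{bmthmcoalesce} supplies a coalescence vertex $c_n$ where $\sigma_n$ meets the spine $\tau := [1,\xi)_\omega$, so the initial segment $\tilde\sigma_n := [g_n,c_n]_\omega$ lies in $T(\xi,\omega)$. The crucial step is to show that the sequence $(c_n)$ stays in a bounded ball $B_R(1)$. The Morse estimate places $\sigma_n$ within Hausdorff distance $M$ of a word geodesic $\alpha_n$ from $g_n$ to $\xi$, and $\tau$ within $M$ of a word geodesic ray $\beta$ from $1$ to $\xi$; by standard $\delta$-hyperbolic geometry, $\alpha_n$ and $\beta$ share a common subray at a vertex within $O(\delta)$ of the Gromov product $(g_n \mid \xi)_1$. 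Since $\xi' \neq \xi$, this Gromov product converges to the finite value $(\xi' \mid \xi)_1$, so the joining vertex stays uniformly bounded in $n$; $c_n$ then lies within $2M + O(\delta)$ of it, giving $d(1,c_n)\le R$ for some $R=R(\omega,\xi,\xi')<\infty$.

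Once $(c_n)$ is bounded, I would pass to a subsequence on which $c_n = c$ is a fixed vertex of $\tau$ (using finiteness of $\tau \cap B_R(1)$). The segments $[c,g_n]_\omega \subset T(\xi,\omega)$ are then $\omega$-geodesics of diverging length based at $c$; local finiteness of $\Gamma$ together with a diagonal extraction yields a pointwise subsequential limit $\gamma$, which is a semi-infinite $\omega$-geodesic from $c$ contained in $T(\xi,\omega)$, since each finite initial subpath of $\gamma$ coincides with that of $[c,g_n]_\omega$ for all large $n$. To identify the direction of $\gamma$, note that $\gamma$ is itself a $(K,C)$-quasi-geodesic in the word metric and so has a unique boundary accumulation point; comparing $\gamma$ with the word geodesics $[c,g_n]$, which converge on compact initial segments to a word geodesic ray from $c$ to $\xi'$, via the Morse estimate forces this accumulation point to be $\xi'$. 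Setting $p := c$ and $(\xi',p]_\omega := \gamma$ finishes the proof.

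The hard part will be producing the uniform-in-$n$ quasi-geodesic constants used above, since the Morse constant $M$ that bounds $(c_n)$ must not depend on $n$. This is precisely where the probabilistic hypothesis enters quantitatively; once the almost sure bi-Lipschitz comparison between $d_\omega$ and $d$ with $\omega$-dependent but $n$-independent constants is in hand, everything else in the argument reduces to classical $\delta$-hyperbolic geometry together with a compactness/diagonal extraction.
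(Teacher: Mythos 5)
Your reduction breaks at the first step: under Assumption~\ref{assume-subexp} the metric $d_\omega$ is \emph{not} almost surely bi-Lipschitz equivalent to the word metric $d$ with constants uniform over all pairs $(x,y)$, and so $\omega$-geodesics are not $(K(\omega),C(\omega))$-quasi-geodesics in the word metric. The hypothesis requires only that $\rho$ be atomless with exponential tails; it is permitted to have support accumulating at $0$ (and/or extending to $\infty$). Fix a small $\kappa>0$ and a local edge pattern whose presence forces a detour of, say, $100$ edges between two adjacent vertices; by translation-invariance of the law and independence of disjoint edge-blocks, this pattern recurs almost surely infinitely often in $\Gamma$, and likewise for detours of $1000$ edges, etc. Hence $\sup_{x,y}\,(\text{number of edges in }[x,y]_\omega)/\max\{1,d(x,y)\} = \infty$ a.s., and no single additive constant $C(\omega)$ closes the gap. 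Subadditivity of $d_\omega$ controls only the upper side of the comparison, and even that fails to be uniform over all base points for the same reason. Consequently the Morse constant $M$ you invoke does not exist, and the uniform-in-$n$ estimate placing $\sigma_n=[g_n,\xi)_\omega$ within bounded Hausdorff distance of a word geodesic cannot be obtained this way. This is exactly the uniformity issue you acknowledge in your final paragraph, but the proposed fix does not deliver it.

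The paper replaces the global Morse property by a much weaker, base-point-local statement (Lemma~\ref{lem-effbt} and Proposition~\ref{prop-effbt}): for fixed $o$ and $C$, with probability at least $1-e^{-cR}$, \emph{every} $\omega$-geodesic $[u,v]_\omega$ whose word geodesic $[u,v]$ enters $N_C(o)$ must itself enter $N_R(o)$. This gives an a.s.\ finite random radius $R(\omega)$ valid for all pairs simultaneously, but only relative to the chosen base point. It is proved by comparing the FPP-length of a hypothetical long detour against a deterministic geodesic segment, using exponential inefficiency of paths far from $[u,v]$, plus a union bound at each scale. For directional control the paper does not claim bounded Hausdorff distance either; Lemma~\ref{lem-backgeo} only shows $\omega$-geodesics stay inside a cone of \emph{linearly} growing width about the word geodesic, which is already enough to pin down the unique accumulation point. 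These two ingredients give Theorem~\ref{thm-bi} (bi-infinite $\omega$-geodesics between every pair of boundary points), from which Theorem~\ref{thm-btree-intro} follows at once. Your overall architecture---choose $g_n\to\xi'$, bound the coalescence points $c_n$, extract a limit---mirrors the paper's, but the boundedness of $c_n$ has to come from this localized probabilistic estimate at the base point $1$, not from a nonexistent global quasi-geodesic constant.
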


{
	\begin{rmk}\label{rmk-btree}
		Corollary~\ref{cor-btree} and hence Definition~\ref{def-btree} uses 
		Theorem~\ref{bmthmcoalesce} to define the backward tree. The full measure set
		$\Omega_\xi$ thus depends on $\xi\in \partial G$. 
		
		This dependence is essential for backward trees and cannot be removed (at least for groups that are not virtually free). In fact, if 
		we could pass to a full measure set that is independent of $\xi$, then 
		exceptional directions in the sense of Definition~\ref{def-except} below would simply not exist, contradicting the theorems in Section~\ref{sec-direx} for backward trees.
\end{rmk}}

{
A key tool that goes into proving Theorem~\ref{thm-btree-intro} is the following result which is of independent interest (see Theorem~\ref{thm-bi}).

\begin{theorem}\label{thm-bi-intro} There exists a full measure subset $\Omega_2 \subset \Omega$ such that  for all $\om \in \Omega_2$,
	and all $\xi_1 \neq \xi_2 \in \partial G$,
	there exists a bi-infinite $\omega-$geodesic $(\xi_1$,$\xi_2)_\omega$.
\end{theorem}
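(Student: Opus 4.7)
The plan is to bootstrap from bigeodesics between fixed pairs of boundary points (each pair having one almost surely, as in prior work) to their simultaneous existence between \emph{all} pairs, the subtlety being that $\partial G \times \partial G$ is uncountable so that a naive countable intersection indexed by pairs is unavailable. First I would fix a countable dense set $D \subset \partial G$ and set
\[
\Omega_2 \;:=\; \Omega_0 \,\cap\, \bigcap_{\eta \in D} \Omega_\eta,
\]
where $\Omega_\eta$ is the full-measure set from Theorem~\ref{bmthmcoalesce} and $\Omega_0$ is the event that $\omega$-geodesics between pairs of vertices are unique. Then $\P(\Omega_2)=1$. To enable the hyperbolic-geometry arguments below, I would further intersect with a full-measure event (available from \cite{BM19} under Assumption~\ref{assume-subexp}) on which $\omega$-geodesic segments $[x,y]_\omega$ lie within a uniform bounded Hausdorff distance of the word geodesics $[x,y]$ once $d_\Gamma(x,y)$ is large enough.

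Next, for fixed $(\eta_1,\eta_2) \in D \times D$ with $\eta_1 \neq \eta_2$ and $\omega \in \Omega_2$, I would construct a bigeodesic by choosing $x_n \to \eta_1$ and $y_n \to \eta_2$ in $\Gamma$ and considering the finite $\omega$-geodesics $\sigma_n := [x_n,y_n]_\omega$. In a $\delta$-hyperbolic Cayley graph the word geodesic $[x_n,y_n]$ passes within a bounded neighborhood of $1$ (the bound depending on the pair $(\eta_1,\eta_2)$ via Gromov products), and the Morse/Hausdorff comparison above then forces each $\sigma_n$ itself to meet a fixed bounded ball around $1$. Local finiteness of $\Gamma$ lets me run a diagonal extraction along an exhaustion by balls around $1$ to obtain a subsequential limit $\sigma_\infty$. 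Being a pointwise limit of $\omega$-geodesics, $\sigma_\infty$ is itself an $\omega$-geodesic; it is bi-infinite because the $\sigma_n$ extend to infinity in two hyperbolically opposite directions; and Morse stability forces its boundary accumulation set to be exactly $\{\eta_1,\eta_2\}$, giving the bigeodesic $(\eta_1,\eta_2)_\omega$.

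Finally, to extend to arbitrary $\xi_1 \neq \xi_2 \in \partial G$, I would choose $\eta_j^{(k)} \in D$ with $\eta_j^{(k)} \to \xi_j$ and $\eta_1^{(k)} \neq \eta_2^{(k)}$, apply the previous step to get bigeodesics $\tau_k := (\eta_1^{(k)},\eta_2^{(k)})_\omega$, and extract a subsequential limit. By visual stability in $\delta$-hyperbolic spaces the word bigeodesics $(\eta_1^{(k)},\eta_2^{(k)})$ converge, on bounded regions, to a word bigeodesic $(\xi_1,\xi_2)$ which passes within bounded distance of $1$; together with the Morse input this shows that all but finitely many $\tau_k$ meet a common bounded ball around $1$. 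A second diagonal extraction produces a bi-infinite $\omega$-geodesic whose only boundary accumulation points are $\xi_1$ and $\xi_2$. The principal obstacle is the geometric comparison used in both stages: the uniform Morse-type control of $\omega$-geodesics by word geodesics, which is what makes hyperbolic arguments (Gromov products staying bounded, stability of bi-infinite geodesics, continuity of boundary endpoints) applicable to $\omega$-geodesics at all. Once this is in place, everything else is compactness in a locally finite graph together with standard $\delta$-hyperbolic facts.
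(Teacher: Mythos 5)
The argument hinges on a geometric input that is not available: there is no full-measure event on which $\omega$-geodesic segments $[x,y]_\omega$ lie within a \emph{uniform bounded Hausdorff distance} of the word geodesics $[x,y]$. FPP geodesics in this setting are not uniformly quasi-geodesics of the word metric, and no such Morse-type stability is established in \cite{BM19} (indeed, if it were, much of the technical apparatus in Section~\ref{sec-btree} would be unnecessary). What is available --- and what the paper actually uses via Lemma~\ref{lem-effbt} and Proposition~\ref{prop-effbt}, quantifying \cite[Proposition 6.3]{BM19} --- is the much weaker assertion that whenever the word geodesic $[x,y]$ passes through $N_C(o)$, the $\omega$-geodesic $[x,y]_\omega$ passes through $N_{R(\omega)}(o)$ for a random, a.s.\ finite $R(\omega)$ depending on $o$ and $C$. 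This is a statement about visiting a bounded region near a fixed center, not a global Hausdorff-distance bound. The same issue infects your control of endpoints of the limit path: you invoke ``Morse stability'' to conclude that $\sigma_\infty$ accumulates only on $\{\eta_1,\eta_2\}$, but without a uniform fellow-travelling property one needs a separate argument; the paper uses the cone-trapping Lemma~\ref{lem-backgeo} (built from the events $\AAA_n$ and Borel--Cantelli) for exactly this purpose.

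Beyond the flawed input, the architecture is an unnecessary detour. Once you have the correct tool --- a full-measure event $\Omega(C)$ on which $R(\omega)$ is finite uniformly over all pairs $\xi_1,\xi_2$ with $\langle \xi_1,\xi_2\rangle_1\le C$ --- the construction of the bigeodesic goes through directly for \emph{every} pair $\xi_1\ne\xi_2$, and the countable intersection is taken over $C\in\natls$ rather than over a dense set of boundary pairs. Your two-stage scheme (build bigeodesics for pairs in a countable dense set $D$, then approximate an arbitrary pair) reintroduces exactly the uniformity problem you were trying to avoid: to extract a limit of the bigeodesics $\tau_k$ you again need all of them to pass through a common bounded ball, which is precisely a uniformity-over-pairs statement of the kind Lemma~\ref{lem-effbt} provides. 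You would essentially be re-deriving the direct argument in disguise. Finally, the intersection with $\bigcap_{\eta\in D}\Omega_\eta$ (coalescence, Theorem~\ref{bmthmcoalesce}) plays no role in the construction of bigeodesics and can be dropped.
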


The main theorem
of \cite{benjamini-tessera} proves the existence of some  bi-infinite $\omega-$geodesic joining some $\xi_1 \neq \xi_2 \in \pG$ under mild assumptions on the edge distribution. In
the presence of  sub-exponentiality of  the edge distribution, 
 Theorem~\ref{thm-bi-intro} strengthens this conclusion  to the 
 existence of   bi-infinite $\omega-$geodesics joining \emph{any pair} $\xi_1 \neq \xi_2 \in \pG$.

\medskip
\noindent
\textbf{Exceptional directions.} First we formally define exceptional directions for forward and backward trees.  

 \begin{defn}\label{def-exceptionaldir} Let $G$ be hyperbolic.
 	For $\omega \in \Omega$, we say that $z \in \pG$ is an exceptional direction for the  forward random tree  $F(1,\omega)$ if there exist \emph{distinct} geodesics
 	$[1,z)^1_\omega$ and $[1,z)^2_\omega$ contained in 
 	 $F(1,\omega)$ such that $z\in \pG$ is the unique accumulation point for both $[1,z)^1_\omega$ and $[1,z)^2_\omega$.
 \end{defn}

\begin{rmk}
\label{r:altdef}
Notice that one could define exceptional directions without reference to the origin of the forward tree as simply the set of directions where coalescence fails, i.e., the set of all $\xi\in \pG$ such that (for a given $\omega\in \Omega$) there exist non-coalescing semi-infinite $\omega$-geodesics $[a,\xi)_{\omega}$ and $[b,\xi)_{\omega}$ with direction $\xi$. Although this is a priori a weaker definition compared to Definition \ref{def-exceptionaldir} above, Theorems~\ref{t:exceptional} and \ref{t:finite} (i.e.\ items (5), (6) of Theorem~\ref{thm-omni-intro} below) remain valid for this definition as well; see Remark \ref{r:altdef2}. Also, the lower bounds (items (1), (2), (4)) in Theorem~\ref{thm-omni-intro}) go through purely formally as this definition is weaker.
\end{rmk}

\begin{defn}\label{def-except}
For the backward random tree
$T(\xi,\omega)$, we say that 
$z\in \pG$ is an \emph{exceptional direction} if $T(\xi,\omega)$ contains \emph{disjoint} $\omega-$geodesic rays $[a,z)_\omega$ and 
$[b,z)_\omega$, both converging to $z \in \pG$. 
\end{defn}

Clearly, if $G$ is free, and $\Gamma$ is a standard Cayley graph, exceptional directions do not exist.

 \begin{defn}\label{def-mult}
 If $\xi'$ is an exceptional direction for either the  forward or the backward random tree, the cardinality of a maximal collection of disjoint $\omega-$geodesics converging to $\xi'$ is called the $\omega-$multiplicity of $\xi'$, or simply its
 \emph{multiplicity}, when $\omega \in \Omega$ is understood.
 \end{defn}
 
Thus, let $\{[a_i, \xi')_\omega: i \in I\}$ be a maximal collection of disjoint geodesic rays all converging to $\xi'$. Then the cardinality of $I$ is called the multiplicity of $\xi'$. Here, the collection is maximal in terms of its cardinality. Note that any 
 other such maximal collection  will necessarily have to be of the form 
$\{[b_i, \xi')_\omega: i \in I\}$, where $[a_i, \xi')_\omega$ and $[b_i, \xi')_\omega$ eventually agree for all $i \in I$.  
 
The basic questions on exceptional directions, as briefly alluded to before, are summarized below.
\begin{qn}\label{qn-exceptionaldirn} Given a hyperbolic $G$ with Cayley graph $\Gamma$, 
	\begin{enumerate}
	\item Do there exist exceptional directions for $F(1,\omega)$ and 
	$T(\xi,\omega)$ almost surely?
	\item If exceptional directions exist, what is the cardinality of the set of all exceptional directions?
	\item What is the maximum possible multiplicity of an exceptional direction?
	\item Are exceptional directions of measure zero with respect to the Patterson-Sullivan measure on $\pG$? What is the Hausdorff dimension of the set of exceptional directions? \\
	\end{enumerate}
\end{qn}

We are able to obtain rather complete answers to most of these questions for a hyperbolic groups $G$ (under Assumption \ref{assume-subexp}). For the convenience of reader, we summarize our main results on exceptional directions and their multiplicities in the following theorem. 

\begin{theorem}\label{thm-omni-intro} $ $
\begin{enumerate}
\item If $G$ is not free (up to finite index), almost surely exceptional directions in $\pG$ exist and are dense $\pG$ (Theorem~\ref{thm-nonfreedense}).
\item If $\pG$ has topological dimension $\dim_t \pG$ greater than 1, then almost surely there are 
\emph{uncountably many} exceptional directions (Theorem~\ref{thm-uncountable}).
\item If $G$ is a cocompact lattice in the hyperbolic plane, and the Cayley graph $\Gamma$ is planar, then  almost surely there are \emph{countably many} exceptional directions (Theorem~\ref{thm-countable}). 
\item Let $G$ be a  hyperbolic group such that $\dim_t \pG = n-1$. Then
there exists an exceptional direction $z \in \pG$ with multiplicity at \emph{at least $n$}. Thus there exist hyperbolic groups with arbitrarily large
multiplicity of an exceptional direction (in the sense of Question~\ref{qn-exceptionaldirn}(3)).
\item Exceptional directions have measure zero with respect to the Patterson-Sullivan measure (Theorem~\ref{t:exceptional}). In fact the set of exceptional directions has Hausdorff dimension (almost surely uniformly) smaller than that of $\pG$.
\item For any (fixed) $\Gamma$, there exists an almost sure uniform upper bound 
on the  multiplicity of an exceptional direction (in the sense of Question~\ref{qn-exceptionaldirn}(3)) (Theorem~\ref{t:finite}).
\end{enumerate} 
\end{theorem}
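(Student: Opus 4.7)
Since this omnibus theorem collects six sub-results, my plan is to describe the strategy for each, unified by one technical backbone: a \emph{random Cannon--Thurston map} $\hat\iota_\omega$ from the tree plus its end-compactification into $\Gamma \cup \pG$. I would build $\hat\iota_\omega$ by establishing, with probability one, that each $\omega$-geodesic from $1$ stays within sublinear word-distance of the word-geodesic to the same endpoint; the inclusion of vertices then extends continuously to $\pG$, and a boundary point has $\omega$-multiplicity $k$ for the tree precisely when it has $k$ preimages under $\hat\iota_\omega$. Theorems~\ref{thm-ftree-intro}, \ref{thm-btree-intro}, and \ref{thm-bi-intro} will supply the geodesic existence and bigeodesic facts needed as input.

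For the lower bounds (items (1), (2) and (4)), the strategy is to combine bi-infinite $\omega$-geodesics from Theorem~\ref{thm-bi-intro} with the topology of $\pG$. When $G$ is not virtually free, $\pG$ is not totally disconnected (Stallings), so starting from a single bigeodesic $(\xi_1,\xi_2)_\omega$ I would produce an exceptional direction by choosing a suitable limit of $G$-translates; density then follows from minimality of the $G$-action on $\pG$, giving (1). When $\dim_t \pG = n-1 \geq 1$, a Menger--Urysohn style separation argument forces, at some boundary point, an $n$-fold branching of bigeodesics, yielding multiplicity $\geq n$ and proving (4); and for (2), when $\dim_t \pG \geq 2$ one iterates this branching in a Cantor-set fashion across a two-parameter family of perturbations to produce uncountably many exceptional directions. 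Item (3), countability for planar Fuchsian $\Gamma$, follows because when $\pG = S^1$ the planar embedding of the forward tree admits a well-defined cyclic order at infinity, and its set of branch-at-infinity points is a countable combinatorial object compatible with this cyclic order.

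For the upper bounds (items (5) and (6)), the approach is as follows. Theorem~\ref{bmthmcoalesce} asserts that for any fixed $\xi$, the event that $\xi$ is exceptional has probability zero, so Fubini against the Patterson--Sullivan measure $\nu$ on $\pG$ yields
\[
\E\bigl[\nu\bigl(\{\xi : \xi \text{ is exceptional for } \omega\}\bigr)\bigr] = 0,
\]
proving the $\nu$-measure-zero part of (5). For the strict Hausdorff-dimension gap, I would prove a quantitative coalescence estimate---stretched-exponential or power-law decay in $R$ for the probability that two $\omega$-geodesics in direction $\xi$ remain uncoalesced past $\omega$-distance $R$---and feed this into a standard covering argument, converting the decay rate into a deterministic exponent strictly below $\dim \pG$. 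For the uniform multiplicity bound (6), $k$ disjoint $\omega$-geodesic rays converging to a common boundary point push through $\hat\iota_\omega$ to $k$ word-geodesic rays converging to the same point of $\pG$; since $\Gamma$ is locally finite and $\delta$-hyperbolic, the number of such mutually disjoint word-geodesic rays sharing a common ideal endpoint is bounded by a constant $K(D,\delta)$ depending only on the combinatorial geometry of $\Gamma$.

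The principal obstacle is the construction of the random Cannon--Thurston map itself: showing that $\omega$-geodesics cannot stray far from word-geodesics requires a delicate conjunction of exponential tail estimates for $\rho$ with the thin-triangles property of $\Gamma$. A secondary obstacle is the quantitative coalescence estimate needed for the Hausdorff-dimension inequality in (5): sharp results of this type are available in integrable planar FPP/LPP models, but here they must be derived purely from hyperbolic geometry and probabilistic concentration, without any exact solvability, and this is where the argument is most likely to require new ideas.
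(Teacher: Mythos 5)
Your backbone (random Cannon--Thurston map, multiplicity $=$ cardinality of fiber) is exactly the paper's architecture, and your sketch for item (3) is essentially the paper's Lemma~\ref{lem-toptree}. However, several of the individual items diverge from the paper's argument in ways that range from vagueness to a genuine gap.

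The clearest gap is in item (6). You argue that $k$ disjoint $\omega$-geodesic rays to $\xi$ ``push through $\hat\iota_\omega$ to $k$ word-geodesic rays converging to the same point of $\pG$,'' and then invoke a deterministic combinatorial bound $K(D,\delta)$ on disjoint word-geodesics sharing an endpoint. This does not work: the inclusion map is the identity on vertices, so the $\omega$-geodesic rays remain $\omega$-geodesics, not word-geodesics, after ``pushing through.'' Nor do they track word geodesics within a $\delta$-controlled tube; the $2$-properness function furnished by the random Cannon--Thurston map is $\omega$-dependent, and the tail estimate (Proposition~\ref{prop-effbt}) only gives exponentially decaying, not bounded, excursions. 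So you cannot extract a constant depending only on $(D,\delta)$ from this route. The paper instead proves Theorem~\ref{t:finite} with an entirely probabilistic argument: it localizes to slabs between perpendicular hyperplanes along $[1,v]$, shows (via Corollary~\ref{cor-effbt}) that disjoint slab-geodesics are unlikely in any one slab, and uses independence of alternating slabs to get exponential decay, followed by a union bound over $v$. This machinery cannot be replaced by a purely deterministic count.

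For item (5), first part, your Fubini argument is exactly the one the paper flags and declines to use in Remark~\ref{r:measure}: joint measurability of $\ind(\xi\in A(\omega))$ in $(\xi,\omega)$ is not obviously established, and the paper circumvents Fubini entirely via the lift-counting estimate Lemma~\ref{l:exceptionallift} plus Sullivan shadows, which then also delivers the strict Hausdorff-dimension gap in one stroke. Your second half of (5) is on the right track (quantitative coalescence between hyperplanes is indeed the engine), but you have not connected it to the measure-zero claim.

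For items (2) and (4), the dimension theory is underspecified. The paper's proof of (4) rests precisely on Hurewicz's theorem (Theorem~\ref{thm-hurewicz}): a continuous surjection from a closed subset of the Cantor set onto a compactum of topological dimension $n-1$ must have a fiber of cardinality at least $n$; ``Menger--Urysohn separation'' will not produce a multiplicity statement directly. For (2), the paper's argument is a clean contradiction: if exceptional directions were countable, a visual sphere avoiding them would have a connected component of positive topological dimension, and the Cannon--Thurston map restricted to its preimage would be a homeomorphism onto it from a totally disconnected set, which is absurd. Your ``iterate branching in Cantor-set fashion across a two-parameter family'' is not an argument and does not by itself imply uncountability. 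For item (1), the paper does not argue directly from a single bigeodesic via translates; it finds a quasiconvex one-ended subgroup $G'$ (Stallings + Dunwoody accessibility + Bowditch/Swarup local connectivity), shows $\partial G'$ is connected and locally connected, and applies Lemma~\ref{lem-top} to the Cannon--Thurston map restricted to preimages of translates $g.\partial G'$; density then comes from density of such translates. Your bigeodesic route is not clearly reducible to this.

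In summary: the overall scaffolding you propose matches the paper, but item (6) as you describe it would fail, item (5)'s first half skips a measurability issue that the paper explicitly sidesteps, and items (1), (2), (4) are too vague to constitute proofs and in fact omit the specific topological inputs (quasiconvex one-ended subgroup, Hurewicz, visual sphere components) that carry the argument.
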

The statements above hold for both forward and backward random trees. 
Behind all but the last two results above, the main new ingredient is the existence of a \emph{random Cannon-Thurston map}, a continuous surjective map from the boundary of the tree to the boundary $\pG$ of the hyperbolic group $G$ (Theorem~\ref{thm-12proper}).

{For the forward tree, under further assumptions on the group $G$, we can strengthen Theorem \ref{thm-omni-intro}(2) further, and give a lower bound on the topological dimension of the set of exceptional directions; see Theorem \ref{thm-pd}.} 

\subsection{Known results in Euclidean FPP and LPP}
\label{s:lit}
{Some analogs of questions answered in Theorem \ref{thm-omni-intro} has been studied previously in various contexts, starting with FPP on Euclidean lattices, a notoriously hard model. As described below, there has been progress on this front, but sharper results have mostly been restricted to two related classes of models: planar exactly solvable models of last passage percolation, and certain models in the continuum (Euclidean $\R^2$ and $\R^m$) with additional rotational symmetries. In this section, we recall these results.}
%
%
These results, specifically the ones known in planar exactly solvable LPP models, believed to be in the same universality class as planar Euclidean FPP, served as a motivation from the Euclidean planar setup for us to address the analogous issues in a hyperbolic context. The actual results, however,  will not be used 
except to provide context in the rest of the paper.
 We start with describing the state of the art for FPP on Euclidean lattices.\\ 

\noindent
\textbf{FPP on Euclidean lattices.}
The interest in infinite geodesics in FPP go back forty years. In the 1980s, the question, attributed to Furstenberg, whether or not a bi-infinite geodesic exists in  Euclidean FPP was asked. This question remains open to date. The study of existence, uniqueness and coalescence of semi-infinite geodesics in given directions was taken up by Newman \cite{New95} and his collaborators since the mid nineties and has received extensive attention since then. However, this question is intimately related to the properties of the \emph{limit shape} for which we do not yet have an adequate understanding for FPP model on Euclidean lattices with generic edge length distributions. Therefore many of the existing results are conditional on various assumptions on the limit shape. In the planar case, some recent progress without such unverified assumptions have been made using the techniques of Busemann functions,  \cite{H05,hoffman, DH14, DH17,AH16}  being a representative sample of works in this direction. In particular, it is now known \cite{DH14,DH17,AH16} that, assuming differentiability of the boundary of limit shape, in every fixed direction uniqueness and coalescence of semi-infinite geodesics hold. In contrast to Theorem \ref{thm-btree-intro}, it was shown in  \cite[Theorem 1.10(4)]{DH14} that, under mild conditions, ``backward clusters" are almost surely finite. In \cite{AH16}, it was shown that there exists a family of \emph{random coalescing geodesics} corresponding to linear functionals that are supporting to the limit shape. It is also known that almost surely there do not exist any bigeodesics in a fixed direction. However, this does not rule out existence of bigeodesics in \emph{random directions}. Some recent results on coalescence under assumptions on the limit shape that are verifiable in certain cases were obtained in \cite{dembin-elboim-peled}. We refer the interested reader to \cite[Chapter 4]{ADH} for more details on the classically known results. Much of the progress in  recent years in planar FPP is contained in \cite{ahlberg2022numbergeodesicsplanarfirstpassage} 
and the survey paper \cite{ahlberg-survey}. Virtually nothing, however, is known in the higher dimensional setting. 

\medskip
\noindent
\textbf{Exactly solvable models and physical motivation.}
As mentioned above, the question of existence, cardinality and multiplicity of exceptional directions are poorly understood in lattice FPP, even in the planar setting. However, these questions are completely answered in the case of directed exponential LPP on $\Z^2$. In exponential LPP, one puts i.i.d.\ exponentially distributed weights on the vertices of $\Z^2$ and calculates the last passage time between two points by maximizing the sum of vertex weights over all up/right paths joining the two. Although last passage times form an anti-metric rather than a metric (and geodesics are defined as weight maximizing paths), it is believed that this model lies in the same KPZ universality class as planar FPP (under mild conditions). The following results are known in the case of exponential LPP (from universality considerations, one might believe the same results hold for planar FPP as well under some reasonable assumptions on the edge length distribution).

\begin{enumerate}
	\item Ferrari and Pimentel \cite{ferrari-pimentel} show that almost surely  all semi-infinite geodesics have a  direction. Also, almost surely for every direction in the first quadrant there is at least one semi-infinite geodesic in that direction. For every fixed non-axial direction, almost surely all semi-infinite geodesics in that direction coalesce. Consequently, exceptional directions are of measure 0.  
	\item Existence of exceptional directions with at least two distinct semi-infinite geodesics was shown by Ferrari, Martin,  and Pimentel \cite{ferrari-pimentel,fmp}. Coupier \cite{coupier11} showed that the set of exceptional directions is almost surely countable and dense in $[0,\pi/2]$.
	 
    \item  Coupier \cite{coupier11} (see also \cite{jrs}) also showed that almost surely there does not exist any non-axial direction with three non-coalescing see geodesics. 
  
\end{enumerate}

The last is often referred to as the N3G (no 3 geodesics) problem and has attracted a lot of attention in the literature. In \cite{jrs}, Janjigian,  Rassoul-Agha,  and Sepp\"{a}l\"{a}inen \cite{jrs} explored the geometry of geodesics and exceptional directions further and provided a comprehensive treatment, in particular they showed that there are two distinct geodesic rays from every point in every exceptional direction. See also the very recent \cite{ejs24} for other pathological behavior in a non-homogeneous (but still exactly solvable) exponential LPP model. Geodesics tree in a given direction (backward trees in our language) have also been studied for exponential LPP and precise tail estimates for sizes of sub-trees (backward clusters) have been obtained \cite{BBB23}. 

A richer structure has been established for the \emph{directed landscape}, which is the space time scaling limit of exponential LPP (and the putative universal scaling limit object in the KPZ class in 2 dimensions).
\begin{enumerate}
	\item Busani, Sepp\"{a}l\"{a}inen, and Sorensen \cite{bss} identify and describe a  countable dense set of directions  in which  at least two distinct geodesics exist from each initial point.
	\item  Busani \cite{busani2024nonexistencenoncoalescinginfinitegeodesics} shows that there exist no three non-coalescing  geodesics  in the same direction.
\end{enumerate}

{In the directed landscape,  exceptional directions correspond to the directions where multiple Busemann functions exist \cite{bss}. It was recently shown that they also correspond to the slopes for which the KPZ fixed point  (another universal scaling limit object in the KPZ class) has non-unique eternal solutions thereby violating the one-force-one-solution principle \cite{BBS25}.}

Since the number of pairs of points in the directed landscape is uncountable, there also exist exceptional pairs of points with non-unique geodesics between them. This leads to a very rich fractal structure of non-unique geodesics in the directed landscape. Using the integrable structure, an almost complete description of the network of finite and infinite geodesics has been obtained in this case, see e.g.\ \cite{Bha23b,Dau23} and the references therein. 

{Analogous results also exist for exactly solvable positive temperature polymer models, e.g.\ the \emph{log-Gamma polymer}, see \cite{JRA20,BFS23}.

\medskip
\noindent
{\textbf{Models with additional continuous symmetries.}
As already mentioned, a key obstacle in understanding FPP on $\Z^d$ is the lack of control on the limit shape. To circumvent this, a number of models on (Euclidean) $\R^d$ have been considered with additional continuous rotational symmetry. This ensures that the limit shape is a Euclidean ball. One such model, based on a Poisson point process on $\R^d$, often referred to as the Howard-Newman model was introduced and studied in \cite{HN97,HN01}. Using the rotational invariance of this model, the authors established an analog of Theorem \ref{thm-ftree-intro} and showed that for the forward tree exceptional directions exist and are dense in $S^{d-1}$ \cite[Theorem 1.10]{HN01}. The question of multiplicity of exceptional directions was not considered in \cite{HN01}.}

{Another related result (but for a quite different model) in a non-lattice setting, and the only prior result we are aware of in a hyperbolic background geometry is by Coupier, Flammant and  Tran \cite{CFT23}. There, the authors consider a random tree constructed out of a Poisson process in the hyperbolic plane $\Hyp^2$. This model is called the Hyperbolic Radial Spanning Tree. For this tree, they show that exceptional directions exist and are dense. They also solve the N3G problem in this context.  We should point out, however, that this tree is not a geodesic tree constructed out of a first passage percolation model.}  
}

%
%
%

\subsection{Novelty and structure of the paper}\label{sec-org} 

{As pointed out above, existing results in the literature, mostly in the Euclidean background geometry, focus primarily on either exactly solvable models, or non-lattice models with additional continuous rotational symmetry. Such additional symmetries are typically not present in a Cayley graph. Our results appear to be the first  in the lattice setting for a model which is not exactly solvable, as also the first in hyperbolic FPP. They apply to a  large class of edge length distributions as well as in a higher dimensional setting. Existing literature on non-lattice models only deals with the case where the set of directions is a sphere $S^{d-1}$, whereas we are able to deal with more general boundaries $\pG$ by appealing to deep facts about hyperbolic groups and the technology of Cannon-Thurston maps. Theorem \ref{thm-omni-intro} (3)  is indeed a close
analog of the results described above in the planar exactly solvable set-up, except that we are unable to completely solve the N3G problem. The existing solutions of the (planar Euclidean) N3G problem rely crucially on the integrability of the corresponding models (either via a  connection to TASEP as in \cite{coupier11} or an exact computation of probability of three disjoint geodesics across a trapezium as in \cite{busani2024nonexistencenoncoalescinginfinitegeodesics}). Instead we obtain a weaker but more general bound in  Theorem \ref{thm-omni-intro} (6). We also establish features that appear to be qualitatively new: existence of exceptional directions with arbitrarily large multiplicity (Theorem \ref{thm-omni-intro},(4)) and an upper bound on the Hausdorff dimension of the set of exceptional directions (Theorem \ref{thm-omni-intro},(5)).} 

\medskip
\noindent {\bf Section 2:} Our arguments depend crucially on the interplay between probability and the underlying hyperbolic geometry of the ambient space, often with probabilistic estimates providing key inputs to geometric arguments and vice versa. In Section~\ref{sec-btree}, we  start off by proving Theorem \ref{thm-ftree-intro}, Theorem \ref{thm-btree-intro} and Theorem \ref{thm-bi-intro}: these theorems  show that almost surely the random geodesic trees have {complete} ends. The key technical ingredient here is Lemma \ref{lem-effbt}. This Lemma gives a quantitative estimate on how unlikely  it is for a certain geometric event to occur. This event is given by the following:\\
 There  exists an $\omega$-geodesic between a pair points that is far away from a given point $o$, and the word geodesic between them  passes within a bounded distance of $o$.\\
  The geometric input for Lemma \ref{lem-effbt} is  exponential inefficiency of paths far away from the word geodesic in hyperbolic spaces. 

\medskip
Our results on exceptional directions can be broadly divided into two categories: \\
(i) lower bounds on cardinality and multiplicity of exceptional directions (Theorem \ref{thm-omni-intro} (1), (2), (4))). This is dealt with in Section~\ref{sec-direx}.\\
(ii) upper bounds on the Hausdorff dimension of the set 
of exceptional directions and their multiplicities  (Theorem \ref{thm-omni-intro}  (5), (6)).  This is dealt with in Section~\ref{sec-exceptional-coalesce}. \\
These two types of results rely on different types of arguments. 

\medskip
\noindent {\bf Section 3:}
The lower bounds rely on several deep  facts about hyperbolic groups and their boundaries (this part uses substantially different techniques from previous works \cite{benjamini-tessera, BM19} on hyperbolic FPP which, for the most part, used more elementary properties of hyperbolic spaces). The principal workhorse behind the lower bound arguments is the existence of a random Cannon-Thurston maps (Section~\ref{sec-ct}), a continuous map from the boundary of the random trees to $\pG$ that continuously extends the natural embeddings of the trees into $\Gamma$. Probabilistic estimates (e.g. Lemma \ref{lem-effbt}) go into verifying a necessary and sufficient geometric condition for the existence of Cannon-Thurston maps. The result about complete ends of the geodesic trees (Theorems \ref{thm-ftree-intro} and \ref{thm-btree-intro}) is used to show that the Cannon-Thurston maps are surjective. Once the almost sure existence of such Cannon-Thurston maps is established, the proofs of the lower bounds follow from deep topological facts about the boundaries of hyperbolic groups that are not virtually free and fundamental facts about topological dimensions of compact metrizable spaces. \\
 a) In Section~\ref{sec-direxdense}, we establish density of exceptional directions in $\pG$ (see Theorem~\ref{thm-nonfreedense}). We use Stallings' theorem on ends \cite{stallings-ends},  Dunwoody's accessibility result \cite{dunwoody}, and local connectivity of boundaries of one-ended hyperbolic groups \cite{bes-mess,bowditch-cutpts,swarup-era}.\\
 b) In Section~\ref{sec-direxcard}, we determine the cardinality of exceptional directions (see Theorems~\ref{thm-uncountable} and \ref{thm-countable}). We use Bestvina-Mess's work on boundaries \cite{bes-mess} of hyperbolic groups.\\
 c) In Section~\ref{sec-mult}, we use classical facts from topological dimension theory \cite{engelking}, in particular a theorem of Hurewicz, to establish lower bounds on multiplicity (see Theorem~\ref{thm-mult}).

\medskip
\noindent {\bf Section 4:} In Section~\ref{sec-exceptional-coalesce}, we 
establish
upper bounds.  The proofs  rely on further developing the techniques introduced in \cite{BM19} to study coalescence of geodesics. These results primarily use the technique of dividing the space into half spaces by hyperplanes.   Geometric properties such as exponential divergence of hyperplanes are combined with several probabilistic estimates. Theorem~\ref{t:exceptional} establishes that exceptional directions are of measure zero. In fact, we show that their Hausdorff dimension in $\pG$ is strictly less than that of $\pG$. In Theorem~\ref{t:finite} we establish an upper bound on multiplicity.

\subsection*{Acknowledgments} The authors thank Ofer Busani for useful comments on a previous draft of this article. The authors thank the referee for a number of helpful comments. RB is partially supported by a SERB MATRICS grant  from DST Govt.\ of India (MTR/2021/000093), and DAE project no.\ RTI4001 via ICTS.  MM is partly supported by  a DST JC Bose Fellowship,  and by  the Department of Atomic Energy, Government of India, under project no.12-R\&D-TFR-5.01-0500.  Both authors were supported in part by an endowment of the Infosys Foundation. This project began at the International Centre for Theoretical Sciences (ICTS) during  the program - Probabilistic Methods in Negative Curvature (Code: ICTS/PMNC2019/03). We  gratefully acknowledge the support of ICTS.

\section{Ends of Geodesic Trees and Bigeodesics}
\label{sec-btree}

Our first result is a strengthening of \cite[Proposition 6.3]{BM19} which said that under the  sub-exponential assumption~\ref{assume-subexp}, given $o \in \Gamma$, $C>0$,
 and for a.e.\ $\om \in \omegap$, there exists
$R(\om)>0$  such that the following holds:\\
For every sequence $x_n, y_n \to \infty$ such that $[x_n, y_n]$ passes through 
$N_C (o)$, the $\om-$geodesic $[x_n, y_n]_\omega$ passes
through the $R_\omega-$neighborhood of $o$.We make this statement quantitative by showing minimal such (random) number $R_{\omega}$ will have exponentially decaying tails. 

\begin{prop}\label{prop-effbt} Fix $o \in \Gamma$, and $C > 0$ as above.
Let	$R(\omega)$ be the smallest integer such that for all $x,y$ with $[x,y]\cap N_C(o)\neq \emptyset$, $[x,y]_{\omega}\cap N_{R}(o)\neq \emptyset$. 
 Then there exists $c>0, N_0$ such that $$\P[R(\om)\geq N] \leq \exp(-cN)$$ for $N \geq N_0$.
\end{prop}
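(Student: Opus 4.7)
The plan is to combine two ingredients: a deterministic hyperbolic-geometric fact (exponential inefficiency of paths avoiding a large ball) and a Chernoff-type large deviation bound (using that under Assumption~\ref{assume-subexp}, $\rho$ has no atom at $0$ and has exponential moments). Together, these force any putative detour of $[x,y]_\omega$ around $N_N(o)$ to be doubly-exponentially unlikely in $N$, enough to beat the combinatorial factors in the union bound. Concretely, I would first establish an \emph{exponential inefficiency lemma}: there exist constants $R_0,c_1,c_2>0$ depending only on $\delta$ and $C$ such that whenever $[x,y]\cap N_C(o)\neq\emptyset$ and $\pi$ is an $x$-to-$y$ path with $\pi\cap N_R(o)=\emptyset$ for some $R\geq R_0$, then $|\pi|\geq d(x,y)+c_1 e^{c_2 R}$. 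This is a standard consequence of the exponential divergence of geodesics in $\delta$-hyperbolic graphs via thin-triangle arguments.

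For a fixed pair $(x,y)$ with $[x,y]\cap N_C(o)\neq\emptyset$, write $L=d(x,y)$ and $\Delta_N=c_1 e^{c_2 N}$. If $[x,y]_\omega$ avoids $N_N(o)$ then $[x,y]_\omega$ has word length $M\geq L+\Delta_N$ and total $\omega$-weight at most that of $[x,y]$. Cancelling common edges reduces the weight comparison to an inequality $\sum_{i=1}^{M_1}X_i\leq \sum_{j=1}^{L_1}Y_j$ between independent sums of i.i.d.\ $\rho$-distributed variables with $M_1-L_1\geq\Delta_N$. Since $\rho$ has no atom at $0$, we have $\log\E[e^{-\lambda\omega}]\to-\infty$ as $\lambda\to\infty$, so the Chernoff bound
\[
\P\Big[\sum_{i=1}^{M_1}X_i\leq \sum_{j=1}^{L_1}Y_j\Big]\leq \exp\bigl(M_1\log\E[e^{-\lambda\omega}]+L_1\log\E[e^{\lambda\omega}]\bigr)
\]
can be tuned, by choosing $\lambda$ sufficiently large, to yield an effective per-edge rate $c_3>\log D$, so that the per-path bound $e^{-c_3 M_1}$ beats the $D^M$ factor coming from counting $M$-step paths in $\Gamma$. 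In the regime $L_1\ll M_1$ (which is the relevant one below), this gives a per-pair bound $\leq \exp(-c_3\Delta_N)$.

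The remaining difficulty is the union bound over pairs $(x,y)$, of which there are infinitely many. I would reduce to pairs in the ball $B_{\kappa N}(o)$ for a suitable constant $\kappa$ via an entry-exit argument: if $(x_0,y_0)$ witnesses $\{R(\omega)\geq N\}$ with, say, $x_0$ outside $B_{\kappa N}(o)$, one takes $(x^*,y^*)$ to be the last and first points of $[x_0,y_0]_\omega$ on $\partial B_{\kappa N}(o)$. Since sub-paths of $\omega$-geodesics are themselves $\omega$-geodesics, $[x^*,y^*]_\omega\subset [x_0,y_0]_\omega$ still avoids $N_N(o)$, and a fellow-traveller argument (using the quasigeodesic control on $\omega$-geodesics established in \cite{BM19}) ensures that $[x^*,y^*]$ still passes within some enlarged $N_{C'}(o)$, $C'\geq C$ fixed. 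In this reduced setting the number of pairs is at most $e^{c_4\kappa N}$; the $\omega$-geodesic word length is at most $4L\leq 8\kappa N$ with exponentially small failure probability (since $\omega$-length of $[x,y]$ is concentrated near $L\bar{\omega}$), so the number of candidate paths per pair is at most $D^{8\kappa N}$. Combined with the per-pair bound $\exp(-c_3\Delta_N)$, this yields
\[
\P[R(\omega)\geq N]\leq \exp\bigl(C_5 N - c_3 c_1 e^{c_2 N}\bigr)\leq \exp(-cN)
\]
for $N\geq N_0$.

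The step I expect to be the main obstacle is the reduction to pairs in a ball: the Morse-type fellow-traveller property for $\omega$-geodesics requires a uniform almost-sure bound on their quasigeodesic constants, itself a random-geometric statement. If no sufficiently clean uniform bound is readily available, one must instead stratify the union bound by $L=d(x_0,y_0)$ directly and balance a Gaussian-regime Chernoff bound $\exp(-c\Delta_N^2/L)$ against the sphere-growth factor $e^{c_4 L}$ shell by shell, using non-atomicity of $\rho$ at $0$ to push the Chernoff rate above the combinatorial growth rate $\log D$. Handling this balance uniformly across all dyadic shells of $L$ is where the main technical work would lie.
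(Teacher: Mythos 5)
Your overall template --- a deterministic exponential-inefficiency estimate for detours around a ball, followed by a Chernoff-type bound and a union bound --- is morally the paper's strategy, and your deterministic lemma (the additive $e^{c_2 R}$ penalty for a path from $x$ to $y$ avoiding $B_R(o)$ when $[x,y]$ passes near $o$) is a correct consequence of $\delta$-hyperbolicity. But the step you yourself flag as the main obstacle is a genuine gap, and neither of your two proposed resolutions closes it. Localization via a Morse/fellow-traveller property of $\omega$-geodesics is circular: Proposition~\ref{prop-effbt} is itself a quantitative tracking statement between $\omega$-geodesics and word geodesics, and no \emph{uniform} almost-sure quasigeodesic bound for $\omega$-geodesics is available to feed in. Your fallback of stratifying by $L=d(x_0,y_0)$ also fails quantitatively. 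After cancelling common edges, the Chernoff exponent is $M_1\log\E[e^{-\lambda\omega}] + L_1\log\E[e^{\lambda\omega}] = L_1\bigl(\log\E[e^{-\lambda\omega}] + \log\E[e^{\lambda\omega}]\bigr) + (M_1-L_1)\log\E[e^{-\lambda\omega}]$, and the coefficient of $L_1$ is nonnegative by Cauchy--Schwarz. Taking $\lambda$ large is therefore useful only when $L_1$ is at most of order $\Delta_N$; for $L_1\gg\Delta_N$ the optimal $\lambda$ is small, the bound degrades to the Gaussian rate $\exp(-c\,\Delta_N^2/L_1)$, and this cannot beat the sphere-growth factor $e^{c_4 L}$ once $L$ far exceeds $\Delta_N$. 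The shell sum diverges and the plan does not close.

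The paper's Lemma~\ref{lem-effbt} sidesteps this by changing what gets enumerated. Rather than parametrizing the bad event by the endpoint pair $(x,y)$, it parametrizes by a witness triple $(u_1,v_1,w)$: here $w$ is a point of $[u,v]_\omega$ at distance at least $R$ from $o$ whose projection onto $[u,v]$ lands in $N_C(o)$, and $u_1,v_1$ are the last-exit and first-return points of $[u,v]_\omega$ relative to the $\epsilon R$-neighborhood of $[u,v]$. The bad event for such a configuration is purely local: it forces $\T(u_1,w)+\T(v_1,w)\leq\T([u_1,v_1])$, a comparison among passage times between points all lying within $O(R)$ of $o$, regardless of how far $u,v$ themselves are. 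The enumeration therefore has entropy only about $D^{(1+4\epsilon)R+C}$, and the per-configuration probability is made at most $e^{-cR}$ with $c\gg\log D$ via \cite[Lemma 5.14]{BM19} (lower tail of $\T(u_1,w)+\T(v_1,w)$) together with concentration (upper tail of $\T([u_1,v_1])$); a second regime, where the exit/return points project far from $w$ along $[u,v]$, is handled by multiplicative exponential inefficiency and the edge-count tail bound of \cite[Lemma 4.3]{BM19}. Proposition~\ref{prop-effbt} then follows from Lemma~\ref{lem-effbt} combined with coarse surjectivity of nearest-point projection of an edge path onto its geodesic (Remark~\ref{rmk-cs}). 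The idea missing from your proposal is precisely this re-parametrization of the union bound by the exit/return/far triple rather than by the endpoint pair.
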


\begin{rmk}
 We 
shall use Proposition \ref{prop-effbt} (and the geometric Lemma \ref{lem-effbt} that goes into the proof) multiple times in the paper. In this section, we use it to prove Theorems \ref{thm-ftree-intro}, \ref{thm-btree-intro}, \ref{thm-bi-intro}. We also use it in Section~\ref{sec-exceptional-coalesce}. It is possible that some of its applications in the sequel might be achieved by  refining the arguments in \cite{BM19} (particularly the proofs of   \cite[Theorems 6.6, 6.7]{BM19}).  However,  Proposition \ref{prop-effbt} and Lemma \ref{lem-effbt} lead to a cleaner and more explicit argument. We believe that this result will be useful in future applications as well.
\end{rmk}

Proposition~\ref{prop-effbt} is a consequence of the following more explicit fact.

\begin{lemma}\label{lem-effbt} Let $C>0, o \in \Gamma$ be as in Proposition~\ref{prop-effbt}.
	
	Suppose Assumption~\ref{assume-subexp} holds. Then, given $c>0$, there exists $R_0=R_0(c,C) >0$ such that for $R \geq R_0$ the following holds.\\
Let $A=A(o,C)$ denote the event that there exist $u, v, w \in \Gamma$ such that
	\begin{enumerate}
	\item $w \in [u,v]_\omega$.
	\item $\Pi_{uv} (w) \in N_{C}(o)$, where $\Pi_{uv}$ denotes nearest-point-projection onto $[u,v]$.
	\item $d(w,o) \geq R$.
	\end{enumerate}
	Then $\P(A) \leq e^{-cR}$. 
\end{lemma}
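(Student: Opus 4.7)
The plan is to combine a geometric excess-length estimate in $\delta$-hyperbolic Cayley graphs with Chernoff large-deviation bounds for i.i.d.\ edge weights, then union bound over triples $(u,v,w)$.

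\emph{Geometric input.} If $(u,v,w)$ witnesses $A$ and $m:=\Pi_{uv}(w)\in N_C(o)$, then $d(w,m)\ge R-C$; by $\delta$-thinness of triangles and the tripod approximation in $\delta$-hyperbolic spaces, there is a universal constant $K_\delta$ with
\[
d(u,w)+d(v,w)\ \ge\ d(u,v)+2(R-C)-K_\delta.
\]
In particular, on the event $B_{u,v,w}$ the word length of the FPP geodesic $[u,v]_\omega$ is at least $d(u,v)+2R-K'$ for some $K'=K'(C,\delta)$.

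\emph{Probabilistic input.} Assumption~\ref{assume-subexp} gives $\mu:=\E[\omega]\in(0,\infty)$ and Chernoff bounds with exponential tails. For any $\mu_+>\mu$ and any fixed path $\pi$, $\P(\sum_{e\in\pi}\omega(e)>\mu_+|\pi|)\le e^{-\alpha|\pi|}$ with $\alpha=\alpha(\mu_+)>0$. Choose $\mu_->0$ so that $q:=\P(\omega<\mu_-)$ satisfies $qD^2<1/8$ (possible by non-atomicity of $\rho$ at $0^+$); then Chernoff combined with the trivial count of at most $D^L$ paths of length $L$ starting at a fixed vertex yields
\[
\P\bigl(\exists\ \pi\text{ starting at }a,\ |\pi|\ge L_0,\ \omega\text{-length}(\pi)<\mu_-|\pi|\bigr)\ \le\ Ce^{-\gamma L_0}
\]
with $\gamma>\log D$, tunable as large as we wish by further shrinking $\mu_-$.

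\emph{Per-triple estimate and Morse closure.} On the joint concentration event (upper bound for $W_0:=\sum_{e\in[u,v]}\omega(e)$ and the lower bound over all long paths from $u$), one has $d_\omega(u,v)\le W_0\le\mu_+ d(u,v)$ and $d_\omega(u,v)=\omega\text{-length}([u,v]_\omega)\ge\mu_-\cdot|[u,v]_\omega|$, so $[u,v]_\omega$ is a $(\mu_+/\mu_-,0)$-quasi-geodesic in the word metric. Tuning $\mu_\pm$ close to $\mu$ makes the Morse constant $M=M(\mu_+/\mu_-,\delta)$ bounded, so $[u,v]_\omega\subset N_M([u,v])$ and any $w\in[u,v]_\omega$ satisfies $d(w,[u,v])\le M$; this contradicts $d(w,[u,v])\ge R-C$ once $R>M+C$. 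Hence $B_{u,v,w}$ forces a concentration failure, whose probability is at most $e^{-\beta R}$ for a rate $\beta$ tunable through $\mu_\pm$ (using $d(u,w),d(v,w)\gtrsim R$ from the geometric input to convert the relevant length scales into $R$).

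\emph{Main obstacle: the union bound.} The principal difficulty is that the set of admissible $(u,v,w)$ is infinite, and a naive count grows as $D^{d(u,v)+d(w,o)}$ per shell. Closing the union bound therefore requires the per-triple Chernoff exponent to dominate the combinatorial growth rate $\log D$, which is precisely why $\mu_-$ must be taken small enough that $\gamma$ can be made arbitrarily large. I would organize the sum by the two scales $r=d(w,o)\ge R$ and $L_0=d(u,v)$, splitting into the regime $L_0\lesssim R$ (where the upper Chernoff bound on $W_0$ provides rapid decay in $r$, since the alternative would require $d_\omega(u,v)\ge 2\mu_-(R-O(1))$, abnormally large relative to the mean $\mu L_0$) and the regime $L_0\gtrsim R$ (controlled by the Morse-rigidity closure, which yields $e^{-\gamma L_0}$ decay from the lower-bound failure probability). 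After tuning $\mu_\pm$ so that both $\gamma$ and $\alpha$ exceed the requisite multiples of $\log D$, the double sum converges and produces $\P(A)\le e^{-cR}$ for any prescribed $c$.
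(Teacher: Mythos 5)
Your high-level plan (geometric excess $+$ Chernoff $+$ union bound) is the right type of argument, and the tripod inequality $d(u,w)+d(v,w)\ge d(u,v)+2(R-C)-K_\delta$ is correct. But the key organizational idea in the paper's proof is missing: \emph{localization}. The paper does not union-bound over triples $(u,v,w)$ with $u,v$ ranging over all of $\Gamma$; instead it observes that the relevant information is concentrated in the subsegment of $[u,v]_\omega$ between the last exit point $u_1$ and first re-entry point $v_1$ of the $\epsilon R$-neighborhood $N_{\epsilon R}([u,v])$. Since this subsegment contains $w$ and its endpoints project within $O(\epsilon R)$ of $o$, the entire comparison happens in a ball of radius $O(R)$ around $o$, and the union runs over at most $D^{O(R)}$ triples $(u_1,v_1,w)$. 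Your union runs over an infinite family of $(u,v)$, and your attempt to close it by splitting on $L_0=d(u,v)$ does not work in the small-$L_0$ regime: there the event forces $W_0=\sum_{e\in[u,v]}\omega(e)$ (mean $\sim\mu L_0$) to exceed $\sim\mu_- r$ with $r\ge R$, and the large-deviation rate for this is governed by the fixed exponential-tail rate $a_0$ of $\rho$ — roughly $\exp(-c\,a_0\mu_- r)$ with $a_0$ \emph{not} tunable — while you simultaneously need $\mu_-$ small so that the lower-bound rate $\gamma$ dominates $\log D$. For $\rho$ with small $a_0$, these two constraints conflict and the $D^r$ count of positions for $w$ is not beaten. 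The paper avoids this tension by comparing the detour to the short word geodesic $[u_1,v_1]$ of length $\le 3\epsilon R$, with the free parameter $\epsilon\ll\kappa$ providing the slack that $\mu_\pm$ cannot.

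Your large-$L_0$ ``Morse closure'' is also incomplete as stated: the quasi-geodesic inequality must hold for \emph{every subsegment} $[a,b]_\omega\subset[u,v]_\omega$, and the endpoints $a,b$ are random, so the concentration events as you set them up do not directly deliver quasi-geodesity of the FPP geodesic. The paper circumvents the Morse lemma entirely in its Case~2: it uses the deterministic hyperbolicity fact that any path staying outside $N_{\epsilon R}([u,v])$ has at least $d(u_1,v_1)e^{\alpha\epsilon R}$ edges (exponential inefficiency), and couples it with \cite[Lemma 4.3]{BM19}, a bound on the probability that the FPP geodesic between two \emph{fixed} points is much longer than their word distance, yielding doubly-exponential decay and a clean union bound. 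These two devices — the localization to $(u_1,v_1)$ and exponential inefficiency in place of the Morse argument — are exactly what you would need to add.
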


\begin{proof}

\begin{figure}[htbp!]
\includegraphics[width=10cm]{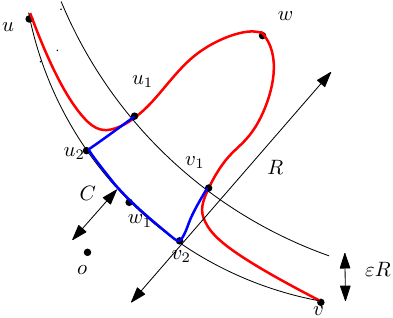}
\caption{Proof of Lemma \ref{lem-effbt}: if the event $A_R$ described in the proof of the lemma is to hold, the FPP length of the red path restricted between $u_1$ and $v_1$ must be smaller than the blue path $[u_1,u_2]\cup [u_2,v_2]\cup [v_2,v_1]$. Since the word length of the red path is much larger than that of the blue path, this is sufficiently unlikely so that we can take a union bound over all possible locations of $u_1,v_1$ and $w$.}
\label{f:effbt}
\end{figure}
For $r\ge R$, let $A_{r}$ denote the event described in the statement of the lemma with the further restriction $d(w,0)=r$. Clearly it suffices to prove $\P(A_{r})\le e^{-cr}$. We shall show that given $c>0$, $\P(A_{R})\le \exp(-cR)$ for all $R$ sufficiently large depending on $c,C$.

For $\ep \in (0,1)$ (to be chosen later), let $N_{\ep R} ([u,v])$ be the $\ep R$ neighborhood of $[u,v]$. Let $u_1$ (resp.\ $v_1$) denote the last point before (resp.\ first point after) $w$ in $[u,v]_\omega\cap N_{\ep R} ([u,v])$. Let $u_2$ (resp.\ $v_2$)
denote  $\Pi_{uv}(u_1)$ (resp.  $\Pi_{uv}(v_1)$). Let $w_1=\Pi_{uv}(w)$. Let $A(l_1,l_2) \subset A_{R}$ denote the subset of $A_{R}$ with $d(w_1,u_2)=l_1, d(w_1,v_2)=l_2$, so that 
$\P(A_{R}) \leq \Sigma_{l_1,l_2\ge 0} \P (A(l_1,l_2))$. We proceed to bound $\P (A(l_1,l_2))$.\\

\noindent {\bf Case 1: $l_1 +l_2 \leq \ep R$.}\\ The 
total set of  possible pairs $u_1, v_1$ such that $u_2, v_2$ satisfies this condition is at most $D^{C+4\ep R}$ ($D^C$ possible choices for $w_1$, $D^{l_1+l_2}$ choices for $(u_2,v_2)$ given $w_1$ and $D^{2\ep R}$ choices for $(u_1,v_1)$ given $(u_2,v_2)$). Further, for any such choice of  $u_1, v_1$,
\begin{enumerate}
\item $d(u_1,v_1) \leq 3 \ep R$,
\item $d(u_1,w) \geq (1-2\ep)R$,
\item $d(v_1,w) \geq (1-2\ep)R$,
\item $\T(u_1,w) + \T (v_1,w) \le \T([u_1,v_1])$
\end{enumerate}
Recall here that $\T$ is the \emph{random} passage time according to $\om \in A$.
In particular, $\T([u_1,v_1])$ denotes the random passage time along a \emph{deterministic} geodesic $[u_1,v_1]$. (Note that $\T([u_1,v_1])$ depends on the geodesic $[u_1,v_1]$ and not just on $u_1, v_1$.)

By \cite[Lemma 5.14]{BM19}, given $c>0$
 we can choose $\kappa>0$ sufficiently small such that (under Assumption~\ref{assume-subexp}), $$\P (\T(u_1,w) + \T (v_1,w) \leq \kappa R) \leq 
 e^{-cR}.$$ 
(We should point out that the statement of \cite[Lemma 5.14]{BM19} is not  phrased exactly this way; however the proof does give this conclusion.) 
 We can thus arrange our choice of $\kappa>0$ so that $c \gg \log(D)$.

 Next,  choosing $\ep > 0$ so that $\kappa \gg \ep$, we have (under Assumption~\ref{assume-subexp}, using the standard concentration inequality \cite[Theorem 2.14]{BM19}) $$\P \big[\T([u_1,v_1]) > \kappa R) \big]\leq 
 e^{-cR}.$$
 
 Next, for $d(w,o)= R$,  summing over all choices  of $u_1, v_1$ (at most  $D^{C+4\ep R}$) and  $w$ (at most $D^{ R}$), we have, 
 $$\sum_{l_1+l_2\le \ep R} \P (A(l_1,l_2)) \leq D^R D^{C+4\ep R}  e^{-cR} \leq  e^{-c'R},$$
 where $c'$ can also be made arbitrarily large (since  $c \gg \log(D)$).\\
 
 \noindent {\bf Case 2: $l_1 +l_2 \geq \ep R$.}\\ Here, $d(u_1, v_1) \leq 2\ep R + l_1 +l_2\le 3(l_1+l_2)$, whereas $[u_1,v_1]_\omega$ has at least 
 $(l_1 +l_2)e^{\alpha  \ep R}$ edges by exponential inefficiency of paths traveling outside $N_{\ep R} ([u,v])$ (see \cite[Chapter III.H.1]{bh-book}
 or \cite[Lemma 7.4]{BM19}). Thus, for $A_{l_1,l_2}$ to occur in this case, the number of edges in $[u_1,v_1]_{\omega}$ is at least $e^{\alpha\epsilon R}/3$ times $d(u_1,v_1)$. The proof of \cite[Lemma 4.3]{BM19}  gives that there exists $R_0>0$ such that for all $u,v\in \Gamma$ and for $R>R_0$, the probability that the number of edges in $[u,v]_{\omega}$ exceeds $Rd(u,v)$ is upper bounded by $\exp(-c\sqrt{R}d(u,v))$. It follows that for fixed $u_1,v_{1}$ this probability is upper bounded by  $\exp(-c\sqrt{3^{-1}e^{\alpha\epsilon R}}d(u_1,v_1))$ for some $c>0$.
 
 {Observe now that in this case if $R$ is sufficiently large  we also have $d(u_1,v_1)\ge c_1(l_1+l_2)$ for some some $c_1>0$. Indeed, $$d(u_1, v_1) \geq d(u_1,u_2) + d(u_2,v_2) + d(v_2,v_1)-8 \delta$$ whenever 
  	$ d(u_2,v_2) = l_1+l_2$ is large enough.} Summing over all possible choices of $u_1$ and $v_1$ (at most $D^{C+l_1+l_2+2\epsilon R}$ many choices) we get  
 $$\P(A(l_1,l_2))\le  D^{C+l_1+l_2+\epsilon R}\exp(-c'\sqrt{\frac{e^{\alpha\epsilon R}}{3}}(l_1+l_2))$$
 for some $c'>0$.Summing over $(l_1,l_2)$ with $l_1+l_2\ge \epsilon R$ get
 $$\sum_{l_1+l_2\ge \ep R} \P (A(l_1,l_2)) \leq   e^{-cR}$$
 where $c$ can be made arbitrarily large by choosing $R$ large enough. Combining the two cases, we get the lemma. 
\end{proof}

We shall now show how Proposition \ref{prop-effbt} follows from Lemma \ref{lem-effbt}. We first need the following observation. 

\begin{rmk}\label{rmk-cs} {There exists (universal) $k_0$ such that for any $\delta-$hyperbolic $\Gamma$, 
 any geodesic $[x,y]\subset \Gamma$, and any (necessarily continuous) edge-path $\sigma_{xy}$ in  $\Gamma$ joining $x,y$, a nearest-point projection $\pi_{xy}$ of $\sigma_{xy}$ to $[x,y]$ is \emph{coarsely surjective} in the following sense:\\ for all $z \in  [x,y]$ there exists $w\in \sigma_{xy}$ and
 $z_0 \in  [x,y]$ such that
 \begin{enumerate}
 \item $\pi_{xy}(w) = z_0$,
 \item  $d(z,z_0) < k_0\delta$.
 \end{enumerate} }
\end{rmk}

\begin{proof}[Proof of Proposition \ref{prop-effbt}]
{For $k_0$ as in Remark \ref{rmk-cs} observe that  for any $x,y$ with $[x,y]\cap N_C(0)\ne \emptyset$, there must exists a point $w\in [x,y]_{\omega}$ such that $\pi_{xy}(w)\in N_{C+k_0\delta}(o)$. Clearly, on the complement of the event $A(o,C+k\delta)$ in Lemma \ref{lem-effbt}, $[x,y]_{\omega}$ intersects $N_R(0)$ for all such $x,y$ and therefore we have $R(\omega)\le R$. Proposition \ref{prop-effbt} follows immediately. This, in particular also shows that $R(\omega)$ is finite almost surely.} 
\end{proof}

Since $G$ acts transitively on $\Gamma$, Proposition~\ref{prop-effbt} and Lemma~\ref{lem-effbt} hold with  \emph{uniform} $N_0, R_0$ respectively
\emph{independent of $o$}. In Proposition \ref{prop-effbt}, the constant $c$ is also uniform. Here, as usual, we identify $G$ with the vertex set of $\Gamma$.

Let $1 \in G$ denote the identity element. Fix $C>0$ as in Lemma~\ref{lem-effbt}.
Further, fix $\ep \in (0,1)$.
For $v \in G$, let $\AAA_v$ denote the event that for all $u \in G$ satisfying
$ N_C(v) \cap [u,1] \neq \emptyset$, and any $w \in [u,v]_\omega$ satisfying 
$\Pi_{u1} (w) \in N_C(v)$, we have $d(w,v) \leq \ep d(1,v) $. 
Further, let $\AAA_n = \cap_{v \in \partial B_n(1)}\AAA_v$.

\begin{lemma}\label{lem-intnAn}
$\P [\liminf_{n \to \infty}\AAA_n ] = 1$.
\end{lemma}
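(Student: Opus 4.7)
The plan is to apply the first Borel--Cantelli lemma: since $\liminf_n \AAA_n = (\limsup_n \AAA_n^c)^c$, it is enough to prove $\sum_n \P[\AAA_n^c]<\infty$. Because $\Gamma$ has vertex degree $D$, the ball $B_n(1)$ has at most $D^n$ vertices, so by a union bound
\[
\P[\AAA_n^c]\le \sum_{v\in \partial B_n(1)}\P[\AAA_v^c]\le D^n\cdot \max_{v\in \partial B_n(1)}\P[\AAA_v^c].
\]
Hence it suffices to produce a single constant $\alpha>\log D$ such that $\P[\AAA_v^c]\le e^{-\alpha n}$ for every $v\in \partial B_n(1)$ and all $n$ large.

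First I would establish a purely deterministic hyperbolic-geometry claim: there exists $C'=C'(C,\delta)$ such that whenever $u\in G$ satisfies $[u,1]\cap N_C(v)\ne \emptyset$ and $w\in \Gamma$ satisfies $\Pi_{u1}(w)\in N_C(v)$, one has $\Pi_{uv}(w)\in N_{C'}(v)$. The ingredient is the standard fact that, because $v$ lies within $C$ of the word geodesic $[u,1]$, the word geodesic $[u,v]$ and the initial subsegment $[u,p]\subset [u,1]$ (with $p$ any point of $[u,1]$ closest to $v$) fellow-travel at Hausdorff distance at most some $\delta'=\delta'(C,\delta)$, by stability of geodesics in $\delta$-hyperbolic spaces. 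Moreover $[u,1]\cap N_C(v)$ has word-diameter at most $2C$, so the projection point $\Pi_{u1}(w)\in N_C(v)$ is automatically close to $p$ (and to $v$); transferring along the fellow-traveling sub-segments places $\Pi_{uv}(w)$ within a constant $C'$ of $v$ as well.

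Given this reduction, the event $\AAA_v^c$ is contained in the event $A(v,C')$ of Lemma~\ref{lem-effbt} taken with base point $o=v$ and with $R=\ep d(1,v)=\ep n$: indeed the witness triple for $\AAA_v^c$ is a triple $(u,v,w)$ with $w\in [u,v]_\omega$, $\Pi_{uv}(w)\in N_{C'}(v)$ and $d(w,v)>\ep n$. Lemma~\ref{lem-effbt} supplies, for any prescribed $c>0$, a threshold $R_0=R_0(c,C')$ such that $\P[A(v,C')]\le e^{-c\ep n}$ whenever $\ep n\ge R_0$; by the translation-invariance of $\P$ under the $G$-action on $\Gamma$, this bound is uniform in $v$. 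Choosing $c$ so large that $c\ep>\log D$ (allowed by Lemma~\ref{lem-effbt}, at the cost of enlarging $R_0$), we obtain $\P[\AAA_n^c]\le D^n e^{-c\ep n}$, which is summable in $n$. Borel--Cantelli then yields $\P[\liminf_n \AAA_n]=1$.

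The main obstacle I anticipate is the deterministic geometric reduction between the two projections $\Pi_{u1}$ and $\Pi_{uv}$. It is a routine application of slim-triangle/stability-of-geodesics arguments in $\delta$-hyperbolic spaces, but the key point to verify carefully is that the resulting constant $C'$ depends only on $C$ and $\delta$ and not on the pair $(u,v)$; only then does Lemma~\ref{lem-effbt} give a bound on $\P[\AAA_v^c]$ that is uniform in $v\in \partial B_n(1)$, which is what drives the Borel--Cantelli estimate.
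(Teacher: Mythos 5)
Your proposal is correct and takes essentially the same approach as the paper: apply Lemma~\ref{lem-effbt} with base point $o=v$ and radius $R=\ep\, d(1,v)$, choose $c$ so that $c\ep > \log D$, take a union bound over the at most $D^n$ vertices of $\partial B_n(1)$ (with uniformity in $v$ coming from $G$-invariance), and invoke Borel--Cantelli.

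The one place you go further than the paper is the deterministic reduction from the condition $\Pi_{u1}(w)\in N_C(v)$ appearing in $\AAA_v$ to the condition $\Pi_{uv}(w)\in N_{C'}(v)$ that Lemma~\ref{lem-effbt} literally requires. That step is correct: writing $p'=\Pi_{u1}(w)$, one has $d(w,u)\geq d(w,p')+d(p',u)-O(\delta)$ and $d(w,v)\leq d(w,p')+C$, so the Gromov product $(w|v)_u$ is within $O(C+\delta)$ of $d(u,v)$ and hence the projection onto $[u,v]$ lands near $v$; the constant depends only on $C,\delta$, which is exactly what the union bound needs. The paper treats this as immediate, and in fact the later use of $\AAA_v$ in Lemma~\ref{lem-backgeo} suggests that the $\omega$-geodesic in the definition of $\AAA_v$ is really meant to be $[u,1]_\omega$ rather than $[u,v]_\omega$, in which case Lemma~\ref{lem-effbt} applies directly with $u'=u$, $v'=1$, $o=v$ and no projection transfer is needed. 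Either reading leads to the same estimate, and your explicit handling of the projection transfer is a useful clarification rather than a detour.
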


\begin{proof}
For $c$ such that $c\gg \frac{1}{\ep}$, and $ C \geq 0$, 	let $R_0$ be as in Lemma~\ref{lem-effbt}.
For $d(v,1) > \frac{R_0}{\ep}$, $\P(\AAA_v^c) \leq e^{-c\ep d(v,1)}$ by Lemma~\ref{lem-effbt}.
Taking a union bound over $v \in \partial B_n(1)$ ($n>R_0$),
$\P[\bigcup_{v \partial B_n(1)} \AAA_v^c] \lesim e^{-c' n}$ for some $c'>0$, since $c\ep \gg 1$, and the ball of radius $n$ has at most $D^n$ vertices. The
Lemma now follows from the Borel-Cantelli Lemma.
\end{proof}

\noindent {\bf A geometric generalization:} In the proof of Lemma~\ref{l:finite}, we shall need a geometric generalization of Proposition~\ref{prop-effbt} and Lemma~\ref{lem-effbt} without using the group-invariant structure of the ambient Cayley graph $\Gamma$. The particular context we shall need the analogs in is that of quasiconvex subsets of $\Gamma$. However, we prefer to isolate the exact geometric features necessary for the argument in Lemma~\ref{lem-effbt} (and hence 
Proposition~\ref{prop-effbt}) to go through. We thus start with a $\delta-$hyperbolic graph $X$.

Notice that for Case 1 of the proof of Lemma~\ref{lem-effbt}, we used the following:
\begin{enumerate}
\item An upper bound $D$ on the valence of every vertex of $X$.
\item \cite[Lemma 5.14]{BM19}:  the proof uses the upper bound $D$ and 
\cite[Lemma 4.3]{BM19}.
\item The general concentration inequality \cite[Theorem 2.14]{BM19}.
\end{enumerate}

For Case 2 of the proof of Lemma~\ref{lem-effbt}, we used the following:
\begin{enumerate}
	\item An upper bound $D$ on the valence of every vertex of $X$.
	\item Exponential inefficiency of paths traveling far away from geodesics  \cite[Chapter III.H.1]{bh-book}.
	\item \cite[Lemma 4.3]{BM19}: as pointed out in the paragraph preceding the statement of \cite[Lemma 4.3]{BM19} in that paper, this is a very general statement and does not need the hypothesis that $X$ is a Cayley graph. (In fact, neither 4.3 nor 5.14 really use hyperbolicity, but we shall not need this here.)
\end{enumerate}
Finally, note that the proof of Lemma~\ref{lem-effbt} is independent of the origin $o$, in the sense that the constant $R_0$ there depends only on $\delta, D$ and the constant $C>0$ in its statement.
We thus conclude:

\begin{cor}\label{cor-effbt}
Let $X$ be a $\delta-$hyperbolic graph with a uniform upper bound $D$ on the degree of every vertex. Then for $o\in X$, Lemma \ref{lem-effbt} holds uniformly in $o$, i.e., for $C,c>0$ there exists $R_0>0$ such that for all $o\in X$, the event $A=A(o,c,R)$ satisfies $\P(A)\le \exp(-cR)$.   
\end{cor}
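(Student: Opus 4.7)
The plan is to revisit the proof of Lemma~\ref{lem-effbt} verbatim and verify that, in both Case~1 and Case~2, every ingredient is either (i) a consequence of $\delta$-hyperbolicity alone, (ii) a counting bound that uses only the uniform degree bound $D$, or (iii) a probabilistic estimate from \cite{BM19} whose proof does not invoke the Cayley-graph structure. Since the resulting constants $R_0, c$ in Lemma~\ref{lem-effbt} depend only on $\delta$, $D$ and $C$, the base point $o$ plays no distinguished role, and the conclusion is automatically uniform in $o$.

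For Case~1, the counting estimate $D^{C+4\epsilon R}$ for the number of quadruples $(u_1,u_2,v_1,v_2)$ with $u_2,v_2\in N_C(o)$ and $u_1,v_1$ within $\epsilon R$ of them uses only the uniform degree bound. The inequality $\mathbb{P}(\T(u_1,w)+\T(v_1,w)\le \kappa R)\le e^{-cR}$ invokes \cite[Lemma~5.14]{BM19}, whose own proof uses only the degree bound $D$ and \cite[Lemma~4.3]{BM19}. The bound $\mathbb{P}(\T([u_1,v_1])>\kappa R)\le e^{-cR}$ invokes the concentration inequality \cite[Theorem~2.14]{BM19}, which is a general statement about i.i.d.\ sums along a fixed path and is wholly insensitive to any ambient algebraic structure. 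I would just isolate each of these citations and verify directly from their statements and proofs in \cite{BM19} that left-invariance is never used.

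For Case~2, the counting bound $D^{C+l_1+l_2+2\epsilon R}$ again uses only the degree bound. The exponential inefficiency statement---that any edge-path from $u_1$ to $v_1$ staying outside $N_{\epsilon R}([u,v])$ has length at least $(l_1+l_2)e^{\alpha\epsilon R}$---is a purely hyperbolic-geometric assertion from \cite[Chapter~III.H.1]{bh-book} that is valid in any $\delta$-hyperbolic graph. The tail bound on the number of edges in $[u_1,v_1]_\omega$ invokes \cite[Lemma~4.3]{BM19}, and, as the authors note in the paragraph preceding that lemma, its statement is already phrased in the general setting (requiring only uniform degree bounds and the i.i.d.\ passage time law, plus Assumption~\ref{assume-subexp}). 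Assembling these observations reproves Lemma~\ref{lem-effbt} line-by-line in the general graph $X$, with uniform constants.

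Finally, the passage from Lemma~\ref{lem-effbt} to Proposition~\ref{prop-effbt} in the group setting used only Remark~\ref{rmk-cs}, which is a geometric coarse-surjectivity statement for nearest-point projection onto geodesics in a $\delta$-hyperbolic graph and carries over without change. Thus the main obstacle is essentially bookkeeping: one must confirm that \cite[Lemma~5.14]{BM19}, \cite[Theorem~2.14]{BM19}, and \cite[Lemma~4.3]{BM19} are indeed proved in forms requiring only bounded degree plus hyperbolicity, not a transitive $G$-action. The excerpt asserts this directly, and the enumeration given just before the corollary already amounts to the full argument. Once these verifications are made, the uniformity in $o$ is automatic since none of the extracted bounds (on counting, on geometric inefficiency, or on the probabilistic tails) ever distinguished $o$ from any other vertex.
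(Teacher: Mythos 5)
Your proposal is correct and takes essentially the same approach as the paper: the paper's own ``proof'' of Corollary~\ref{cor-effbt} is precisely the enumeration, given just before the corollary statement, of the ingredients in Cases~1 and~2 of Lemma~\ref{lem-effbt} together with the observation that each depends only on the degree bound $D$, $\delta$-hyperbolicity, and the $\omega$-independent probabilistic estimates from \cite{BM19}. Your additional remark about the passage to Proposition~\ref{prop-effbt} via Remark~\ref{rmk-cs} is correct but not strictly needed for the corollary itself, which concerns only the Lemma~\ref{lem-effbt} bound.
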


\subsection{Forward and backward trees with complete ends}\label{sec-btstr}
Let $\AAA_n$ be as in the discussion preceding Lemma~\ref{lem-intnAn}.
\begin{defn}\label{def-intnAn}
Define $\AAA_\infty:=\liminf_{n \to \infty}\AAA_n$.
\end{defn}

\begin{lemma}\label{lem-backgeo}
Let $\om \in \AAA_\infty$. Then, for any $\xi \in \partial G$ and any word geodesic
$[1, \xi)$ defined by the sequence of vertices $1, v_1,\cdots, v_n, \ldots$, the only accumulation point of $[1,v_n]_\omega$  on $\partial G$ is $\xi$.
\end{lemma}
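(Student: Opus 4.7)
The goal is to show that for $\omega\in\AAA_\infty$ and any sequence $w_m\in [1,v_{n(m)}]_\omega$ with $d(1,w_m)\to\infty$, we have $w_m\to \xi$ in $\partial G$. Since $\AAA_\infty=\liminf_n\AAA_n$, there is a (random) threshold $N_0=N_0(\omega)$ such that $\AAA_{v_k}$ holds for every $k\geq N_0$. Fix $n$ large, take the word geodesic $[1,v_n]$ to be the initial segment $(1,v_1,\dots,v_n)$ of $[1,\xi)$, and consider any $w\in[1,v_n]_\omega$. Let $z=\pi_{1 v_n}(w)$ be a nearest-point projection of $w$ onto $[1,v_n]$. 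Because $[1,v_n]$ is an edge path, $z$ lies within distance $1$ of some vertex $v_{j(w)}$, and in particular $z\in N_C(v_{j(w)})$ for any fixed $C\geq 1$ (we may assume $C$ is the same constant used to define $\AAA_{v}$).

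Assume $j(w)\geq N_0$. Apply $\AAA_{v_{j(w)}}$ with $u:=v_n$: since $v_{j(w)}\in [v_n,1]$ itself, the hypothesis $N_C(v_{j(w)})\cap [v_n,1]\neq\emptyset$ is satisfied, and we have already arranged $\pi_{1 v_n}(w)\in N_C(v_{j(w)})$. The event $\AAA_{v_{j(w)}}$ then yields
\[
d(w,v_{j(w)})\leq \ep\, d(1,v_{j(w)})= \ep\,j(w).
\]
Consequently
\[
(1-\ep)j(w)\leq d(1,w)\leq (1+\ep)j(w),
\]
so that $d(1,w)\to\infty$ forces $j(w)\to\infty$, and the Gromov product satisfies
\[
(w\mid v_{j(w)})_1=\tfrac12\bigl(d(1,w)+j(w)-d(w,v_{j(w)})\bigr)\geq (1-\ep)\,j(w).
\]

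Since the vertices $v_k$ lie on the word geodesic $[1,\xi)$, we have $v_k\to \xi$ in $\partial G$; in particular $(v_{j(w)}\mid \xi)_1\to\infty$ as $j(w)\to\infty$. Combining the two Gromov product bounds via the standard $\delta$-hyperbolic inequality
\[
(w\mid \xi)_1\geq \min\bigl((w\mid v_{j(w)})_1,\,(v_{j(w)}\mid \xi)_1\bigr)-\delta,
\]
we conclude that $(w\mid \xi)_1\to\infty$ as $d(1,w)\to\infty$, and hence $w\to\xi$ in $\partial G$. Applied to any sequence $w_m\in [1,v_{n(m)}]_\omega$ with $d(1,w_m)\to\infty$, this shows that the unique accumulation point of $[1,v_n]_\omega$ on $\partial G$ is $\xi$, as desired.

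\textbf{Main obstacle.} The subtle point is the bookkeeping to ensure $\AAA_{v_{j(w)}}$ can legitimately be applied with $u=v_n$: this requires identifying the word geodesic $[v_n,1]$ implicit in the definition of $\AAA_v$ with the specific initial segment $(1,v_1,\dots,v_n)$ of $[1,\xi)$ we have fixed. Different choices of word geodesic are within bounded Hausdorff distance by $\delta$-hyperbolicity, and the parameter $C$ in $\AAA_v$ has been chosen with enough slack (at least $k_0\delta$ via Remark \ref{rmk-cs}) to absorb this discrepancy. Once this is handled, the remainder of the argument is a clean application of the Gromov product/coarse projection formalism.
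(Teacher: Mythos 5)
Your proof follows essentially the same approach as the paper's, differing only in presentation. The paper packages the argument geometrically: it observes that the $\omega$-geodesics outside $B_N(1)$ lie in the cone $\CC(\ep,\xi)=\bigcup_{i\geq N}B_{\ep i}(v_i)$, and then proves in a separate Claim (via a nearest-point-projection argument) that the only accumulation point of this cone is $\xi$. You skip the cone and compute Gromov products directly, which is the same estimate unpacked. The application of $\AAA_{v_{j(w)}}$ with $u=v_n$, and the reduction to $z\in N_C(v_{j(w)})$ with slack $k_0\delta$, match the paper's intent exactly.

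One small remark, equally applicable to the paper's own argument: both arguments restrict attention to points $w$ whose projection index satisfies $j(w)\geq N_0$ (you say ``Assume $j(w)\geq N_0$''; the paper silently passes over the edges projecting to $\{v_j: j<N\}$). To conclude that $d(1,w)\to\infty$ forces $j(w)\to\infty$, one needs to know that the set of points $w\in\bigcup_n[1,v_n]_\omega$ with $j(w)<N_0$ is bounded, which does not follow from $\omega\in\AAA_\infty$ alone; it uses that $\omega$ also avoids the null set from Proposition~\ref{prop-effbt}, applied at the finitely many vertices $v_0,\dots,v_{N_0-1}$. Since the lemma is invoked downstream only on full-measure subsets, this makes no difference to the paper's results, but it is worth being aware that both proofs implicitly pass to a further full-measure event.
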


\begin{proof} Given $\omega$,  let $N (=N(\omega)) \in \natls$ be such that $\om \in \bigcap_{n \geq N} \AAA_n$. 
Then the edges in $\cup_n [1,v_n]_\omega$ lying outside $B_N(1)$ are contained in the cone $\CC(\ep, \xi)$ given by $\CC(\ep, \xi)=\bigcup_{i \geq N} B_{\ep i} (v_i) $. Since the only accumulation point of $\CC(\ep, \xi)$ is $\xi$ by Claim~\ref{claim-cone} below, the Lemma follows.
\end{proof}

\begin{claim}\label{claim-cone}
The only accumulation point of $\CC(\ep, \xi)$ in $\partial G$ is $\xi$.
\end{claim}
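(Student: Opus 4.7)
The plan is to show that any accumulation point of $\CC(\ep,\xi)$ in $\partial G$ must coincide with $\xi$, by controlling Gromov products based at $1$. Let $\eta \in \partial G$ be such an accumulation point, and pick a sequence $w_k \in \CC(\ep,\xi)$ with $w_k \to \eta$. For each $k$, fix an index $i_k \geq N$ for which $w_k \in B_{\ep i_k}(v_{i_k})$. I would first argue $i_k \to \infty$: since $d(1,w_k) \to \infty$ (as $w_k \to \eta \in \partial G$) and the triangle inequality combined with $d(1,v_{i_k})=i_k$ gives $d(1,w_k) \leq (1+\ep) i_k$, we must have $i_k \to \infty$. Consequently $v_{i_k} \to \xi$ in $\pG$, since the $v_i$ lie on a word geodesic ray to $\xi$.

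The key estimate is then a direct computation of the Gromov product $(w_k \mid v_{i_k})_1$. Using $d(1,v_{i_k})=i_k$, $d(1,w_k) \geq (1-\ep)i_k$, and $d(w_k,v_{i_k}) \leq \ep i_k$, we have
\[ (w_k \mid v_{i_k})_1 \;=\; \tfrac{1}{2}\bigl(d(1,w_k) + d(1,v_{i_k}) - d(w_k,v_{i_k})\bigr) \;\geq\; \tfrac{1}{2}\bigl((1-\ep)i_k + i_k - \ep i_k\bigr) \;=\; (1-\ep)\, i_k. \]
Since $\ep \in (0,1)$, this diverges to $+\infty$. By the standard fact that sequences in a $\delta$-hyperbolic space with divergent mutual Gromov product have the same limit in the Gromov boundary (whenever both boundary limits exist), combining this divergence with $w_k \to \eta$ and $v_{i_k} \to \xi$ forces $\eta = \xi$, completing the argument.

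I do not anticipate any real obstacle here; the statement is essentially a bookkeeping calculation given the definition of $\CC(\ep,\xi)$ as a cone with radius linear in the distance from $1$. The only external input is the Gromov-product criterion for sequences to have a common boundary limit, which is a basic feature of $\delta$-hyperbolic spaces (see e.g.\ \cite[Chapter III.H]{bh-book}), and the constraint $\ep<1$ that is already imposed in the construction preceding Lemma~\ref{lem-intnAn}.
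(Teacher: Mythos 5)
Your proof is correct, and it takes a genuinely different (and somewhat cleaner) route than the paper's. The paper argues by contradiction: assuming an accumulation point $\xi' \neq \xi$, it passes to a subsequence $w_n \to \xi'$ whose nearest-point projections onto $[1,\xi)$ stabilize at a single point $b_0 \in [1,\xi)$, and then shows that $d(1,w_n) \to \infty$ is incompatible with $w_n$ lying in a ball $B_{\ep i_n}(v_{i_n})$ whose center must stay near $b_0$. This relies on the nearest-point-projection machinery and on keeping track of where the cone balls sit along $[1,\xi)$. You instead work directly: you show $i_k \to \infty$, hence $v_{i_k} \to \xi$, and then a one-line Gromov-product computation gives $(w_k \mid v_{i_k})_1 \geq (1-\ep)i_k \to \infty$, which forces the boundary limits to coincide. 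Both approaches ultimately exploit the linear growth $\ep i_k$ of the cone radii being strictly slower than the distance $i_k$ to the base point; your version isolates that observation as a single explicit inequality and avoids the projection bookkeeping, at the modest cost of invoking the standard Gromov-product characterization of boundary convergence.
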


\begin{proof}
Suppose $\xi' \in \partial G$ is also accumulated upon by $\CC(\ep, \xi)$. Then
there exists an open neighborhood $\BB(\xi') \subset \Gamma \cup \partial G$ such that
\begin{enumerate}
\item $\BB(\xi')$ is disjoint from an open neighborhood of $\xi$ in
$\Gamma \cup \partial G$,
\item $\BB(\xi')$ contains infinitely many points of $\CC(\ep, \xi)$.
\end{enumerate}
 The nearest-point-projection
of $  \BB(\xi') \setminus \partial G$ onto $[1,\xi)$ is a bounded geodesic segment
$[a,b]$. Hence there exist infinitely many points $w_n$ in $(\BB(\xi') \setminus \partial G) \cap \CC(\ep, \xi)$ converging to $\xi'$ whose nearest-point-projection lies in $[1,b]$. Since  $[1,b]$ has finitely many vertices, we can find $b_0 \in [1,b]$ and pass to a subsequence if necessary to assume that the  nearest-point-projections of all the $w_n$'s is $b_0$.
But this forces the sequence $w_n$ to have $d(1,w_n) \to \infty$; in particular,
$w_n$'s must eventually leave the ball  of radius $\ep d(b_0,1)$ about $b_0$. This contradicts the assumption that $w_n$'s lie in
$\CC(\ep, \xi)$ and completes the proof. 
\end{proof}

{ 

The following definition demands that the forward tree $F(1,\omega)$ has no `effective' dead-ends, and moreover extends forwards in all possible directions.

\begin{defn}\label{def-ftreefullend}
	We shall say that the forward tree $F(1,\omega)$  has 
	\emph{complete ends} if for all $\xi\in \pG$, there exists some $\omega-$geodesic  $[1, \xi)_\om$ such that $[1, \xi)_\om \subset F(1,\omega))$.
\end{defn}


The following Theorem simultaneously refines and strengthens \cite[Theorem 6.6]{BM19}
and \cite[Theorem 6.7]{BM19}.
\begin{theorem}\label{prop-ftree} 
	There exists a full measure subset $\Omega_1\subset \Omega$ such that  for all $\om \in \Omega_1$,  the forward tree
	$F(1,\omega)$ has complete ends.
\end{theorem}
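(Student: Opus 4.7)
The plan is to take $\Omega_1 := \AAA_\infty \cap \{\omega : \text{all finite $\omega$-geodesics in $\Gamma$ are unique}\}$; the first factor is full measure by Lemma~\ref{lem-intnAn} and the second by the non-atomicity hypothesis in Assumption~\ref{assume-subexp}, so $\P(\Omega_1)=1$. Fix $\omega\in\Omega_1$ and $\xi\in\pG$, and choose any word geodesic ray $1=v_0,v_1,v_2,\dots$ converging to $\xi$. The candidate semi-infinite ray in the direction $\xi$ will be produced as a subsequential limit of the finite $\omega$-geodesics $\sigma_n:=[1,v_n]_\omega$, each of which sits inside $F(1,\omega)$ by the very definition of the forward tree.

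First I would extract a limit by a compactness/diagonal argument. Because $\Gamma$ has bounded valence $D$, only finitely many $\omega$-geodesics of any fixed length start at $1$; since $\sigma_n$ contains at least $d(1,v_n)=n$ edges, the lengths $|\sigma_n|\to\infty$. Successive pigeonholing on the first edge of $\sigma_n$, then the second edge, and so on, together with uniqueness of finite $\omega$-geodesics on $\Omega_1$, yields a subsequence $n_k$ and an infinite edge-path $\sigma$ starting at $1$ such that for every $m$ the initial $m$ edges of $\sigma$ agree with those of $\sigma_{n_k}$ for all sufficiently large $k$. Every finite prefix of $\sigma$ is therefore itself an $\omega$-geodesic, and is contained in some $\sigma_{n_k}\subset F(1,\omega)$; hence $\sigma$ is a semi-infinite $\omega$-geodesic ray lying in $F(1,\omega)$.

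The remaining (and central) step is to verify that $\sigma$ actually has direction $\xi$. This is where Lemma~\ref{lem-backgeo} is invoked: for $\omega\in\AAA_\infty$ there is an $N=N(\omega)$ such that every edge of $\bigcup_n\sigma_n$ outside $B_N(1)$ is contained in the cone $\CC(\ep,\xi)=\bigcup_{i\ge N}B_{\ep i}(v_i)$; consequently the same containment holds for $\sigma$. As $\sigma$ is a simple infinite path (being a limit of geodesics, all of whose finite prefixes are simple) and $B_N(1)$ is finite, the vertex sequence of $\sigma$ must eventually leave every bounded word-ball and therefore has at least one accumulation point in $\pG$. By Claim~\ref{claim-cone}, the only possible accumulation point of $\CC(\ep,\xi)$ in $\pG$ is $\xi$, so the vertices of $\sigma$ converge to $\xi$ in $\Gamma\cup\pG$. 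Thus $\sigma$ is an $\omega$-geodesic ray from $1$ with direction $\xi$ contained in $F(1,\omega)$, establishing complete ends.

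The main obstacle in this plan is exactly the last step: without Lemma~\ref{lem-backgeo}, the diagonal limit of $\sigma_n$ could a priori fail to have any direction, wander toward the wrong boundary point, or accumulate at multiple points of $\pG$. The quantitative geometric containment provided by Lemma~\ref{lem-backgeo} (which is the probabilistic payoff of Proposition~\ref{prop-effbt}/Lemma~\ref{lem-effbt}) is precisely what rules out each of these possibilities; everything else is standard K\"onig-style compactness plus uniqueness of finite $\omega$-geodesics.
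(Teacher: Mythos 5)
Your proposal is correct and follows essentially the same route as the paper's (very terse) proof: both reduce complete ends to Lemma~\ref{lem-backgeo} together with uniqueness of finite $\omega$-geodesics. The main difference is one of exposition — you explicitly carry out the K\"onig/pigeonhole extraction of a subsequential limit of $[1,v_n]_\omega$ and the direction computation via Claim~\ref{claim-cone}, whereas the paper compresses this into the assertion that ``the proof of Lemma~\ref{lem-backgeo} now shows that each infinite geodesic ray in $F(1,\omega)$ starting at $1$ has a unique accumulation point, and complete ends now follows.''
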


\begin{proof}
Note that we fix  $1 \in \Gamma$ as a root for $F(1,\omega)$. 
 The first part of Theorem~\ref{bmthmcoalesce} along with the fact that existence of geodesic bigons is a zero probability event shows that
	$$F(1,\omega) = \cup_{g \in G} [1,g]_\omega$$ is a tree for a full measure subset $\Omega_1 \subset \Omega$.  This is the analog of Corollary~\ref{cor-btree} and is already incorporated into Definition~\ref{def-fwdtree}.

The proof of Lemma~\ref{lem-backgeo} now shows that each infinite geodesic ray in
	$F(1,\omega)$ starting at 1 has a unique point of accumulation on $\pG$.
Complete ends (in the sense of Definition~\ref{def-ftreefullend}) now follows.
\end{proof}
}
We proceed with the analogous statements for backward trees. This is somewhat more involved.
The following definition demands that the backward tree $T(\xi,\omega)$ (Definition~\ref{def-btree}) has no `effective' dead-ends, and moreover extends backwards in all possible directions.
\begin{defn}\label{def-btreefullend}
We shall say that a backward tree $T(\xi,\omega)$  has 
\emph{complete ends} if for all $\xi'\neq \xi$, there exists 
a bi-infinite geodesic $(\xi',\xi)_\om \subset T(\xi,\omega) $.
\end{defn}

{

There are a couple of equivalent ways of saying when a backward tree $T(\xi,\omega)$  has 
{complete ends}. Let us use 
 the notation  $(\xi',p]_\om$ 
to  suggest that the semi-infinite geodesic  $(\xi',p]_\om$ is parametrized by $(-\infty, 0]$  (note that 
$(\xi',p]_\om$ equals $[p, \xi')_\om$ as unparametrized geodesics). 
Next, fix a root vertex $o \in \Gamma$. 
Then 
we observe that $T(\xi,\omega)$  has 
{complete ends} if and only if for all $\xi'\neq \xi$, there exists  
$p \ in [o, \xi)_\omega$ and some $\omega-$geodesic $(\xi',p]_\om \subset
T(\xi,\omega)$. Further, this is equivalent to the existence of  a sequence $x_{n}\to \xi'$ such that $[x_n,\xi)_{\omega} \subset [x_m,\xi)_{\omega}$ for $n<m$. This forces the increasing limit 
$\cup_n [x_n,\xi)_{\omega}$ to be a bi-infinite geodesic $(\xi',\xi)_\om$.
Further, by definition of $T(\xi,\omega)$, $(\xi',\xi)_\om \subset T(\xi,\omega) $.

\begin{theorem}\label{thm-bi} There exists a full measure subset $\Omega_2 \subset \Omega$ such that  for all $\om \in \Omega_2$,
and all $\xi_1 \neq \xi_2 \in \partial G$,
 there exists a bi-infinite $\omega-$geodesic $(\xi_1$,$\xi_2)_\omega$.
\end{theorem}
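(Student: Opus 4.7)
The plan is to construct $(\xi_1,\xi_2)_\omega$ as a subsequential limit of finite $\omega$-geodesics $[x_n,y_n]_\omega$ with $x_n \to \xi_1$ and $y_n \to \xi_2$, using Proposition~\ref{prop-effbt} to prevent the limit from escaping to infinity in the middle. I define $\Omega_2$ as the intersection of the following full-measure events: uniqueness of $\omega$-geodesics between every pair of vertices (by non-atomicity of $\rho$); for every $w \in G$, every $\omega$-geodesic ray from $w$ has a unique accumulation point in $\pG$ (Theorem~\ref{bmdirexists} applied to each of the countably many $w \in G$); $R_o(\omega) < \infty$ for every $o \in G$ (Proposition~\ref{prop-effbt}); and, for a fixed small $\epsilon > 0$, $R_o(\omega) \leq \epsilon\, d(1,o)$ for all but finitely many $o \in G$. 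For the last event, I use that the tail exponent $c$ in Proposition~\ref{prop-effbt} can be taken arbitrarily large, so choosing $c > \epsilon^{-1}\log D$ yields $\sum_{o \in G} \P(R_o \geq \epsilon\, d(1,o)) \leq \sum_k D^k e^{-c\epsilon k} < \infty$, and Borel--Cantelli applies. Being a countable intersection of full-measure events, $\Omega_2$ has full measure.

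Fix $\omega \in \Omega_2$ and $\xi_1 \neq \xi_2 \in \pG$. Choose a bi-infinite word geodesic $\gamma$ from $\xi_1$ to $\xi_2$, a vertex $o \in G$ within $O(\delta)$ of $\gamma$, and sequences $x_n \to \xi_1$, $y_n \to \xi_2$ in $G$. By $\delta$-hyperbolicity, for large $n$ the word geodesic $[x_n,y_n]$ passes within $O(\delta)$ of $o$, so Proposition~\ref{prop-effbt} supplies $w_n \in [x_n,y_n]_\omega \cap B_{R_o(\omega) + O(\delta)}(o)$. This ball is finite, so after extracting a subsequence I may assume $w_n = w$ is constant. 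Splitting $[x_n,y_n]_\omega = [x_n,w]_\omega \cup [w,y_n]_\omega$, local finiteness of $\Gamma$ and a diagonal argument yield a further subsequence along which $[w,y_n]_\omega$ converges, in the sense of stabilizing initial segments, to a semi-infinite $\omega$-geodesic $\sigma_2$ from $w$, and $[x_n,w]_\omega$ (reversed) converges similarly to a ray $\sigma_1$ from $w$. Uniqueness of $\omega$-geodesics ensures that every finite sub-path of $\sigma_1 \cup \sigma_2$ coincides with a sub-path of some $[x_n,y_n]_\omega$, so $\sigma_1 \cup \sigma_2$ is itself a bi-infinite $\omega$-geodesic through $w$.

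To identify the endpoints of $\sigma_1 \cup \sigma_2$ in $\pG$, pick vertices $o_k \in G$ within $O(\delta)$ of $\gamma$, lying on the $\xi_2$-side of $o$, with $d(1,o_k) \to \infty$ and $o_k \to \xi_2$. For each $k$, the word geodesic $[x_n,y_n]$ eventually passes near $o_k$, so Proposition~\ref{prop-effbt} supplies $q_k^{(n)} \in [w,y_n]_\omega \cap B_{R_{o_k}(\omega) + O(\delta)}(o_k)$; a further diagonal extraction makes $q_k^{(n)} = q_k$ constant in $n$. Since the $d_\omega$-distance from $w$ to $q_k$ is uniformly bounded in $n$, initial-segment convergence forces $q_k \in \sigma_2$. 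By the Borel--Cantelli event, $R_{o_k}(\omega) \leq \epsilon\, d(1,o_k)$ for all large $k$, so $d(q_k, o_k) \leq \epsilon\, d(1,o_k) + O(\delta)$; a standard Gromov-product computation then gives $(q_k \mid o_k)_1 \geq (1-\epsilon)\, d(1,o_k) - O(\delta) \to \infty$. Combined with $o_k \to \xi_2$, this forces $q_k \to \xi_2$ in $\Gamma \cup \pG$. Thus $\xi_2$ is an accumulation point of $\sigma_2$, and by the uniqueness-of-direction event it is the only one; symmetrically, $\sigma_1$ has direction $\xi_1$.

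The main obstacle is that $\Omega_2$ must be chosen independently of the \emph{uncountable} pair $(\xi_1,\xi_2)$: we cannot pick the reference vertex $o$ or the sequence $o_k$ in advance on a per-direction basis and then take a countable union of exceptional sets. The Borel--Cantelli step resolves this by supplying uniform control $R_o(\omega) \leq \epsilon\, d(1,o)$ over all $o \in G$ simultaneously. This uniformity hinges crucially on the fact that the tail exponent in Proposition~\ref{prop-effbt} may be chosen to exceed $\log D$, the exponential volume growth rate of $\Gamma$, which is exactly the interplay between the FPP tail bound and the geometry of the Cayley graph that drives the argument.
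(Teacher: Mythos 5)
Your proof is correct and reaches the same conclusion as the paper's, but it is organized somewhat differently, particularly in how it identifies the endpoints of the limit geodesic. The paper parametrizes pairs $(\xi_1,\xi_2)$ by the threshold $C$ on the Gromov product $\langle\xi_1,\xi_2\rangle_1$, defines a full-measure set $\Omega(C)$ via Lemma~\ref{lem-effbt} for each $C\in\natls$, and then takes $\Omega_2=\bigcap_C\Omega(C)$; the direction-identification step is delegated to Lemma~\ref{lem-backgeo}, whose hypothesis $\omega\in\AAA_\infty$ is itself a Borel--Cantelli event established in Lemma~\ref{lem-intnAn}. You instead center the construction at a vertex $o$ chosen near the word bigeodesic $(\xi_1,\xi_2)$ (rather than always at $1$), and absorb the uniformity issue into a single Borel--Cantelli event ``$R_o(\omega)\le\epsilon\,d(1,o)$ for all but finitely many $o$''; you then identify the endpoints by a direct Gromov-product estimate on the points $q_k$. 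The two routes are morally equivalent --- both exploit the fact that the tail exponent in Lemma~\ref{lem-effbt} can beat the volume growth rate $\log D$ --- but yours avoids routing through Lemma~\ref{lem-backgeo} and is arguably more self-contained, at the modest cost of re-deriving the ``cone'' control that the paper packages into $\AAA_\infty$.

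Two small points worth tightening. First, in the sentence ``Since the $d_\omega$-distance from $w$ to $q_k$ is uniformly bounded in $n$, initial-segment convergence forces $q_k\in\sigma_2$,'' the relevant justification is not the boundedness of $d_\omega(w,q_k)$ per se but that $q_k$ lies in $[w,y_n]_\omega$ for all $n$ in your subsequence; since $[w,q_k]_\omega$ is then the (fixed) initial segment of $[w,y_n]_\omega$ by uniqueness, it is an initial segment of $\sigma_2$. You should apply Proposition~\ref{prop-effbt} to the pair $(w,y_n)$ rather than $(x_n,y_n)$ to land the point $q_k^{(n)}$ in $[w,y_n]_\omega$ directly; the word geodesic $[w,y_n]$ does pass within $O(\delta)$ of $o_k$ for large $n$, so this is legitimate, but the argument as written says ``the word geodesic $[x_n,y_n]$ eventually passes near $o_k$,'' which gives a point of $[x_n,y_n]_\omega$ near $o_k$ without immediately placing it on the $\xi_2$-side of $w$. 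Second, if one prefers not to rely on this placement at all, a slightly more robust variant is available: $q_k\in\sigma_1\cup\sigma_2$ and $q_k\to\xi_2$ make $\xi_2$ an accumulation point of the full bigeodesic; combined with the symmetric argument for $\xi_1$ and the fact that $\sigma_1,\sigma_2$ each have a unique accumulation point in $\pG$ (and these two points are distinct since $\sigma_1\cup\sigma_2$ is a geodesic), one concludes that the accumulation set of $\sigma_1\cup\sigma_2$ is exactly $\{\xi_1,\xi_2\}$. Neither point is a substantive gap.
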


\begin{proof}
Fix $C \in \natls$. Let $\xi_1, \xi_2 \in \pG$ be any pair of points satisfying
$\langle \xi_1, \xi_2 \rangle_1 \leq C$. Also, let $x_{1n} \to \xi_1$ and 
 $x_{2n} \to \xi_2$ be any two sequences in $\Gamma$ such that 
 $\langle x_{1n}, x_{2m} \rangle_1 \leq 2C$ for all $m, n$. This can always be arranged by dropping the first few terms in $\{x_{1n}\}$ and 
 $\{x_{2m}\}$ if necessary.

  Let $A(C, R)$ denote the event that for some  choice of $\xi_1, \xi_2 \in \pG$ with $\langle \xi_1, \xi_2 \rangle_1 \leq C$ and any  two sequences $\{x_{1n}\}$ and 
  $\{x_{2m}\}$ as above, the following holds:   there exists $w \in [x_{1n}, x_{2m}]_\omega$ such that
  \begin{enumerate}
  \item the nearest point projection of $w$ onto (a deterministic geodesic) $[x_{1n}, x_{2m}]$ lies in $B_{2C} (1)$,
  \item $d(w,1) \geq 2C + R$.
  \end{enumerate}
  
  Then Lemma~\ref{lem-effbt} shows that  given $c>0$, there exists $R_0 >0$ such that for $R \geq R_0$, 
 $\, \P\big(A(C, R)\big) \leq e^{-cR}$. Hence, there exists a full measure set $\Omega (C) \subset \Omega$ such that for all $\omega \in \Omega (C) $, there exists $R(\omega) \in (0,\infty)$ (see Proposition~\ref{prop-effbt}) such that for all $\xi_1, \xi_2, x_{1n}, x_{2m}$ as above, $[x_{1n}, x_{2m}]_\omega$ passes through the 
 $R(\omega)-$ball about $1 \in \Gamma$. Since $R(\omega)$ is finite, 
 there exists a point $w(\xi_1,\xi_2)$ contained in the $R(\omega)-$ball,
 and  a subsequence  $[x_{1n}', x_{2n}']_\omega$ such that 
 $w(\xi_1,\xi_2) \in [x_{1n}', x_{2n}']_\omega$. 
 
 Now, by Lemma~\ref{lem-backgeo}, 
 \begin{enumerate}
 \item $  [w(\xi_1,\xi_2), x_{2n}']_\omega$
 converges to a semi-infinite geodesic $  [w(\xi_1,\xi_2), \xi_2)_\omega$, and 
 \item $  [x_{1n}', w(\xi_1,\xi_2)]_\omega$ converges to a semi-infinite geodesic $  (\xi_1, w(\xi_1,\xi_2)]_\omega$,
 \end{enumerate} 
 so that $[x_{1n}', x_{2n}']_\omega$  converges to a bi-infinite geodesic
  $  (\xi_1, ,\xi_2)]_\omega$. 
  
  Finally, let $C$ range over the natural numbers $\natls$, and set
  $\Omega_2 = \cap_C \Omega (C) $ to obtain the required  full measure subset.
  \end{proof}

As a consequence of Theorem~\ref{thm-bi}, we have the following:
\begin{theorem}\label{thm-btree}
	For all $\xi \in \partial G$,
there exists a full measure subset $\Omega_\xi\subset\Omega$ such that  for all $\om \in \Omega_\xi$,  the backward tree
$T(\xi,\omega)$ has complete ends.
\end{theorem}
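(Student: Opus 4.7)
The plan is to deduce Theorem~\ref{thm-btree} directly from Theorem~\ref{thm-bi} together with the coalescence/uniqueness statement of Theorem~\ref{bmthmcoalesce}. Fix $\xi\in\partial G$. The natural candidate for the full-measure set is
\[
\Omega_\xi \;:=\; \Omega_2 \,\cap\, \Omega_\xi^{\mathrm{coal}},
\]
where $\Omega_2$ is the set from Theorem~\ref{thm-bi} on which bi-infinite $\omega$-geodesics exist between every pair of distinct boundary points, and $\Omega_\xi^{\mathrm{coal}}$ is the full-measure subset from Theorem~\ref{bmthmcoalesce} on which every $o\in\Gamma$ admits a unique semi-infinite $\omega$-geodesic $[o,\xi)_\omega$ and all such rays coalesce. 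Both sets have full $\P$-measure, so $\Omega_\xi$ does as well.

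Now fix $\omega\in\Omega_\xi$ and $\xi'\neq\xi$. By Theorem~\ref{thm-bi}, there exists some bi-infinite $\omega$-geodesic $(\xi',\xi)_\omega$; parametrize its vertices as $\{v_n\}_{n\in\mathbb{Z}}$ with $v_n\to\xi$ as $n\to+\infty$ and $v_n\to\xi'$ as $n\to-\infty$. The heart of the argument is the observation that, for every $n\in\mathbb{Z}$, the forward half-ray of this bi-infinite geodesic starting at $v_n$ is a semi-infinite $\omega$-geodesic emanating from $v_n$ with direction $\xi$ (its only boundary accumulation point is $\xi$, since $v_n\to\xi$). By the uniqueness clause of Theorem~\ref{bmthmcoalesce}, this forward half-ray must coincide with $[v_n,\xi)_\omega$, which lies in $T(\xi,\omega)$ by definition.

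Taking the union over all $n\in\mathbb{Z}$ of the rays $[v_n,\xi)_\omega$ recovers the entire bi-infinite geodesic:
\[
(\xi',\xi)_\omega \;=\; \bigcup_{n\in\mathbb{Z}} [v_n,\xi)_\omega \;\subset\; T(\xi,\omega).
\]
Since $\xi'\neq\xi$ was arbitrary, $T(\xi,\omega)$ has complete ends in the sense of Definition~\ref{def-btreefullend}, completing the proof.

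There is no real obstacle here: the substantive work is already carried out in Theorem~\ref{thm-bi}, whose proof via Lemma~\ref{lem-effbt} and Lemma~\ref{lem-backgeo} is the difficult part. The only point requiring care is to note that the bi-infinite geodesic produced by Theorem~\ref{thm-bi} must \emph{a posteriori} lie in $T(\xi,\omega)$, which is forced by the uniqueness of semi-infinite $\omega$-geodesics in direction $\xi$ on $\Omega_\xi^{\mathrm{coal}}$; this is why intersecting $\Omega_2$ with $\Omega_\xi^{\mathrm{coal}}$ (rather than just taking $\Omega_2$) is essential.
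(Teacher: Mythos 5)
Your proof is correct and takes essentially the same route as the paper's: intersect the full-measure set on which the backward tree is well-defined with the set $\Omega_2$ from Theorem~\ref{thm-bi}, then argue that the bi-infinite geodesic $(\xi',\xi)_\omega$ produced by Theorem~\ref{thm-bi} lies inside $T(\xi,\omega)$. The paper's published proof is extremely terse — it simply writes ``Observing that $(\xi_1,\xi)_\omega \subset T(\xi,\omega)$ \ldots'' without spelling out why this containment holds — and relies on the discussion preceding the theorem statement (about increasing sequences of rays $[x_n,\xi)_\omega$ whose union is a bi-infinite geodesic). You supply exactly the missing justification: each forward half-ray of $(\xi',\xi)_\omega$ from a vertex $v_n$ is a semi-infinite $\omega$-geodesic with direction $\xi$, hence by uniqueness on $\Omega_\xi^{\mathrm{coal}}$ it must coincide with $[v_n,\xi)_\omega \subset T(\xi,\omega)$, and the union over $n$ recovers the whole bi-infinite geodesic. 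One small point worth being explicit about: the claim that the forward half-ray from $v_n$ has \emph{unique} accumulation point $\xi$ is not quite a consequence of ``$v_n\to\xi$'' alone, but of the way Theorem~\ref{thm-bi} constructs $(\xi',\xi)_\omega$ as $(\xi',w]_\omega \cup [w,\xi)_\omega$ with $[w,\xi)_\omega$ having direction $\xi$ via Lemma~\ref{lem-backgeo}. With that minor clarification your argument is complete and, if anything, more transparent than the paper's.
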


\begin{proof} Let $\Omega (\xi)$ denote {the}
  full measure subset of $\Omega$ furnished by Corollary~\ref{cor-btree} so that for all $\omega \in \Omega (\xi)$, the backward tree
 $T(\xi,\omega)$ is well-defined (see  Definition~\ref{def-btree}).

Now, set $\Omega_\xi = \Omega (\xi) \cap \Omega_2$, where $\Omega_2$ is as in 
Theorem~\ref{thm-bi}. Observing that $(\xi_1,\xi)_\omega \subset T(\xi,\omega)$
for all $\om \in \Omega_\xi$, it follows that $T(\xi,\omega)$ has complete ends.
\end{proof}
}

\section{Exceptional directions: geometric and topological estimates}\label{sec-direx}
In this section and the next, we shall deal with exceptional directions for random forward and backward trees. The proofs are almost identical and need only minor terminological change. We refer the reader to Section~\ref{sec-treeqns} for a conceptual framework using KAN decompositions of semi-simple Lie groups that explains 
why the proofs  for random forward and backward trees mostly agree.
We continue with Assumption~\ref{assume-subexp} in this section. Given the existence of forward and backward random trees with complete ends from Theorems~\ref{prop-ftree} and ~\ref{thm-btree}, the purpose of this section is to investigate if \emph{exceptional directions} exist in $F(1,\omega)$ and $T(\xi,\omega)$.

\subsection{Random Cannon-Thurston Maps}\label{sec-ct}
The notion of a Cannon-Thurston map originated in work of Cannon and Thurston in \cite{CT,CTpub}, and was extended to the  context of
(Gromov) hyperbolic metric spaces in \cite{mitra-trees} (see \cite{mahan-icm} for a survey of developments). {Recall that a function $M: \natls \to \natls$ is said to be \emph{proper} if $n \to \infty$ implies
$M(n) \to \infty$.} We start with the following.

\begin{defn}\label{def-proper}
	We say that an embedding $i:(Y,y_0,d_Y) \to (X,x_0, d_X)$ of metric spaces with  base-points $y_0, x_0$ is proper if  there exists a proper function $M: \natls \to \natls$ such that  if $d_Y(y_0,y) \geq n$, then $d_X(x_0,i(y)) \geq M(n)$.
\end{defn}

\begin{defn}\label{def-ct}
Let $(X, d_X)$ and  $(Y, d_Y)$  be  hyperbolic metric spaces and let $i: (Y,y_0) \to (X,x_0)$ denote a proper embedding. Let $\partial X, \, \partial Y$ be their (Gromov) boundaries.
Also, let $\hhat X$ and $\hhat Y$ denote their Gromov compactifications.
We say that the triple $(X,Y,i)$ admits a Cannon-Thurston map, if
$i$ extends continuously to 
$\hhat i: \hhat Y \to \hhat X$. The continuous extension, if it exists, is referred to as a \emph{Cannon-Thurston map} for 
$i: Y \to X$.
\end{defn}

 In the Lemma below, $[a,b]_Y$ will denote a geodesic in $Y$ joining $a, b \in Y$ and $[i(a),i(b)]_X$ will denote a geodesic in $X$ 
 joining $i(a),i(b) \in X$.
 \begin{lemma}\cite[Lemma 3.1]{mahan-icm} \label{lemma-ctsuff} Let $i : (Y,y) \to (X,x)$ be a proper map between (based) Gromov-hyperbolic spaces.
 	A  Cannon-Thurston map
 	$\hat{i}: \hhat{Y} \to \hhat{X}$
 	exists if  and only if the following holds: \\
 	There exists a non-negative proper function  $M: \natls \to \natls$, such that 
 	if $[a,b]_Y$  lies outside an $N$-ball
 	around $y$ in $Y$, then $[i(a),i(b)]_X$
 	lies outside the $M(N)$-ball around 
 	$x=i(y)$ in $X$.
 \end{lemma}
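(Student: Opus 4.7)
The proof will proceed via the standard dictionary in Gromov hyperbolic geometry between Gromov products at the base-point and the distance from the base-point to a geodesic, together with the characterization of boundary points as equivalence classes of sequences whose pairwise Gromov products tend to infinity. Specifically, for any $\delta$-hyperbolic $(Z,z)$, one has $\langle u,v\rangle_z = d_Z(z,[u,v]_Z) \pm O(\delta)$, and a sequence $z_n$ converges in $\hhat Z$ to a boundary point precisely when $\langle z_n, z_m\rangle_z \to \infty$. I would invoke these facts freely throughout.

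For the ``if'' direction, the plan is to fix $y_n \to \xi \in \partial Y$, so that $\langle y_n, y_m\rangle_y \to \infty$, equivalently the geodesics $[y_n, y_m]_Y$ eventually avoid each ball $B_N(y)$. The hypothesis then yields that $[i(y_n), i(y_m)]_X$ lies outside $B_{M(N)}(x)$, so $\langle i(y_n), i(y_m)\rangle_x \to \infty$; together with properness of $i$ (which gives $d_X(x, i(y_n))\to\infty$ when $d_Y(y, y_n)\to\infty$), this shows $i(y_n)$ converges in $\hhat X$ to a point $\hat i(\xi) \in \partial X$. Independence of the choice of sequence I would obtain by interleaving two sequences converging to $\xi$ and running the same argument; continuity of $\hat i$ on $\hhat Y$ follows by the analogous interleaving argument applied to $\xi_n, \xi$ with $\xi_n \to \xi$ in $\hhat Y$.

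For the ``only if'' direction, I would argue by contradiction. Assuming $\hat i$ exists while the stated geometric condition fails, there must be a constant $M_0$ and sequences $a_n, b_n \in Y$ with $d_Y(y, [a_n, b_n]_Y) \to \infty$ and yet $[i(a_n), i(b_n)]_X \cap B_{M_0}(x) \neq \emptyset$. Since the geodesic $[a_n, b_n]_Y$ passes through its endpoints, both $d_Y(y, a_n)$ and $d_Y(y, b_n)$ diverge; after passing to a subsequence one may assume $a_n \to \alpha$, $b_n \to \beta$ in $\partial Y$. The divergence of $d_Y(y, [a_n, b_n]_Y)$ forces $\langle a_n, b_n\rangle_y \to \infty$, hence $\alpha = \beta$. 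Continuity of $\hat i$ then forces $i(a_n), i(b_n) \to \hat i(\alpha)$ in $\hhat X$; properness of $i$ excludes the possibility $\hat i(\alpha) \in X$ (it would contradict $d_X(x, i(a_n)) \to \infty$), so $\hat i(\alpha) \in \partial X$, and consequently $\langle i(a_n), i(b_n)\rangle_x \to \infty$. But the assumption that the geodesic $[i(a_n), i(b_n)]_X$ comes within $M_0$ of $x$ forces $\langle i(a_n), i(b_n)\rangle_x \leq M_0 + O(\delta_X)$, a contradiction.

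The principal subtlety I anticipate is the bookkeeping in the ``only if'' direction, where one must rule out the possibility that $\alpha$ or $\beta$ lies in $Y$ itself (rather than $\partial Y$) and that $\hat i(\alpha)$ lies in $X$ rather than $\partial X$; the observation that a geodesic segment realizes its minimum distance from $y$ only if both endpoints are far from $y$, together with the hypothesis that $i$ is proper in the sense of Definition \ref{def-proper}, handles both points. Beyond this I expect no hidden difficulty, as the core of the argument is merely a translation between the two equivalent characterizations of boundary convergence, one via Gromov products and the other via geodesic avoidance of the base-point.
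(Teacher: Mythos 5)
The paper itself does not prove this lemma; it is cited directly from the reference \cite{mahan-icm} (see also the original source \cite{mitra-trees}). Your proposed proof is essentially the standard argument given in those references: both directions are dictionary translations between the two characterizations of convergence to $\partial Y$ (via Gromov products, and via geodesics escaping balls around the base-point), with the inequality $\langle u,v\rangle_z \le d(z,[u,v]_Z) \le \langle u,v\rangle_z + O(\delta)$ doing the work, properness of $i$ ensuring that limits of $i(y_n)$ land in $\partial X$ rather than $X$, and the interleaving trick giving well-definedness and continuity. The ``only if'' argument by contradiction is likewise the standard one. The one hypothesis you use tacitly is sequential compactness of $\widehat Y$ in passing to the convergent subsequences $a_n \to \alpha$, $b_n \to \beta$; this requires $Y$ to be a proper hyperbolic space (as it is in the cited source and in every application in this paper, where $Y$ is a locally finite tree and $X$ a Cayley graph), so it is worth stating that assumption explicitly rather than invoking compactness silently. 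With that caveat flagged, the proposal is correct and takes the same route as the cited proof.
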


Note that
in Lemma~\ref{lemma-ctsuff}, the preliminary assumption is that $i$ is   a proper map of based spaces; thus we demand  that $d_Y(y_1,y_0) \to \infty$ implies
$d_X(i(y_1),x_0) \to \infty$.

\begin{defn}\label{def-2proper}
	We say that an embedding $i: (Y,y_0, d_Y) \to (X,x_0, d_X)$ between hyperbolic spaces with base-points is 2-proper if  there exists a proper function $M': \natls \to \natls$ such that for all $y_1, y_2 \in Y$,
	if $d_Y(y_0, [y_1,y_2]_Y) \geq n$, then $d_X(x_0, [i(y_1),i(y_2)]_X) \geq M'(n)$.
\end{defn}

{\begin{rmk}\label{rmk-2proper}
		Note that Definition~\ref{def-proper} gives only a {proper} function
		$M(n)$ comparing distances between pairs of points  measured with respect to $d_X$ and $d_Y$.
		
		On the other hand, in Definition~\ref{def-2proper}, the image 
		$i([y_1,y_2]_Y)$  of the geodesic 
		$[y_1,y_2]_Y$ under $i$, and the geodesic $[i(y_1),i(y_2)]_X$  have nothing really
		to do with each other. Properness in the sense of Definition~\ref{def-proper} will only control the distance of $i([y_1,y_2]_Y)$
		from $x_0$, but will say nothing about $d_X(x_0,[i(y_1),i(y_2)]_X)$.
		
		Note also that Lemma~\ref{lemma-ctsuff} says equivalently that  an embedding $i: (Y,y_0,d_Y) \to (X,x_0,d_X)$ between hyperbolic spaces admits a Cannon-Thurston map if and only the embedding is 2-proper.
\end{rmk}}

\begin{defn}\label{def-randomtree}
Let $(T(\xi,\omega),d^s_\omega)$ denote the backward random tree with the intrinsic simplicial metric $d^s_\omega$ inherited from $\Gamma$, i.e.\ $d^s_\omega (x,y)$ equals the minimum length of an edge path (with unit weights) 
in $T(\xi,\omega)$  from $x$ to $y$. (To prevent confusion with the random metric $d_\omega$  on $\Gamma$, we use the superscript $s$ to emphasize that we are using the \emph{simplicial metric} on the random 
tree  $T(\xi,\omega)$.)

Similarly, let $(F(1,\omega),d^s_\omega)$ denote the forward random tree with the intrinsic simplicial metric.

For both $(T(\xi,\omega),d^s_\omega)$ and  $(F(1,\omega),d^s_\omega)$, we set $1$ to be a preferred base-point.
\end{defn}

\begin{rmk}
The  choice of base point  in Definition~\ref{def-randomtree} is arbitrary, but it is important that a choice be made so as to apply Lemma~\ref{lemma-ctsuff} in Theorem~\ref{thm-12proper} below.
\end{rmk}

{

Below, $(F(1,\omega),d^s_\omega)$ and $(T(\xi,\omega),d^s_\omega)$ will take the place of $(Y,d_Y)$ in Lemma~\ref{lemma-ctsuff} above
and $\Gamma$ with its word metric will take the place of $X$.
{ As noted above, 1 will be the preferred base-point.}

Let  $\Omega_1 \subset \Omega$ be the full measure subset for which there are no finite $\omega-$geodesic bigons, i.e.\ it is the subset of $\Omega$ where there exist unique $\omega-$geodesics between any two points in $\Gamma$. 

\begin{lemma}\label{lem-proper} Let  $\Omega_1 \subset \Omega$ be as above.\\
	1) For $\omega \in \Omega_1$
the forward random tree $(F(1,\omega), d^s_\omega)$	 is properly embedded in $(\Gamma,d)$ with respect to the base-point 1. \\
	2) Let $\Omega_\xi$ be as in Theorem~\ref{thm-btree}.
Then
for all $\xi \in \pG$ and $\omega$ in  $\Omega_1\cap \Omega_\xi$, {$(T(\xi,\omega),d^s_\omega)$}  is properly embedded in $(\Gamma,d)$ with respect to the base-point 1. 
\end{lemma}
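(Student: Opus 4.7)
The plan is to observe that properness of both embeddings is essentially automatic, following from the local finiteness of $\Gamma$ combined with the fact that tree-paths from the base-point $1$ to any individual vertex have finite simplicial length. No probabilistic estimates or Cannon-Thurston machinery are needed at this stage; these will enter only later when upgrading to 2-properness in the sense of Definition~\ref{def-2proper}.

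For Part 1, I would fix $\omega \in \Omega_1$ and observe that for any $y \in F(1,\omega)$, the tree-path from $1$ to $y$ is exactly the (unique, finite) $\omega$-geodesic $[1,y]_\omega$, so $d^s_\omega(1,y)$ is a finite integer. Given any $R \in \natls$, the ball $B_R(1) \subset \Gamma$ is finite by local finiteness, hence $F(1,\omega) \cap B_R(1)$ is finite, and
\[
\phi_\omega(R) \ := \ \max\bigl\{\,d^s_\omega(1,y) : y \in F(1,\omega) \cap B_R(1)\,\bigr\}
\]
is finite for every $R$. Setting $M_\omega(n) := \min\{R : \phi_\omega(R) \geq n\}$, the condition $d^s_\omega(1,y) \geq n$ rules out $y \in B_{M_\omega(n)-1}(1)$, hence forces $d(1,y) \geq M_\omega(n)$. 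Properness of $M_\omega$ (i.e.\ $M_\omega(n) \to \infty$) is then immediate from $\phi_\omega(R) < \infty$ for each $R$, while the finiteness of $M_\omega(n)$ for each $n$ is ensured by the infiniteness of $F(1,\omega)$ (which contains an $\omega$-geodesic ray by Theorem~\ref{prop-ftree}).

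For Part 2, the only new step is to verify that tree-paths in $T(\xi,\omega)$ from $1$ to arbitrary vertices remain finite. This follows directly from coalescence (Theorem~\ref{bmthmcoalesce}): for any $y \in T(\xi,\omega)$, if $p \in [1,\xi)_\omega \cap [y,\xi)_\omega$ denotes the coalescence point, then the unique tree-path from $1$ to $y$ is the concatenation of a finite sub-geodesic of $[1,\xi)_\omega$ from $1$ to $p$ with a finite sub-geodesic of $[y,\xi)_\omega$ from $p$ to $y$ (traversed in reverse). Both pieces are finite $\omega$-geodesic segments, so $d^s_\omega(1,y) < \infty$. With this established, the argument of Part~1 applies verbatim with $F(1,\omega)$ replaced by $T(\xi,\omega)$, using Theorem~\ref{thm-btree} in place of Theorem~\ref{prop-ftree} to guarantee that $T(\xi,\omega)$ is infinite.

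I do not foresee any real obstacle here: the lemma is a formal consequence of local finiteness of $\Gamma$, uniqueness of $\omega$-geodesics on $\Omega_1$, and the coalescence machinery already available from \cite{BM19} and the preceding subsection. The substantive probabilistic work is deferred to the next step, where the quantitative estimate in Proposition~\ref{prop-effbt} will be needed to upgrade this properness to the 2-properness demanded by Lemma~\ref{lemma-ctsuff}.
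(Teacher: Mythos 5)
Your proof is correct and takes essentially the same approach as the paper: both rest on the finiteness of $B_R(1)$, the fact that $d^s_\omega(1,y)$ is finite for each individual $y$ (via uniqueness of finite $\omega$-geodesics on $\Omega_1$ for the forward tree, and via the tripod structure $[1,\xi)_\omega \cup [y,\xi)_\omega$ inside the tree $T(\xi,\omega)$ for the backward tree), and hence the finiteness of $\max_{y \in B_R(1)} d^s_\omega(1,y)$ for each $R$. You spell out the construction of the properness function $M_\omega$ a bit more explicitly than the paper does, and you lean on Theorem~\ref{prop-ftree}/\ref{thm-btree} for infiniteness where the simple observation that $F(1,\omega)$ (resp.\ $T(\xi,\omega)$) is unbounded in $\Gamma$ would already suffice, but neither of these amounts to a substantive difference.
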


\begin{proof} {In both cases ,we need to show that 
$\max_{w\in B_n(1)} d_{\om}^{s}(1,w)<N(\omega)<\infty$. Since it is only a finite maximum the result follows in the case of $F(1,\omega)$ from the fact that for each  $w\in B_n(1)$, there is a unique (finite length) geodesic from $1$ to $w$. 

In the case of $T(\xi,\omega)$, $[1,\xi)_\om \cup [w, \xi)_\om$ is a subtree
of $T(\xi,\omega)$; so that  there exists $z\in \Gamma$ such that $d_{\om}^s(1,w)=d_{\om}^s(1,z)+d_{\om}^s(z,w)$.
Again,  taking a finite maximum  over $w\in B_n(1)$ we obtain a {proper} function as per Definition~\ref{def-proper}. (We  note in conclusion that the {proper} function depends on $\omega$.) }
\end{proof}

{
\begin{rmk}
Note that the quantification in Lemma~\ref{lem-proper} is such that the full measure subset $\Omega_1$ is independent of $\xi$ (this is irrelevant for forward trees anyway). In particular, we are not using the conclusion of Theorem~\ref{thm-btree} for this part. For $\omega \in \Omega_1$, the union
$\cup_g [g, \xi)_\omega$ does not have finite $\omega-$geodesic bigons. However, this
only ensures that $\cup_g [g, \xi)_\omega$ is a forest; in particular it allows
more than one disjoint $\omega-$geodesic to accumulate at $\xi$. To upgrade 
$\cup_g [g, \xi)_\omega$ to a tree, we need to intersect $\Omega_1$ with $\Omega_{\xi}$.

Note also that in both  cases of Lemma~\ref{lem-proper}, the embedding in question is given by the identity on the vertices of $\Gamma$. 
\end{rmk}
}

\begin{defn}\label{def-AC}
{Recall from Proposition \ref{prop-effbt} that $R(\omega)=R_{1,C}(\omega)$ denotes the smallest integer such that for any $x,y$ with $[x,y]\in N_C(1)$, $[x,y]_{\omega}$ intersects $N_{R_{1,C}(\omega)}(1)$. Let $E_{\infty}(C)$ denote the event that $R_{1,C}(\omega)=\infty$. We note that Proposition \ref{prop-effbt} implies that $E_{\infty}(C)$ is a zero probability event.}
\end{defn}

In Theorem~\ref{thm-12proper} below, we shall use $i_\omega^T$ to denote the inclusion of $T(\xi,\omega)$ into $\Gamma$ for any $\xi$ (though strictly speaking, the inclusion map also depends on $\xi$).

\begin{theorem}\label{thm-12proper} We fix the base-point $1$.
There exists a full measure subset $\Omega_0 \subset \Omega$ such that
for all  $\omega \in \Omega_0$, 
\begin{enumerate}
	\item $i_\omega^F: F(1,\omega) \to \Gamma$ is  proper in the sense of Lemma~\ref{lem-proper}, 
	\item $i_\omega^F: F(1,\omega) \to \Gamma$ is uniformly 2-proper in the sense of Definition~\ref{def-2proper}.
\end{enumerate}
Hence for  all  $\omega \in \Omega_0$, 
$i_\omega^F: F(1,\omega) \to \Gamma$ admits a Cannon-Thurston map. 

Further, given $\xi \in \pG$, and all $\omega \in \Omega_0 \cap \Omega_{\xi}$, 
\begin{enumerate}
\item $i_\omega^T: T(\xi,\omega) \to \Gamma$ is  proper in the sense of Lemma~\ref{lem-proper}, 
\item $i_\omega^T: T(\xi,\omega) \to \Gamma$ is  2-proper in the sense of Definition~\ref{def-2proper}.
\end{enumerate}
Hence for  all $\xi \in \pG$, and $\omega \in \Omega_0 \cap \Omega_{\xi}$, 
$i_\omega^T: T(\xi,\omega) \to \Gamma$ admits a Cannon-Thurston map.
Finally, (after possibly passing to a further full measure subset)
 the (boundary values of the) Cannon-Thurston maps $\partial i_\omega^F: \partial F(1,\omega) \to \pG$ and $\partial i_\omega^T: \partial T(\xi,\omega) \to \pG$ are surjective.
\end{theorem}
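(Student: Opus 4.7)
The plan is to verify the 2-properness criterion of Lemma~\ref{lemma-ctsuff} for the identity embeddings $i_\omega^F$ and $i_\omega^T$ using Proposition~\ref{prop-effbt}, and then to deduce surjectivity of the boundary Cannon-Thurston maps from the complete-ends property established in Theorems~\ref{prop-ftree} and~\ref{thm-btree}. First I would set $\Omega_0 := \Omega_1 \cap \bigcap_{C \in \natls} \{\omega : R_{1,C}(\omega) < \infty\}$, which is of full measure by Proposition~\ref{prop-effbt}; properness of $i_\omega^F$ (resp.\ $i_\omega^T$ on $\Omega_0 \cap \Omega_\xi$) then follows immediately from Lemma~\ref{lem-proper}.

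For 2-properness in the sense of Definition~\ref{def-2proper}, I would argue by contrapositive. Suppose $[y_1, y_2] \cap N_C(1) \neq \emptyset$ in the word metric. Proposition~\ref{prop-effbt} then forces $[y_1, y_2]_\omega$ to meet $N_R(1)$ where $R = R_{1,C}(\omega) < \infty$. For the forward tree, the tree-geodesic $[y_1, y_2]_T$ in the simplicial metric is exactly $[u, y_1]_\omega \cup [u, y_2]_\omega$, where $u$ is the common ancestor of $y_1$ and $y_2$, i.e.\ the point of divergence of $[1, y_1]_\omega$ and $[1, y_2]_\omega$; since $u$ is the closest vertex of $[y_1, y_2]_T$ to $1$ in the tree, we have $d^s_\omega(1, [y_1, y_2]_T) = d^s_\omega(1, u)$. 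Using the hyperbolicity of $(\Gamma, d_\omega)$ available almost surely from \cite{benjamini-tessera,BM19}, the tripod structure for the $\omega$-triangle $\{1, y_1, y_2\}$ bounds $d_\omega(1, u)$ in terms of $d_\omega(1, [y_1, y_2]_\omega)$, which in turn is controlled by $R$ (through the a.s.\ finite total edge weight in a bounded ball). Finally, bounding the word-length of the $\omega$-geodesic $[1, u]_\omega$ in terms of $d(1, u)$ via \cite[Lemma 4.3]{BM19} yields the required bound on $d^s_\omega(1, u)$. The argument for $T(\xi, \omega)$ is analogous, with $u$ replaced by the coalescence point $z$ of $[y_1, \xi)_\omega$ and $[y_2, \xi)_\omega$, which serves as the tree tripod center for $\{1, y_1, y_2\}$ in $T(\xi, \omega)$.

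Given properness and 2-properness, Lemma~\ref{lemma-ctsuff} produces the Cannon-Thurston maps $\partial i_\omega^F$ and $\partial i_\omega^T$ on the respective full-measure sets. For surjectivity of $\partial i_\omega^F$: given any $\xi' \in \pG$, Theorem~\ref{prop-ftree} furnishes an $\omega$-geodesic ray $[1, \xi')_\omega \subset F(1, \omega)$ whose unique accumulation point in $\pG$ is $\xi'$, and this ray defines a boundary point $\eta \in \partial F(1, \omega)$ of the tree. By the accumulation condition together with the continuity of the Cannon-Thurston extension, $\partial i_\omega^F(\eta) = \xi'$. For surjectivity of $\partial i_\omega^T$: given $\xi' \neq \xi$, Theorem~\ref{thm-btree} produces a bi-infinite geodesic $(\xi', \xi)_\omega \subset T(\xi, \omega)$ whose two ends give distinct boundary points in $\partial T(\xi, \omega)$ mapping respectively to $\xi'$ and $\xi$ by the same continuity argument; the remaining case $\xi' = \xi$ is covered by any ray $[1, \xi)_\omega$ in the tree. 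Taking a countable intersection (over a dense countable set of boundary base-points $\xi'$ if required) keeps us inside a full-measure set.

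The main obstacle I anticipate is the passage from the word-metric estimate of Proposition~\ref{prop-effbt} to a bound on the intrinsic simplicial metric $d^s_\omega$. This requires a careful combination of the tripod geometry in $(\Gamma, d_\omega)$, the comparability between $d$ and $d_\omega$ along $\omega$-geodesics, and a Borel-Cantelli reduction to pass to a single full-measure event on which all of these bounds hold with a $\omega$-dependent (but finite) function $M'$. The ``uniformly 2-proper'' clause in the forward-tree statement demands, in addition, that the properness function can be taken independent of the pair $(y_1, y_2)$, which should follow by invoking the group-invariance of $\Gamma$ and the uniformity (in the base-point) of the constants in Proposition~\ref{prop-effbt} noted in the discussion following its proof.
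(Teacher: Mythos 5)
Your overall plan is correct, and the surjectivity argument (from Theorems~\ref{prop-ftree} and \ref{thm-btree} via continuity of the Cannon-Thurston extension) matches the paper. However, your route to 2-properness is substantially different from the paper's and has a genuine gap.

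You attempt to bound $d^s_\omega(1,u)$ (for the tree branch point $u$) by the chain: Proposition~\ref{prop-effbt} gives a point $p \in [y_1,y_2]_\omega \cap N_R(1)$; $d_\omega$-hyperbolicity places $u$ near $[y_1,y_2]_\omega$; this bounds $d_\omega(1,u)$; then you invoke \cite[Lemma 4.3]{BM19} to control the edge-count of $[1,u]_\omega$ ``in terms of $d(1,u)$.'' The gap is between the third and fourth steps: you never explain how to pass from a bound on $d_\omega(1,u)$ to a bound on $d(1,u)$. Under Assumption~\ref{assume-subexp} the distribution $\rho$ can accumulate at zero, so the $d_\omega$-ball of bounded radius can in principle contain vertices at large word distance, and making this comparison rigorous requires shape-theorem/subadditive-ergodic inputs. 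The reliance on a.s.\ hyperbolicity of $(\Gamma,d_\omega)$ is also a nontrivial additional ingredient (it would typically follow from a $d \leftrightarrow d_\omega$ quasi-isometry, which raises the same issue).

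The paper avoids all of this. It never works in the metric $d_\omega$. It decomposes the tree geodesic $\gamma_{ab}$ as $[a,f]_\omega \cup [b,f]_\omega$, where $f$ is the branch point, and observes that each piece is a \emph{subpath of an $\omega$-geodesic ray} from the root ($[a,f]_\omega \subset [a,1]_\omega$ in the forward case, $[a,f]_\omega \subset [a,\xi)_\omega$ in the backward case), hence is itself an $\omega$-geodesic. It applies the contrapositive of Proposition~\ref{prop-effbt} directly to each piece: if $\gamma_{ab}$ lies outside the $R_{1,C}(\omega)$-ball about $1$ in $\Gamma$ (word metric), then both word geodesics $[a,f]$ and $[b,f]$ lie outside the $C$-ball. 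Then $\delta$-hyperbolicity of $\Gamma$ \emph{with the word metric} gives $[a,b] \subset N_\delta([a,f] \cup [b,f])$, so $[a,b]$ lies outside the $(C-\delta)$-ball. Properness (Lemma~\ref{lem-proper}) supplies the final bridge from the tree-metric hypothesis of Definition~\ref{def-2proper} to the word-metric hypothesis just used. You should adopt this decomposition-into-$\omega$-geodesic-pieces strategy; it is both more elementary and robust enough to carry over verbatim to the backward tree and to the later generalizations in Section~\ref{sec-genlzns}.
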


\begin{proof}  Properness of $i_\omega^F$ and $i_\omega^T$ follow from Lemma~\ref{lem-proper}. {Thus the preliminary properness hypothesis of Definition~\ref{def-ct} is satisfied for $i_\omega^F$ and $i_\omega^T$.}

	 For $2$-properness, let $C \in \natls$. For each such $C$,  Definition \ref{def-AC} (and Proposition \ref{prop-effbt} referred there) shows that $E_\infty(C)$ has probability zero. Hence, $$E_\infty:=\bigcup_{C \in \natls}E_\infty(C)$$ also  has probability zero.
	 Let $\Omega_1$ be as in Lemma~\ref{lem-proper}, and $\Omega_0 = \Omega_1 \setminus E_\infty$. 

We show below that if $\omega \in  \Omega_0 $, then 
$i_\omega^F$ is 2-proper. Also, given $\xi$, if $\omega \in  \Omega_0\cap \Omega_{\xi} $, then we show that  $i_\omega^T$ is
	2-proper. 
	
For ease of exposition, we  first check for  $i_\omega^F$. For $\omega\in \Omega_0$ define the 2-{proper} function $M(C)=M(C,\omega)=R_{1,C}(\omega)$ as in Definition \ref{def-AC}. 
For any $[a,b]_\omega \subset [a,1]_\om \subset F(1,\omega)$,
	if $[a,b]_\omega$ lies outside the  $M=M(C,\omega)$ ball about $1$ in $\Gamma$, then (any deterministic geodesic) $[a,b]$ lies outside the $C-$ball about $1$ in $\Gamma$.

	Next, observe that a geodesic $\gamma_{ab} $
	in the intrinsic simplicial metric on
	$ F(1, \om)$ joining arbitrary $a, b \in  F(1,\omega)$  satisfies 
	$\gamma_{ab} \subset [a,1]_\om\cup [b,1]_\om$ so that there exists $f \in \gamma_{ab} $ such that the geodesic $\gamma_{af} $ 	in the intrinsic simplicial metric on
	$ F(1, \om)$ joining  $a, f$ satisfies  $\gamma_{af}= [a,f]_\omega \subset [a,1]_\om \subset F(1,\omega)$. Similarly, the geodesic $\gamma_{bf} $ 	in the intrinsic simplicial metric on
	$ F(1, \om)$ joining  $b, f$ satisfies  $\gamma_{bf}= [b,f]_\omega \subset [b,1]_\om \subset F(1,\omega)$. Also, $\gamma_{ab} =\gamma_{af} \cup \gamma_{bf} $.
	If $\gamma_{ab}$ lies outside the  $M=M(C,\omega)$ ball about $1$ in $\Gamma$, then
	so do $[a,f]_\omega $ and $[b,f]_\omega$.
	By the argument in the previous paragraph, (any deterministic geodesics) $[a,f], [b,f]$ lies outside the $C-$ball about $1$ in $\Gamma$.
	Since $[a,b] \subset N_\delta ([a,f]\cup [b,f])$, $[a,b]$ lies outside the $(C-\delta)$ball about $1$ in $\Gamma$. This completes the proof that $i_\omega^F$ is 	2-proper. 
	
	We turn now to the proof that for fixed $\xi\in \pG$, $\omega \in \Omega_0\cap \Omega_{\xi}$,  $i_\omega^T$ is
	2-proper.  For $a, b \in T(\xi,\omega)$, satisfying $[a,b]_\omega \subset [a,\xi)_\om \subset T(\xi,\omega)$, the argument is now the same as in the first case for $F(1,\omega)$ where we assumed that
	$[a,b]_\omega \subset [a,1]_\om \subset F(1,\omega)$.
	In general, a geodesic $\gamma_{ab} $
	in the intrinsic simplicial metric on
	$T(\xi,\omega)$ satisfies  $\gamma_{ab} \subset [a,\xi)_\om \cup [b,\xi)_\om$, and decomposes as a union $[a,f]_\omega \cup [b,f]_\omega$, where 
	$[a,f]_\omega \subset [a,\xi)_\om$ and 	$[b,f]_\omega \subset [b,\xi)_\om$.
	Now, repeating the argument in the second (general) case for $ F(1, \om)$ completes the proof that $i_\omega^T$ is 2-proper.

The  two claims in the proposition for both $i_\omega^F$ and  $i_\omega^T$
 follow. The existence of a  Cannon-Thurston map now follows from Lemma~\ref{lemma-ctsuff} (see the last paragraph of Remark~\ref{rmk-2proper}).
 
 We finally turn to surjectivity. We  pass to a further full measure subset
 if  necessary as dictated by Theorem~\ref{prop-ftree} or Theorem~\ref{thm-btree}.

Surjectivity of the Cannon-Thurston map to $\pG$ for forward trees follows from Theorem~\ref{prop-ftree} that guarantees existence of
forward geodesics to \emph{all $ \xi$}. 

Surjectivity of the Cannon-Thurston map to $\pG$ for backward trees
$T(\xi,\omega)$
 follows from Theorem~\ref{thm-btree}, since there are 
 backward geodesics to \emph{all $\xi'\neq \xi$}. 
\end{proof}

\begin{remark}\label{rmk-caveat}
A caveat is in order. Theorem~\ref{thm-12proper} ensures the existence of a Cannon-Thurston map  for $T(\xi,\omega)$
for a  full measure set $\Omega_\xi$ that depends on $\xi$. Note also that
the  argument for existence only uses that $T(\xi,\omega)$ is a tree, i.e.\ Corollary~\ref{cor-btree} (and hence Theorem~\ref{bmthmcoalesce}) but not
Theorem~\ref{thm-btree}. Theorem~\ref{thm-btree} gets used at the last step to prove surjectivity.
An analogous statement  using Theorem~\ref{prop-ftree} holds for forward trees. Henceforth when we refer to a \emph{full measure subset} of $\Omega$, we shall assume implicitly that such a full measure subset is dependent on $\xi \in \pG$ in the case of the backward tree. Since the forward tree has no $\xi$ in it,
the independence of this full measure subset on $\xi \in \pG$ in the case of the forward tree is automatic.
This is, in particular, relevant in Theorems~\ref{thm-nonfreedense}, \ref{thm-uncountable},
\ref{thm-countable}, \ref{thm-mult} below.\\
\end{remark}

\noindent {\bf Aside: Cannon automata and Cannon-Thurston maps:}\\
The existence of a surjective Cannon-Thurston map from  $\partial F(1,\omega)$
onto $\pG$, and Theorem~\ref{thm-12proper} have a deterministic analog
in the literature. 

The geodesic language in a hyperbolic group is \emph{regular}, i.e.\ there exists a finite state automaton $\GG$ whose associated regular language is the geodesic language in $\Gamma$. This is originally due to Cannon (see \cite{calegari-notes} for an exposition).
Since $\GG$ is a finite, rooted graph, its universal cover $\til \GG$ is a rooted directed tree with root at 1, say.
In 
\cite[Lemma 3.5.1]{calegari-notes}, Calegari notes that the evaluation
$ev: \til \GG \to \Gamma$ extends to a continuous, surjective, bounded-to-one
map $\partial{ev}: \partial \til \GG \to \pG$.

\subsection{Existence and Density of exceptional directions}\label{sec-direxdense}
In this section, we shall use the Cannon-Thurston maps constructed above to show that \emph{exceptional directions} exist and are dense in $\partial G$. Recall the definitions of \emph{exceptional directions} for random forward trees (Definition~\ref{def-exceptionaldir}), and 
random backward trees (Definition~\ref{def-except}). 

The first (trivial) observation is that if $G$ is a free group with standard generators, then there are no exceptional directions since $\Gamma$ is a tree. Our main result in this section shows that if $G$ is not free (up to finite index) then exceptional directions exist and are dense in $\partial G$. 

\begin{defn}
A group $G$ is said to be \emph{virtually free} if there exists a free subgroup of finite index in $G$.
\end{defn}

We have the following theorem. 

\begin{theorem}\label{thm-nonfreedense}
	Let $G$ be a hyperbolic group such that $G$ is not virtually free, and let $\Gamma$ be its Cayley graph with respect to some finite symmetric generating set. Then the following hold.
	\begin{enumerate}
	\item For $\omega$ in the probability 1 event $\Omega_0$ from Theorem \ref{thm-12proper}, the set of exceptional directions in the forward tree $F(1,\omega)$ is non-empty and dense in $\pG$. 
	\item  For $\xi\in \pG$, and for $\omega$ in the probability 1 event $\Omega_0\cap \Omega_{\xi}$ from Theorem \ref{thm-12proper}, the set of exceptional directions in the backward tree $T(1,\omega)$ is non-empty and dense in $\pG$. 
\end{enumerate}	 
\end{theorem}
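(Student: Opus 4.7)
The plan is to extract exceptional directions from the Cannon-Thurston maps of Theorem~\ref{thm-12proper} via a topological dimension argument, exploiting the mismatch between tree boundaries and boundaries of non-virtually-free hyperbolic groups. Write $\partial i_\omega$ for either $\partial i_\omega^F: \partial F(1,\omega) \to \pG$ or $\partial i_\omega^T: \partial T(\xi,\omega) \to \pG$, working on the full-measure sets $\Omega_0$ or $\Omega_0 \cap \Omega_\xi$ supplied by Theorem~\ref{thm-12proper}. The initial observation is that $z \in \pG$ is an exceptional direction (Definition~\ref{def-exceptionaldir} or Definition~\ref{def-except}) if and only if $\partial i_\omega^{-1}(z)$ contains at least two points: distinct points of the tree boundary are exactly asymptotic classes of semi-infinite rays in the tree that eventually diverge, and in the exceptional situation their $\pG$-accumulation points coincide (for the backward tree, $z = \xi$ is ruled out by coalescence, so the correspondence is clean).

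The first step is to show that if some nonempty open $U \subset \pG$ contained no exceptional direction, then $U$ would be zero-dimensional. Since $F(1,\omega)$ and $T(\xi,\omega)$ are simplicial trees, their Gromov boundaries are compact, metrizable, and zero-dimensional. Injectivity of $\partial i_\omega$ on $V := \partial i_\omega^{-1}(U)$, combined with second countability and zero-dimensionality of the tree boundary, lets one cover $V$ by countably many clopen compacta $V_n$ of the tree boundary contained in $V$. On each $V_n$, the restriction of $\partial i_\omega$ is a continuous injection from a compact space into a Hausdorff space, hence a homeomorphism onto its image, which is therefore a compact zero-dimensional subset of $\pG$. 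Consequently $U = \bigcup_n \partial i_\omega(V_n)$ is a countable union of closed zero-dimensional subsets of a separable metric space, and the Menger--Urysohn sum theorem \cite{engelking} forces $\dim U = 0$.

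The second step uses the hypothesis that $G$ is not virtually free to produce a nondegenerate connected subset inside every nonempty open set $U \subset \pG$, contradicting the first step. Stallings' theorem on ends combined with Dunwoody's accessibility result expresses $G$ as a finite graph of groups with finite edge groups and vertex groups that are either virtually free or one-ended hyperbolic; non-virtual-freeness of $G$ forces some vertex group $H$ to be one-ended hyperbolic. Its boundary $\partial H$ embeds in $\pG$ as a compact, connected, perfect subset that is locally connected by the theorems of Bestvina--Mess, Bowditch, and Swarup. Since the $G$-action on $\pG$ is minimal, some translate $g \partial H$ meets $U$; local connectivity and perfectness of $g \partial H$ together then produce a nondegenerate connected set $C \subset g \partial H \cap U$. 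Any connected metric space with more than one point has topological dimension at least one, so $\dim U \geq \dim C \geq 1$, yielding the contradiction.

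Density in $\pG$ and nonemptiness follow at once, simultaneously for forward and backward random trees. The main delicacy is the second step: arranging that positive topological dimension propagates to \emph{every} open subset of $\pG$ (not merely to $\pG$ globally) requires both the Stallings--Dunwoody decomposition to locate a locally connected piece inside $\pG$ and minimality of the boundary action to transport that piece into any prescribed open set. The first step, by contrast, is a soft compactness-plus-sum-theorem argument that is quite robust.
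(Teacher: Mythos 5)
Your proposal is correct, and it follows the paper's overall strategy (Cannon--Thurston map from Theorem~\ref{thm-12proper}, Stallings--Dunwoody decomposition to find a one-ended vertex group, Bowditch--Swarup local connectivity, density of the orbit on the boundary) but executes the final contradiction via a genuinely different route. The paper's argument isolates Lemma~\ref{lem-top}: given a continuous surjection from a totally disconnected set onto a compact, connected, locally connected set, multiple points are dense; the proof is a short compactness argument (a continuous injection from a compact totally disconnected set onto a connected nondegenerate set would be a homeomorphism, which is absurd). The paper then applies this to $\partial i_\omega$ restricted to each $\partial i_\omega^{-1}(g\cdot\pG')$, and finishes by showing that for any nonempty open $U\subset\pG$, some translate $g\cdot\pG'$ lies \emph{entirely inside} $U$ (using quasiconvexity and convergence of translates of $\pG'$ to singletons in the Hausdorff metric). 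You instead argue by contradiction on a putative open $U$ free of exceptional directions: injectivity of $\partial i_\omega$ on $\partial i_\omega^{-1}(U)$, a clopen exhaustion of that preimage, and the countable sum theorem force $\dim_t U = 0$; meanwhile a translate $g\cdot\partial H$ merely \emph{meeting} $U$ (supplied by minimality) together with local connectivity and perfectness produce a nondegenerate connected set inside $U$, forcing $\dim_t U\geq 1$. Your route trades the compactness-injection lemma for the Menger--Urysohn sum theorem, and trades the shrinking-translates step for local connectivity applied \emph{inside} $U\cap g\partial H$; it avoids needing $g\partial G'\subset U$, which is one of the more delicate geometric points in the paper's argument, at the cost of invoking more dimension-theoretic machinery. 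One small point worth making explicit in your write-up: the embedding $\partial H\hookrightarrow\pG$ with matching subspace topology relies on quasiconvexity of the vertex group $H$ in the Stallings--Dunwoody decomposition, which the paper establishes (via the finite edge groups) in the proof of its Lemma~\ref{l:input}; you assert the embedding without that justification.
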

The reader will note that once we have fixed an $\omega$ as in the statement of the theorem, the rest of the argument is purely geometric and deterministic. {A word is in order about the case excluded from Theorem~\ref{thm-nonfreedense}, viz.\ that of virtually free groups.
We will pose this case explicitly  as Question~\ref{qn-other} and briefly discuss it there.  For now, we simply say that the issues involved are significantly different than those that go into Theorem~\ref{thm-nonfreedense}.}

The main tool for proving Theorem \ref{thm-nonfreedense} will be the following observation which identifies the exceptional directions with the multiple values of the Cannon-Thurston maps. 

\begin{rmk}\label{rmk-exception=ct}
Note that if $z$ is an exceptional direction for $F(1,\omega)$, there are distinct infinite rays in $F(1,\omega)$ which  accumulates on $z$ and a similar conclusion holds for $T(\xi,\omega)$. Therefore, by  Theorem~\ref{thm-12proper} and the continuity of Cannon-Thurston maps, for $\omega\in \Omega_0 \cap \Omega_{\xi}$ (resp.\ $\omega\in \Omega_0$), $z \in \pG$ is an exceptional direction for  $T(\xi,\omega)$ (resp. $F(1,\omega)$) if and only if $|(\partial i^{T}_\omega)^{-1}(z)|\geq 2$ (resp.\ $|(\partial i^{F}_{\omega})^{-1}(z)|\geq 2$) where $i^{T}_{\omega}$ (resp.\ $i^{F}_{\omega}$) is the Cannon-Thurston map constructed in Theorem \ref{thm-12proper} for $T(\xi, \omega)$ (resp.\ $F(1,\omega)$), i.e.\ $z$ is a multiple value of the Cannon-Thurston map $\partial i_\omega$.
Further, $|(\partial i^{T}_\omega)^{-1}(z)|$ equals the multiplicity (Definition~\ref{def-mult}) of the direction $z$.
\end{rmk}

Since the arguments in this section will be identical for both forward and backward trees, from now on we shall drop the superscripts $F$ and $T$. We shall assume henceforth that we are working with 
\begin{enumerate}
\item either a fixed $\omega\in \Omega_0$  for forward trees, 
\item  or a fixed $\xi \in \pG$ and a fixed $\omega\in \Omega_0\cap \Omega_{\xi}$ for backward trees.
\end{enumerate}
Further, $i_\omega$ will denote the inclusion of the corresponding random tree (forward or backward) into $\Gamma$. Also,  $\partial i_\omega$ will denote {the}  boundary value of the corresponding Cannon-Thurston map furnished by Theorem~\ref{thm-12proper}.

To prove Theorem~\ref{thm-nonfreedense}, we shall need a number of standard but deep facts about hyperbolic groups and their boundaries. For the convenience of the  reader we shall separate out the two parts of the proof. The main  input from geometric group theory needed is the following.

\begin{lemma}
\label{l:input}
Let $G$ be a  hyperbolic group that is not virtually free. Then there exists a quasiconvex hyperbolic subgroup $G'$ such that $\pG'$ is a compact connected and locally connected subset of $\partial G$. 
\end{lemma}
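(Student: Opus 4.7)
The plan is to combine Stallings' theorem on ends with Dunwoody's accessibility to decompose $G$ along finite subgroups, isolate a one-ended piece, and then invoke standard boundary theorems. First I would argue that if $G$ has more than one end, then by Stallings' theorem $G$ splits nontrivially over a finite subgroup; iterating this splitting and applying Dunwoody's accessibility theorem, $G$ can be realized as the fundamental group of a finite graph of groups whose edge groups are finite and whose vertex groups have at most one end.

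Next I would observe that since $G$ is not virtually free, the process must produce at least one vertex group $G'$ that is infinite and one-ended: indeed, if every vertex group were finite, $G$ would be the fundamental group of a finite graph of finite groups and hence virtually free (by Karrass--Pietrowski--Solitar), contradicting the hypothesis. Standard results on splittings of hyperbolic groups along finite edge groups (going back to work of Bowditch) imply that such a vertex group $G'$ is quasiconvex in $G$, and hence $G'$ is itself a hyperbolic group whose Gromov boundary $\partial G'$ embeds as a compact subset of $\partial G$ via the inclusion.

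Finally, I would apply the Bestvina--Mess theorem \cite{bes-mess} to conclude that $\partial G'$ is connected (this characterizes one-endedness in the hyperbolic setting), and then apply the Bowditch--Swarup theorem on the absence of global cut points, together with Swarup's resolution of the local connectivity question \cite{bowditch-cutpts,swarup-era}, to conclude that $\partial G'$ is locally connected. Combining these three features --- quasiconvex embedding, connectedness and local connectedness --- gives the conclusion of the lemma.

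The main obstacle here is not really an obstacle for us: it consists in citing the correct (and deep) black boxes. The subtle point is ensuring that the vertex group inherits both quasiconvexity (so that $\partial G' \hookrightarrow \partial G$ is a topological embedding with closed image) and the local connectedness of its boundary; both of these rely on nontrivial work in the geometric group theory of hyperbolic groups, but under the blanket assumption that $G$ is hyperbolic and not virtually free they are exactly the results available in the literature cited in the paper.
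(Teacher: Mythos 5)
Your proposal is essentially correct and follows the same route as the paper: Stallings plus Dunwoody accessibility to extract a one-ended vertex group $G'$, the observation that non-virtual-freeness forces the existence of such a vertex group (the paper cites cohomological dimension arguments of Stallings and Swan where you cite Karrass--Pietrowski--Solitar, but both are standard and interchangeable here), quasiconvexity of vertex groups in a splitting over finite edge groups (the paper phrases this via Bestvina--Feighn and a path-families argument, you cite it as a black box, which is fine), and finally the Bowditch--Swarup local connectivity theorem. The one small omission is that you only treat the case where $G$ has more than one end; if $G$ is itself one-ended one simply takes $G' = G$ and applies Bowditch--Swarup directly, as the paper does before running the splitting argument.
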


Postponing the proof of Lemma \ref{l:input}, we move on to the proof of Theorem \ref{thm-nonfreedense}. We shall need the following easy topological lemma. 

\begin{lemma}\label{lem-top}
Let $f: A \to B$ be a continuous surjective map from a totally disconnected set $A$ onto
a compact connected locally connected set $B$. Then multiple points of $B$ (i.e.\ $b \in B$ such that
$|f^{-1}(b)|>1$) is dense.
\end{lemma}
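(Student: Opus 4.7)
The plan is to argue by contradiction. Suppose the set $M$ of multiple points is not dense in $B$. Then there is a non-empty open subset $U\subset B$ disjoint from $M$, so that the restriction $f|_{f^{-1}(U)}\colon f^{-1}(U)\to U$ is a continuous bijection. My strategy is to cut down further to a compact connected subset of $U$, and then use the fact that a continuous bijection from a compact Hausdorff space to a Hausdorff space is a homeomorphism, to derive a contradiction with total disconnectedness of $A$.

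Using local connectivity of $B$, I would pick a point $b_0\in U$ and choose a connected open neighborhood $W\subset U$ of $b_0$ with $\overline{W}\subset U$; in a Hausdorff locally connected space such $W$ exist since connected open sets form a basis and one can shrink to keep the closure inside $U$. Then $K:=\overline{W}$ is compact, connected (closure of a connected set is connected), and has more than one point (since $U$ is open in a connected non-degenerate space $B$, any sufficiently small neighborhood $W$ of $b_0$ is nontrivial).

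In the intended applications, $A$ is the Gromov boundary of a locally finite tree (namely $\partial F(1,\omega)$ or $\partial T(\xi,\omega)$, whose ambient tree has valence bounded by $D$), hence compact Hausdorff. Therefore $f^{-1}(K)$ is a closed subspace of the compact space $A$ and is itself compact. The restriction $f|_{f^{-1}(K)}\colon f^{-1}(K)\to K$ is then a continuous bijection between compact Hausdorff spaces, and so a homeomorphism. But $f^{-1}(K)\subseteq A$ is totally disconnected, while $K$ is connected with more than one point — a contradiction.

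The principal obstacle is arranging that $f^{-1}(K)$ be compact, since only then does the compact-Hausdorff-bijection-is-homeomorphism principle apply; this is the reason I must work with a closed connected $K\subset U$ rather than an open connected set, which in turn forces simultaneous use of local connectivity (to secure connectedness of $K$) and of the Hausdorff axiom (to fit $\overline{W}$ inside $U$). A second, minor, issue is making sure $K$ is non-degenerate, which requires that $B$ itself be non-degenerate, a harmless assumption in all the applications of the lemma below.
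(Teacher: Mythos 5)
Your argument follows the paper's proof essentially verbatim: assume multiplicity fails on an open set, shrink (using local connectivity) to a compact connected $K$ inside it, observe that $f$ restricted to $f^{-1}(K)$ is a continuous bijection, conclude it is a homeomorphism, and derive a contradiction from total disconnectedness of the domain versus connectedness of the image. You go further than the paper in one useful respect: you correctly flag that one needs $f^{-1}(K)$ to be compact for the continuous-bijection-is-homeomorphism step, and that the lemma as literally stated does not guarantee this (the paper tacitly asserts that $f^{-1}(\overline{N_\ep(b)})$ is compact, which holds only because in every application $A$ is a closed subspace of the compact boundary of a bounded-valence tree). In fact, without some such hypothesis the statement is false --- e.g.\ removing from the Cantor set one preimage of each dyadic rational under the usual binary-expansion map onto $[0,1]$ yields a totally disconnected space admitting a continuous bijection onto $[0,1]$ with no multiple points whatsoever. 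Your observation and your explicit handling of the non-degeneracy of $K$ are minor but genuine improvements in rigor over the paper's terse proof.
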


\begin{proof}
Suppose not. Then,  there exists $b \in B$ and a compact connected neighborhood $\overline{N_\ep (b)}$ of $b$ such that $f$ is injective and continuous on the compact set $f^{-1}(\overline{N_\ep (b)})$. Hence $f$ is a homeomorphism.
But $f^{-1}(\overline{N_\ep (b)})$ is totally disconnected,
and $\overline{N_\ep (b)}$ is connected, a contradiction.
\end{proof}

We can now complete the proof of Theorem \ref{thm-nonfreedense}. 

\begin{proof}[Proof of Theorem~\ref{thm-nonfreedense}] 
Let $G'$ be the subgroup given in Lemma \ref{l:input}. {(If the reader likes, (s)he may think of $G=G'*G{''}$ as a motivating example to keep in mind. Here $G'$ is assumed to be one-ended.)}
Then for each $g\in G$, $g\cdot\pG'$ is a connected locally connected compact subset of $\pG$. We first show that for each $g\in G$, the set of exceptional directions is dense in $g\cdot\pG'$. 

Indeed, consider the Cannon-Thurston map $\partial i_{\omega}:  \partial T(\omega) \to \partial G$ given by Theorem \ref{thm-12proper} (where $T(\omega)$ is either $F(1,\omega)$ or $T(\xi,\omega)$). Consider the restriction of this map to $\partial i_{\omega}^{-1}(g\cdot\partial G')$. By Theorem \ref{thm-12proper}, this map is continuous and surjective. Since $T(\omega)$ is a tree $\partial i_{\omega}^{-1}(g\cdot\partial G')$ is totally disconnected whereas $g\cdot\pG'$ is a connected locally connected compact as argued above. This therefore satisfies the hypothesis of Lemma \ref{lem-top} and hence the multiple points of this map are dense in $g\cdot\pG'$. By Remark \ref{rmk-exception=ct}, we get that the set of exceptional directions is dense in $g\cdot\pG'$, as claimed. 

So it remains to show that the set of exceptional directions is dense in all of $\partial G$. Towards this, it is a standard fact that for any $\xi' \in \pG$, $G\cdot\xi' $ is dense in $\pG$ (for any non-elementary hyperbolic $G$). Hence $\cup_{g\in G}\, (g\cdot\pG')$ is dense in $\pG$. We shall further show, using quasiconvexity of $G'$ that for any open set $U \subset \partial G$, there exists $ g \in G$ such that $g\cdot\partial G' \subset U$, which clearly shows that there are exceptional directions in $U$, completing the proof. 

Indeed, equip the space $C_c(\pG)$ of compact subsets of $\pG$ with the Hausdorff metric $d_\HH$, {i.e.\ $d_\HH(C_1, C_2)$ is given by the least $\ep$ such that the $\ep-$neighborhood of $C_1$ contains $C_2$ and vice versa}. Then  the only closed subsets of $\pG$ that are accumulated upon by elements of  $g\cdot\partial G'$, with $g \in G$ are singletons, i.e.\ points of $\pG$ . Further, each $\{\xi'\} \subset \pG$ is accumulated upon by a sequence $\{g_n \partial G'\}$ of translates of $\pG'$ (see \cite[Proposition 2.3]{mahan-relrig} for instance). Hence for $U$ as above containing $\xi$, there exists $ g \in G$ such that $g\cdot\partial G' \subset U$, as claimed.  
\end{proof}

We now move to  proving  Lemma \ref{l:input}. We need the following classical results. 

\begin{theorem}\cite{bowditch-cutpts,swarup-era}\label{thm-lc}
Let $G$ be a one-ended hyperbolic group. Then $\pG$ is connected and locally connected.
\end{theorem}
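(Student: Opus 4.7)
The plan has two parts corresponding to the two assertions. For connectedness, I would invoke Stallings' theorem on ends: a finitely generated group has more than one end if and only if it splits nontrivially as an amalgamated free product or HNN extension over a finite subgroup. For hyperbolic $G$, the number of ends of $G$ equals the number of connected components of $\pG$ (with the convention that two ends corresponds to $\pG$ being two points). Since $G$ is one-ended by hypothesis, $\pG$ is connected. This step is short and mostly a matter of assembling known correspondences between ends of a group, ends of its Cayley graph, and components of the Gromov boundary.

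For local connectedness, the strategy is by contradiction. Suppose $\pG$ is connected but fails to be locally connected at some point. By a result of Bestvina--Mess \cite{bes-mess}, the absence of local connectedness in the boundary of a one-ended hyperbolic group forces the existence of a \emph{global cut point} in $\pG$, i.e.\ a point $p \in \pG$ such that $\pG \setminus \{p\}$ is disconnected. So it suffices to show that $\pG$ contains no global cut points.

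The next step is to analyze the structure that a global cut point would impose on $G$. Here I would follow Bowditch's program: associated to the set of global cut points one can construct a canonical $\R$-tree (or pretree) on which $G$ acts, and from this one extracts a nontrivial splitting of $G$ over a two-ended (virtually cyclic) subgroup. Thus, if $\pG$ has a global cut point, then $G$ admits a nontrivial decomposition as a graph of groups with two-ended edge groups and quasiconvex vertex groups. This is the main technical obstacle, as Bowditch's machinery (convergence actions on the cut-point pretree, coherence of the cut-point structure under the $G$-action) is the heart of the argument.

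Finally I would close the loop with Swarup's argument \cite{swarup-era}: one shows that in such a splitting, one of the vertex groups must itself be one-ended, and inductively the boundary of each vertex group embeds in $\pG$ in a compatible way; tracing through the combinatorics of how cut points of $\pG$ arise from the pretree, one derives a contradiction with the assumed existence of a cut point. Putting together connectedness (from Stallings) and the absence of cut points (from Bowditch--Swarup, combined with Bestvina--Mess), one concludes that $\pG$ is connected and locally connected, as required.
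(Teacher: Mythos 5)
The paper does not prove this theorem; it is imported as a black box from the literature, with \cite{bowditch-cutpts,swarup-era} supplying the proof. So there is no internal argument to compare your sketch against, only the standard proof in the cited sources.

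Your sketch correctly identifies the main structural ingredients of that proof: Stallings' theorem and the ends--components dictionary for connectedness; the Bestvina--Mess reduction, which says that for a one-ended hyperbolic group, failure of local connectedness of a connected $\pG$ forces a global cut point; and the Bowditch--Swarup cut-point theorem eliminating global cut points. These are indeed the right blocks, and the reduction via Bestvina--Mess is stated accurately.

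The weak point is the final paragraph, where the actual work is. What you describe --- ``one of the vertex groups must itself be one-ended, and inductively the boundary of each vertex group embeds in $\pG$ in a compatible way'' --- is not how the contradiction is reached. The real mechanism is $\R$-tree theory: Bowditch's analysis turns the set of global cut points into a $G$-invariant pretree, whose completion is a dendrite carrying a nontrivial isometric $G$-action on an $\R$-tree with virtually cyclic arc stabilizers. One must then establish \emph{stability} of this action (this is where Swarup's contribution, together with Levitt's work on $\R$-trees, enters) so that the Rips machine/Bestvina--Feighn theory applies and yields a splitting of $G$ over a two-ended (or smaller) subgroup. The contradiction is then extracted from a minimality/maximality argument comparing this induced splitting to the cut-point pretree itself --- not from an induction on vertex groups and their boundaries. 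In short, the high-level roadmap is right, but the sentence claiming the contradiction comes from a vertex-group induction misidentifies the crucial step; the missing idea is the appeal to stability of the $\R$-tree action and the Rips machine.
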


Stallings \cite{stallings-ends} in his foundational work shows that 
 a finitely generated group $G$ has more than one end  if and only if 
$G$ acts on a
simplicial tree $\TT$ such that
\begin{enumerate}
\item  for each edge $e \in \EE(\TT)$, the stabilizer $G_e$ is finite,
\item for each vertex $v \in \VV(\TT)$, the stabilizer $G_v$ is unequal to $G$.
\end{enumerate} 

We refer the reader to \cite{scott-wall} for a thorough topological treatment of Bass-Serre theory, in particular graphs of groups and trees of spaces.

\begin{defn}\label{def-accessible}
We say that a finitely generated group $G$ is \emph{accessible} if there exists a simplicial tree $\TT$ and a $G-$action on $\TT$ such that
 \begin{enumerate}
 \item $G_e$ is finite for each edge $e \in \EE(\TT)$,
 \item $G_v$ has at most one end for each vertex $v \in \VV(\TT)$.
 \end{enumerate}
\end{defn}

\begin{theorem}\cite{dunwoody} \label{dunwoody}
If $G$ is finitely presented, $G$ is accessible.
\end{theorem}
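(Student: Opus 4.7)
The plan is to follow Dunwoody's original strategy, which recasts group-theoretic splittings as geometric objects inside a finite $2$-complex and then bounds the complexity of these objects by the combinatorics of the complex. First I would fix a finite $2$-complex $K$ with $\pi_1(K) \cong G$ and pass to its universal cover $\widetilde K$. Following Dunwoody, I would introduce the notion of a \emph{track}: a compact $1$-dimensional subcomplex $t \subset K$ which, on each $2$-cell, looks like a disjoint union of arcs joining interiors of $1$-cells, and at each $1$-cell looks like a finite set of transverse intersection points. Lifting $t$ to $\widetilde K$ yields a disjoint union of closed $1$-manifolds; the key case is when some component is a two-sided, separating, simply connected track in $\widetilde K$ with finite stabilizer.

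Next I would show the bridge between tracks and splittings: by Stallings' ends theorem, any splitting of a vertex group $G_v$ over a finite subgroup corresponds to such a separating track with finite stabilizer. Conversely, any $G$-invariant pattern of disjoint simply-connected separating tracks in $\widetilde K$ yields, by taking the dual tree to the complementary regions, a $G$-action on a simplicial tree with finite edge stabilizers, and hence a graph-of-groups decomposition. The accessibility question then becomes: can one find a pattern whose vertex stabilizers all have at most one end?

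The heart of the proof, and the main obstacle, is the combinatorial bound: there is a constant $C = C(K)$ (essentially the number of $2$-cells, counted with multiplicity of how the boundary meets the $1$-skeleton) such that any $G$-invariant disjoint pattern of pairwise non-parallel simply-connected separating tracks in $\widetilde K$ has at most $C$ orbits. To prove this I would use a resolution/complexity argument: assign to each pattern a non-negative integer measuring its intersections with the $2$-cells, show that non-parallel tracks contribute independently to this count, and conclude a uniform upper bound. Parallel tracks cobound an annulus and can be collapsed without changing the induced splitting.

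Finally, I would run the following hierarchy: start with the trivial splitting; at each stage, if some vertex group $G_v$ has more than one end, invoke Stallings to split it over a finite subgroup and refine the pattern accordingly. Each refinement strictly increases the number of non-parallel orbits of tracks in $\widetilde K$, so by the combinatorial bound the process terminates after at most $C(K)$ steps. The resulting graph of groups has finite edge groups and vertex groups with at most one end, which is precisely the accessibility statement in Definition~\ref{def-accessible}. The hard step is unquestionably the combinatorial bound on non-parallel tracks; the splitting/resolution steps are formal once that inequality is in place.
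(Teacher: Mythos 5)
The paper does not prove Theorem~\ref{dunwoody}; it cites Dunwoody's 1985 paper and uses the result as input to Lemma~\ref{l:input}. So there is no in-paper proof to compare against; the relevant question is whether your sketch matches Dunwoody's published argument, and at the level of strategy it does: fix a finite presentation $2$-complex $K$, introduce tracks and patterns in $K$ and $\widetilde K$, establish a uniform bound on the number of disjoint pairwise non-parallel tracks, pass back and forth with Stallings' ends theorem, and run a terminating refinement hierarchy.

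Two points deserve more care than your sketch gives them. First, the combinatorial bound is not obtained by bounding the raw count of intersections of the pattern with the $2$-cells --- many parallel copies of the same arc make such a count unbounded. Dunwoody's invariant is the number of \emph{parallelism classes} of arcs within each $2$-cell: after subdividing so that each $2$-cell is a triangle meeting the $1$-skeleton in three edges, a disjoint family of proper arcs in a triangle falls into at most three parallelism classes (one per pair of sides), and summing over the finitely many $2$-cells of $K$ gives the bound on pairwise non-parallel disjoint tracks. Your phrase ``non-parallel tracks contribute independently to this count'' points in the right direction but elides which count actually works. Second, the step ``each refinement strictly increases the number of non-parallel orbits'' quietly assumes that a Stallings splitting of a vertex group $G_v$ over a finite subgroup can be realized as a track disjoint from, and non-parallel to, the existing pattern without destroying the earlier splittings. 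This normalization --- choosing tracks of minimal weight in their class and performing cut-and-paste surgery to achieve disjointness --- is where the bulk of Dunwoody's technical effort is concentrated and should not be waved through as ``formal once the bound is in place.'' Neither issue signals a wrong approach; they are simply the two spots where a filled-in proof has to work hardest, and your sketch currently treats them as routine.
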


\begin{proof}[Proof of Lemma \ref{l:input}] Note that if $G' < G$ is quasiconvex, then the inclusion map naturally extends to an embedding
	of $\pG' < \pG$; this is the base case of a Cannon-Thurston map  \cite{mitra-trees} furnished by a quasi-isometric embedding. In particular, this also implies that the topology on $\partial G'$ obtained from the Gromov compactification of $G'$ is the same as the subspace topology inherited from $\pG$. 
By Theorem \ref{thm-lc} therefore, we only need to furnish a quasiconvex {one-ended} subgroup $G'$ of $G$. If $G$ is {one-ended} then we can take $G'=G$ and therefore, we can assume that $G$ has more than one end. Stallings' Theorem \cite{stallings-ends} gives a non-trivial graph of groups decomposition as pointed out following Theorem \ref{thm-lc}.
Since $G$ is hyperbolic, it is finitely presented \cite[Ch.4 Thm.1]{GhH}, and hence accessible by Theorem~\ref{dunwoody}. Let $\GG$ be the underlying graph of  a 
maximal graph of groups decomposition  of $G$ guaranteed  by Theorem~\ref{dunwoody} (we refer the reader to \cite{scott-wall} for the relevant Bass-Serre theory).

{
If all the vertex and edge groups are finite (i.e.\ have zero ends)
then $G$ is virtually free (since the resulting group is forced to have rational cohomological dimension one, and one can use classical results of Stallings and Swan \cite[ Ch. VIII, Example 2]{brown}).  Since $G$ is not virtually free, 
 at least one vertex group $G_v$  is one-ended. Choose $G'=G_v$.
 It suffices to prove that $G_v < G$ is quasiconvex. (Since a quasiconvex subgroup of a hyperbolic group is necessarily hyperbolic, Theorem~\ref{thm-lc} applies to $\pG'$.) Now, for a tree of hyperbolic spaces where the edge spaces are of uniformly bounded diameter, the
 resulting total space is hyperbolic \cite{BF} and
  each
 vertex space is quasiconvex (see  for instance the Bowditch path families argument in \cite[Section 3.1]{mahan-sardar}). It follows that $G'$ is quasiconvex as required.}
\end{proof}

\subsection{Cardinality  of exceptional directions}\label{sec-direxcard}
We next explore the cardinality of the set of exceptional directions. As mentioned before, in the planar exponential LPP case, the exceptional directions are known to be countable almost surely. It turns out that it is not so in the higher dimensional hyperbolic situation. 

\subsubsection{Uncountably many exceptional directions in higher dimensions}\label{sec-direxuncountable}
To state the result we shall need the notion of topological dimension of a metric space;  we refer the reader to \cite{engelking} for details on topological dimension. For a metric space $Z$, we shall denote its topological dimension by $\dim_t Z$. We shall recall below classical facts we need about $\dim_t Z$, but let us first state the main result.

\begin{theorem}\label{thm-uncountable}
Let $G$  be a hyperbolic group such that $\dim_t \pG > 1$. Then for $T(\omega)$ either the forward or backward tree {for any $\omega$ in the corresponding full measure subset as in the statement of Theorem \ref{thm-nonfreedense}}, the number of exceptional directions must be uncountable.
\end{theorem}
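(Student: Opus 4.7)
The plan is to combine the random Cannon-Thurston map of Theorem~\ref{thm-12proper} with a classical dimension-theoretic fact. By Theorem~\ref{thm-12proper}, for $\omega$ in the appropriate full measure subset (depending on $\xi$ in the backward tree case), there is a continuous surjective Cannon-Thurston map $\partial i_\omega\colon \partial T(\omega) \to \pG$, where $T(\omega)$ denotes either $F(1,\omega)$ or $T(\xi,\omega)$ equipped with its intrinsic simplicial metric. By Remark~\ref{rmk-exception=ct}, the exceptional directions in $\pG$ are exactly the multiple values of $\partial i_\omega$, so it suffices to show that this set of multiple values is uncountable.

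Since $T(\omega)$ is a subtree of the locally finite Cayley graph $\Gamma$, it is itself a locally finite tree, and hence its Gromov boundary $\partial T(\omega)$ is a compact metric space of topological dimension zero (cones based at interior vertices provide a clopen basis for the boundary topology). The key topological ingredient is then the following classical fact from dimension theory: if $f\colon X \to Y$ is a continuous surjection of compact metric spaces with $\dim_t X = 0$ and the set $E = \{y \in Y : |f^{-1}(y)| \geq 2\}$ is at most countable, then $\dim_t Y \leq 1$. Equivalently, the quotient of a compact zero-dimensional metric space by a closed equivalence relation with at most countably many non-trivial classes has topological dimension at most one; the prototypical example is the standard Cantor surjection onto $[0,1]$. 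This fact may be found in, e.g., Engelking's \emph{Theory of Dimensions, Finite and Infinite}, or proved directly via the inductive characterization of topological dimension, by combining a clopen neighborhood basis of $f^{-1}(y)$ in $X$ with the countable exceptional set $E$ to produce neighborhoods of each $y \in Y$ whose boundaries are zero-dimensional.

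Applying this fact with $X = \partial T(\omega)$, $Y = \pG$ and $f = \partial i_\omega$: since $\dim_t \pG > 1$ by hypothesis, the set of multiple values of $\partial i_\omega$, and hence the set of exceptional directions, must be uncountable. This holds simultaneously for the forward and backward trees since Theorem~\ref{thm-12proper} supplies Cannon-Thurston maps in both cases with the same structural properties.

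The main non-geometric obstacle is the dimension-theoretic statement invoked above: one must verify (or cite) carefully the principle that ``countably many collapses in a zero-dimensional compact metric space cannot raise topological dimension above one''. Intuition from the Cantor-to-$[0,1]$ example is persuasive, but some care is needed because the collapsed fibers over the countable set of multiple values are allowed to be large (even Cantor-set-like); it is the \emph{count} of non-trivial classes, not their individual sizes, that controls the dimension jump. Once this dimension-theoretic input is established, the rest of the argument reduces to Theorem~\ref{thm-12proper} together with elementary topology of locally finite trees.
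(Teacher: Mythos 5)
Your proof is correct and takes a genuinely different route from the paper's. You isolate a clean general lemma --- if $f\colon X\to Y$ is a continuous surjection of compact metric spaces with $\dim_t X = 0$ and the multiplicity set $E = \{y \in Y : |f^{-1}(y)|\geq 2\}$ is at most countable, then $\dim_t Y \leq 1$ --- and apply it to $\partial i_\omega\colon \partial T(\omega)\to\pG$. The paper instead argues by contradiction more concretely: assuming the exceptional set $A$ is countable, it produces a visual-metric sphere $S_\rho(x,\epsilon)$ in $\pG$ disjoint from $A$ with $\dim_t S_\rho(x,\epsilon)>0$, extracts a connected component $K_\epsilon$ of positive dimension (hence with more than one point), and observes that $\partial i_\omega$ would restrict to a homeomorphism from the totally disconnected compact set $(\partial i_\omega)^{-1}(K_\epsilon)$ onto the connected set $K_\epsilon$, which is impossible. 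Your route avoids any appeal to connectedness and is arguably cleaner and more reusable; the lemma you invoke is true and can be proved exactly as you sketch: for $y\in Y$ and $\epsilon>0$ choose a clopen $V\subset X$ with $f^{-1}(y)\subset V\subset f^{-1}(B_\epsilon(y))$, set $U = Y\setminus f(X\setminus V)$, and check that $U$ is an open neighborhood of $y$ of small diameter with $\partial U\subset f(V)\cap f(X\setminus V)\subset E$, hence $\dim_t\partial U\leq 0$, and the inductive characterization gives $\dim_t Y\leq 1$. One caveat: this lemma is the load-bearing step of your argument, so it should be proved carefully or given a precise reference rather than a gesture toward Engelking; it is a routine but not completely standard consequence of the inductive definition of dimension, and the point you yourself flag --- that the count of nontrivial fibers, not their size, controls the dimension --- is exactly the content of the observation that $\partial U\subset E$.
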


Before proving Theorem \ref{thm-uncountable} let us mention that the topological dimension of $\pG$ reflects cohomological properties of $G$ as the following fundamental theorem of Bestvina-Mess shows.
\begin{theorem}\cite[Corollary 1.4]{bes-mess}\label{thm-bm}
Let $G$ be a hyperbolic group. Then
\begin{enumerate}
\item $\dim_t \pG = \max\{n : H^n(G; \Z G) \neq {0}\},$
\item If $G$ is virtually torsion free (for instance if $G$ is residually finite) then  $\dim_t \pG =\mathrm{vcd}(G) - 1,$ 
where vcd refers to virtual cohomological dimension.
\end{enumerate}
\end{theorem}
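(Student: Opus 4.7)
The plan is to argue by contradiction, using the random Cannon--Thurston map from Theorem~\ref{thm-12proper} together with a classical decomposition theorem in topological dimension theory. Throughout, write $T(\omega)$ for either the forward tree $F(1,\omega)$ or the backward tree $T(\xi,\omega)$, and let $f := \partial i_\omega : \partial T(\omega) \to \pG$ be the (boundary value of the) associated Cannon--Thurston map, which by Theorem~\ref{thm-12proper} is continuous and surjective on the full measure set at hand. Since $T(\omega) \subset \Gamma$ is a locally finite simplicial tree (every vertex has degree at most $D$), the Gromov boundary $\partial T(\omega)$ is a compact, totally disconnected metric space, and in particular $\dim_t \partial T(\omega) = 0$. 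By Remark~\ref{rmk-exception=ct}, the set of exceptional directions is precisely the multi-value set
\[
E := \{z \in \pG : |f^{-1}(z)| \geq 2\}.
\]

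Suppose for contradiction that $E$ is countable. First I would upgrade the tautological continuous bijection $f : \partial T(\omega) \setminus f^{-1}(E) \to \pG \setminus E$ to a homeomorphism. This is where compactness of $\partial T(\omega)$ combines pleasantly with its zero-dimensionality: for $x$ in the domain with image $y = f(x) \notin E$, and for any clopen neighborhood $U \subset \partial T(\omega)$ of $x$, the image $f(\partial T(\omega) \setminus U)$ is compact and does not contain $y$, so $\pG \setminus f(\partial T(\omega) \setminus U)$ is an open neighborhood of $y$ whose $f$-preimage lies in $U$. This yields continuity of the inverse map, and therefore $\pG \setminus E$ is homeomorphic to a subspace of the zero-dimensional space $\partial T(\omega)$; in particular $\dim_t (\pG \setminus E) = 0$.

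Since $E$ is countable, we also have $\dim_t E = 0$, so writing $\pG = E \cup (\pG \setminus E)$ as a union of two zero-dimensional subsets and invoking the Menger--Urysohn decomposition theorem (see \cite{engelking}) gives $\dim_t \pG \leq 1$, contradicting the standing hypothesis $\dim_t \pG > 1$. The main conceptual step is the continuity-of-inverse argument of the previous paragraph: once one notices that clopen neighborhoods are available in $\partial T(\omega)$ and that compactness allows their complements to be pushed forward to closed subsets of $\pG$, the decomposition theorem finishes the job. No further probabilistic input is required beyond the almost-sure existence and surjectivity of the Cannon--Thurston map supplied by Theorem~\ref{thm-12proper}.
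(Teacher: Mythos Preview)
Your proposal does not address the stated theorem at all. Theorem~\ref{thm-bm} is the cited Bestvina--Mess result relating $\dim_t \pG$ to the group cohomology $H^*(G;\Z G)$ and to the virtual cohomological dimension of $G$; the paper does not prove it but merely quotes it from \cite{bes-mess}. Your argument makes no mention of group cohomology, $\mathrm{vcd}(G)$, or any of the ingredients that would go into such a statement.

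What you have actually written is a (correct) proof of Theorem~\ref{thm-uncountable}: assuming $\dim_t \pG > 1$, the set of exceptional directions is uncountable. Your route to that conclusion differs from the paper's. The paper finds a metric sphere $S_\rho(x,\ep)$ disjoint from the countable exceptional set, picks a nontrivial connected component $K_\ep$ of it, and observes that $\partial i_\omega$ restricted to the compact preimage of $K_\ep$ is a continuous bijection onto a connected set, hence a homeomorphism, contradicting total disconnectedness. You instead prove directly that $\partial i_\omega$ restricts to a homeomorphism from $\partial T(\omega)\setminus f^{-1}(E)$ onto $\pG\setminus E$ (using clopen neighborhoods and compactness), deduce $\dim_t(\pG\setminus E)=0$, and then invoke the Menger--Urysohn decomposition theorem to conclude $\dim_t\pG\leq 1$. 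Both approaches are valid; yours is somewhat cleaner in that it avoids the auxiliary choice of a sphere and the density argument for $G$-orbits, at the cost of invoking the decomposition theorem. But none of this bears on Theorem~\ref{thm-bm}, which is the statement you were asked to address.
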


Therefore the hypothesis $\dim_t \pG > 1$ can be replaced by any of the equivalent cohomological properties  in Theorem~\ref{thm-bm}.

Turning now to the proof of Theorem \ref{thm-uncountable}, we shall only need the following equivalent definition of topological dimension \cite[Definition 1.1]{topdim}. This is equal to the usual covering dimension for compact metric
spaces.

\begin{defn}\label{def-topdim}
Set $\dim_t \emptyset = -1$. The topological dimension of a non-empty metric
space $X $ is defined by induction as
$$\dim_t X=\inf\{d : X \ has\ a\ basis\ \UU 
\ such\ that\ \dim_t \partial U 
\leq d - 1 \ \ \forall  U \in \UU \}.$$
\end{defn}

\begin{proof}[Proof of Theorem \ref{thm-uncountable}:]
By Remark~\ref{rmk-exception=ct}, exceptional directions can be identified with multiple values of the Cannon-Thurston map
$\partial i_\omega: \partial T(\omega) \to \pG$. We argue by contradiction. Note that $ \partial T(\omega) , \pG$ are compact metrizable, and hence  separable (since this is true for any  proper Gromov-hyperbolic space). 

Suppose that the set
$A\subset \pG$ of exceptional directions is countable, then
\begin{enumerate}
\item $\dim_t A =0$, and
\item $(\pG \setminus A)$ is the image of the separable totally disconnected set
 $$Q=\partial T(\omega) \setminus (\partial i_\omega)^{-1} (A)$$ under the
 continuous  map $\partial i_\omega$, and the latter is injective on $Q$.
\end{enumerate}
{
Next, let $\rho$ denote a visual metric in $\pG$ and $S_\rho (x, \ep)$ denote the $\ep-$sphere about $x \in \pG$ with respect to $\rho$, i.e.\ 
$S_\rho (x, \ep)$ is the set of points in $\pG$ at $\rho-$distance equal to $\ep$ from $x$. 
Since
 $\dim_t \pG > 1$, we can choose $x \in \pG$ and $\ep > 0$ such that 
 \begin{enumerate}
 \item $S_\rho (x, \ep)$ is disjoint from the countable set $A$,
\item $\dim_t S_\rho (x, \ep) > 0$. 
 \end{enumerate}
 
 The second conclusion  follows from Definition~\ref{def-topdim}.  Indeed, the collection of $B_\rho (x, \ep)$ of open balls with boundaries $S_\rho (x, \ep)$ disjoint from $A$ forms a basis at the point $x$. Since $G\cdot x$ is dense in the compact metrizable $\pG$, the collection of translates $g\cdot B_\rho (x, \ep)$ forms a basis 
 of $\pG$. If each $S_\rho (x, \ep)$ has dimension zero, it follows from  Definition~\ref{def-topdim} that $\dim_t \pG\leq 1$, contradicting the hypothesis. 
 
  Since   $S_\rho (x, \ep)$ is compact metrizable, it has a connected component $K_\ep$ with 
 $\dim_t K_\ep > 0$ (since the maximum topological dimension of a connected component of a compact metric space equals the topological dimension of the ambient space). Being a component of the compact set $S_\rho (x, \ep)$,  $K_\ep$ is compact and  closed. Hence, $(\partial i_\omega)^{-1} (K_\ep)$ is closed in $\partial T(\omega)$, and hence compact.
 Since $\partial i_\omega$ is continuous and injective on $Q$, it must be a homeomorphism on $(\partial i_\omega)^{-1} (K_\ep)$. 
 This is a contradiction, since the image $K_\ep$ is connected. }
\end{proof}

\begin{example} We illustrate the proof of Theorem~\ref{thm-uncountable} by a simple example.
Let $G$ be the fundamental group of a closed hyperbolic 3-manifold. Then
$\pG$ is homeomorphic to $S^2$. Then $S^2\setminus A$ is path-connected for any countable $A$. In particular it contains a homeomorphic copy of $[0,1]$. The proof of Theorem~\ref{thm-uncountable} now shows that  the number of multiple points cannot be countable.
\end{example}

\subsubsection{Countably many exceptional directions for the hyperbolic plane}\label{sec-direxcountable} The cases not handled by Theorem~\ref{thm-uncountable} are hyperbolic groups $G$ with $\pG$ of dimension zero or one. If $\pG$ has dimension zero, then $G$ is virtually free. So we turn to hyperbolic groups $G$ with $\pG$ of dimension one.
We prove that for planar Cayley graphs in  the hyperbolic plane $\Hyp^2$,
the number of  exceptional directions must be countable.

\begin{theorem}\label{thm-countable}
	Let $G$  be a hyperbolic group acting cocompactly on the hyperbolic plane $\Hyp^2$ such that the Cayley graph $\Gamma$ is embedded in $\Hyp^2$. Then for $T(\omega)$ either the forward or backward tree, and {for any $\omega$ in the corresponding full measure subset as in the statement of Theorem \ref{thm-nonfreedense}}, the number of exceptional directions must be countable. 
\end{theorem}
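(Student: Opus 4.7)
The plan is to exploit the planarity of $\Gamma \subset \Hyp^2$ to endow $\partial T(\om)$ (where $T(\om)$ denotes either the forward or the backward random tree) with a cyclic order that is weakly preserved by the Cannon-Thurston map, and then to conclude countability from a standard fact about monotone maps onto the circle. Since $G$ acts cocompactly by isometries on $\Hyp^2$, the Gromov boundary $\pG$ is canonically homeomorphic to $\partial \Hyp^2 = S^1$. Fix $\om$ in the full measure subset from Theorem~\ref{thm-12proper} (intersected with $\Omega_\xi$ in the backward case). That theorem provides a continuous surjective Cannon-Thurston map $f := \partial i_\om \colon \partial T(\om) \to S^1$, and by Remark~\ref{rmk-exception=ct} the exceptional directions are exactly the multi-value points $\{\eta \in S^1 : |f^{-1}(\eta)| \ge 2\}$.

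I would next equip $\partial T(\om)$ with a cyclic order coming from the planar embedding. At each vertex $v$ of $T(\om)$ the incident edges are cyclically ordered by the embedding, and removing $v$ splits $T(\om)$ into finitely many sub-trees whose ends partition $\partial T(\om)$. For three distinct ends $\eta_1, \eta_2, \eta_3 \in \partial T(\om)$, take their tree median $v$; the three rays $[v,\eta_i)$ then leave $v$ along three distinct edges, and the cyclic order of these edges at $v$ defines the cyclic order of the $\eta_i$. The next key step is to show that $f$ is weakly monotone with respect to this cyclic order: the three sub-trees at the median $v$ lie in three cyclically arranged sectors of $\Hyp^2$ at $v$, each accumulating on a closed arc of $S^1$, and the three arcs are themselves cyclically ordered and meet at most at endpoints. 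This forces $f(\eta_1), f(\eta_2), f(\eta_3)$ to sit in a matching cyclic order on $S^1$, with possible coincidences.

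Once monotonicity is in hand, the conclusion is routine. By continuity and monotonicity, every fiber $f^{-1}(\eta)$ is a closed cyclic interval of $\partial T(\om)$, and distinct fibers are disjoint cyclic intervals. Viewing $\partial T(\om)$ as a closed totally disconnected subset of an abstract oriented circle determined by its cyclic order, each non-degenerate fiber spans a non-degenerate closed arc; pairwise disjoint non-degenerate arcs on a circle form an at most countable family (each contains a rational point in any fixed metric parametrization). Hence the set of exceptional directions is at most countable.

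The main technical obstacle I anticipate is the verification of monotonicity of $f$, i.e., checking that the Cannon-Thurston map produced abstractly from the 2-properness criterion underlying Theorem~\ref{thm-12proper} is compatible with the sector-to-arc assignment coming from the planar embedding. Cocompactness of the $G$-action on $\Hyp^2$ should be crucial here, since it prevents pathological accumulation of $T(\om)$ on $S^1$ and ensures that each planar sector at a tree vertex has a genuine closed boundary arc in $S^1$ rather than a more complicated limit set; with this in place, the weak cyclic-order preservation of $f$ reduces to comparing finite combinatorial data at a single median vertex.
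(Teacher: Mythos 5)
Your proposal takes essentially the same route as the paper's proof (Lemma~\ref{lem-toptree}): both use the planar embedding to equip $\partial T(\omega)$ with a circular order, argue that the Cannon-Thurston map is weakly order-preserving onto $S^1$, and then conclude countability of multiple values from the fact that pairwise disjoint non-degenerate arcs in a circle form a countable family. The paper packages the argument slightly differently, by embedding $\partial T(\omega)$ into an auxiliary circle $\CC$, quotienting the complementary gaps to form $\CC_0$, and factoring $\partial i_\omega = j_2 \circ j_1$ with each factor having countably many multiple values, but the underlying idea and the level of detail devoted to verifying order-preservation are the same as in your sketch.
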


\begin{proof}
Since $\Gamma$ is embedded in $\Hyp^2$, any random backward or forward  tree $T(\omega)$ is planar.
Further, the Cannon-Thurston map
 $\partial i_\omega$ from Theorem~\ref{thm-12proper}  is surjective.
The theorem now follows from the purely topological Lemma~\ref{lem-toptree} below and the fact that multiple values of the Cannon-Thurston map coincide with exceptional directions (Remark~\ref{rmk-exception=ct}). 
\end{proof}

The following is a purely topological lemma using the tautological circular order on the circle.

\begin{lemma}\label{lem-toptree}
Let $i:\TT \subset \Hyp^2$ denote an embedding of a planar tree.
Suppose that $i$ admits a surjective Cannon-Thurston map
$\partial i:\partial \TT \to \partial \Hyp^2$. Then there are countably many multiple values of $\partial i$.
\end{lemma}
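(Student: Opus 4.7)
The plan is to use the planar embedding to equip $\partial \TT$ with a cyclic order, show that $\partial i$ is monotone with respect to this order, and then count multiple values by an edge-based bookkeeping that uses planarity to show each edge contributes only finitely many.

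First I would install the cyclic order on $\partial \TT$ coming from the planar structure: for three distinct ends $\eta_1, \eta_2, \eta_3 \in \partial \TT$, their three rays in $\TT$ meet at a unique tripod center $v$, and the three edges departing $v$ carry a cyclic order from the planar embedding $\TT \subset \Hyp^2$, defining the cyclic order of $(\eta_1, \eta_2, \eta_3)$. I would then verify that $\partial i$ preserves this cyclic order: when the three images $\partial i(\eta_j) \in \partial \Hyp^2 \cong S^1$ are pairwise distinct, the three rays $i([v, \eta_j))$ are pairwise disjoint embedded paths in $\Hyp^2$ from $i(v)$ to three distinct boundary points, so the Jordan curve theorem in $\overline{\Hyp^2}$ forces their cyclic order on $S^1$ to agree with the cyclic order of the departing edges at $v$.

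The main count proceeds edgewise. For each edge $e$ of $\TT$, removing $e$ splits $\TT$ into two subtrees $T_+, T_-$ with boundaries $B_\pm \subset \partial \TT$, and I set $L_e := \partial i(B_+)$, $R_e := \partial i(B_-)$. To bound $L_e \cap R_e$, I would extend $e$ to a bi-infinite embedded path $\sigma \subset \TT$ through $e$ with ends $\xi_+ \in B_+$, $\xi_- \in B_-$ chosen so that $\partial i(\xi_+) \neq \partial i(\xi_-)$; this is possible by surjectivity of $\partial i$ unless $\partial i$ is constant on one of $B_\pm$, a degenerate case which I would handle directly. The image $i(\sigma) \cup \{\partial i(\xi_+), \partial i(\xi_-)\}$ is then a Jordan arc in $\overline{\Hyp^2}$; together with either arc of $S^1$ joining its endpoints it bounds a closed disk, and by planarity the two sides of $\sigma$ in $\TT$ lie in the two complementary disks. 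It follows that $L_e$ and $R_e$ are contained in the two complementary closed arcs of $S^1$ bounded by $\{\partial i(\xi_+), \partial i(\xi_-)\}$, so $|L_e \cap R_e| \leq 2$.

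The conclusion is then immediate: any multiple value $z$ of $\partial i$ has distinct preimages $\eta_1, \eta_2 \in \partial \TT$, and any edge $e$ on the tree-geodesic from $\eta_1$ to $\eta_2$ separates them into $B_+, B_-$, so $z \in L_e \cap R_e$. Since a planar simplicial tree has only countably many edges, and each contributes at most two multiple values, the set of multiple values is countable. The main technical obstacle I anticipate is the precise planarity step placing the two sides of $\sigma$ in the two complementary disks together with the treatment of degenerate cases where some $B_\pm$ might be finite or where $\partial i$ collapses a large subset of $\partial \TT$ to a single point; in the latter case one may appeal to the monotonicity of $\partial i$ together with the fact that the collapsed arc is itself a single multiple value, so it contributes at most one additional point to the count.
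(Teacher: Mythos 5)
Your edge-based counting (countably many edges, each contributing at most two multiple values via $L_e\cap R_e$, and every multiple value landing in some $L_e\cap R_e$) is a genuinely different organization from the paper's proof, which instead embeds $\partial\TT$ into an auxiliary circle $\CC$ via its cyclic order, collapses the complementary gaps to obtain a quotient circle $\CC_0$, and factors $\partial i$ through $\CC_0$ as a composition of two maps each having only countably many multiple values. Your reduction would give the conclusion if the key bound $|L_e\cap R_e|\leq 2$ were established.

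However, your proof of $|L_e\cap R_e|\leq 2$ has a genuine gap. You assert that, because the two sides of $\sigma$ in $\TT$ lie in the two complementary disks cut out by the Jordan arc $i(\sigma)\cup\{\partial i(\xi_+),\partial i(\xi_-)\}$, the sets $L_e=\partial i(B_+)$ and $R_e=\partial i(B_-)$ lie in the two complementary arcs of $S^1$ determined by $\{\partial i(\xi_+),\partial i(\xi_-)\}$. This does not follow, because the decomposition of $\TT\setminus\sigma$ into left/right of $\sigma$ is not the decomposition into $T_+$ and $T_-$: the subtree $T_+$ has branches departing the ray $\sigma\cap T_+$ on both its left and its right, and likewise for $T_-$. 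Consequently $B_+$ (and similarly $B_-$) has ends accumulating on both arcs of $S^1\setminus\{\partial i(\xi_+),\partial i(\xi_-)\}$, and the stated containment of $L_e$, $R_e$ in complementary arcs fails. The cyclic-order machinery you have already built supplies the correct argument: the edge $e$ cuts $\partial\TT$ into two complementary cyclic arcs $B_+$ and $B_-$, but the two ``seams'' of this arc decomposition sit between the $T_+$-ends and the $T_-$-ends branching off the same side of $\sigma$ --- they are not $\xi_\pm$ and they are not detected by your Jordan arc $i(\sigma)$. Extending the monotone map $\partial i$ to a degree-one monotone circle map, the images of two complementary cyclic arcs can overlap in at most the two images of the seam points, and this gives $|L_e\cap R_e|\leq 2$ honestly. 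In short, the correct separating object is the edge $e$ in the abstract cyclic order on $\partial\TT$, not your chosen bi-infinite path $\sigma$ in $\overline{\Hyp^2}$.
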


\begin{proof}
Observe that $\partial \TT$ is totally disconnected and admits a circular order, i.e.\ there exists an \emph{embedding} $j: \partial \TT \to \CC$, where $\CC$ is an auxiliary circle, whose orientation agrees with the circular order on $\partial \TT$.
Note that the circular order on $\partial \TT$ is inherited from its embedding in $ \Hyp^2$, so that  $\CC$ and $\partial \Hyp^2$ are consistently oriented.
We identify $\partial \TT$ with its  image $j (\partial \TT) \subset \CC$.
Then $j (\partial \TT) $ is closed, and its complement is a countable collection of intervals $\{\II_n\}$ {since this is the structure of the complement of any closed subset in a circle}. 
Let $q: \CC\to \CC_0$ denote the quotient map that quotients each $\{\bbar \II_n\}$ down to a point $i_n \in \CC_0$.
Let $j_1=q \circ j$. Then
\begin{enumerate}
\item $\{i_n\}$ are precisely the multiple values of $q$, and hence of 
$j_1$. Further, each $i_n$ has precisely two pre-images under $j_1$
provided $\partial \TT$  is a Cantor set  (these are the `end-points' of the Cantor set). Else, all isolated points of  $\partial \TT$ are identified with end-points of neighboring intervals in $\CC$ under $j_1=q \circ j$.
\item Since the collection $\{\II_n\}$ is countable, $j_1$ has only countably many multiple values.
\item Since $q$ is a quotient map, the Cannon-Thurston map
$\partial i:\partial \TT \to \partial \Hyp^2$ factors as 
$\partial i = j_2\circ j_1$, where $j_2 : \CC_0 \to  \partial \Hyp^2$ is a surjective circular order-preserving continuous map from the circle 
$\CC_0 $ to the circle $\partial \Hyp^2$.
\end{enumerate}
Since $j_2$ is a surjective circular order-preserving continuous map, the pre-image of any point in $ \partial \Hyp^2$ is a closed interval, possibly a singleton. Thus, the multiple values of $j_2$ correspond precisely to the non-degenerate intervals (i.e.\ not singletons) in $\CC_0$ that are identified to points in $ \partial \Hyp^2$ under $j_2$.
Since any such interval {contains a rational number}, $j_2$ has at most countably many multiple values.

We have already seen that $j_1$ has only countably many multiple values. 
Hence $\partial i = j_2\circ j_1$ has only countably many multiple values, proving the lemma. 
\end{proof}

\subsection{Multiplicity of exceptional directions}\label{sec-mult}
In this subsection, we investigate the multiplicity (see Definition~\ref{def-mult}) of  random geodesics in an {exceptional}  direction $\xi$. This is the analog of the N3G problem in the planar Euclidean FPP/LPP setting. We show, in contrast to what is known {or} believed in that set-up, that the maximum number of distinct $\omega$-geodesics can be arbitrarily large depending on the topological dimension of $\pG$.  
The main result in this subsection is the following. 

\begin{theorem}\label{thm-mult}
Let $G$ be hyperbolic such that $\dim_t \pG = n-1$ {for some $n\ge 2$} (which implies that $G$ is not virtually free and Theorem \ref{thm-nonfreedense} applies). Then for $T(\omega)$ either the forward or backward tree, {and for any $\omega$ in the corresponding full measure subset as in the statement of Theorem \ref{thm-nonfreedense}}, there exists an exceptional direction $z \in \pG$ with multiplicity \emph{at least $n$}. 
\end{theorem}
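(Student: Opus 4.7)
The plan is to reduce the statement to a classical dimension-theoretic result, namely Hurewicz's theorem on dimension and fiber size. By Theorem~\ref{thm-12proper} I have, for $\omega$ in the appropriate full measure subset, a continuous surjective Cannon-Thurston map
\[
\partial i_\omega: \partial T(\omega) \to \pG,
\]
and by Remark~\ref{rmk-exception=ct} the multiplicity of a direction $z \in \pG$ equals $|(\partial i_\omega)^{-1}(z)|$. Thus, to produce a direction with multiplicity at least $n$, it suffices to find a fiber of $\partial i_\omega$ of cardinality at least $n$.

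The domain $\partial T(\omega)$ is the Gromov boundary of a locally finite simplicial tree (a subtree of the locally finite Cayley graph $\Gamma$), so it is a compact totally disconnected metric space; in particular $\dim_t \partial T(\omega) = 0$. Also $\pG$ is compact metric, so $\partial i_\omega$ is automatically closed.

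The key input I will invoke is the following theorem of Hurewicz (see \cite{engelking}): if $f: X \to Y$ is a closed continuous surjection of separable metric spaces and every fiber $f^{-1}(y)$ has at most $k+1$ points, then $\dim_t Y \leq \dim_t X + k$. Applied to $\partial i_\omega$ in contrapositive form: if every fiber had at most $n-1$ points, setting $k = n - 2$ would force
\[
n - 1 = \dim_t \pG \leq \dim_t \partial T(\omega) + (n - 2) = n - 2,
\]
a contradiction. Hence there exists $z \in \pG$ with $|(\partial i_\omega)^{-1}(z)| \geq n$. Since $n \geq 2$, such a $z$ is exceptional in the sense of Definition~\ref{def-exceptionaldir}/\ref{def-except}, and its multiplicity (by Remark~\ref{rmk-exception=ct}) is at least $n$.

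I do not foresee any significant obstacle beyond correctly invoking Hurewicz's theorem; all the substantive work---the existence, continuity, and surjectivity of the Cannon-Thurston map, and the identification of multiplicity with fiber cardinality---has already been established in Theorem~\ref{thm-12proper} and Remark~\ref{rmk-exception=ct}. The only minor subtlety worth emphasizing is that closedness of the map is automatic here since both source and target are compact Hausdorff, which is why the standard statement of Hurewicz's theorem applies directly.
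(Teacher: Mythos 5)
Your proof is correct and follows essentially the same route as the paper's: both invoke the surjective Cannon-Thurston map from Theorem~\ref{thm-12proper}, identify multiplicity with fiber cardinality via Remark~\ref{rmk-exception=ct}, and apply Hurewicz's dimension-raising theorem to a zero-dimensional compact metric domain to force a fiber of size at least $n$. The only cosmetic difference is that you cite the general Hurewicz formula $\dim_t Y \leq \dim_t X + k$ for closed maps with fibers of size at most $k+1$, whereas the paper cites the Cantor-set special case (Theorem~\ref{thm-hurewicz}(2)); these are interchangeable here since $\partial T(\omega)$ is a closed subset of a Cantor set and the map is automatically closed by compactness, as you correctly observe.
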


The proof of this result hinges on the following nearly century-old theorem from topological dimension theory due to Hurewicz (1926) and Kuratowski (1932). Let {$\ind X$} denote the (small) inductive dimension of a topological space (see \cite[Chapter 1]{engelking}). We also implicitly use the basic theorem \cite[Theorem 1.7.7]{engelking} that equates 
$\ind X$ with the topological dimension for separable metric spaces $X$. We shall apply this to $\pG$ that is compact, metrizable, hence separable.
Since the following theorem is purely topological,  "Cantor set" in the statement below will refer to a metric space homeomorphic to the standard Cantor set.

\begin{theorem}\label{thm-hurewicz}\cite[p. 70, 1.7.D]{engelking}
A separable metric space $X$ satisfies the inequality $\ind X \leq  n$ (for some $n\geq 0)$ if and only if there exists a closed surjective mapping $f: Z \to X$ of a zero-dimensional (in the sense of topological dimension)  separable $Z$ with fibers of cardinality
at most $n + 1$.  

Further,
\begin{enumerate}
	\item For every compact metric space $ X$ with  $\dim_t X \leq  n$ (for some $n\geq 0)$  there exists a continuous mapping $f: Z \to X$ of a closed subspace $Z$ of the  Cantor set  onto the space $X$ with fibers of cardinality at most $n+ 1$.
	\item If for a compact  metric space $X$ there exists a continuous mapping $f: Z \to X$ of a closed subspace $Z$ of the Cantor set  onto the space X with fibers of cardinality at most $n+ 1$, then  $\dim_t X \leq  n$.
\end{enumerate} 
\end{theorem}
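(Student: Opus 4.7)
The plan is to treat the compact statements (1) and (2) first and then deduce the general biconditional for separable metric spaces, using Urysohn's theorem that $\ind X = \dim_t X$ for every separable metric space.

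For direction (2), I would argue by induction on $n$ that any continuous surjection $f\colon Z \to X$ from a compact zero-dimensional $Z$ to a compact metric $X$ with fibers of cardinality at most $n+1$ forces $\dim_t X \le n$. The base case $n=0$ is clean: fibers of size one make $f$ a continuous bijection between compacta, hence a homeomorphism, and zero-dimensionality transfers. For the inductive step, given $x \in X$ with $f^{-1}(x) = \{z_1,\ldots,z_k\}$, $k \le n+1$, use zero-dimensionality of $Z$ to pick pairwise disjoint clopen sets $W_i \ni z_i$. Since $f$ is closed, the sets $A_i = f(Z \setminus W_i)$ are closed in $X$ and miss $x$, so a small neighborhood $V_0$ of $x$ satisfies $f^{-1}(V_0) \subset \bigsqcup_i W_i$. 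A further shrinking to an open $V$ with $\bar V \subset V_0$ and $\partial V$ disjoint from at least one $f(W_i)$ forces fibers of $f$ over $\partial V$ to have cardinality at most $n$; the inductive hypothesis then gives $\dim_t \partial V \le n-1$, hence $\dim_t X \le n$ via Definition~\ref{def-topdim}.

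For direction (1), I would exploit the covering-dimension characterization of $\dim_t X \le n$: for every $\ep > 0$, $X$ admits a finite open cover of mesh $< \ep$ and \emph{order} at most $n+1$ (every point lying in at most $n+1$ members). Build recursively a sequence of such covers $\UU_k = \{U_{k,1},\ldots,U_{k,m_k}\}$ with mesh $< 1/k$, together with a parent assignment $p\colon \{(k+1,j)\} \to \{(k,i)\}$ such that $\bar U_{k+1,j} \subset U_{k,p(k+1,j)}$. The associated rooted tree $T$ has boundary $\partial T$ homeomorphic to a closed subset of the standard Cantor set. Let $Z \subset \partial T$ consist of infinite branches $(i_1,i_2,\ldots)$ for which $\bigcap_k \bar U_{k,i_k} \ne \emptyset$; the mesh condition forces this intersection to be a single point, defining a continuous surjection $f\colon Z \to X$. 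A fiber $f^{-1}(x)$ corresponds to the branches on which every member contains $x$; the order $\le n+1$ hypothesis bounds this cardinality by $n+1$, provided the parent assignment is chosen so that distinct branches through $x$ at level $k+1$ do not merge at level $k$.

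For the general biconditional for separable (not necessarily compact) metric $X$, the ``if'' direction adapts the argument for (2), replacing compactness with closedness of $f$ and invoking $\ind = \dim_t$; the ``only if'' direction proceeds by embedding $X$ in a compact metric space $\bar X$ of the same dimension (available via metric completion followed by a standard $n$-dimensional compactification) and restricting the map furnished by (1) to $f^{-1}(X)$. The main obstacle is the bookkeeping in (1): one must choose the parent assignment so that fiber cardinality is controlled by the order of the covers rather than by the gross branching of the tree. The classical resolution, going back to \v{C}ech, is to interleave the construction of $\UU_{k+1}$ with a canonical assignment that, near any fixed $x$, injects the members of $\UU_{k+1}$ containing $x$ into the members of $\UU_k$ containing $x$, preserving the bound $n+1$ up the tower.
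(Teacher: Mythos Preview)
The paper does not prove this statement: Theorem~\ref{thm-hurewicz} is quoted verbatim from Engelking \cite[p.~70, 1.7.D]{engelking} as a classical result of Hurewicz and Kuratowski, and is used as a black box in the proof of Theorem~\ref{thm-mult}. So there is no ``paper's own proof'' to compare against; what follows is an assessment of your sketch on its own terms.

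Your argument for direction~(2) has a genuine gap in the inductive step. You take $f^{-1}(x)=\{z_1,\ldots,z_k\}$ with $k\le n+1$, cover by disjoint clopen sets $W_i\ni z_i$, and then assert that if $\partial V$ is disjoint from some $f(W_j)$, the fibers of $f$ over $\partial V$ have cardinality at most $n$. This does not follow: for $y\in\partial V$ you only know $f^{-1}(y)\subset\bigsqcup_{i\ne j}W_i$, but nothing prevents $f^{-1}(y)$ from having $n+1$ points all lying in a single $W_i$. The clopen sets $W_i$ were chosen to separate the points of \emph{one} fiber $f^{-1}(x)$; they need not separate points of nearby fibers. (It is also unclear why such a $V$ with $\partial V\cap f(W_j)=\emptyset$ should exist, since every $f(W_i)$ contains $x$.) The standard fix is to abandon the individual $W_i$ and instead, given an open $U\ni x$, choose a single clopen $W$ with $f^{-1}(x)\subset W\subset f^{-1}(U)$, set $V=X\setminus f(Z\setminus W)$, and observe that for $y\in\partial V\subset f(W)\cap f(Z\setminus W)$ the fiber $f^{-1}(y)$ meets both $W$ and $Z\setminus W$; hence the restriction $f|_{W\cap f^{-1}(\partial V)}\colon W\cap f^{-1}(\partial V)\to\partial V$ is a closed surjection with fibers of size at most $n$, and induction applies.

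Your sketch for direction~(1) and the reduction of the general separable case to the compact case are along the right lines, though the ``canonical assignment'' that controls fiber cardinality in the tree construction is left entirely to a citation, and the existence of a compactification of $X$ with the same dimension is itself a nontrivial theorem (due to Hurewicz) that you are implicitly assuming.
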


\begin{proof}[Proof of Theorem \ref{thm-mult}]
Let $\partial i_\omega$ denote the Cannon-Thurston map for the pair
$(T(\omega), \Gamma)$ furnished by Theorem~\ref{thm-12proper}.
Then $\partial T(\omega)$ is (homeomorphic to) a closed subset of a Cantor set {(since this is true for the boundary of any tree with bounded valence).}
Also, $\partial i_\omega: \partial T(\xi,\omega) \to \pG$ is surjective.
Hence by  Theorem~\ref{thm-hurewicz}(2), there exists $z \in \pG$ such that
$\partial i_\omega^{-1}(z)$ has cardinality at least $n$. 
Finally,
by Remark~\ref{rmk-exception=ct}, $z$ has multiplicity at least  $n$. 
\end{proof}

\begin{example}\label{eg-ndim}
For completeness, we furnish examples in each dimension. Let $G$ be a cocompact lattice in $Isom^+(\Hyp^{n+1}) = SO(n+1,1)$, i.e.\ $G$ is the fundamental group of a closed hyperbolic $(n+1)-$manifold
(here $Isom^+(X)$ denotes the group of orientation-preserving isometries of $X$). Then $G$ is quasi-isometric to 
the hyperbolic $(n+1)-$space $\Hyp^{n+1}$ by the Milnor-Schwarz lemma and hence $\pG = \partial \Hyp^{n+1}$ is homeomorphic to the $n-$sphere $S^n$. These furnish examples to which
Theorem~\ref{thm-mult} applies. 
\end{example}

\section{Exceptional directions: upper bounds}
\label{sec-exceptional-coalesce}
The aim of this section is to prove the upper bounds on cardinality and multiplicities of exceptional directions, i.e., Theorem \ref{thm-omni-intro}, (5) and (6). To reduce the notational overhead, {the proofs in this section will be written for the forward geodesic trees $F(1,\omega)$. We shall explain subsequently how the same arguments work for the backward trees $T(\xi,\omega)$ as well; see Remarks \ref{r:btree1} and \ref{r:btree2}.
 Recall that for a full measure set $\Omega'$ and for $\omega\in \Omega'$, the union of all $\omega$-geodesics started at $1$ form the forward tree $F(1,\omega)$.   
 The Patterson-Sullivan measure of $K \subset \pG$ will be denoted by $\nu(K)$.}

\subsection{Exceptional directions are exceptional}
We call two $\omega$-geodesic rays distinct if they are disjoint outside some finite ball. A point $\xi\in \pG$ is said to be a $1$-exceptional direction for the forward geodesic tree $F(1,\omega)$ if there exist at least two distinct geodesic rays in $F(1,\omega)$ converging to $\xi$. {Note that this is equivalent to Definition~\ref{def-exceptionaldir}.}
Let $A(\omega)\subset \pG$ denote the set of all $\omega$-exceptional directions. Our goal in this subsection is to prove the following theorem. 

\begin{theorem}
	\label{t:exceptional}
	For $\P$-a.e. $\omega$, $A(\omega)$ has zero Patterson-Sullivan measure, i.e., $\nu(A(\omega))=0$. 
\end{theorem}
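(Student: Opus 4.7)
The plan is a straightforward Fubini argument built on the fixed-direction coalescence result Theorem~\ref{bmthmcoalesce}. The key observation is that exceptionality in Definition~\ref{def-exceptionaldir} requires two distinct $\omega$-geodesic rays from $1$ in the \emph{same} direction $z$, which is ruled out once coalescence is invoked for that specific $z$.

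First, I would prove the pointwise statement: for every fixed $z \in \partial G$, $\P(z \in A(\omega)) = 0$. Suppose $z \in A(\omega)$. By Definition~\ref{def-exceptionaldir}, there exist distinct $\omega$-geodesic rays $\sigma_1, \sigma_2 \subset F(1,\omega)$, both emanating from $1$ and accumulating only at $z$. Since $F(1,\omega) = \bigcup_g [1,g]_\omega$, each $\sigma_i$ is an increasing union of finite $\omega$-geodesics $[1, g_n^{(i)}]_\omega$ with $g_n^{(i)} \to z$ along $\sigma_i$; hence each $\sigma_i$ is itself a semi-infinite $\omega$-geodesic from $1$ in direction $z$. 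Applying Theorem~\ref{bmthmcoalesce} with $\xi = z$ and $o=1$ yields a full-measure set $\Omega_z$ on which such a ray from $1$ is unique. Thus on $\Omega_z$ we must have $\sigma_1 = \sigma_2 = [1,z)_\omega$, contradicting distinctness. Therefore $\{z \in A(\omega)\} \subset \Omega \setminus \Omega_z$, which has $\P$-measure zero.

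Second, by Fubini applied to the indicator $\mathbf{1}_{\{(\omega,z)\,:\,z \in A(\omega)\}}$ on $(\Omega \times \partial G,\, \P \otimes \nu)$,
\[
\E\bigl[\nu(A(\omega))\bigr] \;=\; \int_{\partial G} \P\bigl(z \in A(\omega)\bigr)\, d\nu(z) \;=\; 0,
\]
and consequently $\nu(A(\omega)) = 0$ for $\P$-a.e.\ $\omega$, as required.

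The main technical obstacle will be verifying joint measurability of $\{(\omega, z) : z \in A(\omega)\}$ as a subset of $\Omega \times \partial G$, so that Fubini is applicable. I would handle this by rewriting $\{z \in A(\omega)\}$ as a countable combination of elementary events: for a countable basis $\{U_k\}$ of the topology of $\partial G$ and for integers $m, n \ge 1$, the point $z$ is exceptional iff $z$ belongs to arbitrarily small $U_k$ for which there exist $u, v \in G$ with $u, v \notin B_m(1)$ whose $\omega$-geodesic segments $[1,u]_\omega, [1,v]_\omega \subset F(1,\omega)$ diverge inside $B_m(1)$ and extend to geodesics $[1,u']_\omega \supsetneq [1,u]_\omega$, $[1,v']_\omega \supsetneq [1,v]_\omega$ with $u', v' \in U_k$. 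Each such condition depends measurably on $\omega$ and defines a clopen (hence Borel) subset of $\partial G$. Equivalently, via Theorem~\ref{thm-12proper} one can identify $A(\omega)$ with the set of multiple values of the Cannon--Thurston map $\partial i_\omega^F$ and carry out measurability at the level of fibres. The same Fubini-plus-coalescence argument transfers verbatim to the backward tree $T(\xi,\omega)$: for the auxiliary fixed direction $z \neq \xi$, coalescence of rays in direction $z$ forces the rays through $z$ inside $T(\xi,\omega)$ to agree, and Fubini again yields $\nu$-measure zero.
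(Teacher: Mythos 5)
Your Fubini-plus-coalescence strategy is precisely the approach the authors explicitly consider and decline in Remark~\ref{r:measure}: they observe that Theorem~\ref{bmthmcoalesce} gives $\P(\xi\in A(\omega))=0$ for each fixed $\xi$, and that Fubini would finish if one could verify joint measurability of $(\omega,z)\mapsto \mathbf{1}_{\{z\in A(\omega)\}}$ on $\Omega\times\partial G$, but they state that this measurability ``does not seem entirely straightforward'' and for that reason take a different route. Your pointwise step is correct and is exactly what the remark describes. The potential gap is the measurability verification, which you flag yourself; but your sketch is not a proof. The characterization ``$z$ belongs to arbitrarily small $U_k$ for which there exist $u,v\notin B_m(1)$ whose $\omega$-geodesics diverge inside $B_m(1)$ and extend into $U_k$'' leaves the quantifier structure and the sense of ``diverge inside $B_m(1)$'' unspecified, and the backward direction (recovering two \emph{distinct} rays with a common unique accumulation point $z$ from a sequence of finite witnesses $(u_m,v_m)$, which may a priori correspond to different pairs of branches as $m$ varies) requires a diagonal/pigeonhole argument that you do not supply. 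The alternative via measurable dependence of $\partial i_\omega^F$ on $\omega$ carries the same burden. So as written the argument has a genuine gap, albeit one the authors acknowledge may be closeable with work.

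Beyond the gap, the paper's actual proof goes in a fundamentally different and strictly stronger direction. It proves Proposition~\ref{p:exceptional}: the Hausdorff dimension of the exceptional set $\widetilde A_1(\omega)$ is almost surely strictly less than that of $\partial G$, from which $\nu$-measure zero follows via Ahlfors regularity. The mechanism is quantitative: Lemma~\ref{l:exceptionallift} bounds the number of ``lifts of exceptional directions'' on the sphere $\partial B_n(1)$ by an exponentially small fraction of $|\partial B_n(1)|$, using the wandering estimate Lemma~\ref{l:wandering0} (via Lemma~\ref{lem-effbt}) together with the hyperplane/coalescence construction from \cite{BM19} in Lemma~\ref{l:wandering}; Sullivan's shadow lemma then converts the counting bound into a Hausdorff-dimension deficit. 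This extra strength is not cosmetic: Theorem~\ref{thm-omni-intro}(5) advertises the dimension bound, and the same machinery (hyperplanes, exponential coalescence) feeds directly into the multiplicity bound of Theorem~\ref{t:finite}. A Fubini argument, even if its measurability issue were resolved, would only yield $\nu(A(\omega))=0$ and would contribute nothing toward those results.
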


Theorem \ref{t:exceptional} will follow from Proposition~\ref{p:exceptional} below, which furnishes a stronger result. For $v\in \Gamma$, let $\widetilde{A}_v(\omega)$ denote the set of all directions in $\pG$ such that there exists two edge disjoint geodesic rays  from $v$ in the direction $\xi$. In this section, we assume that
$\pG$ is equipped with the \emph{visual metric}, so that it becomes a metric measure space once it is equipped with the 
Patterson-Sullivan measure. Thus, $\pG$ is \emph{Ahlfors-regular}, i.e.\ the volume of balls of radius $r$ in $\pG$ have volume $\sim r^\DD$ \cite{coornert-pjm}, where $\DD$ is the Hausdorff dimension of 
$\pG$ equipped with the visual metric and the 
Patterson-Sullivan measure. 
We shall prove the following.

\begin{prop}
	\label{p:exceptional}
	For $\P$-a.e. $\omega$, $\widetilde{A}_1(\omega)$ has
	Hausdorff dimension strictly less than $\pG$ equipped with the 
	Patterson-Sullivan measure and the visual metric. In particular, $\widetilde{A}_1(\omega)$ has
	zero Patterson-Sullivan measure, i.e., $\nu(\widetilde{A}_{1}(\omega))=0$.
\end{prop}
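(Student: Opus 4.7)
\emph{Measure-zero part.} The vanishing of $\nu(\widetilde{A}_1(\omega))$ follows from Tonelli together with Theorem~\ref{bmthmcoalesce}. For each fixed $\xi\in\pG$ that theorem furnishes a full-measure set $\Omega_\xi$ on which the $\omega$-geodesic ray from $1$ in direction $\xi$ is unique, so any two edge-disjoint such rays would contradict uniqueness; hence $\P(\xi\in\widetilde{A}_1(\omega))=0$ for every $\xi$. Given the routine joint measurability of $\{(\omega,\xi):\xi\in\widetilde{A}_1(\omega)\}$, Tonelli yields
$$\E\bigl[\nu(\widetilde{A}_1(\omega))\bigr]\;=\;\int_{\pG}\P\bigl(\xi\in\widetilde{A}_1(\omega)\bigr)\,d\nu(\xi)\;=\;0,$$
so $\nu(\widetilde{A}_1(\omega))=0$ almost surely.

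\emph{Dimension strategy.} For the strict inequality in Hausdorff dimension the plan is a covering argument at dyadic scales $e^{-n}$. Cover $\pG$ by the shadows $O(v)$ of vertices $v\in\partial B_n(1)$; by Ahlfors regularity these have diameter $\asymp e^{-n}$ in the visual metric, and $|\partial B_n(1)|\asymp e^{\DD n}$. Declare $v$ \emph{bad} if $O(v)\cap\widetilde{A}_1(\omega)\ne\emptyset$, and write $\mathrm{Bad}_n(\omega)$ for the set of bad vertices. If, for some $\varepsilon>0$, one can show $|\mathrm{Bad}_n(\omega)|\le e^{(\DD-\varepsilon)n}$ for all sufficiently large $n$, a.s., then $\widetilde{A}_1(\omega)\subseteq\bigcup_{v\in\mathrm{Bad}_n}O(v)$ is covered by $e^{(\DD-\varepsilon)n}$ sets of diameter $e^{-n}$, and the Hausdorff dimension is at most $\DD-\varepsilon<\DD$.

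\emph{Probability input.} To control $\P(v\in\mathrm{Bad}_n)$ the plan is the following geometric reduction: if $\xi\in O(v)\cap\widetilde{A}_1(\omega)$ then, by $\delta$-hyperbolicity of $\Gamma$ and concentration of the $\omega$-metric, the two edge-disjoint $\omega$-rays to $\xi$ must fellow-travel at bounded word-distance from some scale on, so the second ray passes within a threshold $K$ of a vertex $w\in\partial B_n(1)$ with $d(v,w)\le K$. Bounding $\P(v\in\mathrm{Bad}_n)$ thus reduces to bounding, for one such pair $(v,w)$, the probability that edge-disjoint $\omega$-rays from $1$ through $v$ and through $w$ converge to a common direction. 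This will be controlled by a half-space argument in the spirit of \cite{BM19}: insert a quasi-convex separator transverse to $[1,v]$ at an intermediate scale $\alpha n$; any pair of disjoint rays as above must cross the separator at well-separated points and then re-approach within $K$, a configuration forcing the $\omega$-geodesics to deviate substantially from their word-geodesic skeletons. Proposition~\ref{prop-effbt} together with Lemma~\ref{lem-effbt} then provide the exponential decay of the probability of such deviation, and Markov together with Borel--Cantelli yield the required a.s.\ bound on $|\mathrm{Bad}_n|$.

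\emph{Main obstacle.} The critical input is a uniform exponential estimate $\P(v\in\mathrm{Bad}_n)\le e^{-\varepsilon' n}$---a quantitative refinement of the qualitative coalescence of \cite{BM19}. Extracting this exponential rate will require balancing the intermediate scale $\alpha n$ and the separator geometry so that both the exponential divergence of word-geodesics (geometric gain) and the exponential-tail estimate of Proposition~\ref{prop-effbt} (probabilistic gain) are deployed optimally. A secondary source of difficulty is that the fellow-traveling threshold $K$ is only controlled by an exponentially integrable random constant, so the choice of $\alpha$ must remain uniform after a suitable truncation on $K$.
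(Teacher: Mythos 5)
Your covering strategy at scale $e^{-n}$ is essentially the one the paper uses: cover $\widetilde A_1(\omega)$ by shadows of a collection of vertices $C_n\subset\partial B_n(1)$ (the paper's ``lifts of exceptional directions,'' your ``bad'' vertices) and show that, with exponentially high probability, $|C_n|\le e^{(\lambda-\varepsilon)n}$, from which the dimension drop follows via the shadow lemma. The two places where your plan departs from the paper are where the gaps are.

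\textbf{The Tonelli/Fubini argument for $\nu(\widetilde A_1)=0$.} You dismiss the joint measurability of $\{(\omega,\xi):\xi\in\widetilde A_1(\omega)\}$ as routine. The paper explicitly declines to take this route (Remark~\ref{r:measure}), noting that measurability ``does not seem entirely straightforward,'' and instead derives $\nu(\widetilde A_1)=0$ as a corollary of the strict Hausdorff-dimension inequality together with Ahlfors regularity of $(\pG,\nu)$. You should either supply the measurability argument in detail or, as the paper does, obtain the measure-zero statement for free from the dimension bound.

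\textbf{The source of exponential decay for $\P(v\ \text{bad})$.} This is the real gap. You attribute the decay to Proposition~\ref{prop-effbt}/Lemma~\ref{lem-effbt} applied after inserting one quasi-convex separator at scale $\alpha n$. That is not where the paper's exponential rate comes from, and a single separator plus deviation estimates does not yield decay in $n$. The paper's Lemma~\ref{l:wandering} instead stacks $\asymp n/D$ hyperplanes $\mathbb{H}_i$ perpendicular to $[1,v]$ at spacing $D$ and invokes \cite[Proposition~7.10]{BM19}: there are events $\mathscr{H}_i$, each with probability $\ge p>0$, on which \emph{every} pair of geodesics crossing the slab between $\mathbb{H}_i$ and $\mathbb{H}_{i+1}$ shares an edge, and the events $\mathscr{H}_{2i}$ are independent because they depend on disjoint slabs. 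Disjointness of $[1,v_1]_\omega$ and $[1,v_2]_\omega$ for $v_1,v_2\in B_{\varepsilon n}(v)$ then forces all $\asymp\varepsilon' n$ slab events to fail simultaneously, giving $\P(X_v=1)\le(1-p)^{\varepsilon' n/2}$. This iterated, independent coalescence mechanism --- not the deviation estimate --- is what produces exponential decay. Proposition~\ref{prop-effbt}/Lemma~\ref{lem-effbt} are used, but for a different purpose: they enter Lemma~\ref{l:wandering0}, which shows that with probability $\ge 1-e^{-cn}$ no $\omega$-ray is $\varepsilon$-wandering at scale $n$, so that each lift of an exceptional direction stays within $B_{\varepsilon n}(v)$ of some word-geodesic skeleton vertex $v$. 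This is exactly what resolves the ``random fellow-traveling constant $K$'' worry you flag at the end --- not a truncation on $K$, but a uniform-in-direction wandering bound with exponentially small failure probability. As written, your sketch conflates the wandering control (where prop-effbt is the right tool) with the disjointness control (where the hyperplane coalescence of \cite{BM19} is the right tool), and without the latter the proof does not close.
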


The last statement of Proposition~\ref{p:exceptional} follows from the first since $\pG$  equipped with the Patterson-Sullivan measure and the visual metric  is Ahlfors-regular
\cite[Theorem 7.2, Proposition 7.4]{coornert-pjm}. It thus suffices to prove the first.
Observe that, by almost sure uniqueness of geodesics between any $u,v\in \Gamma$, it follows that $A(\omega)\subseteq \cup_{v} \widetilde{A}_{v}(\omega)$. Proposition \ref{p:exceptional} implies by group invariance that for all $v\in \Gamma$, for $\P$-a.e.\ $\omega$, we have $\nu(\widetilde{A}_v(\omega))=0$. By taking a countable union over all $v$ (and a countable intersection over the corresponding probability 1 events), we get Theorem \ref{t:exceptional}. In fact, we get the stronger assertion that the Hausdorff dimension  of $A(\omega)$ is strictly less than that of $\pG$.

{\begin{rmk}
\label{r:measure}
Notice that if one just wants to show that $\nu(A(\omega))=0$ for a.e.\ $\omega$, one might want to use Theorem \ref{bmthmcoalesce} which says that $\P(\xi\in A(\omega))=0$ for each $\xi\in \pG$ and use Fubini's theorem. Indeed, if one could show that the random variable $I(\xi\in A(\omega))$ is jointly measurable in $\xi$ and $\omega$, then Fubini's theorem and Theorem \ref{bmthmcoalesce} would indeed imply that $\nu(A(\omega))=0$ for a.e. $\omega$. This question of measurability, however, does not seem entirely straightforward. Hence we provide the argument below which leads to a stronger result in terms of the Hausdorff dimension. 
\end{rmk}
}

To prove Proposition \ref{p:exceptional}, we need the following. For $n\in \N$ large, and  $v\in \partial B_n(1)$, the $n$-sphere, we say that $v$ is a \emph{lift of an exceptional direction} $\xi\in \pG$ if $\xi\in \widetilde{A}_1(\omega)$ and $v\in [1,\xi)$. Let $C_{n}\subset \partial B_n(1)$ denote the set of lifts of exceptional directions. We shall prove the following lemma. 

\begin{lemma}
	\label{l:exceptionallift}
	There exists $c_1,c_2>0$ such that for all $n$ sufficiently large we have 
	$$\P\bigg(|C_n|\ge \exp(-c_1n)|\partial B_n(1)|\bigg) \le \exp(-c_2n) $$
\end{lemma}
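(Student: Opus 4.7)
The plan is to bound $\E |C_n|$ by something exponentially smaller than $|\partial B_n(1)|$ and then apply Markov's inequality. Concretely, I will aim to show that, for each $v \in \partial B_n(1)$,
\[
\P(v \in C_n) \le e^{-\alpha n}
\]
for some $\alpha > 0$ independent of $v$. Given such an estimate, $\E |C_n| \le e^{-\alpha n}|\partial B_n(1)|$, and Markov's inequality immediately yields $\P(|C_n| \ge e^{-c_1 n}|\partial B_n(1)|) \le e^{-c_2 n}$ for any $0 < c_1 < \alpha$ with $c_2 = \alpha - c_1$.

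First I would carry out a geometric reduction. If $v \in C_n$, then $v$ lies on some $\omega$-geodesic ray $\gamma$ from $1$ converging to an exceptional direction $\xi$, and there is a second $\omega$-geodesic ray $\gamma'$ from $1$ to the same $\xi$, edge-disjoint from $\gamma$. Applying Proposition~\ref{prop-effbt} with base points ranging over the vertices of a word geodesic $[1, \xi)$, combined with a union bound, should yield a high-probability event $\mathcal{E}_n$ with $\P(\mathcal{E}_n^c) \le e^{-\beta n}$ on which every $\omega$-geodesic from $1$ in every direction $\xi'$ remains within an $O(\log n)$-neighborhood of a word geodesic $[1, \xi')$. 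Since word geodesic rays to the same boundary point are within bounded Hausdorff distance by $\delta$-hyperbolicity, on $\mathcal{E}_n$ the intersections of $\gamma$ and $\gamma'$ with $\partial B_n(1)$ must give vertices $v$ and $v'$ with $d(v, v') \le K = O(\log n)$, and with $[1,v]_\omega, [1, v']_\omega$ necessarily edge-disjoint.

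The main probabilistic step, and what I anticipate as the central obstacle, is a quantitative coalescence estimate: for $v, v' \in \partial B_n(1)$ with $d(v, v') \le K$,
\[
\P\bigl([1, v]_\omega \text{ and } [1, v']_\omega \text{ are edge-disjoint}\bigr) \le e^{-\gamma n}
\]
for some $\gamma > 0$. Heuristically, two edge-disjoint $\omega$-geodesics from $1$ to nearby endpoints at word-distance $n$ would enclose a long ``bigon-like'' region in $\Gamma$; the exponential inefficiency of paths leaving bounded neighborhoods of word geodesics (\cite[Ch.~III.H.1]{bh-book}) combined with concentration estimates as in \cite[Lemma~5.14 and Theorem~2.14]{BM19} should force the $\omega$-length of at least one of these paths to exceed $d_\omega(1, v)$ or $d_\omega(1, v')$ except with probability exponentially small in $n$. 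Structurally this parallels the proof of Lemma~\ref{lem-effbt}, with two nearby endpoints in place of one endpoint and an interior point; making this quantitative refinement of Theorem~\ref{bmthmcoalesce} work uniformly over nearby pairs is where I expect the real technical work to lie.

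Finally, a union bound over the polynomially many (at most $D^K$ with $K = O(\log n)$) choices of $v'$ within distance $K$ of $v$ combines with the two previous steps to give $\P(v \in C_n) \le e^{-\alpha n}$ for any $\alpha$ strictly less than $\min(\beta, \gamma)$, completing the estimate and hence (by the Markov argument above) the lemma. The backward-tree analog proceeds by the identical argument, using rays $[u, \xi)_\omega \subset T(\xi, \omega)$ in place of $[1, g]_\omega \subset F(1, \omega)$ (see Remarks~\ref{r:btree1}--\ref{r:btree2}).
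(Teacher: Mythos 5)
Your high-level structure matches the paper: reduce to a first-moment/Markov bound, introduce a high-probability ``tracking'' event forcing $\omega$-geodesic rays to stay close to word geodesic rays, and on that event bound $\E|C_n|$ by the probability of two nearby, edge-disjoint finite $\omega$-geodesics. The paper's $\mathscr{C}_{\varepsilon,n}$ (Lemma~\ref{l:wandering0}) plays the role of your $\mathcal{E}_n$, and Lemma~\ref{l:wandering} plays the role of your edge-disjointness bound. However, there are two substantive problems.

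First, your claim that $\mathcal{E}_n$ gives $O(\log n)$ tracking with $\P(\mathcal{E}_n^c)\le e^{-\beta n}$ is not achievable. Proposition~\ref{prop-effbt} gives $\P[R(\omega)\ge N]\le e^{-cN}$, and to survive a union bound over the $\Theta(D^n)$ vertices of $\partial B_n(1)$ one needs $cN\gg n\log D$, i.e.\ $N$ linear in $n$. The paper accordingly works with a linear tracking radius $\varepsilon n$ (Lemma~\ref{lem-intnAn}, Lemma~\ref{l:wandering0}). This changes your counting from ``polynomially many pairs $(v,v')$'' to $D^{\varepsilon n}$ pairs; the argument still closes because the coalescence estimate beats this exponential factor, but as written your bookkeeping would not.

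Second, and more seriously, the mechanism you propose for the edge-disjointness estimate --- exponential inefficiency plus concentration, paralleling Lemma~\ref{lem-effbt} --- does not apply and would fail. Lemma~\ref{lem-effbt} concerns a single $\omega$-geodesic straying far from a word geodesic, so that one of its subsegments is forced to be long compared to its endpoints' word distance. Two edge-disjoint $\omega$-geodesics to nearby targets need not stray from anything: on the tracking event they both lie inside the same $\varepsilon n$-tube about $[1,\xi)$, and they can weave around each other at distance $1$--$2$ the whole way while sharing no edge. No length-inefficiency is incurred, so neither the inefficiency bound nor the concentration inequality gets traction. The paper instead proves $\P(X_v=1)\le e^{-c_*n}$ (Lemma~\ref{l:wandering}) via the hyperplane construction of \cite[Proposition~7.10]{BM19}: partition $[1,v]$ by hyperplanes $\mathbb{H}_i$ spaced $D$ apart; with probability $p>0$ (independent of $v,n$) the event $\mathscr{H}_i$ occurs, on which \emph{every} pair of geodesics crossing the slab between $\mathbb{H}_i$ and $\mathbb{H}_{i+1}$ shares an edge; the events $\mathscr{H}_{2i}$ are independent; hence two disjoint geodesics crossing all $\sim n/D$ slabs have probability $\le(1-p)^{n/2D}$. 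This independence-across-slabs idea is the crux of the lemma and is absent from your proposal.

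A minor point: $C_n$ consists of lifts on \emph{word} geodesics $[1,\xi)$, not on the $\omega$-geodesic rays themselves, so your initial reduction misidentifies where $v$ lives; this is cosmetic given the tracking event, but worth noting since the paper's counting variable $X_v$ is indexed by vertices $v$ of the word sphere with the disjoint $\omega$-geodesics ending at $v_1,v_2\in B_{\varepsilon n}(v)$.
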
 

Proposition \ref{p:exceptional} now follows from Lemma \ref{l:exceptionallift} by a standard application of Sullivan's shadow lemma \cite[Section 6]{coornert-pjm} as follows. What the argument in 
\cite[Section 6]{coornert-pjm} shows is that the (upper) volume entropy 
$\limsup \frac{\log |C_n|}{n}$ is at least the Hausdorff dimension of the 
exceptional set $\widetilde{A}_1$. Indeed, for a sufficiently large, fixed $R$
depending on $\delta$ alone, we have the following.
\begin{enumerate}
	\item $\limsup \frac{\log |C_n|}{n}$ equals $\limsup \frac{\log |B_R(C_n)|}{n}$, where $B_R(C_n)$ denotes the $R-$neighborhood of $C_n$.
	\item For all $n$ large enough, the $R-$shadow of $C_n$ covers {$\widetilde{A}_1(\omega)$} by Lemma~\ref{l:wandering0} and ~\ref{l:wandering}. (Here, by $R-$shadow of a set $W$ we mean the collection of points $z \in \pG$, such that $[1,z)$ passes through an $R-$neighborhood of $W$.)
	\item Finally, by the Borel-Cantelli Lemma, and Lemma \ref{l:exceptionallift}, we have a.s.\ 
	 $$\limsup \frac{\log |C_n|}{n}\le \lambda-c_1,$$
	 where $c_1$ is as in the statement of Lemma \ref{l:exceptionallift}, 
	 and $\lambda = \lim_{n \to \infty} \frac{\log |\partial B_n(1)|}{n}$
	 equals the volume entropy of $\Gamma$.  Note that $\lambda$ equals the Hausdorff dimension of
	 $\pG$ equipped with the visual metric and the Patterson-Sullivan measure \cite{coornert-pjm}.
\end{enumerate}

Thus, it suffices to 
prove Lemma \ref{l:exceptionallift}. We consider two different events. For $\varepsilon>0$, let us call a  semi-infinite $\omega$-geodesic ray $\gamma$ (starting at 1) $\varepsilon$-wandering at distance $n$ if it does not intersect $B_{\varepsilon n}(v)$ where $v=[1,\xi)\cap \partial B_n(1)$ and $\xi$ is the asymptotic direction of $\gamma$. Let $\mathscr{C}_{\varepsilon,n}$ denote the event that there does not exist any $\omega$-geodesic ray started at $1$ that is  $\varepsilon$-wandering at distance $n$. Further for $v\in \partial B_n(1)$, let $X_{v}$ denote the indicator of the event that there exists $v_1,v_2\in B_{\varepsilon n}(v)$ such that the geodesics $[1,v_1]_{\omega}$ and $[1,v_2]_{\omega}$ are edge disjoint. Clearly, on the event $\mathscr{C}_{\varepsilon,n}$ we have 
$$ |C_n| \le \sum_{v\in \partial B_{1}(n)}X_{v}.$$
Therefore, Lemma \ref{l:exceptionallift} will follow from  Lemmas~\ref{l:wandering0} and ~\ref{l:wandering} below. 

\begin{lemma}
	\label{l:wandering0}
	For each $\varepsilon>0$, there exists $c=c(\varepsilon)>0$ such that for all $n$ sufficiently large 
	$$\P(\mathscr{C}_{\varepsilon,n})\ge 1-\exp(-cn).$$
\end{lemma}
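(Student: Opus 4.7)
The plan is to apply Lemma \ref{lem-effbt} at every vertex of the sphere $\partial B_n(1)$ and take a union bound. The conceptual picture is that an $\varepsilon$-wandering $\omega$-geodesic ray in direction $\xi$ at scale $n$ must produce, at the point $v=[1,\xi)\cap \partial B_n(1)$, exactly the kind of quantitative deviation between a word geodesic and an $\omega$-geodesic that Lemma \ref{lem-effbt} rules out with probability at least $1-e^{-c\varepsilon n}$.

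Concretely, I would first fix a constant $C=C(\delta)$ absorbing $\delta$-hyperbolic fellow-traveling errors and the coarse-surjectivity constant $k_0\delta$ of Remark \ref{rmk-cs}, and apply Lemma \ref{lem-effbt} with $o=v$ and $R=\varepsilon n$ for each $v\in \partial B_n(1)$. Choosing the constant $c$ supplied by that lemma large enough that $c\varepsilon > \log D$, we obtain $\P(A(v,C))\leq e^{-c\varepsilon n}$ for all $n$ large, uniformly in $v\in \partial B_n(1)$.

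Next I would verify the key set-theoretic inclusion $\mathscr{C}_{\varepsilon,n}^{c}\subset \bigcup_{v\in\partial B_n(1)} A(v,C)$. Suppose $\sigma$ is an $\omega$-geodesic ray from $1$ with direction $\xi$ that is $\varepsilon$-wandering at distance $n$, and set $v=[1,\xi)\cap\partial B_n(1)$. Since $\sigma$ genuinely tends to $\xi$, I can pick a vertex $y\in\sigma$ with $d(1,y)$ large enough that the Gromov product $(y,\xi)_1$ exceeds $n+k_1\delta$ for a suitable $k_1=k_1(\delta)$. Standard $\delta$-hyperbolic fellow-traveling then forces any word geodesic $[1,y]$ to pass within $O(\delta)$ of $v$. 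By coarse surjectivity of nearest-point projection (Remark \ref{rmk-cs}), there exists a vertex $w$ on the $\omega$-geodesic $[1,y]_\omega\subset \sigma$ whose projection $\Pi_{1y}(w)$ lies in $N_C(v)$. Since $w\in\sigma$ and $\sigma$ avoids $B_{\varepsilon n}(v)$ by the $\varepsilon$-wandering hypothesis, $d(w,v)\geq \varepsilon n$. Thus all three defining conditions of $A(v,C)$ with parameter $R=\varepsilon n$ are simultaneously satisfied.

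Combining the two steps, a union bound yields
$$\P(\mathscr{C}_{\varepsilon,n}^{c})\leq |\partial B_n(1)|\cdot e^{-c\varepsilon n}\leq D^n e^{-c\varepsilon n} = e^{-(c\varepsilon-\log D)n},$$
which proves the lemma with rate $c(\varepsilon)=c\varepsilon-\log D>0$. I expect the only genuinely non-routine step to be the deterministic geometric one in the previous paragraph: verifying, independently of the possibly non-unique choice of $[1,\xi)$, that the word geodesic $[1,y]$ really does pass within $O(\delta)$ of $v$ for $y\in\sigma$ sufficiently far out. This is handled by the fact that any two such word geodesics are within bounded Hausdorff distance in a $\delta$-hyperbolic space. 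Once this geometric fact is in place, all the probabilistic content is already packaged in Lemma \ref{lem-effbt}.
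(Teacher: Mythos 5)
Your proof is correct and follows essentially the same strategy as the paper: both apply Lemma~\ref{lem-effbt} at each $v\in\partial B_n(1)$ with $R=\varepsilon n$, verify via fellow-traveling of $[1,y]$ with $[1,\xi)$ and coarse surjectivity of nearest-point projection (Remark~\ref{rmk-cs}) that an $\varepsilon$-wandering ray in direction $\xi$ with $v=[1,\xi)\cap\partial B_n(1)$ triggers the excluded event, and close with a union bound over $\partial B_n(1)$. The paper phrases this through the events $\AAA_v(\kappa,\varepsilon)$ and the specific marker points $y_n(\xi,\omega),y_{2n}(\xi,\omega)$ on $[1,\xi)_\omega$ rather than your Gromov-product formulation, but the content is the same.
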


\begin{proof}
	To prove Lemma~\ref{l:wandering0} we start with some hyperbolic geometry observations: \\
	Let $\pi_\xi$ denote nearest-point-projection onto $[1,\xi)$ (a deterministic
	geodesic ray from 1 to $\xi$). 
	We then note that  $[1, \xi)_\omega$ is a random geodesic with direction $\xi$ if and only if $\pi_\xi$ maps  $[1, \xi)_\omega$  coarsely 
	surjectively onto $[1, \xi)$. Next, let
	$\YY_n (\xi,\omega)$ be the set of  points in  $ [1, \xi)_\omega$ whose image under $\pi_\xi$  is  at distance $n$ from 1 on $[1, \xi)$.
	For convenience of notation, let us denote the point on $[1, \xi)$ at distance $n$ from 1  by $n(\xi)$.
	Then for any $y_{2n}(\xi,\omega) \in {\YY_{2n}} (\xi,\omega)$,   $[1,2n(\xi)]\cup[2n(\xi), y_{2n}]$ is a uniform quasigeodesic (in fact, $[1,2n(\xi)]\cup[2n(\xi), y_{2n}(\xi,\omega)]$ lies within
	a $2\delta-$neighborhood of the geodesic $[1,y_{2n}(\xi,\omega)]$). Further, there exists $\kappa$ depending only on $\delta$ such that  nearest-point-projections of any $y_{n}(\xi,\omega) \in {\YY_{n}} (\xi,\omega)$ onto  $[1,y_{2n}(\xi,\omega)]$ lies within a $\kappa-$distance of $n(\xi)$ (since
$[1,y_{2n}(\xi,\omega)]$ and $[1,2n(\xi)]\cup[2n(\xi), y_{2n}(\xi,\omega)]$ track each other uniformly).

Recall the events $\AAA_v=\AAA_v(C,\epsilon)$ and $\AAA_n=\AAA_{n}(C,\epsilon)$ from Lemma~\ref{lem-intnAn}. Observe from the proof of Lemma \ref{lem-intnAn} that using Lemma \ref{lem-effbt} and taking a union bound over $v\in \partial B_n(1)$ we have that for all $\varepsilon>0$, there exists $c>0$ such that 
$$\P(\AAA_n(\kappa,\varepsilon))\ge 1-\exp(-cn)$$ for all $n$ sufficiently large. It remains to verify that $\AAA_n(\kappa,\varepsilon) \subseteq \mathscr{C}_{\varepsilon,n}$. Indeed, note that for every $v\in \partial B_n(1)$ and for every $\xi\in \partial G$ such that $n(\xi)=v$, we have that $[1,y_{2n}(\xi,\omega)]$ passes within $\kappa$ distance of $v$ (see the geometric argument in the previous paragraph). Therefore, on the event $\AAA_v(\kappa,\varepsilon)$ we have that for all $\xi\in \pG$ such that $n(\xi)=v$ there does not exist any $\omega$ geodesic ray with direction $\xi$ that is $\varepsilon$-wandering at distance $n$. Taking a union bound over all $v\in \partial B_n(1)$ (and using the fact that $n(\xi)\in \partial B_n(1)$ for all $\xi\in \pG$), we get $\AAA_n(\kappa,\varepsilon) \subseteq \mathscr{C}_{\varepsilon,n}$, as claimed. This completes the proof.  
\end{proof}

\begin{lemma}
	\label{l:wandering}
	Let $\varepsilon>0$ be fixed and sufficiently small. There exists $c'=c(\varepsilon)>0$ and $\tilde{c}=\tilde{c}(\varepsilon)>0$ such that for all $n$ sufficiently large 
	$$\P\left(\sum_{v\in \partial B_{1}(n)}X_{v} \ge \exp(-\tilde{c}n)|\partial B_n(1)|\right)\le \exp(-c'n).$$
\end{lemma}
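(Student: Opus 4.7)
The plan is to reduce Lemma~\ref{l:wandering} via Markov's inequality to a uniform exponential upper bound on $\P(X_v = 1)$ for $v \in \partial B_n(1)$, and then establish this bound by a union bound over candidate endpoint pairs combined with a quantitative non-coalescence estimate. Concretely, Markov's inequality gives
$$\P\!\left(\sum_{v \in \partial B_n(1)} X_v \ge e^{-\tilde c n} |\partial B_n(1)|\right) \le \frac{e^{\tilde c n}}{|\partial B_n(1)|} \sum_{v \in \partial B_n(1)} \P(X_v = 1),$$
so it suffices to show that there exists $c_1 > 0$ (depending on $\varepsilon$) with $\P(X_v = 1) \le e^{-c_1 n}$ for every $v \in \partial B_n(1)$; any $\tilde c \in (0, c_1)$ then works with $c' = c_1 - \tilde c$. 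By the $G$-invariance of $\P$ and the vertex-transitivity of $\Gamma$, the argument establishing such a bound at one $v$ applies uniformly to all $v \in \partial B_n(1)$.

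To bound $\P(X_v = 1)$, I would union-bound over the at most $D^{2\varepsilon n}$ ordered pairs $(v_1, v_2) \in B_{\varepsilon n}(v)^2$ with $v_1 \neq v_2$, thereby reducing the task to showing that
$$\P\bigl([1, v_1]_\omega \text{ and } [1, v_2]_\omega \text{ are edge-disjoint}\bigr) \le e^{-c_0 n}$$
with $c_0 > 2\varepsilon \log D$, uniformly over pairs with $d(v_1, v_2) \le 2\varepsilon n$ and $d(1, v_i) \in [(1-\varepsilon)n, (1+\varepsilon)n]$. By the symmetry of the random metric $d_\omega$, this event coincides with the event that the two $\omega$-geodesics from the nearby base points $v_1, v_2$ to the common target $1$ fail to coalesce at any point before reaching $1$. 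The problem is thus recast as a quantitative non-coalescence estimate for two $\omega$-geodesics emanating from nearby base points and heading to a common target at distance $n$.

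This non-coalescence estimate is the heart of the proof and the main obstacle: it is a quantitative strengthening of the qualitative coalescence theorem (Theorem~\ref{bmthmcoalesce}) of \cite{BM19}. I would adapt the halfspace/hyperplane technique underlying that theorem as follows. By $\delta$-hyperbolicity, the word geodesics $[v_1, 1]$ and $[v_2, 1]$ $O(\delta)$-fellow-travel along a terminal segment of length $(1 - O(\varepsilon)) n$; along this common segment pick $k = \Theta(n)$ equally spaced landmarks $w_1, \ldots, w_k$. For each $w_j$, Lemma~\ref{lem-effbt} (applied with $o = w_j$ and fixed constants $C, R_0$) produces a high-probability event on which every $\omega$-geodesic between a pair of points whose word geodesic enters $N_C(w_j)$ passes within $R_0$ of $w_j$; in particular on the intersection of these events both $[v_i, 1]_\omega$ visit the $R_0$-neighborhood of every $w_j$. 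Then, conditioning on the FPP environment outside a thin shell around $w_j$ and using the exponential divergence of hyperplanes through $w_j$ together with the concentration estimates of \cite[Lemma~5.14, Theorem~2.14]{BM19}, one obtains a uniformly positive (scale-independent) conditional probability $p \in (0, 1)$ that $[v_1, 1]_\omega$ and $[v_2, 1]_\omega$ coalesce within an $O(1)$-neighborhood of $w_j$. For the two $\omega$-geodesics to remain edge-disjoint, coalescence must fail at every one of the $k = \Theta(n)$ landmarks; arranging approximate independence of the scale-by-scale events, by sequentially conditioning on the $\sigma$-algebras generated by the FPP weights on disjoint shells around consecutive $w_j$'s (in the spirit of \cite[Section~7]{BM19}), yields the bound $(1 - p)^{\Theta(n)} = e^{-c n}$. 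Choosing $\varepsilon$ small enough that $c > 2\varepsilon \log D$ produces $c_0 > 0$, and hence $c_1 > 0$, completing the proof. The most delicate point is the rigorous extraction of this scale-by-scale independence: the FPP weights at different shells are independent, but the coalescence events at different $w_j$'s are correlated through the global $\omega$-geodesic structure, and the conditioning must be set up so that success at one scale is essentially determined by the environment within a thin shell around the corresponding landmark.
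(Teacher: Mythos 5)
Your high-level structure matches the paper's: reduce via Markov's inequality to the uniform bound $\P(X_v=1)\le e^{-c_*n}$, then attack this via a chain of hyperplanes $\mathbb{H}_i$ perpendicular to $[1,v]$ with a scale-by-scale independence argument. The crucial difference is how the core quantitative coalescence input is obtained. The paper cites \cite[Proposition 7.10]{BM19}, which supplies events $\mathscr{H}_i$ with $\P(\mathscr{H}_i)\ge p>0$, independent for $i$ even, and on which \emph{every} pair of geodesics crossing from $\mathbb{H}_i$ to $\mathbb{H}_{i+1}$ shares a common edge. That ``all pairs at once'' formulation is decisive: it yields $X_v=0$ on the single event $\bigcup_{i\le\varepsilon'n}\mathscr{H}_i$, so no union bound over the $D^{2\varepsilon n}$ pairs $(v_1,v_2)\in B_{\varepsilon n}(v)^2$ is needed, and hence no delicate comparison of exponents $c_0$ versus $2\varepsilon\log D$.

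The genuine gap in your proposal is that the per-pair non-coalescence estimate — your claim of a uniformly positive conditional probability of coalescence near each landmark $w_j$, obtained by conditioning on thin shells — is precisely the content that \cite[Proposition 7.10]{BM19} packages as a black box. You correctly identify this as ``the most delicate point'' and sketch the intended mechanism (exponential divergence of hyperplanes, concentration via \cite[Lemma 5.14, Theorem 2.14]{BM19}, conditioning on disjoint shells to get approximate independence), but you stop short of carrying it out; the correlation between coalescence events at different $w_j$ through the global geodesic structure is exactly what makes this nontrivial, and the proposal does not resolve it. So the proposal has the right skeleton but leaves the key lemma unproven where the paper instead cites it. If you incorporate \cite[Proposition 7.10]{BM19} directly, the union bound over pairs and the constraint on $\varepsilon$ become unnecessary and the argument closes cleanly.
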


\begin{figure}[htbp!]
\includegraphics[width=14cm]{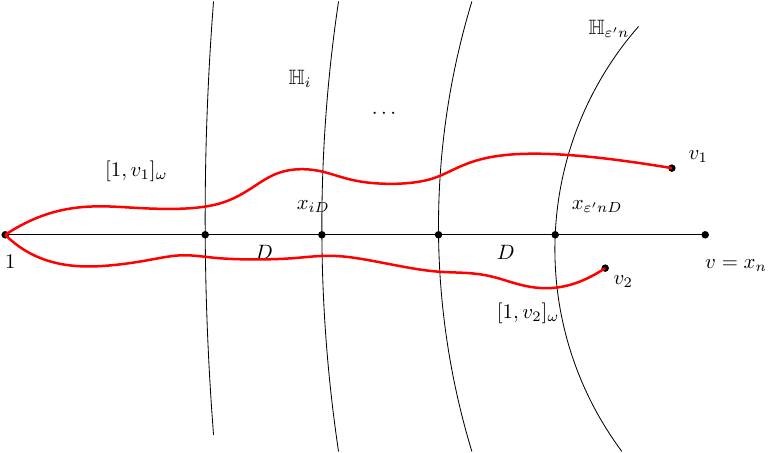}
\caption{Proof of \eqref{e:xvbound}: We consider the hyperplanes $\mathbb{H}_{i}$ perpendicular to $[1,v]$ with separation $D$ between consecutive hyperplanes. If there exist points $v_1,v_2$ within distance $\varepsilon n$ of $v$ such that $[1,v_1]_{\omega}$ and $[1,v_2]_{\omega}$ are edge disjoint then there must be disjoint geodesics from $\mathbb{H}_i$ to $\mathbb{H}_{i+1}$ for all $i\le \varepsilon' n$. Since the probability of this event is bounded away from 1, and the events are independent for all even $i$, we get the probability that there exist such disjoint geodesics is exponentially small.} 
\label{f:hyperplane}
\end{figure}

\begin{proof} The first step is to prove that there exists $c_*>0$ such that for all $v\in \partial B_n(1)$
	\begin{equation}
		\label{e:xvbound}
		\P(X_v=1)\le \exp(-c_*n).
	\end{equation}
	To see this, fix $v$ and consider $[1,v]={1=x_0,x_1,\ldots, x_{n}=v}$. For $D$ sufficiently large, let $\mathbb{H}_{i}$ denote the hyperplane passing through $x_{iD}$ perpendicular to $[1,v]$.
	(In \cite{BM19}, we constructed hyperplanes perpendicular to
	infinite geodesic rays, but  the same construction works for finite geodesic segments as well.) We know from \cite[Proposition 7.10]{BM19} that for $D$ sufficiently large, there is $p>0$ (not depending on $v$ or $n$)  and events $\mathscr{H}_{i}$ such that 
	
	\begin{itemize}
		\item[(i)] $\P(\mathscr{H}_{i})\ge p$.  
		\item[(ii)] The events $\mathscr{H}_{2i}$ are independent.
		\item[(iii)] For all $i\ge i_0$, on the event $\mathscr{H}_{i}$ every pair of geodesics from a point in $\mathbb{H}_{i}$ to a point in $\mathbb{H}_{i+1}$ share a common edge. 
	\end{itemize}
{(That $p$ does not depend on $v$ is a consequence of the proof of \cite[Proposition 7.10]{BM19}.)}
	
	Observe also that for every $v'\in B_{\varepsilon n}(v)$, $[1,v']_{\omega}$ must cross from $\mathbb{H}_i$ to $\mathbb{H}_{i+1}$ for each $i\le \varepsilon' n$ where $\varepsilon'=\varepsilon'(\varepsilon,D)>0$ is independent of $v$ and $n$.
	{This is because $B_{\varepsilon n}(v)$ is $\delta-$quasiconvex, and each $\mathbb{H}_i$ has quasiconvexity constant dependent only on $\delta$. Further, each $\mathbb{H}_i$ separates $\Gamma$}. 
	
	It is therefore clear (due to the almost sure uniqueness of $\omega$-geodesics between any two points) that on the event $\cup_{i\le \varepsilon' n} \mathscr{H}_i$ we have 
	$X_v=0$ almost surely. Since the events $\mathscr{H}_{2i}$ are independent we get that 
	$$\P(X_{v}=1)\le (1-p)^{\varepsilon' n/2}$$
	completing the proof of \eqref{e:xvbound}. 
	
	Next, \eqref{e:xvbound} implies that 
	$$\E\left[\sum_{v\in \partial B_{1}(n)}X_{v}\right] \le \exp(-c_*n)||\partial B_n(1)|.$$
	Choosing $\tilde{c}\in (0,c_*)$ Markov inequality implies {
		$$\P\left(\sum_{v\in \partial B_{1}(n)}X_{v} \ge \exp(-\tilde{c}n)|\partial B_n(1)|\right)\le \exp(-(c_{*}-\tilde{c})n).$$
		This completes the proof of the lemma.}
\end{proof}

\begin{rmk}
\label{r:btree1}
We briefly describe how to get Theorem \ref{t:exceptional} and Proposition \ref{p:exceptional} for the backward tree $T(\xi,\omega)$. Observe that if $\xi'\ne \xi$ is an exceptional direction in $T(\xi,\omega)$ for $\omega\in \Omega_{\xi}$, there exists $v\in \Gamma$ and disjoint rays $\gamma_1$, $\gamma_2$ from $v$ in direction $\xi'$ such that $\gamma_1\cup [v,\xi)_{\omega}$ and $\gamma_2 \cup \cup [v,\xi)_{\omega}$ are both bigeodesics $(\xi',\xi)_{\omega}$ contained in $T(\xi,\omega)$. In particular, $\gamma_1$ and $\gamma_2$ are both semi-infinite geodesics $[v,\xi')_{\omega}$. Therefore, $\xi'\in \widetilde{A}_{v}(\omega)$. Therefore, the set of exceptional directions for $T(\xi,\omega)$ is also contained in $\cup_v \widetilde{A}_{v}(\omega)$. The result now follows from Proposition \ref{p:exceptional}. 
\end{rmk}

\subsection{Maximum multiplicity of exceptional directions is bounded}
We have shown in Theorem~\ref{thm-uncountable}  that for groups which are not virtually free, there exist exceptional directions $\xi$ for the forward tree such that there are at least two distinct $\omega$-geodesics from  $1$ in the direction $\xi$. We also know from Theorem~\ref{thm-mult} that the maximum number of distinct geodesics in exceptional directions can be arbitrarily large depending on the group. Here we show that for any fixed Cayley graph $\Gamma$ there is a uniform upper bound to {the} number of distinct geodesics in any direction for a full measure subset of $\Omega$. 

\begin{theorem}
	\label{t:finite}
	There exists $k_0=k_0(\Gamma,\delta)$ such that almost surely there does not exist any direction $\xi\in \pG$ with multiplicity of $\xi$  more than $k_0$, i.e.\ 
	there do not exist more than $k_0$  distinct $\omega$-geodesics from $1$ in the direction $\xi$.  
\end{theorem}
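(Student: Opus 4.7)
The plan is to extend the hyperplane/bottleneck technique of Lemma~\ref{l:wandering} (building on \cite[Proposition 7.10]{BM19}) by constructing a family of bottleneck events indexed by $k$ that progressively forbid larger collections of pairwise edge-disjoint $\omega$-geodesics between consecutive hyperplanes.

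Fix a direction $\xi\in\pG$, a (deterministic) geodesic $[1,\xi)$, and consider the perpendicular hyperplanes $\mathbb{H}_i$ at $v_{iD}\in[1,\xi)$ as in the proof of Lemma~\ref{l:wandering}. The heart of the argument is to construct, for each $k\ge 2$, events $\mathscr{H}_i^{(k)}$ (depending only on $\omega$ in the band between $\mathbb{H}_i$ and $\mathbb{H}_{i+1}$) such that: (i) on $\mathscr{H}_i^{(k)}$, any collection of pairwise edge-disjoint $\omega$-geodesic arcs from $\mathbb{H}_i$ to $\mathbb{H}_{i+1}$ lying in the $\varepsilon iD$-neighborhood of $[1,\xi)$ has at most $k-1$ elements; (ii) $\P(\mathscr{H}_i^{(k)})\ge p_k$ for a constant $p_k>0$ depending only on $k,\Gamma,\delta,\rho$; (iii) the events $\{\mathscr{H}_{2i}^{(k)}\}$ are mutually independent; and (iv) $p_k\to 1$ as $k\to\infty$. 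For $k=2$ this is exactly \cite[Proposition 7.10]{BM19}. For larger $k$, the strategy is to identify a set of at most $k-1$ ``bottleneck edges'' through which all near-optimal crossings are forced to pass; this becomes easier to arrange as $k$ grows, since the geometric constraint relaxes. Exponential divergence of geodesics in a $\delta$-hyperbolic graph controls the effective transverse width of the band in which competing paths can live, and combined with the concentration estimate \cite[Theorem 2.14]{BM19}, this should yield (i)--(iii) by an inductive construction on $k$.

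Given these events, the conclusion follows along the lines of Section~\ref{sec-exceptional-coalesce}. If some $\xi\in\pG$ has multiplicity at least $k$, then by the wandering estimates (Lemma~\ref{lem-backgeo} and Lemma~\ref{l:wandering0}), for all large $n$ there exist $k$ pairwise edge-disjoint $\omega$-geodesic rays from $1$ crossing every $\mathbb{H}_i$ (for $i\le\varepsilon'n$) within $\varepsilon iD$ of $[1,\xi)$. This forces $\mathscr{H}_i^{(k)}$ to fail for all such $i$, so by (ii) and (iii) together with a union bound over the $\le D^n$ candidate basepoints $v\in\partial B_n(1)$,
\[
\P\big(\exists\,\xi\in\pG \text{ with multiplicity} \ge k \text{ witnessed at level } n\big) \;\le\; D^n(1-p_k)^{\varepsilon' n/2}.
\]
By (iv), one may choose $k_0=k_0(\Gamma,\delta)$ large enough that $D\cdot(1-p_{k_0})^{\varepsilon'/2}<1$, making this probability summable in $n$. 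The Borel--Cantelli lemma then yields that almost surely no direction has multiplicity $\ge k_0$.

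The main obstacle is (iv): establishing $p_k\to 1$ uniformly in $i$, $n$, and $\xi$. This requires combining the hyperbolic-geometric bound on the number of candidate near-optimal arcs in a band of width $\sim\varepsilon iD$ with a Kesten-type concentration argument ruling out too many approximately-tied disjoint crossings. We expect this to be the most technical step; in particular the use of hyperbolicity must be made quantitative in a way that is insensitive to the scale $i$.
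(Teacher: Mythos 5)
Your skeleton is correct and matches the paper's structure quite closely: hyperplanes perpendicular to a base geodesic, alternating slabs to extract independence, a per-slab bound that improves with $k$, and then a union bound over $\sim D^n$ base vertices plus Borel--Cantelli. You have also correctly isolated the crux: item~(iv), i.e.\ proving that the per-slab failure probability for $k$-fold disjointness tends to~$0$ uniformly in the slab index. However, this is exactly the step you leave as a gap, and the mechanism you sketch for it does not work as stated, while the paper uses a cleaner, different tool.

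Concretely, the paper proves (iv) not by an inductive construction of $k-1$ bottleneck edges nor by a concentration argument ruling out ties, but by a deterministic pigeonhole applied to the bottleneck estimate of Lemma~\ref{lem-effbt} in the form of Corollary~\ref{cor-effbt}. One passes from $\omega$-geodesics in $\Gamma$ to geodesics in the intrinsic metric of a uniformly thickened slab $\widetilde{\SSS}_i$ (a quasiconvex, hence uniformly hyperbolic, subgraph with bounded valence). Corollary~\ref{cor-effbt} — crucially stated for arbitrary bounded-valence $\delta$-hyperbolic graphs, not just Cayley graphs, precisely so it applies to the slabs — then says that with probability $\ge 1-e^{-cr}$ every such slab-geodesic between the two hyperplanes passes through the ball $B_r(v^*)$ about the midpoint $v^*$ of $[v_i,v_{i+1}]$. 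Since $B_r(v^*)$ contains a bounded number $k(r)$ of edges, at most $k(r)$ pairwise edge-disjoint slab-geodesics can coexist; this is the whole content of~(iv), and $p_k\to 1$ because $r\mapsto k(r)$ is increasing and the exceptional probability $e^{-cr}$ decays. No comparison of near-ties is needed. Your proposal should therefore replace the inductive/``Kesten-type concentration'' plan by this direct argument, and it must also make explicit the passage from $\omega$-geodesics in $\Gamma$ to slab-geodesics: without thickening the slabs and restricting to intrinsic geodesics, the per-slab events are \emph{not} measurable with respect to the weights inside the band, so the independence you invoke in~(iii) would not hold. That measurability repair (replacing $B_{i,k}$ by $\widetilde B_{i,k}$ in the paper's notation) is a necessary, not cosmetic, step.

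One minor additional remark: your use of Lemma~\ref{l:wandering0} to localize the geodesics is not needed in this argument. The paper instead anchors at $v\in\partial B_1(2n)$ and uses hyperplanes at distances $n$ and $3n/2$ along $[1,v]$; any ray in a direction for which $[1,\xi)$ passes near $v$ must cross all intermediate hyperplanes simply because they separate $\Gamma$. This is cleaner than invoking a wandering bound and avoids an extra $\varepsilon$-dependence.
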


This result is along the same lines as the no three disjoint geodesics results in planar  exponential LPP model. Although we do not exactly know what is the maximum number of geodesics in an exceptional direction, Theorem~\ref{t:finite}  shows that there cannot be too many.

The proof of Theorem \ref{t:finite} uses arguments broadly similar to the proof of Theorem \ref{t:exceptional}. The result will follow from the following estimate together with the Borel-Cantelli lemma. 

\begin{proposition}
	\label{p:finite}
	Let $A_{n,k}$ denote the event that there exists $\xi\in \partial G$ such that there exist $k$ $\omega$-geodesic rays from $1$ in the direction $\xi$ which are disjoint outside $B_n(1)$. Then there exists $k_0$ sufficiently large such that for $k\ge k_0$, there exists $c>0$ such that 
	$$\P(A_{n,k})\le \exp(-cn).$$
\end{proposition}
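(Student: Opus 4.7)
For Proposition~\ref{p:finite}, my strategy is to adapt the proof of Theorem~\ref{t:exceptional}, with the main new ingredient being a per-slab obstruction event whose failure probability decays with the multiplicity $k$. First I would invoke a variant of Lemma~\ref{l:wandering0} to restrict to the high-probability event $\mathscr{C}_{\varepsilon,m}$ (for some scale $m \asymp n$) on which every $\omega$-geodesic ray from $1$ stays inside an $\varepsilon$-cone around its word-geodesic counterpart. On this event, if $A_{n,k}$ occurs for some direction $\xi$, then the $k$ disjoint rays all pass through a common ball $B_{\varepsilon m}(v)$ at radius $m$, for $v = [1,\xi)\cap\partial B_m(1)$. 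Writing $Y_v^{(k)}$ for the event that there exist $k$ pairwise edge-disjoint (outside $B_n(1)$) $\omega$-geodesic segments from $1$ landing in $B_{\varepsilon m}(v)$, one has $A_{n,k}\cap\mathscr{C}_{\varepsilon,m} \subseteq \bigcup_{v\in\partial B_m(1)} Y_v^{(k)}$.

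The main step is then the per-vertex estimate $\P(Y_v^{(k)}) \le e^{-(\lambda+c)m}$, where $\lambda$ is the exponential growth rate of $\Gamma$. For each fixed $v$, I would set up hyperplanes $\mathbb{H}_i$ perpendicular to $[1,v]$ at spacing $D=D_k$ and construct a refinement $\mathscr{H}_i^{(k)}$ of the event $\mathscr{H}_i$ of \cite[Proposition 7.10]{BM19} satisfying: (i) $\P(\mathscr{H}_i^{(k)}) \ge p_k$ with $p_k\to 1$ as $k\to\infty$ (for suitably chosen $D_k$); (ii) the events $\mathscr{H}_{2i}^{(k)}$ are independent; (iii) on $\mathscr{H}_i^{(k)}$, no $k$ pairwise edge-disjoint $\omega$-geodesics can cross the slab from $\mathbb{H}_i$ to $\mathbb{H}_{i+1}$ within the $\varepsilon$-tube. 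The intended construction exhibits, with probability close to $1$, a set of at most $k-1$ ``bottleneck'' edges used by every $\omega$-geodesic crossing the slab in the tube; pigeonhole then rules out $k$ edge-disjoint crossings. The exponential inefficiency of detours in $\delta$-hyperbolic graphs (Case~2 of the proof of Lemma~\ref{lem-effbt} and \cite[Lemma 7.4]{BM19}) together with Assumption~\ref{assume-subexp} should allow one to push $p_k$ to $1$ by enlarging $D_k$.

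By the independence of the even-indexed hyperplane events, $\P(Y_v^{(k)}) \le (1 - p_k)^{cm/(2D_k)}$. Summing over $v\in\partial B_m(1)$ and combining with the wandering bound yields
\begin{equation*}
\P(A_{n,k}) \;\le\; e^{\lambda m}(1 - p_k)^{cm/(2D_k)} \;+\; e^{-c_1 n}.
\end{equation*}
For $k\ge k_0$ sufficiently large, $p_k$ is close enough to $1$ that $\tfrac{c|\log(1-p_k)|}{2D_k} > \lambda$, making both terms decay exponentially in $n$ and giving the desired estimate $\P(A_{n,k})\le e^{-c'n}$.

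The hardest part will be producing the hyperplane events $\mathscr{H}_i^{(k)}$ with $p_k\to 1$. The original event $\mathscr{H}_i$ of \cite[Proposition 7.10]{BM19} has a fixed probability $p>0$ independent of $k$, so the two-geodesic argument used in the proof of Theorem~\ref{t:exceptional} does not immediately upgrade. The subtlety is to choose $D_k$ large enough that the independent edge-weight randomness inside a single slab suffices to force a $(k-1)$-edge bottleneck with high probability, while still keeping $D_k$ small enough (compared to $m$) to retain enough independent slabs for the final exponent to beat the volume growth rate $\lambda$.
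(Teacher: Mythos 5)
Your overall strategy (hyperplane slabs, union bound, independence of alternate slabs, making the per-slab failure probability small as $k\to\infty$) matches the paper's. But there are two differences, one cosmetic and one substantive.

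The cosmetic difference: you preprocess via the wandering event $\mathscr{C}_{\varepsilon,m}$ to localize the $k$ rays near some $v\in\partial B_m(1)$. The paper skips this. It instead considers, for each $v\in\partial B_{2n}(1)$, the two hyperplanes perpendicular to $[1,v]$ at the points at distance $n$ and $3n/2$ from $1$, and observes that if $A_{n,k}$ occurs with direction $\xi$, then the $k$ rays must all cross between these two hyperplanes for $v=[1,\xi)\cap\partial B_{2n}(1)$ (since the hyperplanes separate). A union bound over $v$ costs $e^{Cn}$, which is then beaten by the per-$v$ estimate. Your preprocessing is harmless but not needed.

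The substantive issue is in your ``hardest part'': the construction of the refined hyperplane events $\mathscr{H}_i^{(k)}$ with $p_k\to 1$ by enlarging the slab length $D_k$. This is the wrong parameter to vary. If you vary $D_k$ you create exactly the tension you acknowledge (needing $|\log(1-p_k)|/D_k>\lambda$), and lengthening the slab does not obviously help: a longer slab gives slab geodesics more room to wander, so it is not clear why the bottleneck probability should improve. The paper instead keeps the slab length $D$ \emph{fixed} (large enough for quasiconvexity and independence of alternate slabs) and varies the \emph{radius} $r$ of a ball $B_r(v^*)$ centered at the slab midpoint $v^*$. By the geometric generalization Corollary~\ref{cor-effbt} applied inside the thickened slab $\widetilde{\SSS}_i$ (which is a uniformly hyperbolic graph of bounded valence), with probability at least $1-e^{-cr}$ \emph{every} slab $\omega$-geodesic from $\mathbb{H}_i$ to $\mathbb{H}_{i+1}$ passes through $B_r(v^*)$. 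Your ``bottleneck set'' is then deterministic: it is the edge set of $B_r(v^*)$. Pigeonhole gives that $k$ pairwise edge-disjoint slab geodesics require $k$ distinct edges in $B_r(v^*)$, so taking $k=|E(B_r(v^*))|+1$ makes the per-slab event probability at most $e^{-cr}$. Now $r$ (hence $k$) is chosen large enough to make this $\le\varepsilon$, with $D$ unchanged, so the number of independent slabs remains $\asymp n/D$ and $\P(A_{v,k})\le\varepsilon^{cn/D}$ beats the $e^{Cn}$ union bound for $\varepsilon$ small. So: right template, but the key per-slab estimate needs the backtracking lemma inside the slab (not a redefined hyperplane event), and the parameter to vary is the ball radius, not the slab spacing.
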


For the proof of this proposition we shall again use the hyperplane construction. For $v\in \Gamma$ with $d(1,v)=2n$, let $v'$ denote the point on $[1,v]$ at a distance $n$ from $1$, and let $v''$ denote the point on $[1,v]$ at distance $3n/2$ from $1$. Consider hyperplanes $\mathbb{H}_{v'}$ and $\mathbb{H}_{v''}$ perpendicular to $[1,v]$ at points $v'$ and $v''$ respectively. Let $A_{v,k}$ denote the event that there exist $k$ disjoint $\omega$-geodesics from $\mathbb{H}_{v'}$ to $\mathbb{H}_{v''}$. It follows that $\cup_{v\in \partial B_1(2n)} A_{v,k} \supseteq A_{n,k}$. Since $|\partial B_1(2n)|\le \exp(Cn)$ for some $C>0$, Proposition \ref{p:finite} follows from the following lemma together with a union bound. 

\begin{lemma}
	\label{l:finite}
	For any large but finite $c$, there exists $k_0=k_0(c)$ such that for all $k\ge k_0$ and all $v\in \partial B_1(2n)$, we have 
	$$\P(A_{v,k})\le \exp(-cn).$$ 
\end{lemma}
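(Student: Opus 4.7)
The plan is to extend the hyperplane construction used in the proof of Lemma~\ref{l:wandering} (estimate~\eqref{e:xvbound}) to a multi-scale decomposition, exploiting the presence of $k$ disjoint geodesics to obtain a decay rate $c$ that grows without bound as $k\to\infty$. First, I would erect sub-hyperplanes $\mathbb{H}_0=\mathbb{H}_{v'},\mathbb{H}_1,\ldots,\mathbb{H}_M=\mathbb{H}_{v''}$ perpendicular to $[v',v'']$ at equally spaced points $v_0=v',v_1,\ldots,v_M=v''$ with $d(v_j,v_{j+1})=W$, where $W$ is a large constant depending only on $\delta$; thus $M\simadd n/(2W)$. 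Since any sub-segment of an $\omega$-geodesic is itself an $\omega$-geodesic and every path from $\mathbb{H}_{v'}$ to $\mathbb{H}_{v''}$ crosses each intermediate hyperplane, on the event $A_{v,k}$, for every $i$ there exist $k$ edge-disjoint $\omega$-geodesics between some points of $\mathbb{H}_i$ and $\mathbb{H}_{i+1}$; denote this sub-slab event by $\mathcal{E}_i^{(k)}$. As in \cite[Proposition~7.10]{BM19}, the events $\{\mathcal{E}_{2i}^{(k)}\}_i$ are mutually independent, being determined by edge-weights in pairwise disjoint sub-slabs.

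The heart of the argument is a quantitative sub-slab estimate $\P(\mathcal{E}_i^{(k)})\leq q_k$ with $q_k\to 0$ as $k\to\infty$. By Corollary~\ref{cor-effbt} applied with center $o=w_i$, the midpoint of $[v_i,v_{i+1}]$, there is a constant $c''>0$ such that with probability at least $1-e^{-c''R}$, every $\omega$-geodesic $[a,b]_\omega$ with $a\in\mathbb{H}_i$, $b\in\mathbb{H}_{i+1}$ and $[a,b]\cap N_C(w_i)\neq\emptyset$ is contained in the ball $B_R(w_i)$. The ball $B_R(w_i)$ contains at most $D^{R+1}$ edges, hence supports at most $D^{R+1}$ pairwise edge-disjoint paths. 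Choosing $R=R_k=\lceil\log_D k\rceil+1$ forces $\mathcal{E}_i^{(k)}$ to fail on this event, giving $q_k\lesim k^{-c''/\log D}$. Combining the sub-slab bound with the independence of $\{\mathcal{E}_{2i}^{(k)}\}_i$,
\[\P(A_{v,k})\;\leq\;\prod_{i\text{ even}}\P(\mathcal{E}_i^{(k)})\;\leq\; q_k^{M/2}\;\lesim\;\exp\!\left(-\frac{c''\,n\,\log k}{4W\log D}\right).\]
Given any $c>0$, it suffices to choose $k_0(c)$ so that $c''\log k_0>4Wc\log D$; then $\P(A_{v,k})\leq e^{-cn}$ for every $k\geq k_0(c)$.

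The main obstacle is ensuring that \emph{all} $k$ edge-disjoint $\omega$-geodesics contributing to $\mathcal{E}_i^{(k)}$ have endpoints on $\mathbb{H}_i$ and $\mathbb{H}_{i+1}$ close enough to $v_i$ and $v_{i+1}$ for Corollary~\ref{cor-effbt} to apply and confine them to $B_R(w_i)$. A priori, the hyperplanes $\mathbb{H}_i$ contain vertices arbitrarily far from the axis $[v',v'']$, and far-away endpoint pairs could produce edge-disjoint $\omega$-geodesics that evade the local confinement. This should be dealt with by a supplementary union bound: a pair of endpoints $(a,b)$ with $\max(d(a,v_i),d(b,v_{i+1}))=r$ yields an $\omega$-geodesic whose word length is at least $W+2r-O(\delta)$, so its $\omega$-weight exceeds $\mu(W+2r)$ by definite margin; the global concentration inequality \cite[Theorem~2.14]{BM19} together with a union bound over the at most $D^{O(r)}$ such pairs shows that the contribution of far pairs to $q_k$ is negligible compared to the polynomial decay $k^{-c''/\log D}$ coming from the local estimate. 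Once this localization is in place, the multi-scale combination above yields the required quantitative bound and thereby Proposition~\ref{p:finite} via the union bound $\P(A_{n,k})\leq|\partial B_1(2n)|\cdot\max_v\P(A_{v,k})$.
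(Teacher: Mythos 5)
Your overall architecture mirrors the paper's: erect intermediate hyperplanes, reduce to a sub-slab event, bound its probability, and multiply across independent sub-slabs. The quantitative form of your sub-slab bound ($q_k \lesim k^{-c''/\log D}$, rather than the paper's qualitative ``$\P(B_{i,k})\le\varepsilon$ for $k\ge k(\varepsilon)$'') is a legitimate variant. However, there is a genuine gap in the independence step, and a secondary misreading of Corollary~\ref{cor-effbt}.

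The independence claim is where the argument breaks. You define $\mathcal{E}_i^{(k)}$ in terms of \emph{global} $\omega$-geodesics (``$k$ edge-disjoint $\omega$-geodesics between some points of $\mathbb{H}_i$ and $\mathbb{H}_{i+1}$'') and then assert that the events $\{\mathcal{E}_{2i}^{(k)}\}$ are independent because they are ``determined by edge-weights in pairwise disjoint sub-slabs.'' But a global $\omega$-geodesic between two points on $\mathbb{H}_i$ and $\mathbb{H}_{i+1}$ is a shortest path in the \emph{whole} graph $\Gamma$; it is free to leave the slab if the ambient edge weights make that cheaper, so whether such a geodesic exists (and what it looks like) depends on edge weights far outside the slab. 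The events $\mathcal{E}_{2i}^{(k)}$ are therefore not measurable with respect to disjoint $\sigma$-algebras, and independence fails. This is precisely the issue the paper resolves by passing to \emph{slab geodesics}: it defines a larger event $\widetilde{B}_{i,k}$ requiring $k$ disjoint paths that are weight-minimizers among paths confined to a uniformly quasiconvex thickened slab $\widetilde{\SSS}_i$, observes that every global $\omega$-geodesic crossing the slab contains a sub-segment that is such a slab geodesic (so $B_{i,k}\subseteq\widetilde{B}_{i,k}$), and only then invokes independence, because slab geodesics \emph{are} determined by the local weights. You would also need the slabs to be $\delta'$-hyperbolic with bounded degree in order to apply Corollary~\ref{cor-effbt} \emph{inside} the slab; the paper supplies this via the quasiconvexity of slabs (\cite[Lemma 7.3]{BM19}). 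Without the slab-geodesic device, the product bound $\prod_i \P(\mathcal{E}_{2i}^{(k)})$ is unjustified.

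Two smaller points. First, Corollary~\ref{cor-effbt} does not give that the $\omega$-geodesic is \emph{contained} in $B_R(w_i)$ --- it cannot, since the endpoints are far from $w_i$. What it gives (via coarse surjectivity of nearest-point projection, Remark~\ref{rmk-cs}) is that the $\omega$-geodesic \emph{intersects} $B_R(w_i)$. That is still enough for the edge-counting step, since $k$ edge-disjoint geodesics passing through $B_R(w_i)$ need $k$ distinct edges there, but the statement should be corrected. Second, the ``main obstacle'' you identify about far-away endpoint pairs is a non-issue: hyperbolicity forces the \emph{word} geodesic $[y_i,y_{i+1}]$ to pass within a uniformly bounded distance $\kappa$ of the axis midpoint $v^*$ for \emph{any} $y_i\in\mathbb{H}_i$, $y_{i+1}\in\mathbb{H}_{i+1}$ (the hyperplanes separate and the slabs are quasiconvex), so the hypothesis of Corollary~\ref{cor-effbt} is met automatically for all endpoint pairs and no supplementary passage-time union bound is needed.
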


\begin{proof}
	Let us fix $v\in \partial B_1(2n)$. As in the proof of Lemma \ref{l:wandering}, consider perpendicular hyperplanes $\mathbb{H}_i$ to $[1,v]$. Assume without loss of generality that $n/2$ is an integer multiple of $D$. Let $B_{i,k}$ denote the event that there exists $k$ disjoint $\omega$-geodesics from $\mathbb{H}_{i}$ to $\mathbb{H}_{i+1}$. Clearly, 
	
	$$ A_{v,k} \subseteq \cap_{n/D \le i \le 3n/2D} B_{i,k}.$$ 

Next we shall replace the events $B_{i,k}$ by slightly larger events. Note that for any $\omega$-geodesic $\gamma$ from $x_i\in \mathbb{H}_i$ to $x_{i+1}\in \mathbb{H}_{i+1}$, there exists a subsegment $\gamma_1$ from $x'_{i}$ to $x'_{i+1}$ such that $\gamma_1$ is also a \emph{slab geodesic} between $x'_{i}$ and $x'_{i+1}$, i.e., $\gamma_1$ minimizes the $d_{\omega}$-length among all paths from $x'_{i}$ to $x'_{i+1}$
contained in the region (or slab) $\SSS_i$ between $ \mathbb{H}_i$ and $ \mathbb{H}_{i+1}$. 

Notice now that (see \cite[Lemma 7.3]{BM19}) the slabs $\SSS_i$ are uniformly quasiconvex with quasiconvexity constant dependent only on $\delta$. Let $\widetilde{\SSS}_{i}$ denote a uniform thickening of $\SSS_{i}$ such that $\widetilde{\SSS}_{i}$ is quasi-isometrically embedded in $\Gamma$, and hence are $\delta'$-hyperbolic with $\delta'$ independent of $i$. Let $\widetilde{B}_{i,k}$ denote the event that there exists $k$ disjoint paths from points in $ \mathbb{H}_i$ to points in $ \mathbb{H}_{i+1}$ which are $\omega$-geodesics in the slab $\widetilde{\SSS}_i$. By the above discussions, $B_{i,k}\subseteq \widetilde{B}_{i,k}$. 

The reason we move from $B_{i,k}$ to $\widetilde{B}_{i,k}$ is that $\widetilde{B}_{i,k}$ depends only on the edge weights restricted to $\widetilde{\SSS}_{i}$. If $D>D_0$ for some large but fixed $D_0$ then the slabs $\widetilde{\SSS}_{2i}$ are disjoint and hence the events $\widetilde{B}_{2i}$ are independent. 

\noindent
\textbf{Claim.} For any $\varepsilon>0$ there exists $k=k(\varepsilon)$ sufficiently large (not depending on $v$) such that $\P(B_{i,k})\le \varepsilon$.
	
	Let us first complete the proof of the lemma assuming the claim. Using the independence of $\widetilde{B}_{2i,k}$
	$$\P(A_{v,k})\le \prod_{n/2D \le i \le 3n/4D} \P(\widetilde{B}_{2i,k})\le    (\varepsilon)^{n/4D}\le \exp(-cn)$$
	by choosing $\varepsilon$ sufficiently small depending on $c$ and $D$.

	Moving now to the proof of the claim,  $\omega$-geodesics in slabs $\widetilde{\SSS}_i$ between 
		$x_{i}\in \mathbb{H}_i, x_{i+1}\in \mathbb{H}_{i+1}$ will be denoted  as $[x_i,x_{i+1}]_{\SSS,\omega}$. Let $v^*$ denote the midpoint of $[v_i,v_{i+1}]$. Let $C_{i,r}$ denotes the event that there exists $x_{i}\in \mathbb{H}_i, x_{i+1}\in \mathbb{H}_{i+1}$ such that
		 $[x_i,x_{i+1}]_{\SSS,\omega}\cap B_{r}(v^{*})=\emptyset$. Let $k=k(r)$ denote the number of edges contained in $B_{r}(v_*)$ (this does not depend on $v_*$). It follows (since a family of edge disjoint geodesics can use at most one edge in $B_r(v^*)$) that $C_{i,r}\supseteq \widetilde{B}_{i,k+1}$. To prove the claim, it therefore suffices to show that for any $\varepsilon>0$, one can find $r$ sufficiently large such that $\P(C_{i,r})\le \varepsilon$.

		Let $y_i \in \mathbb{H}_i$ and $y_{i+`1} \in \mathbb{H}_{i+`1}$ be arbitrary points. Then, there exists $\kappa, D_0 >0$ such that if
		the above separation distance $D$ is at least $D_0$, and  $w \in [v_i,v_{i+`1}]$ is arbitrary, then the word geodesic $[y_i,y_{i+`1}]$ passes through a $\kappa$ neighborhood of $w$. Further, by uniform quasiconvexity of the slabs $\widetilde{\SSS}_{i}$, the same is true for geodesics in the intrinsic metric on $\til\SSS_i$. Further, the latter tracks $[y_i,y_{i+`1}]$.  We shall therefore ignore this difference and denote the geodesics in  $\til\SSS_i$ as $[y_i,y_{i+`1}]$. In particular, we have $B_{\kappa}(v_*)\cap [y_{i},y_{i+1}]\ne 0$ for all $y_i \in \mathbb{H}_i$ and $y_{i+`1} \in \mathbb{H}_{i+`1}$. Since $\widetilde{\SSS}_i$ is a subgraph of $\Gamma$, it has the degree upper bounded by the valence of $\Gamma$, and is $\delta'$-hyperbolic for some $\delta'$ dependent only on $\delta$. This is true
		for all $i$. It follows from Corollary \ref{cor-effbt} that there exists $c>0$ such that for all $i$ and for all sufficiently large $r$, 
		$\P(C_{i,r})\le \exp(-cr)$. By taking $r$ sufficiently large depending on $\varepsilon$, we get $\P(C_{i,r})\le \varepsilon$, as required. 
		This completes the proof of the claim and hence that of the lemma. 
	\end{proof}

\begin{rmk}
\label{r:btree2}
{We briefly sketch how to get the upper bound on maximum multiplicity of exceptional directions for the backward geodesic tree $T(\xi,\omega)$. Since $T(\xi,\omega)$ almost surely is the union of unique semi-infinite geodesics geodesics $[v,\xi)_{\omega}$ that coalesce, if the multiplicity of some exceptional direction $\xi'\neq \xi$ is at least $k$, it follows that there exists $M$ and $w\in B_{M}(1)$ such that there exists $k$ semi-infinite geodesic rays in the direction $\xi'$ starting from $w$ such that they are disjoint outside $B_{n}(1)$ for all $n$ large. Now fix $M$, $w\in B_{M}(1)$ and choose $n\gg M$. The hyperplane argument from Proposition \ref{p:finite} shows that for $k$ large (not depending on $M$) the probability that there exists $k$ semi-infinite geodesics from $w$ to some $\xi'$ which are disjoint outside $B_{n}(1)$ is bounded by $e^{-cn}$ for some $c>0$. Since $\sum_n e^{-cn} < \infty$, the first Borel-Cantelli Lemma now implies that the above event almost surely cannot hold for infinitely many $n$ and hence with probability $1$ there cannot exist $k$ semi-infinite geodesic rays in some direction $\xi'$ starting from $w$ such that they are disjoint outside $B_{n}(1)$ for all $n$ large. The claimed upper bound of $k$ on the  multiplicity of exceptional directions follows: first take a union bound over finitely many $w\in B_{M}(1)$ to get the conclusion almost surely simultaneously for all $w\in B_M(1)$ and then take a limit as the radius $M\to \infty$.} 
\end{rmk}

{
\begin{rmk}\label{rmk-topdim}
Theorem~\ref{t:finite} establishes an upper bound on the multiplicity in any direction $\xi$. The techniques flow from the proof of Theorem~\ref{t:exceptional}, where Hausdorff dimension bounds are obtained.

Theorem~\ref{thm-mult} on the other hand, establishes a lower bound in terms of the topological dimension of $\pG$ on the multiplicity in some direction $\xi$. 

It is a standard fact that the Hausdorff dimension dominates topological dimension. It would thus be worth establishing that the  upper bound  of
Theorem~\ref{t:finite} can in fact be refined to give a bound in terms of the   Hausdorff dimension.
\end{rmk}}

\begin{rmk}
\label{r:altdef2}
We note that the proofs in this section work under a weaker definition of exceptional directions. For $u,u'\in \Gamma$, let $A_{u,u'}(\omega)$ denote the set of directions $z\in \pG$ such that there exist $\omega$ geodesic rays from $u$ and $v$ in the direction $z$ that do not coalesce. Observe that for a.e. $\omega$, $A_{u,u'}(\omega)\subseteq \cup_{w} \widetilde{A}_{w}(\omega) \cup \widetilde{A}_{u,u'}(\omega)$ where $\widetilde{A}_{u,u'}(\omega)$ is the set of directions $z\in \pG$ such that there exist disjoint $\omega$ geodesic rays from $u$ and $v$ in the direction $z$. Observe now that if $n\gg d(1,u)+d(1,u')$, the same argument as in the proof of Lemma \ref{l:wandering} shows that for all $v\in \partial B_n(1)$, the probability that there exists $v_1,v_2\in B_{\varepsilon n}(v)$ such that $[u,v_1]_{\omega}$ and $[u',v_2]_{\omega}$ are disjoint has probability exponentially small in $n$. The same argument as in the proof of Proposition \ref{p:exceptional} will then lead to an upper bound on the Hausdorff dimension of $A_{u,u'}(\omega)$. By taking a union bound over countably many pairs $(u,u')$, Theorem \ref{t:exceptional} also holds for the weaker definition of exceptional direction as in Remark \ref{r:altdef}. Similarly, as already remarked in Remark \ref{r:btree2}, the bound in Proposition \ref{p:finite} also works even when the geodesics are allowed to start from different points.
\end{rmk}

\section{Questions and Generalizations}\label{sec-qns} In this section, we collect together a number of questions that the present paper gives rise to.

\subsection{One and two-dimensional  cases}\label{sec-2d}
In this subsection, we elaborate on the cases not covered by Section~\ref{sec-direxcard}.
For any hyperbolic group $G$,
Stallings' Theorem on ends of groups \cite{stallings-ends} coupled with
Dunwoody's accessibility Theorem~\ref{dunwoody} gives a  maximal unique graph of groups decomposition with finite edge groups and vertex groups that are either
finite or have one end. This is a standard first reduction step allowing us to focus on one-ended hyperbolic groups.
A next step involves decomposing one-ended hyperbolic groups along 2-ended subgroups, i.e.\ along subgroups that are virtually $\Z$.
The canonical JSJ decomposition of Rips-Sela-Bowditch \cite{rips-sela,rips-sela-jsj,bowditch-cutpts} furnishes us with precisely such a graph of groups decomposition where each edge group is 2-ended. By convention, surface groups are regarded as vertex groups for the JSJ decomposition; in particular,
surface groups have \emph{trivial JSJ decomposition} though they do split over infinite cyclic groups.
The details are not relevant to the present paper. However, we need the following theorem for our discussion:

\begin{theorem}\cite{kapovich-kleiner}\label{thm-kk}
	Let $G$ be a one-ended torsion-free hyperbolic group with 1 dimensional boundary, and trivial JSJ decomposition. Then $\pG$ is exactly one of the following:
	\begin{enumerate}
		\item a circle $S^1$,
		\item  a Sierpinski carpet
		\item  a Menger curve.
	\end{enumerate}
	If $G$ does not split over $\Z$, $\pG$ is either  a Sierpinski carpet or  a Menger curve.
\end{theorem}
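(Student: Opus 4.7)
The plan is to combine the topological properties of $\pG$ already at hand with (i) Bowditch's dictionary translating splittings of $G$ over two-ended subgroups into local cut points of $\pG$, and (ii) classical topological characterizations of the three model spaces $S^1$, the Sierpinski carpet, and the Menger curve. By Theorem \ref{thm-lc}, $\pG$ is a connected, locally connected, metrizable compactum, and by hypothesis it is one-dimensional. Since $G$ is torsion-free, non-elementary hyperbolic, $\pG$ has no isolated points, so it is a non-degenerate Peano continuum.

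The first key step is to use Bowditch's theorem on the canonical JSJ decomposition: the presence of a local cut point in $\pG$ yields a splitting of $G$ over a two-ended subgroup, and conversely non-trivial splittings of $G$ over two-ended subgroups produce local cut points. Under the hypothesis that the JSJ decomposition of $G$ is trivial, either $G$ is (by convention) a surface group, in which case $\pG\cong S^{1}$ by the convergence group theorem of Tukia, Gabai, and Casson-Jungreis, or $\pG$ has no local cut points at all. In particular, $\pG$ also has no global cut points in the second case, since a global cut point would be a local cut point. For the final clause of the theorem, splitting over $\Z$ is precisely how surface groups arise along simple closed curves, so the hypothesis that $G$ does not split over $\Z$ rules out the $S^{1}$ case.

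Assuming from now on that $\pG$ is a one-dimensional, connected, locally connected, metrizable continuum without local cut points, I would split based on whether $\pG$ embeds in $S^{2}$. If $\pG$ is planar, I invoke Whyburn's topological characterization of the Sierpinski carpet: a planar Peano continuum of dimension one with no local cut points, whose complementary components are Jordan domains with pairwise disjoint boundaries, is homeomorphic to the carpet. The pairwise disjointness of the boundary circles follows from the absence of local cut points together with the homogeneity supplied by the convergence action of $G$ on $\pG$. If $\pG$ is not planar, I invoke Anderson's topological characterization of the Menger curve: every one-dimensional, connected, locally connected, metrizable continuum with no local cut points that is nowhere planar is homeomorphic to the Menger curve. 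To upgrade failure of planarity at one point to nowhere-planarity, I would use that the $G$-orbit of any point is dense in $\pG$ and that the property of having a planar neighborhood is invariant under the $G$-action, so if some neighborhood of some point were planar, all points would have planar neighborhoods, contradicting non-planarity.

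The main obstacle is Step one, Bowditch's JSJ/cut-point correspondence, which is the deepest ingredient and has to be cited essentially as a black box; any direct approach would require reconstructing substantial portions of Bowditch's theory. Secondary obstacles are verifying the combinatorial hypotheses of the Whyburn and Anderson characterizations, in particular ensuring pairwise disjointness of the boundary circles of complementary regions in the planar case and promoting pointwise non-planarity to global non-planarity using the dynamics of $G\curvearrowright\pG$.
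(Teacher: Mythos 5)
This is a theorem the paper cites as a black box from Kapovich--Kleiner, so there is no internal proof to compare against; your proposal is a sensible reconstruction of the original argument, which indeed proceeds via Bowditch's JSJ/local-cut-point dictionary, Whyburn's characterization of the Sierpinski carpet, and Anderson's characterization of the Menger curve. The broad architecture and the list of ingredients are correct. Most of your sketch matches the actual Kapovich--Kleiner argument.

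There is, however, a genuine gap in your handling of the Menger case, and it is worth flagging because it is exactly where the convergence-group dynamics need to be used more carefully. You want to upgrade ``$\pG$ is not planar'' to ``$\pG$ is nowhere planar'' in order to invoke Anderson's theorem. Your argument is: if some point had a planar neighborhood, then by density of $G$-orbits and $G$-invariance, every point would have a planar neighborhood, ``contradicting non-planarity.'' But the implication ``locally planar at every point $\Rightarrow$ planar'' is simply false for topological spaces (the Klein bottle is the standard counterexample, and one can cook up $1$-dimensional examples as well), so the conclusion you reach does not contradict the hypothesis. The correct argument runs the other way: since $\pG$ is not planar it contains a compact non-planar subset $K$ (e.g.\ via Claytor's embedding theorem for Peano continua), and then one uses the conical/north--south dynamics of loxodromic elements in $G$ to transport a homeomorphic copy of $K$ into \emph{any} given open set $V\subset\pG$; hence no open set is planar. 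Density of orbits alone does not suffice --- you need the contraction furnished by the convergence action. A similar (though less severe) caveat applies to the carpet case: before invoking Whyburn you must establish that the complementary regions of a planar embedding are Jordan domains with pairwise disjoint boundary circles; the no-local-cut-point hypothesis is the key input here (two boundary circles meeting at a point would produce a local cut point), but this needs to be spelled out rather than attributed loosely to ``homogeneity.''
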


Theorem~\ref{thm-uncountable} deals with $G$  having $\dim_t \pG > 1$. This reduces the problem of exceptional directions to that of 
$G$  having $\dim_t \pG = 1$.  Theorem~\ref{thm-countable} essentially deals with the first case of Theorem~\ref{thm-kk}.
{In fact, a deep theorem due independently to Casson-Jungreis \cite{casson-jungreis}  and Gabai \cite{gabai-cgnce} says that if $\pG$ is $S^1$, then a finite index subgroup $G'$ of $G$ is a surface group, so that $G'$ admits a planar Cayley graph. }
This leaves open the following
case in the light of  Theorem~\ref{thm-kk}.

{For Questions~\ref{qn-kk} and \ref{qn-other} below, we assume implicitly that we have already passed to the full measure subset
	given by Theorem~\ref{thm-12proper}.}

\begin{qn}\label{qn-card}
	Is the cardinality of exceptional directions independent of  $\omega$ in the full measure subset
	given by Theorem~\ref{thm-12proper}?
\end{qn}

\begin{qn}\label{qn-kk}
	Let $G$ be a one-ended hyperbolic group with $\pG$ either  a Sierpinski carpet or  a Menger curve. What is the cardinality of exceptional directions?
\end{qn}

We also enunciate the following question (probably easier) that would complete the story.
\begin{qn}\label{qn-other}
	Let $G$ be 
	\begin{enumerate}
		\item either a virtually free group with some finite generating set,
		\item or a surface group with a \emph{non-planar} Cayley graph.
	\end{enumerate}
	What is the cardinality of exceptional directions? For a  virtually free group, when are there no exceptional directions?
	
	Let $G$ be free, and $\Gamma$ a Cayley graph with  non-standard generators. What is the cardinality of exceptional directions? 
\end{qn}

{As promised after the statement of Theorem~\ref{thm-nonfreedense}, we briefly discuss the issues involved in the case that $G$ is a virtually free group. For virtually free groups that are free products of finite groups, equipped with   generating sets given by generating sets of the finite group factors, it is easy to see that there are no exceptional directions. For instance,
	 let $G=\Z/2 * \Z/3$, and let $a, b$ generate $\Z/2, \Z/3$ respectively. Then for each bigon or triangle contained in the Cayley graph, random geodesics between any pair of points are unique almost surely. We now pass to the full measure subset $\Omega'$ of configurations for which each of 
	 the countably many  bigons or triangles satisfies the condition that 
	 random geodesics between any pair of vertices on any of them are unique.
	  Next, observe that any geodesic from $1$ to $\pG$ passes through a deterministic sequence of points $1=x_0, x_1, \cdots, x_n, \cdots$, where successive points are vertices of either the same bigon or of the same triangle. Since the  $\Omega'$-random geodesics between successive vertices $x_i, x_{i+1}$ are unique,  it follows that there are no exceptional directions almost surely.}
	 
	{  The test case $G=F_2 \times \Z/2$ with standard generating set needs a different kind of handling and is the first example not addressed by our techniques in this paper. The only way an exceptional direction may exist is if there are two parallel deterministic geodesic rays in a common direction (in $\pG$) that are also random geodesics. We need to estimate first the probability of some parallel pair of geodesic segments surviving in the ball of radius $n$. Next, one demands that for some such $n$, a parallel pair survives as $n$ goes to $\infty$. It is possible that carrying out such estimates is not too difficult; however, the above approach is far-removed from the techniques of this paper.}

\subsection{Poincare duality groups and exceptional directions}\label{sec-pd} The aim of this subsection is to strengthen Theorem~\ref{thm-omni-intro} for hyperbolic Poincare duality groups. In \cite{HN01} (see Remark following \cite[Theorem 1.10]{HN01}), the authors assert that in the Howard-Newman model with continuous rotational symmetry, exceptional directions in
$\R^{n+1}$ must have Hausdorff
dimension at least $n- 1$. We were motivated, in part, by this assertion
and provide below an analogous statement for hyperbolic Poincare duality groups (see \cite{bes-mess} and \cite[Section 1.4]{mahan-pattern} for relevant background). The following is the main theorem of this subsection.

\begin{theorem}\label{thm-pd}
{Let $G$ be a hyperbolic $PD(n+1)$ group. Then, for a.e.\ $\omega$, the set of exceptional directions in the forward tree $F(1,\omega)$ has topological dimension at least $(n-1)$. Hence the Hausdorff dimension of the set of exceptional directions  is at least  $(n-1)$ almost surely. If, in addition, $G$ is the fundamental group of a closed
negatively curved manifold, so that $\pG = S^n$, then the set of exceptional directions in $F(1,\omega)$ has topological dimension exactly $(n-1)$ almost surely.}
\end{theorem}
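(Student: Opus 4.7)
The plan is to leverage the random Cannon--Thurston map $f_\omega := \partial i_\omega^F : \partial F(1,\omega) \to \pG$ from Theorem~\ref{thm-12proper}, which by Remark~\ref{rmk-exception=ct} identifies the set of exceptional directions $E = E(\omega)$ with the set of multiple values of $f_\omega$. The central technical input is that $\partial F(1,\omega)$, being the Gromov boundary of a locally finite tree, is totally disconnected compact metric and hence zero-dimensional.

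For the lower bound $\dim_t E \geq n-1$, the first step is to establish that $\pG \setminus E$ is itself zero-dimensional. I would show that the restriction $f_\omega|_{Z_0} : Z_0 \to \pG \setminus E$, where $Z_0 := f_\omega^{-1}(\pG \setminus E)$, is a homeomorphism: it is obviously a continuous bijection, and it is closed because $f_\omega$ is closed (continuous surjection from a compact space to a Hausdorff space) and because for any closed $D \subset \partial F(1,\omega)$, injectivity of $f_\omega$ on $Z_0$ yields $f_\omega(D \cap Z_0) = f_\omega(D) \cap (\pG \setminus E)$, which is closed in $\pG \setminus E$. Since $Z_0$ inherits zero-dimensionality from $\partial F(1,\omega)$, so does $\pG \setminus E$. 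Combining this with the classical Menger--Urysohn sum inequality $\dim_t(A \cup B) \leq \dim_t A + \dim_t B + 1$ for separable metric spaces applied to $\pG = E \cup (\pG \setminus E)$, and with the Bestvina--Mess identification $\dim_t \pG = n$ for hyperbolic $PD(n+1)$ groups \cite{bes-mess}, yields $\dim_t E \geq n-1$ almost surely. The Hausdorff dimension bound follows since topological dimension is always dominated by Hausdorff dimension.

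For the matching upper bound when $G = \pi_1(M)$ with $M$ a closed negatively curved manifold (so $\pG$ is homeomorphic to $S^n$), the plan is to show $E$ has empty interior in $\pG$. This follows from $\nu(E) = 0$ (Theorem~\ref{t:exceptional}) together with the fact that the Patterson--Sullivan measure has full support on $\pG$ for cocompact $G$, so that $\pG \setminus E$ is dense. I would then invoke the classical dimension-theoretic fact (see Engelking \cite{engelking}) that a subset $A$ of $\mathbb{R}^n$, and more generally of any $n$-manifold, has $\dim_t A = n$ if and only if $A$ has non-empty interior. This forces $\dim_t E \leq n-1$, and combined with the lower bound yields $\dim_t E = n-1$ almost surely.

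The hardest step will be the verification that $f_\omega|_{Z_0}$ is closed, since this is what transfers zero-dimensionality from the tree boundary to $\pG \setminus E$; without it the Menger--Urysohn decomposition cannot be applied, and the whole lower-bound strategy collapses. A minor subsidiary point is that the upper bound in the general variable-curvature case must be argued via the empty-interior criterion for subsets of the topological $S^n$, rather than via Hausdorff dimension estimates (which would suffice only when, e.g., the visual Hausdorff dimension of $\pG$ equals $n$, as in the constant curvature case, where the upper bound follows more directly from Proposition~\ref{p:exceptional} through $\dim_t E \leq \dim_H E < n$).
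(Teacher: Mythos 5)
Your proof is correct, and for the lower bound you take a genuinely different route from the paper. The paper's argument constructs a covering of $\pG$ by closed $\omega$-shadows $\mho(v,\omega)$ of vertices on the sphere $\partial B_r(1)$, uses Baire category to find one with non-empty interior, and then applies a Mayer--Vietoris argument on the homology manifold $\pG$ (Lemma~\ref{lem-sep}/Corollary~\ref{cor-sep}, which rests on the Bestvina--Mess Theorem~\ref{t:bes-messbd} that $\pG$ is a homology $n$-manifold) to conclude that the overlap of two shadows -- which sits inside the exceptional set -- has dimension at least $n-1$. Your argument instead observes that $f_\omega$ is closed (compact domain, Hausdorff codomain), so the standard identity $f_\omega(D\cap f_\omega^{-1}(Y_0))=f_\omega(D)\cap Y_0$ shows the restriction of $f_\omega$ over the non-multiple-value set $\pG\setminus E$ is a closed continuous bijection from $Z_0\subset\partial F(1,\omega)$, hence a homeomorphism; since tree boundaries are zero-dimensional, so is $\pG\setminus E$, and the Menger--Urysohn addition theorem $\dim_t(A\cup B)\le\dim_t A+\dim_t B+1$ then gives $\dim_t E\ge n-1$ directly from $\dim_t\pG=n$.

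What your approach buys: it is shorter, avoids the shadow construction entirely, and needs only the \emph{dimension} of $\pG$ (via Bestvina--Mess Theorem~\ref{thm-bm}), not the full homology-manifold structure of Theorem~\ref{t:bes-messbd}. It thus applies verbatim to any hyperbolic $G$ with $\dim_t\pG=n$, which is relevant to the open question posed at the end of Section~\ref{sec-pd}. Moreover, the intermediate statement -- that the set of non-exceptional directions $\pG\setminus E$ is zero-dimensional -- is a structural fact stronger than anything stated in the paper, and it is somewhat striking since $\pG\setminus E$ also has full Patterson--Sullivan measure by Theorem~\ref{t:exceptional}. What the paper's approach buys: the shadow decomposition gives a localized picture (an explicit open set's worth of overlap between two closed shadows is exceptional), which is geometrically more concrete than the existence statement you derive. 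For the upper bound when $\pG=S^n$ your argument essentially coincides with the paper's: both use Theorem~\ref{t:exceptional} plus full support of $\nu$ to get empty interior, then the Hurewicz--Wallman criterion that $n$-dimensional subsets of $n$-manifolds have interior. One minor caveat: you should make explicit that you are working on the intersection of the full-measure events of Theorem~\ref{thm-12proper} (existence and surjectivity of the Cannon--Thurston map) and Theorem~\ref{prop-ftree} (complete ends), which is implicit but worth stating to justify surjectivity of $f_\omega$.
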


We start with a topological lemma.
\begin{lemma}\label{lem-sep}
Let $M$ be a closed orientable $n-$manifold,  $A, B\subset M$ be closed sets, both with non-empty interior such that $A\cup B=M$. Let $C = A \cap B$. Further, assume that there exist interior points $a, b$ in the complements of $B, A$ respectively.
Then $C$ has topological dimension {at least} $n- 1$.
\end{lemma}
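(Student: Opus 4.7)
The plan is twofold: first, show that $C$ separates $M$, meaning $M \setminus C$ is disconnected; then invoke the classical fact that a closed set separating a connected topological $n$-manifold has topological dimension at least $n-1$.

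For the separation step, the key observation is the disjoint decomposition
$$M \setminus C \;=\; (A \setminus C) \sqcup (B \setminus C),$$
with disjointness coming from $(A \setminus C) \cap (B \setminus C) = (A \cap B) \setminus C = \emptyset$ and the union exhausting $M \setminus C$ because $A \cup B = M$. Each piece, being the intersection of a closed subset of $M$ with $M \setminus C$, is closed in $M \setminus C$; being the complement of the other, each is also open in $M \setminus C$. The hypotheses on $a,b$ give $a \in M \setminus B \subseteq A \setminus C$ and $b \in M \setminus A \subseteq B \setminus C$, so both clopen pieces are nonempty and $M \setminus C$ is disconnected.

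For the dimension bound I would invoke the classical result (see for instance \cite{engelking}) that a closed subset disconnecting a connected topological $n$-manifold has topological dimension at least $n-1$. One route is via Alexander-Lefschetz duality on the closed orientable manifold $M$: the nontriviality of $\tilde H_0(M \setminus C)$ forces $\check H^{n-1}(C) \neq 0$, and on compact metric spaces nonvanishing of \v{C}ech cohomology in degree $k$ forces $\dim_t \geq k$. A more elementary route reduces locally to the Eilenberg-Otto-Mazurkiewicz separation theorem in $\mathbb{R}^n$: take a path $\gamma$ from $a$ to $b$ in $M$, select a point $p \in \gamma \cap C$ at which both components of $M \setminus C$ accumulate, and inside a coordinate chart about $p$ apply the Euclidean separation theorem to obtain $\dim_t(C \cap U) \geq n-1$, which upgrades to the global bound by monotonicity of topological dimension.

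The principal obstacle is really only identifying and applying the correct separation theorem from dimension theory and justifying its local-to-global reduction; the topological setup is essentially forced by the hypotheses on $A, B, a, b$. A mild subtlety in the local reduction is verifying that some frontier point $p \in C$ has both components of $M \setminus C$ accumulating at it, which follows from a path-crossing argument along $\gamma$ using $t_0 = \inf \gamma^{-1}(C)$ together with the fact that $\gamma(t_0) \in \overline{V_1} \cap C$, where $V_1$ denotes the component of $M \setminus C$ containing $a$.
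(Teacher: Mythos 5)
Your proof is correct, but it follows a genuinely different route from the paper's. You first observe that $M\setminus C$ decomposes into the two nonempty, disjoint, relatively clopen pieces $A\setminus C$ and $B\setminus C$ (containing $a$ and $b$), so that $C$ separates $M$; you then invoke the classical dimension-theoretic fact that a closed subset separating a connected topological $n$-manifold has topological dimension at least $n-1$. The paper instead avoids the separation language entirely: it uses excision (since $a\in\operatorname{int}A$, $b\in\operatorname{int}B$) to get $\mathbb{Z}\cong H_n(A,A-a)$ and $\mathbb{Z}\cong H_n(B,B-b)$, argues by contradiction that $\dim_t C\le n-2$ would kill $H_{n-1}(C)$ and $H_n(C)$, and then combines the long exact sequences of $(A,C)$, $(B,C)$ with Mayer--Vietoris for $M=A\cup B$ to force $\mathbb{Z}\oplus\mathbb{Z}\hookrightarrow H_n(M)=\mathbb{Z}$. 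The payoff of the paper's route is precisely that it never touches the local manifold structure of $M$ --- it uses only the duality-type data $H_n(M,M-m;\mathbb{Z})\cong\mathbb{Z}$ and $H_n(M;\mathbb{Z})\cong\mathbb{Z}$ --- and therefore carries over verbatim to homology manifolds (Corollary~\ref{cor-sep}), which is what the paper actually needs for $\pG$ of a $PD(n+1)$ group in Theorem~\ref{thm-pd}. Of your two suggested routes, the Alexander--Lefschetz duality one ($\tilde H_0(M\setminus C)\neq 0\Rightarrow \check H^{n-1}(C)\neq 0$) is in the same homological spirit and would also generalize to homology manifolds; the local reduction to the Euclidean separation theorem, by contrast, relies on coordinate charts and does not. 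One small caution worth stating explicitly in your argument: connectedness of $M$ is genuinely needed (for $M=S^n\sqcup S^n$, $A,B$ the two components, one has $C=\emptyset$), and both your proof and the paper's use it implicitly; you invoke it in the separation theorem, the paper uses it via $H_n(M)\cong\mathbb{Z}$.
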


\begin{proof} 
Since $M$ is a  manifold, $H_n (M, M-a, \mathbb{Z})= \mathbb{Z}= H_n (M, M-b,  \mathbb{Z})$.
Further, since $a,b$ are interior points of the closed sets $A, B$ respectively,   $H_n (A, A-a, {\mathbb{Z}})= {\mathbb{Z}}= H_n (B, B-b,  {\mathbb{Z}})$.
So,  ${\mathbb{Z}}\subset H_n (A, C,  {\mathbb{Z}})$ and ${\mathbb{Z}}\subset  H_n (B,C,{\mathbb{Z}})$.

We argue by contradiction. Suppose that
 $C$ has topological dimension at  most $(n-2)$.
 We now use the fact \cite{walsh} that if the topological  dimension of a set is finite, then the topological dimension equals the Cech (co)homological dimension over $\Z$, so that $H_{n-1} ( C, {\mathbb{Z}})=0$.
  Then the homology exact sequence of $(A, C)$ forces
$H_n (A, C,  {\mathbb{Z}}) =H_n(A, \mathbb{Z})$. 
Similarly, for the pair $(B, C)$.

Now, applying the Mayer-Vietoris for $M, A, B$, we find that since $C=A\cap B$  has topological dimension at  most  $(n-2)$,
$H_n(M) = H_n (A) \oplus H_n (B)$ and hence $\mathbb{Z} \oplus \mathbb{Z} < H_n(M)$, a contradiction.
\end{proof}

Note that the above argument used only the algebraic facts that 
 $H_n (M, M-a, \mathbb{Z})= \mathbb{Z}= H_n (M, M-b,  \mathbb{Z})$, and
 $H_n(M)=\mathbb{Z}$. These are exactly the defining conditions of a homology manifold.
 Recall that $M$ is said to be an \emph{$n-$dimensional homology manifold} if 
 \begin{enumerate}
 \item $M$ is a locally compact Hausdorff space  with finite homological dimension over $\Z$. 
 \item   For all $m \in M$, $H_n(M,M \setminus \{m\}, \mathbb{Z}) = \Z$ and $H_i(M,M \setminus \{m\}, \mathbb{Z}) =   0$ for $i \neq n$.
 \item $H_n(M, \mathbb{Z}) = \Z$.
 \end{enumerate}
  Thus, the proof of Lemma~\ref{lem-sep} goes through for $M$ a homology manifold verbatim. We record this:

 \begin{cor}\label{cor-sep}
 	Let $M$ be a homology manifold,  and $A, B\subset M$ be closed sets, both with non-empty interior such that $A\cup B=M$. Let $C = A \cap B$. Further, assume that there exist interior points $a, b$ in the complements of $B, A$ respectively.
 	Then $C$ has topological dimension {at least} $n- 1$.
 \end{cor}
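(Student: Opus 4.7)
The plan is to observe that the proof of Lemma~\ref{lem-sep} carries over to the homology-manifold setting essentially verbatim, so the task reduces to checking each invocation of the manifold hypothesis against the definition of a homology manifold recalled just above the statement of the corollary. Concretely, I would trace through the three algebraic steps of the Lemma~\ref{lem-sep} argument and identify where ``closed orientable $n$-manifold'' was actually used.

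The argument uses manifoldness only through two purely homological facts: (i) the local homology $H_n(M, M\setminus\{m\};\mathbb{Z}) = \mathbb{Z}$ for each $m \in M$, and (ii) the global identity $H_n(M;\mathbb{Z}) = \mathbb{Z}$. Both are part of the definition of an $n$-dimensional homology manifold. Fact (i), combined with excision applied inside small neighborhoods of the interior points $a$ and $b$, yields $H_n(A, A\setminus\{a\};\mathbb{Z}) = \mathbb{Z} = H_n(B, B\setminus\{b\};\mathbb{Z})$, and hence (since $C \subset A\setminus\{a\}$ and $C \subset B\setminus\{b\}$) canonical copies of $\mathbb{Z}$ inside $H_n(A,C;\mathbb{Z})$ and $H_n(B,C;\mathbb{Z})$. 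Fact (ii) is then invoked at the final step to extract the contradiction. Having made these identifications, I would argue by contradiction: suppose $\dim_t C \leq n-2$; by Walsh's theorem identifying topological dimension with \v{C}ech cohomological dimension over $\mathbb{Z}$, we get $H_{n-1}(C;\mathbb{Z}) = 0$. The long exact sequences of the pairs $(A,C)$ and $(B,C)$ collapse to give $H_n(A,C;\mathbb{Z}) \cong H_n(A;\mathbb{Z})$ and $H_n(B,C;\mathbb{Z}) \cong H_n(B;\mathbb{Z})$, so both $H_n(A;\mathbb{Z})$ and $H_n(B;\mathbb{Z})$ contain a copy of $\mathbb{Z}$. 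The Mayer--Vietoris sequence for the closed cover $M = A \cup B$, again using $H_{n-1}(C;\mathbb{Z}) = 0$, then produces an injection $H_n(A;\mathbb{Z}) \oplus H_n(B;\mathbb{Z}) \hookrightarrow H_n(M;\mathbb{Z})$, forcing $\mathbb{Z}\oplus\mathbb{Z} \hookrightarrow \mathbb{Z}$ by (ii), which is the desired contradiction.

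The only step that needs a moment's care, and the main potential obstacle, is the validity of the Mayer--Vietoris sequence (and of excision at $a,b$) for closed subsets of a general homology manifold, which need not be an ANR. I would handle this by working throughout in \v{C}ech (Steenrod--Sitnikov) homology in the locally compact Hausdorff category, where Mayer--Vietoris for closed covers and excision behave as expected, and which is also the theory implicit in the dimension-theoretic input from Walsh. With this choice of homology theory fixed at the outset, every diagram chase from the proof of Lemma~\ref{lem-sep} transports without modification, and the conclusion $\dim_t C \geq n-1$ follows.
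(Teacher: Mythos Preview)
Your proposal is correct and follows exactly the paper's approach: the paper simply observes that the proof of Lemma~\ref{lem-sep} uses only the local homology condition $H_n(M,M\setminus\{m\};\mathbb{Z})=\mathbb{Z}$ and the global condition $H_n(M;\mathbb{Z})=\mathbb{Z}$, both of which are axioms for an $n$-dimensional homology manifold, and then declares that the argument goes through verbatim. Your added care about working in \v{C}ech/Steenrod--Sitnikov homology for the Mayer--Vietoris and excision steps is actually more than the paper itself spells out (one small slip: the injectivity of $H_n(A)\oplus H_n(B)\to H_n(M)$ in Mayer--Vietoris comes from $H_n(C)=0$, not $H_{n-1}(C)=0$, but both vanish under the assumption $\dim_t C\le n-2$, so the conclusion is unaffected).
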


We now recall the following theorem of Bestvina-Mess (see \cite[Section 1.4]{mahan-pattern} for a quick summary of the background material).

\begin{theorem}\cite{bes-mess}
\label{t:bes-messbd}
Let $G$ be a hyperbolic $PD(n+1)$ group. Then $\partial G$
is a homology manifold {of dimension $n$.}
\end{theorem}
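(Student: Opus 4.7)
The plan is to combine the Bestvina-Mess cohomological duality for boundaries of hyperbolic groups with the Poincar\'e duality hypothesis in order to pin down both the global and, more importantly, the local \v{C}ech cohomology of $\partial G$ at every point. First I would reduce to the torsion-free case: since $G$ is $PD(n+1)$, it has finite virtual cohomological dimension, hence (by Serre) admits a torsion-free finite-index subgroup $G_0$, and $\partial G_0 = \partial G$ as topological spaces, so it suffices to treat $G_0$.

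With $G$ torsion-free, fix the Rips complex $P_d(G)$ for $d$ large compared to the hyperbolicity constant $\delta$. This is a contractible, finite-dimensional, locally finite simplicial complex on which $G$ acts freely and cocompactly, so $P_d(G)$ is a model for $EG$. The key geometric input from \cite{bes-mess} is that the Gromov compactification $\overline{P_d(G)} = P_d(G) \cup \partial G$ is a compact metrizable AR, and $\partial G$ sits inside it as a finite-dimensional $Z$-set. The long exact sequence of the pair $(\overline{P_d(G)}, \partial G)$, together with contractibility of $\overline{P_d(G)}$, yields the global Bestvina-Mess duality
\[
\check{H}^{k}(\partial G;\mathbb{Z}) \;\cong\; H^{k+1}_c(P_d(G);\mathbb{Z}) \;\cong\; H^{k+1}(G;\mathbb{Z}G)
\qquad (k \geq 1).
\]
Plugging in the PD$(n+1)$ hypothesis, namely $H^{k+1}(G;\mathbb{Z}G) = \mathbb{Z}$ for $k=n$ and $0$ otherwise, one concludes $\check{H}^{n}(\partial G) = \mathbb{Z}$ with vanishing in all other positive degrees. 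This matches the global cohomology of an $n$-dimensional homology sphere, and in particular certifies that $\partial G$ has the right top-dimensional cohomology and the right topological dimension (compare the Bestvina-Mess formula $\dim_t \partial G = \max\{k : H^k(G;\mathbb{Z}G)\ne 0\} - 1 = n$ already used in Theorem~\ref{thm-bm}).

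The substantive work is to upgrade this global calculation to the local one: for every $\xi\in\partial G$,
\[
\check{H}^{i}(\partial G,\,\partial G\setminus\{\xi\};\mathbb{Z}) \;\cong\; \begin{cases} \mathbb{Z} & i = n,\\ 0 & i\ne n.\end{cases}
\]
Fix a geodesic ray $\gamma$ from the base point with $\gamma(\infty)=\xi$, and take as a neighborhood basis of $\xi$ the shadows $U_R$ of large balls along $\gamma$. I would set up a relative version of the Bestvina-Mess duality of the form
\[
\check{H}^{k}(\partial G,\,\partial G\setminus U_R) \;\cong\; H^{k+1}_c(P_d(G),\,P_d(G)\setminus X_R),
\]
where $X_R\subset P_d(G)$ is a horoball-type ``shadow region'' in the Rips complex whose closure in $\overline{P_d(G)}$ contains $U_R$. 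Passing to the direct limit over $R\to\infty$ and using that $\bigcap_R \overline{U_R}=\{\xi\}$ identifies the left-hand side with the desired local cohomology. The right-hand side is then computed using cocompactness of the $G$-action and PD$(n+1)$: a Mayer-Vietoris argument comparing $P_d(G)$ with the decomposition $X_R\cup (P_d(G)\setminus X_R')$ (for a slightly thickened $X_R'$) reduces the computation to $H^{*}(G;\mathbb{Z}G)$, which is concentrated in degree $n+1$ with value $\mathbb{Z}$.

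The main obstacle will be this final local step: the global duality falls out cleanly from the $Z$-set formalism, but the local version requires a careful analysis of shadow neighborhoods and the proof that $H^{k+1}_c(P_d(G),P_d(G)\setminus X_R)$ stabilizes, as $R\to\infty$, to a single $\mathbb{Z}$ in degree $n$ with all other degrees vanishing, uniformly in $\xi$. The uniformity is geometrically natural because of the cocompactness of $G\curvearrowright P_d(G)$ and the self-similar structure of hyperbolic horoballs, but making it precise requires combining the low-degree vanishing coming from PD with an exhaustion argument that identifies the inverse system $\{X_R\}_R$ cohomologically with the fundamental class in $H^{n+1}(G;\mathbb{Z}G)$. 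Once this step is in place, the two cohomology conditions (top-degree $\mathbb{Z}$, other degrees $0$) at each point $\xi$ constitute exactly the definition of an $n$-dimensional homology manifold recalled just before Theorem~\ref{t:bes-messbd}.
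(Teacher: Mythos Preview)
The paper does not prove this statement at all: Theorem~\ref{t:bes-messbd} is simply quoted from \cite{bes-mess} and then used as a black box in the proof of Theorem~\ref{thm-pd}. There is nothing in the paper to compare your proposal against.

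That said, two remarks on the proposal itself. First, your reduction step is both unnecessary and not quite justified as written. A $PD(n+1)$ group over $\mathbb{Z}$ has finite cohomological dimension $n+1$, and any group of finite cohomological dimension over $\mathbb{Z}$ is already torsion-free (a nontrivial finite subgroup would have infinite cohomological dimension). So $G$ is torsion-free to begin with; invoking Serre's theorem on passage to a torsion-free finite-index subgroup is superfluous, and in any case would require knowing that $G$ is virtually torsion-free, which is open for general hyperbolic groups. Second, your outline of the global computation via the $Z$-set structure of $\partial G$ in $\overline{P_d(G)}$ is the standard Bestvina--Mess mechanism and is fine; the local step you flag as the ``main obstacle'' is indeed where the content lies, and your sketch there is more of a wish list than an argument. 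If you actually want to carry this out, you should look at the original \cite{bes-mess} rather than reinvent it.
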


\begin{proof}[Proof of Theorem \ref{thm-pd}]
Let $G$ is a hyperbolic $PD(n+1)$ group and $\Gamma$ a Cayley graph with respect to a finite symmetric generating set. Recall the event $\mathcal{C}_{\varepsilon, r}$ from Lemma \ref{l:wandering0}. By Lemma \ref{l:wandering0} and the Borel-Cantelli Lemma, $\liminf \mathcal{C}_{\varepsilon, r}$ is an almost sure event. For $\omega\in \liminf \mathcal{C}_{\varepsilon, r}$, let $r_0(\omega)$ be such that $\mathcal{C}_{\varepsilon, r}$ holds for all $r\ge r_0$. {Now, for $r\ge r_0$ and $v\in \partial B_r(1)$, define the \emph{ $\omega-$shadow} $\mho (v,\omega)$ of $v$ to be the collection of $\xi \in \pG$ such that there exists some $[1,\xi)_{\omega}$ satisfying the property that $v$ is the first point of intersection of 
	$[1,\xi)_{\omega}$ with $\partial B_r(1)$. Observe that
$\mho (v,\omega)$ is a closed subset of $\pG$. Indeed, suppose $\xi_n\in \mho (v,\omega)$ such that $\xi_n\to \xi$. Since the restriction of $[1,\xi_n)_{\omega}$ from $1$ to $v$ is contained in $B_r(1)$ for each $\xi_n$ it follows by a diagonal argument that there exist an infinite path $\{1=u_0,u_1,\ldots,\}$ from $1$ that is a  subsequential limit of $[1,\xi_n)_{\omega}$ that intersects $\partial B_r(1)$ first at $v$. By definition, each initial segment of this  path is an $\omega$-geodesic and hence this path itself is also an $\omega$-geodesic ray. Since $\xi_n\to \xi$ it follows that $u_{i}\to \xi$ and therefore there exists a $[1,\xi)_{\omega}$ which intersects $\partial B_r(1)$ first at $v$. Therefore $\xi\in \mho (v,\omega)$.}


{On the event $C_{\varepsilon,r}$ it follows that if $\xi\in \mho (v,\omega)$, then $v\in B_{\varepsilon r}(w)$ where $w=[1,\xi)\cap \partial B_r(1)$. Therefore $\mho (v,\omega)$ is contained in the deterministic $\varepsilon r$-shadow $\mathcal{S}(v,\varepsilon r)$ of $v$. Notice that by restricting $\omega$ to a further almost sure subset if necessary, we can assume by Theorem \ref{prop-ftree}  that $\pG$ is a finite union of closed sets $\cup_{u\in \partial B_r(1)} \mho (v,\omega)$.} Therefore, there exists $v\in \partial B_r(1)$ such that $\mho (v,\omega)$ has a non-empty interior (by the Baire category theorem, for instance).  Also, notice that, by choosing $\varepsilon$ sufficiently small there exists $w\in \partial B_r(1)$ such that the deterministic shadows $\mathcal{S}(v,\varepsilon r)$ and $\mathcal{S}(w,\varepsilon r)$ are disjoint. Indeed, given $k \in \natls$, if the Gromov inner product $\langle v, w\rangle_1 \leq k$, then the deterministic shadows $\mathcal{S}(v,D')$ and $\mathcal{S}(w,D')$ are disjoint for $v, w \in \partial B_{r'} (1)$
for $r' \geq 2k+D+\delta$. {Set $D=\varepsilon r$, and $r'=r$. Then for $\varepsilon \in (0,1)$, one can  
choose $r$  large enough depending on $\varepsilon$ such that $r' \geq 2k+D+\delta$. Therefore, for large enough $r$, there exists $v,w\in \partial B_r(1)$ such that  $\mathcal{S}(v,\varepsilon r)$ and $\mathcal{S}(w,\varepsilon r)$ are disjoint, as claimed}.  {Now, set $A=\mho (v,\omega)$, and let $B=\cup_{u \neq v, u \in \partial B_n(1)}\mho(u,\omega)$. Notice that $A$ and $B$ are closed subsets of $\pG$ with non-empty interiors ($A$ has non-empty interior by choice, and $B$ has non-empty interior since it contains $\mathcal{S}(w,\varepsilon r)$).} Also $A \cup B =\pG$.

	Then, by Corollary~\ref{cor-sep} and Theorem \ref{t:bes-messbd}, $ A\cap B$ has topological dimension at least $n-1$. {Observe now that $a\in A\cap B$ implies $a$ is an exceptional direction in $F(1,\omega)$. Indeed, any $a\in A\cap B$ belongs to  
$\mho(u,\omega)\cap \mho(v,\omega)$ for some $u$. Therefore there exist $[1,a)_{\omega}^1$ and $[1,a)^2_{\omega}$  whose first intersection points with $\partial B_r(1)$ are distinct. Thus, $a$ is an exceptional direction of $F(1,\omega)$.
	Therefore, the set of exceptional directions has topological dimension at least $n-1$.}
 
 To conclude the proof of Theorem~\ref{thm-pd}, it suffices to show that 
 the topological dimension of exceptional points cannot equal $n$ if
 $\pG =S^n$. This follows from \cite[Theorem IV 3]{hw-book} which asserts that
 if the topological dimension of a subspace of $\R^n$ is $n$, then it has non-empty interior.  The same holds for $S^n$ as the latter is the one-point compactification of $\R^n$.
 This is not possible by Theorem~\ref{t:exceptional} and Ahlfors regularity of $\pG$ which implies that every open set in $\pG$ must have positive Patterson-Sullivan measure.
\end{proof}

 We conclude this subsection with the following general question (without extra hypotheses on $G$).
 
 \begin{qn}
 If $G$ is hyperbolic with $\pG$ of topological dimension $n$, do the set of exceptional directions have topological dimension $(n-1)$?
 \end{qn}

\subsection{Bounds on multiplicity}\label{sec-bb} Theorems~\ref{thm-mult} and \ref{t:finite} furnish respectively upper and lower bounds on the  maximum multiplicity of exceptional directions. The first is in terms of the topological dimension of $\pG$, and the latter is, in some sense, related to the Hausdorff dimension of $\pG$. 
It is not clear where exactly in this range the maximum multiplicity lies.
In particular, we do not know if the answer  depends on $\rho$, the passage time distribution on edges. As mentioned in the Introduction, the case where a complete answer is known is  exponential planar LPP, where the N3G problem is fully answered. In the hyperbolic setup of the present paper, to push our approach further, one needs effective coalescence estimates between hyperplanes to improve the upper bound of Theorem~\ref{t:finite}.

We turn to the lower bound of Theorem~\ref{thm-mult} now.
The maximum multiplicity of exceptional directions for the classical (deterministic) Cannon-Thurston map \cite{CTpub}  $i_{CT}: S^1 \to S^2$  is  3 for \emph{generic degenerate}
Kleinian surface groups (see \cite{mahan-icm} for a survey of results).  It is not hard to pre-compose the above $i_{CT}$ with a surjective map $j: \partial T_n \to S^1$ from the boundary Cantor set of an $n-$regular tree $T_n$ onto $S^1$ such that $j \circ i_{CT}: \partial T_n \to S^2$ also has maximum multiplicity 3.
 The number  3  comes from the fact that the pre-images of exceptional directions in $S^1$ under the Cannon-Thurston map correspond to end-points of leaves of laminations
 \cite{mahan-elct} (see \cite{CTpub} for a proof in the special case that the surface group corresponds to the fiber of a hyperbolic 3-manifold fibering over the circle). Further, a generic lamination on a surface only has ideal triangles in its complement.  But there are indeed (non-generic) degenerate
Kleinian surface groups  for which the multiplicity is much larger than this topological lower bound of 3. Hence, even in the deterministic setup, multiplicity can indeed be large. From this point of view, it is not reasonable to expect that, without further assumptions, the topological lower bound on maximum multiplicity established in Theorem~\ref{thm-mult}
actually equals the maximum multiplicity of exceptional directions. We thus ask the following.

\begin{qn}\label{qn-mult} $ $
	\begin{enumerate}
	\item If $\partial G_1 = \partial G_2$, and the passage time distribution $\rho$ is the same for Cayley graphs $\Gamma_1, \Gamma_2$ (of $G_1, G_2$ respectively), is the maximum multiplicity the same for $\Gamma_1, \Gamma_2$? In particular, is this true if  $\Gamma_1, \Gamma_2$ are quasi-isometric? If $G_1=G_2$, and 
	 $\Gamma_1, \Gamma_2$ are different Cayley graphs of the same group, is the maximum multiplicity the same for $\Gamma_1, \Gamma_2$?
	 \item Is there a probability distribution $\rho$ satisfying Assumption~\ref{assume-subexp} such that the lower bound in Theorem~\ref{thm-mult} is strictly less than the maximum multiplicity?
	\end{enumerate}
\end{qn}

\subsection{Relationships between forward and backward trees}\label{sec-treeqns}
As promised at the beginning of Section~\ref{sec-direx}, we briefly recall the essential features of the KAN decomposition.\\

\noindent {\bf  $PSl(2,\R), \Hyp^2$ and  random forward, backward trees:}\\
We provide a conceptual reason as to why  forward and backward trees have similar qualitative asymptotic properties as illustrated in the main theorems of this paper. Let $L=PSl(2,\R)= Sl(2,\R)/ (\pm I)$. Then the Iwasawa  or $KAN$ decomposition of $L$ is given as follows.
\begin{enumerate}
	\item $K=PSO(2)=SO(2)/ (\pm I) $ is the maximal compact in $L$,
	\item $A$ is the space of diagonal matrices with entries $\lambda, 1/\lambda$, $\lambda > 1$,
	\item $N$ is the space of strictly upper triangular $2 \times 2$ matrices with $1$'s along the diagonal.
\end{enumerate}

Then the associated symmetric space $L/K$ equals  $\Hyp^2$, and there are two possible identifications:
\begin{enumerate}
	\item $\Hyp^2 = KA$, where $A$ gives the radial coordinate, and $K$ gives the angular coordinate.
	\item $\Hyp^2 = AN$, where $A$ is identified with the vertical $y-$axis in the upper half-plane model, and $N$ consists of horizontal translations.
\end{enumerate}

The first description $\Hyp^2 = KA$ has a distinguished origin $o$ about which $K$ rotates, and $A$ gives the radial distance from $o$. The  forward tree may be regarded as the FPP analog of this model. The second 
description $\Hyp^2 = AN$ has a distinguished $\infty$ kept fixed by both $N$  and $A$. The   backward tree may be regarded as the FPP analog of this model where $\infty$ is replaced by $\xi$. 

Let $o$ be a base-point in $\Hyp^2$ as above,  $z \in \partial \Hyp^2$, and $[o,z)$ be a geodesic ray. Let $z_n \in [o,z)$ be such that
$d(z_n, o) =n$. Then the balls $B_n(z_n)$ of radius $n$ about $z_n$ converge to the unique horodisk $\HHH(z)$ based at $z $ and containing $o$ as  a point on 
its boundary horocycle  $\partial \HHH(z)$. This geometric construction generalizes to hyperbolic groups in a straightforward manner by replacing $\Hyp^2$ with a Cayley graph $\Gamma$, the base-point $o$ by $1$ and  $z \in \partial \Hyp^2$ by  $z \in \pG$. 
More generally, this construction  shows that horocycles (i.e.\ orbits of $N$ in $\Hyp^2$) are limits of boundaries $\partial D(n)$ of a suitably chosen sequence of $n-$disks where
$\partial D(n)$ passes through a fixed point (the origin, say) in  $\Hyp^2$. It is this fact that allows us to move from the first to the second descriptions of $\Hyp^2$ to deduce ergodic theoretic results
(see \cite{eskin-mcmullen} and subsequent works for instance). In our present random discrete setup, it shows up in similar asymptotic behavior of  forward and backward trees. There are, however, subtle differences that this analogy does not capture.  Question~\ref{qn-fwdbwd}  below captures one such feature.
\\

For a backward tree $T(\xi,\omega)$, the base-point is chosen arbitrarily; hence the restriction of the Cannon-Thurston map $i_\omega$ 
to $\partial T(\xi,\omega)$ does not depend on the choice of base-point. The same is not true for the forward tree, as the precise relationship between the trees
$F(1,\omega)$ and $F(g, \omega)$ is not clear. We thus ask the following.

\begin{qn}\label{qn-fwdbwd}
How do the exceptional directions and their multiplicities of the forward tree $F(g, \omega)$ depend on base-point $g$? In particular, are they independent of $g$?
The test case is when the support of $\rho$ is bounded away from $0, \infty$.
\end{qn}
In  planar (Euclidean) exactly solvable LPP, it was shown in \cite[Theorem 3.9]{jrs} that exceptional directions are independent of the  base-point $g$. In conjunction with the resolution
of the N3G problem \cite{coupier11} for planar (Euclidean) exactly solvable LPP, the second part of Question~\ref{qn-fwdbwd} above automatically has a positive answer in this case.
In the deterministic case, where $ \rho$ has an atom at 1, Question~\ref{qn-fwdbwd} specializes to the full geodesic language
(without an ordering on the generating set) generated by the Cannon automaton.

An approach to resolving Question~\ref{qn-fwdbwd} is to use the fact, already stated, that for the backward tree $T(\xi,\omega)$, exceptional directions and 
 their multiplicities are tautologically independent of the  base-point $g$. Recall the  well-known geometric limiting construction of disks to horodisks describes earlier in this subsection.
 We then ask the following. 
 
 \begin{qn}\label{qn-cgnce}
 Do the forward trees $F(z_n, \omega)$ converge to $T(z, \omega)$ in a suitable sense?  
 \end{qn}

The strongest sense in which such a convergence could occur is Gromov-Hausdorff. We do not expect a positive answer in such a strong sense. However, one might expect a positive answer if one  wants only a convergence in measure. This is due to the following fact: if $z_n$ is chosen uniformly randomly from 
the ball $B_n(1)$ of radius $n$, then the uniform probability on $B_n(1)$
converges to the Patterson-Sullivan measure on $\pG$.
Hence, an appropriate modification  of Benjamini-Schramm  convergence might yield a positive answer to Question~\ref{qn-cgnce}.

\subsection{Generalizations}\label{sec-genlzns}
To conclude this paper, we indicate briefly how, in many of our results, the Cayley graph $\Gamma$ of a hyperbolic group may be replaced with a $\delta-$hyperbolic graph $X$, with some additional assumptions. Assume therefore that 
 \begin{assume}\label{assume-Xhyp}
  $X$ is a $\delta-$hyperbolic graph with a uniform upper bound $D$ on the valence of any vertex.
 \end{assume}

 \noindent 1) The results from \cite{BM19} used in this paper, i.e.\ all results from Sections 6-8 of that paper, \cite[Lemma 5.14]{BM19},
 \cite[Lemma 4.3]{BM19}, and
  \cite[Theorem 2.14]{BM19} hold under the general Assumption~\ref{assume-Xhyp}.\\

  \noindent 2) Observe that Corollary~\ref{cor-effbt} is stated in this generality.
 Hence, Theorems~\ref{prop-effbt}, \ref{thm-bi}, \ref{thm-btree} go through for $X$ as in Assumption~\ref{assume-Xhyp}. Thus, forward and backward trees in $X$ have complete ends under Assumption~\ref{assume-subexp}.\\

  \noindent 3a) The existence of surjective Cannon-Thurston maps for forward and backward trees  in Theorem~\ref{thm-12proper} uses Theorems~\ref{prop-effbt}, \ref{thm-bi}, \ref{thm-btree}. The other arguments in its proof being completely general, Theorem~\ref{thm-12proper} continues to hold under Assumption~\ref{assume-subexp}.\\

   \noindent 3b)  Stallings' Theorem, Dunwoody's accessibility Theorem~\ref{dunwoody} and Bowditch-Swarup's Theorem~\ref{thm-lc} have no analogs for a general $X$. However, Lemma~\ref{lem-top} is fully general.
   Hence, for Theorem~\ref{thm-nonfreedense} to go through for $X$, it suffices to assume the following (since the other ingredient, existence of surjective Cannon-Thurston maps, goes through under Assumption~\ref{assume-subexp}).
   
   \begin{assume}\label{assume-bdy}
   	For $X$ satisfying Assumption~\ref{assume-Xhyp}, assume further that 
   $\partial X$ is connected and locally connected.
   \end{assume}
   
      \noindent 3c) Theorem~\ref{thm-uncountable} uses only basic properties of topological dimension. Hence, it goes through for $X$ satisfying Assumption~\ref{assume-Xhyp}, and \ref{assume-bdy}.\\
  
   \noindent 3d)  For the same reasons as (3c) above,  Theorem~\ref{thm-mult} goes through for $X$ satisfying Assumption~\ref{assume-Xhyp}, and \ref{assume-bdy}.\\
 
  \noindent 4a) In Section~\ref{sec-exceptional-coalesce}, the proof of Theorem~\ref{t:exceptional} uses the Patterson-Sullivan measure on $\pG$. The other ingredients in the proof of Theorem~\ref{t:exceptional}, i.e.\ Lemmas~\ref{l:exceptionallift}, \ref{l:wandering0} and \ref{l:wandering} go through under the general Assumption~\ref{assume-Xhyp}. To ensure that we can use the argument involving the  Patterson-Sullivan measure, it thus suffices to assume the following, which extracts from \cite{coornert-pjm} the output of the relevant group-theoretic argument, and treats it as an axiom
  (see \cite[2.5.8,2.5.9,2.5.10]{calegari-notes} for instance).
 
  \begin{assume}\label{assume-ps}
 For $X$ satisfying Assumption~\ref{assume-Xhyp}, assume further that 
 \begin{enumerate}
 \item there exists $C, \lambda \geq 1$ such that for any $x \in X$ and $n \in \natls$, $\frac{1}{C} \lambda^n \leq |B_n(x)| \leq {C} \lambda^n.$
 \item  the limit of uniform measures on finite balls converge to an Ahlfors-regular measure of Hausdorff dimension $\lambda$ on the boundary.
 \end{enumerate}
 
 \end{assume}
Assumption~\ref{assume-ps} is satisfied for a larger class of spaces than Cayley graphs of hyperbolic groups. In particular, it is satisfied whenever a graph admits a (not necessarily discrete) cocompact group of isometries. It is also satisfied for Cayley graphs with a thin set of vertices removed.
 
 Thus Theorem~\ref{t:exceptional} goes through for  $X$ satisfying Assumptions~\ref{assume-Xhyp} and \ref{assume-ps}.\\

 \noindent 4b) The hyperplane construction and the estimates of Lemma~\ref{l:finite}
 go through for  $X$ satisfying Assumptions~\ref{assume-Xhyp}. Hence, 
 Theorem~\ref{t:finite} follows for  $X$ satisfying Assumptions~\ref{assume-Xhyp}.

\bibliography{fpphoro}
\bibliographystyle{alpha}

\end{document}